\providecommand{\U}[1]{\protect\rule{.1in}{.1in}}
\newtheorem{theorem}{Theorem}
\theoremstyle{plain}
\newtheorem{corollary}{Corollary}
\newtheorem{lemma}{Lemma}
\newtheorem{proposition}{Proposition}
\numberwithin{equation}{section}
\theoremstyle{definition}
\newtheorem{definition}{Definition}
\newtheorem{example}{Example}
\theoremstyle{remark}
\newtheorem{remark}{Remark}
\def\R{\mathbb{R}}
\def\T{\mathbb{T}}
\begin{document}
\title[A sharp stability criterion for Euler equations via sparseness]{A sharp stability criterion for Euler equations via sparseness}

\author{\'{O}scar Dom\'{\i}nguez}
\address{O. Dom\'inguez, Departamento de An\'alisis Matem\'atico y Matem\'atica
Aplicada, Facultad de Matem\'aticas, Universidad Complutense de Madrid, Plaza
de Ciencias 3, 28040 Madrid, Spain}
\email{oscar.dominguez@ucm.es}

\author{Mario Milman}
\address{M. Milman, Instituto Argentino de Matematica\\
Buenos Aires\\
Argentina}
\email{mario.milman@icloud.com}
\urladdr{https://sites.google.com/site/mariomilman/}

\thanks{Part of this research was carried out while the first-named author was a postdoc at the Institut Camille Jordan, Lyon, supported by the LABEX
MILYON (ANR-10-LABX-0070) of Universit\'{e} de Lyon, within the program
\textquotedblleft Investissement d'Avenir" (ANR-11-IDEX-0007) operated by the
French National Research Agency (ANR)}
\subjclass{76B03, 42B35, 42B37, 46B70}
\keywords{$H^{-1}$-stability, Euler equations, energy conservation, sparseness, Morrey spaces, approximate solutions, physical solutions,  extrapolation spaces.}

\begin{abstract}
We introduce sparse versions of function spaces that are relevant to
characterize the solutions of Euler equations without concentration. The standard 
Sobolev space $H^{-1}$ is given a sparse structure that
allows to measure the degree of compactness of embeddings into $H^{-1}$ and
provides new quantitative general criteria for $H^{-1}$-stability. 
Indices of sparseness are defined, and function spaces whose indices have
prescribed decay are constructed, resulting in an improvement of the classical
$H^{-1}$-stability results: sparse stability. The analysis relies on the introduction of
sparse Riesz-Morrey-Tadmor spaces, that are characterized via maximal
operators and new sparse domination theorems, together with extrapolation techniques. Our methods also yield
improvements on recent results on the conservation of energy of physically
realizable solutions of $2$D-Euler. 

\end{abstract}
\maketitle
\tableofcontents

\section{Preamble\label{sec:preambulo}}

The classical Euler equations for incompressible fluid flow are given by%

\begin{equation}
\left\{
\begin{array}
[c]{c}%
u_{t}+u\cdot\nabla u=-\nabla p,\\
\text{div }u=0,\\
\text{initial and boundary conditions,}%
\end{array}
\right.  \label{Euler}%
\end{equation}
where $u=(u_{1},\ldots,u_{n})$ is the \emph{velocity} field and $p$ is the
(scalar) \emph{pressure}. Although the Euler equations have been studied for more than two and half
centuries, many important open problems remain unanswered. In particular, while it is
easy to see that smooth solutions of \eqref{Euler} conserve kinetic energy, the existence of
weak solutions that conserve energy or the uniqueness of weak solutions are
more subtle issues. 


\subsection{$H^{-1}$-stability for approximate solutions of Euler equations}\label{Section1Intro}

Research on conservation of energy has been considerably influenced by the work of
DiPerna--Majda \cite{DiPernaMajda, DiPernaMajda1*, DiPernaMajda2}. These authors 
introduced the key concept of \emph{approximate solutions} $\{u^\varepsilon\}_{\varepsilon > 0}$ (cf. Definition \ref{def:approxsol}) that weakly converge to $u$. If $u^\varepsilon \to u$ strongly in $L^2$, then $u$ is a weak solution to  \eqref{Euler}. Otherwise the energy concentrates on sets.  Despite this, $u$ may be still an Euler solution due to the presence of subtle cancellations. This is the so-called \emph{concentration-cancellation phenomenon}.



In their foundational papers, Lopes Filho--Nussenzveig Lopes--Tadmor
\cite{LNT} and Tadmor \cite{Tadmor} develop $H^{-1}$-stability\footnote{More precisely,  $H^{-1}_{\text{loc}}$-stability.} (cf. Definition \ref{def:h-1}) into a
very powerful unifying framework to study lack of concentrations in approximate solutions. To be more precise, 
these authors obtained the following result.    


\begin{theorem}
[\cite{LNT}]\label{teoh-1} Suppose that $\{u^{\varepsilon}\}_{\varepsilon>0}$
is an $H^{-1}$-stable approximate family of Euler solutions. Then $\{u^{\varepsilon}\}_{\varepsilon>0}$  converges strongly (possibly passing to a subfamily)
to a weak solution of the Euler equation $u$ in $L^{\infty
}([0,T];L_{\emph{loc}}^{2}(\mathbb{R}^{n};\mathbb{R}^{n}))$.
\end{theorem}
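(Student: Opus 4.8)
The plan is to carry out the classical compensated-compactness programme of Murat and Tartar in the Euler setting, with $H^{-1}$-stability playing exactly the role of supplying the spatial compactness of the vorticities needed to run the Div--Curl lemma; without such a hypothesis genuine concentration is possible, so the assumption is essentially unavoidable.

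\emph{Step 1 (weak limits and time-equicontinuity).} By Definition~\ref{def:approxsol} the family $\{u^{\varepsilon}\}$ is uniformly bounded in $L^{\infty}([0,T];L^{2}_{\mathrm{loc}}(\mathbb{R}^{n};\mathbb{R}^{n}))$, so Banach--Alaoglu yields, along a subfamily not relabelled, a weak-$*$ limit $u^{\varepsilon}\rightharpoonup u$ together with weak-$*$ limits of the quadratic quantities, $|u^{\varepsilon}|^{2}\rightharpoonup\mu$ and $u^{\varepsilon}\otimes u^{\varepsilon}\rightharpoonup\tau$, as locally finite (matrix-valued) Radon measures on $[0,T]\times\mathbb{R}^{n}$, with $\mu\geq|u|^{2}$, $\operatorname{tr}\tau=\mu$, and $\mu-|u|^{2}\geq0$ the reduced defect measure detecting concentration. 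Testing the approximate momentum equation against divergence-free fields eliminates the pressure and shows $\partial_{t}u^{\varepsilon}$ is bounded in $L^{\infty}([0,T];H^{-L}_{\mathrm{loc}})$ for some $L$; hence $\{u^{\varepsilon}\}$ is equicontinuous in time valued in $H^{-L}_{\mathrm{loc}}$, and Arzel\`a--Ascoli upgrades the extraction to $u^{\varepsilon}\to u$ in $C([0,T];H^{-L}_{\mathrm{loc}})$, so the weak limits are consistent on every time slice.

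\emph{Step 2 (no concentration via Div--Curl).} This is where the hypothesis enters. By Definition~\ref{def:h-1}, $H^{-1}$-stability provides the precompactness in $H^{-1}_{\mathrm{loc}}(\mathbb{R}^{n})$ of the vorticities $\{\operatorname{curl}u^{\varepsilon}\}$, locally uniformly in $t$. Fix $t$; the fields $a^{\varepsilon}=b^{\varepsilon}=u^{\varepsilon}(t,\cdot)$ satisfy $\operatorname{div}a^{\varepsilon}=0$ and have $\operatorname{curl}b^{\varepsilon}$ precompact in $H^{-1}_{\mathrm{loc}}$, and converge weakly in $L^{2}_{\mathrm{loc}}$ to $u(t,\cdot)$ by Step~1; the Div--Curl lemma of Murat--Tartar then gives $|u^{\varepsilon}(t,\cdot)|^{2}=a^{\varepsilon}\cdot b^{\varepsilon}\rightharpoonup u(t,\cdot)\cdot u(t,\cdot)=|u(t,\cdot)|^{2}$ in $\mathcal{D}'(\mathbb{R}^{n})$. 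Integrating in $t$ against a test function, which is legitimate by the uniform bound (equi-integrability), yields $|u^{\varepsilon}|^{2}\rightharpoonup|u|^{2}$ in $\mathcal{D}'((0,T)\times\mathbb{R}^{n})$, i.e.\ $\mu=|u|^{2}$: no concentration. I expect the only real obstacle to be technical and to live here, namely making the purely spatial Div--Curl statement cooperate with the time variable --- one must justify, via the $C([0,T];H^{-L}_{\mathrm{loc}})$ convergence of Step~1 and a Fubini/measurable-selection argument, both that the fixed-time weak limit really is $|u(t,\cdot)|^{2}$ and that the slicewise convergence integrates to the space-time statement.

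\emph{Step 3 (strong convergence and passage to the limit).} From $\mu=|u|^{2}$ and a cutoff/monotone-class argument, $\int_{[0,T]\times K}|u^{\varepsilon}|^{2}\to\int_{[0,T]\times K}|u|^{2}$ for every compact $K$; since simultaneously $u^{\varepsilon}\rightharpoonup u$ weakly in the Hilbert space $L^{2}([0,T]\times K)$, convergence of norms forces strong convergence there, so $u^{\varepsilon}\to u$ strongly in $L^{2}_{\mathrm{loc}}([0,T]\times\mathbb{R}^{n})$. Combining this with the uniform $L^{\infty}([0,T];L^{2}_{\mathrm{loc}})$ bound and the time-equicontinuity of Step~1 promotes the convergence to the class $L^{\infty}([0,T];L^{2}_{\mathrm{loc}})$ and places $u$ there. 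Finally, strong $L^{2}_{\mathrm{loc}}$ convergence gives $u^{\varepsilon}\otimes u^{\varepsilon}\to u\otimes u$ in $L^{1}_{\mathrm{loc}}$, the constraint $\operatorname{div}u^{\varepsilon}=0$ passes to $\operatorname{div}u=0$, and the defect term in the approximate equation vanishes by Definition~\ref{def:approxsol}; hence $u$ is a weak solution of \eqref{Euler} along the extracted subfamily, which is the assertion.
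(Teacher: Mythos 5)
The paper does not prove Theorem~\ref{teoh-1}; it is imported verbatim from \cite{LNT}, so there is no in-paper argument to compare against. Evaluated on its own terms, your compensated-compactness plan captures the essential mechanism of the classical proof: $\operatorname{div}u^{\varepsilon}=0$ together with $H^{-1}_{\mathrm{loc}}$-precompactness of $\omega^{\varepsilon}=\operatorname{curl}u^{\varepsilon}$ kills concentration, and Steps~1 and~2 (weak limits, slicewise Div--Curl, consistency via $C([0,T];H^{-L}_{\mathrm{loc}})$) are sound.

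There is, however, a genuine gap in Step~3, precisely at the point you flag as merely technical. The Div--Curl lemma is qualitative: for each fixed $t$ it produces $u^{\varepsilon}(t,\cdot)\to u(t,\cdot)$ strongly in $L^{2}_{\mathrm{loc}}$, but only pointwise in $t$, with no rate. Your assertion that the uniform $L^{\infty}_{t}L^{2}_{x}$ bound together with the $\mathrm{Lip}_{t}H^{-L}_{\mathrm{loc}}$ equicontinuity of Step~1 ``promotes'' this to strong convergence in $L^{\infty}([0,T];L^{2}_{\mathrm{loc}})$ does not hold: equicontinuity of $t\mapsto u^{\varepsilon}(t)$ in the very weak $H^{-L}$ topology gives no control on the modulus of continuity of $t\mapsto \|u^{\varepsilon}(t)-u(t)\|_{L^{2}(K)}$, so pointwise-in-$t$ convergence to zero plus a uniform bound does not upgrade to supremum-in-$t$ convergence (one can have a moving bump of fixed $L^{2}$ height whose $H^{-L}$ footprint and $L^{2}_{t,x}$ mass both vanish). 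To get the stated $L^{\infty}_{t}L^{2}_{x,\mathrm{loc}}$ conclusion you should replace the slicewise Div--Curl step by a quantitative Hodge/Biot--Savart estimate: for a smooth cutoff $\chi$ one has
\[
\|\chi\,(u^{\varepsilon}-u)(t)\|_{L^{2}}\lesssim \|\omega^{\varepsilon}(t)-\omega(t)\|_{H^{-1}(\operatorname{supp}\chi)}+\|u^{\varepsilon}(t)-u(t)\|_{H^{-1}(\operatorname{supp}\chi)},
\]
where the first term tends to $0$ uniformly in $t$ directly from the $H^{-1}$-stability hypothesis (which lives in $C((0,T);H^{-1}_{\mathrm{loc}})$), and the second term tends to $0$ uniformly in $t$ via a Lions-type interpolation: the compactness of $L^{2}_{\mathrm{loc}}\hookrightarrow H^{-1}_{\mathrm{loc}}$ together with the uniform $L^{\infty}_{t}L^{2}_{\mathrm{loc}}$ bound and the $C([0,T];H^{-L}_{\mathrm{loc}})$ convergence of Step~1 upgrades the latter to $C([0,T];H^{-1}_{\mathrm{loc}})$ convergence of $u^{\varepsilon}$. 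With this replacement the rest of your Step~3 (passage to the limit in the weak formulation) goes through.
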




The implementation of $H^{-1}$-stability depends on having at one's
disposal sharp criteria to characterize the compact sets of $H^{-1}$ and, in
particular, leverage this knowledge to decide which function spaces, among
those relevant in the description of physical phenomena connected with the
Euler equations, embed compactly into $H^{-1}.$ In this direction, the
$H^{-1}$-criteria, as it applies to rearrangement invariant spaces, was
extensively developed in \cite{LNT}, recovering and extending earlier results from \cite{DiPernaMajda} and  \cite{Lio96}.




%

As shown in \cite{DiPernaMajda}, solutions to $2$D Euler equations when the initial vorticity is supported in a curve play a central role in fluid dynamics.  These solutions are called \emph{vortex sheets} and their regularity can be naturally  measured in terms of the Morrey spaces $M^{p,\alpha}$ (cf. \eqref{aparecio} below). The $H^{-1}$-stability theory for Morrey spaces is also
treated in \cite{LNT}, and relies on the following compactness
result\footnote{The same statement holds, mutatis mutandi, for the Morrey
space of measures \cite[Theorem 4.3]{LNT}.} \footnote{ We use standard notation: If $X$ is a function space on $\R^n$, we let
$X_{c}$ be the subspace of compactly supported functions; and we let $X_{\text{loc}}$ be
the set of functions $f$ such that $f \mathbf{1}_{Q_0}\in X,$ for all cubes 
$Q_0.$ We write%
\[
X_{c}\hookrightarrow H_{\text{loc}}^{-1}(\R^n) \qquad \text{ (resp. }X_{c}%
\overset{compactly}{\hookrightarrow}H_{\text{loc}}^{-1}(\R^n))
\]
if for all $Q_0$
\[
X(Q_0)\hookrightarrow H^{-1} (\R^n) \qquad \text{ (resp. }X(Q_0
)\overset{compactly}{\hookrightarrow}H^{-1}(\R^n)),
\]
where $X(Q_0) = \{f \in X : \text{supp } f \subset Q_0\}$ with $\|f\|_{X(Q_0)} = \|f \mathbf{1}_{Q_0}\|_{X}$.
} due independently to DeVore and Tao (cf. \cite[Theorems 4.2]{LNT}):
\begin{equation}
M^{p,\alpha}_c(\R^n)\overset{compactly}{\hookrightarrow}H^{-1}_{\text{loc}}(\R^n
)\label{comp}%
\end{equation}
provided that one of the following conditions is satisfied:
\begin{enumerate}
\item[(a)] $p>\frac{n}{2}$,
\item[(b)] $p=\frac{n}{2}$ \quad and \quad  $\alpha>1.$
\end{enumerate}
Once in possession of these statements, Theorem \ref{teoh-1} can be applied to establish that for families of approximate
solutions, with uniformly bounded vorticities in $M^{p,\alpha}(\R^n),$ one
can extract convergent subsequences to a solution of the Euler equation
(\ref{Euler}), without concentration. In the special case $n=2, p=1$ and $\alpha > 1$, this result\footnote{The original statement from \cite{DiPernaMajda} involves a certain additional  assumption on weak decay at infinity of vorticities.} was first obtained by DiPerna--Majda \cite[Theorem 3.1]{DiPernaMajda} using tools from elliptic theory. On the other hand, the case $n=3$ and $p = 3/2$ is connected with the work of Giga and Miyakawa \cite{Giga} on well-posedness of $3$D Navier--Stokes equations with initial singular data such as vortex filaments.

At present time, the picture is not
completely understood for all the values of the parameters involved in
(\ref{comp}). Specially in $2$D, it is known that for $p=1$ and $\alpha=1/2,$
(\ref{comp}) does not hold (cf. \cite{DiPernaMajda, M93}). To the best of our knowledge, it
remains an open problem to decide whether (\ref{comp}) with $p=1$ still holds  for $\alpha$ $\in(\frac{1}%
{2},1]$ (the so-called \textquotedblleft gap problem\textquotedblright), leaving open the existence of solutions without concentrations in $M^{1, \alpha}$. 
Similar types of gaps also appear when dealing with higher dimensions.

In an effort to understand the nature of these gaps, and their impact on the
convergence of approximate solutions of the Euler equations, Tadmor
\cite{Tadmor} introduced the finer scale of RMT spaces, $R_{p,q}\log^{\alpha},$ that sharpen (\ref{comp}); cf. Definition \ref{DefinitionRMT}. 

\subsection{Tadmor's refinement of $H^{-1}$-stability\label{sec:tad}}

It is shown in \cite{Tadmor} that RMT spaces
\textquotedblleft interpolate the compactness gap\textquotedblright\ in the
sense that%
\begin{equation}
R_{p,2}\log^{\alpha} (\R^n)_c\overset{compactly}{\hookrightarrow}H_{\text{loc}}%
^{-1}(\R^n)\label{comp1}%
\end{equation}
provided that one of the following conditions is satisfied:
\begin{enumerate}
\item[(a)] $p>\frac{2n}{n+2},$
\item[(b)] $p=\frac{2n}{n+2}$\quad  and (crucially) \quad $\alpha>\frac{1}{2}.$
\end{enumerate}

The $R_{p,2}\log^{\alpha}$ scale is sharp, with respect to the $H^{-1}$-
stability, in the sense that for approximate solutions, with vorticities
uniformly bounded in $R_{\frac{2n}{n+2},2}\log^{\alpha},$ $\alpha>1/2,$
we can extract solutions without concentration, while for $\alpha\in(0,1/2]$
there is a weak limit solution (i.e. a concentration-cancellation effect). On
the other hand, the original gap problem for Morrey spaces $M^{1,\alpha},$ is
apparently not resolved in this fashion, since\footnote{In other words, Tadmor's scale requires a stronger
regularity condition than Morrey regularity on the set of vorticities to
achieve compactness.} (cf. \cite[page 519 and the discussion after 
(3.5)]{Tadmor})
\begin{equation*}
R_{1,2}\log^{\alpha}(\mathbb{R}^{2})\subset M^{1,\alpha}(\mathbb{R}%
^{2}).
\end{equation*}

\subsection{Paving the way to sparseness}\label{SectionPaving}

The presence of logarithms in (\ref{comp1}) (and \eqref{comp}) is very natural and is connected
with some implicit \emph{extrapolation} constructions that are needed since
$R_{1,2}(\mathbb{R}^{2})$ (or more generally,  $R_{\frac{2n}{n+2}%
,2}(\mathbb{R}^{n})$) is not suitable for $H^{-1}$-stability. In
fact, we have (cf. (\ref{agregada}))
\begin{equation}
R_{\frac{2n}{n+2},2}(\mathbb{R}^{n})\nsubseteq H^{-1}(\mathbb{R}%
^{n}).\label{barnard}%
\end{equation}
To overcome this obstacle, in this paper we propose a different methodology
based on \emph{sparseness}.


\subsection{A new framework for $H^{-1}$-stability: Sparse stability}

The main goal of this paper is to reformulate $H^{-1}$-stability applying  the theory of 
sparse spaces, that we recently introduced in \cite{DMComptes}. In a nutshell, we show that the ``defect" of RMT spaces exhibited  by  \eqref{barnard} can be overcome if the geometry of testing cubes in the definition of these spaces is changed. More precisely, let $SR_{\frac{2n}{n+2},2}$ the space that is obtained by replacing  pairwise disjoint cubes in $R_{\frac{2n}{n+2},2}$   by sparse\footnote{Loosely speaking, sparse cubes are not necessarily disjoint but possible overlappings can be controlled in a sharp fashion; cf. Definition \ref{DefnSparse}.} families of cubes (cf.  Definition \ref{DefSRMT} below). Then, somewhat informally, the following surprising formula holds (cf. Theorem \ref{CorSparSob} for the precise statement)
\begin{equation}\label{SNS}
	SR_{\frac{2n}{n+2},2}(\R^n) = H^{-1}(\R^n). 
\end{equation}

%

Armed with formula \eqref{SNS} we provide a sparse structure to $H^{-1},$ which we
exploit to develop new methods to characterize  compact sets 
in $H^{-1}.$ In particular, we introduce \emph{indices of sparseness}, associated to function spaces, that measure the degree of compactness  into
$H^{-1}$. Conversely, given a
\emph{decay} $\Psi$, i.e. a positive decreasing function on $[0, \infty)$ satisfying
\begin{equation}
\lim_{t\rightarrow\infty}\Psi(t)=0,\label{psi1}%
\end{equation}
 we
construct \emph{sparse spaces} $S_{\Psi}$, whose sparse indices have the prescribed decay $\Psi$. This leads to the introduction of the concept of \emph{$\Psi$-sparse stability} for approximate solutions; cf. Definition \ref{DefSpS}. 
As a consequence, we create a refined scale that exhausts the classical $H^{-1}$-stability in the following sense (compare with Theorem \ref{teoh-1}.)


\begin{theorem}
\label{teo:exhaust} Let $\{u^{\varepsilon
}\}_{\varepsilon>0}$ be a family of approximate solutions. The
following are equivalent:
\begin{enumerate}
\item[(i)] $\{u^{\varepsilon}\}_{\varepsilon>0}$ is $H^{-1}$-stable,
\item[(ii)] $\{u^\varepsilon\}_{\varepsilon > 0}$ is sparse stable.
\end{enumerate}
As a consequence, if \emph{(ii)} holds then (possibly passing to a subfamily) $u^\varepsilon \to u$ strongly in $L^2$, where $u$ is a solution of \eqref{Euler}. 
\end{theorem}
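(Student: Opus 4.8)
The plan is to reduce the asserted equivalence to a quantitative compactness characterization of subsets of $H^{-1}$, and then feed the outcome into Theorem \ref{teoh-1}. Recall (Definition \ref{def:h-1}) that $H^{-1}$-stability of $\{u^{\varepsilon}\}_{\varepsilon>0}$ amounts to a uniform energy bound together with the requirement that, for each cube $Q_{0}$, the localized vorticities $\{(\operatorname{curl} u^{\varepsilon})\mathbf{1}_{Q_{0}}\}_{\varepsilon>0}$ form, uniformly in $t\in[0,T]$, a relatively compact subset of $H^{-1}(\R^{n})$; and (Definition \ref{DefSpS}) that sparse stability means exactly that this same family has its sparse index dominated by some decay $\Psi$ in the sense of \eqref{psi1}. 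Thus the heart of the theorem is the assertion that the localized family is relatively compact in $H^{-1}$ if and only if its sparse index tends to $0$, and here the identification \eqref{SNS}, $SR_{\frac{2n}{n+2},2}(\R^{n})=H^{-1}(\R^{n})$ (Theorem \ref{CorSparSob}), together with the maximal-operator and sparse domination descriptions of $SR_{\frac{2n}{n+2},2}$, is what carries the argument.

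For the implication (i) $\Rightarrow$ (ii) I would argue as follows. Fix $Q_{0}$ and let $K\subset H^{-1}(\R^{n})$ be the corresponding relatively compact family. Given $\delta>0$, pick a finite $\delta$-net $\{f_{1},\dots,f_{N}\}\subset H^{-1}$ for $K$. Each $f_{j}$ is a fixed element of $SR_{\frac{2n}{n+2},2}$ and, by density of smooth compactly supported functions in this space combined with \eqref{SNS}, the sparse index of a single such function vanishes at infinity; hence there is $t_{\delta}$ with the sparse indices of all of $f_{1},\dots,f_{N}$ less than $\delta$ once $t\geq t_{\delta}$. A quasi-triangle (subadditivity) inequality for the sparse functional then upgrades this to a bound of order $\delta$, uniform over $K$, valid for $t\geq t_{\delta}$. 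Letting $\delta$ run through a null sequence and taking the decreasing envelope of the resulting moduli produces a decay $\Psi$ controlling the sparse index of $K$; performing this along an exhausting sequence of cubes and diagonalizing yields $\Psi$-sparse stability.

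For the converse (ii) $\Rightarrow$ (i), suppose the sparse index of the localized vorticities is dominated by a decay $\Psi$. By the sparse domination / maximal-operator characterization of $SR_{\frac{2n}{n+2},2}$, the family is uniformly bounded in $SR_{\frac{2n}{n+2},2}(\R^{n})=H^{-1}(\R^{n})$, and the uniform decay $\Psi(t)\to 0$ is precisely the ``uniform concentration'' condition which, by the same characterization, certifies relative compactness of a bounded subset of $H^{-1}$. Hence $\{u^{\varepsilon}\}_{\varepsilon>0}$ is $H^{-1}$-stable. The final assertion is then immediate: $H^{-1}$-stability places us under the hypotheses of Theorem \ref{teoh-1}, which produces a subfamily converging strongly in $L^{\infty}([0,T];L^{2}_{\mathrm{loc}}(\R^{n};\R^{n}))$, in particular in $L^{2}$, to a weak solution $u$ of \eqref{Euler}.

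The main obstacle is the direction (i) $\Rightarrow$ (ii): manufacturing a \emph{single} decay $\Psi$ that works uniformly along the whole family from bare relative compactness. This rests on two non-routine ingredients established earlier in the paper: the quantitative equivalence between $H^{-1}$-compactness and decay of the sparse index (so that ``$H^{-1}$-small'' genuinely transfers to ``small sparse index''), and the stability of sparse indices under $H^{-1}$-small perturbations---the quasi-triangle inequality for the sparse functional---which legitimizes the finite-net argument. Once \eqref{SNS} and the maximal-function description are in hand, the reverse implication is comparatively soft.
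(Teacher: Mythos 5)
Your approach is essentially the paper's. Both directions coincide: the soft direction (ii)\,$\Rightarrow$\,(i) is the immediate consequence of the compact embedding $S_{\Psi}(\R^n)_c\overset{compactly}{\hookrightarrow}H^{-1}_{\mathrm{loc}}$ (eq.\ \eqref{116}) plus Theorem \ref{teoh-1}; and the hard direction (i)\,$\Rightarrow$\,(ii) is carried by exactly the three ingredients you isolate---a finite $\delta$-net in $\mathcal{H}^{-1}$, the subadditivity $s_N(f+g)\leq s_N(f)+s_N(g)$, and a single-function vanishing lemma $\lim_{N}s_N(\omega)=0$ for each fixed $\omega\in\mathcal{H}^{-1}$ (this is \eqref{ClaimSob}). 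The paper then simply takes $\Psi(N):=\sup_{\varepsilon>0}s_N(\omega^{\varepsilon})$, which by the above is a decay and trivially dominates every $s_N(\omega^\varepsilon)$, so no diagonalization over an exhausting sequence of cubes is needed; the construction is global on $\R^n$ from the start.

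The one place where your sketch differs in substance is the justification of single-function vanishing. You propose deducing it from density of smooth compactly supported functions in $SR_{\frac{2n}{n+2},2}=H^{-1}$. That route is delicate: the two-sided equivalence $\|f\|_{SR_{\frac{2n}{n+2},2}}\approx\|f\|_{H^{-1}}$ in Theorem \ref{CorSparSob} is stated for $f\geq 0$ (in general one only has the embedding $SR_{\frac{2n}{n+2},2}\hookrightarrow H^{-1}$), and $SR$-spaces, being defined by suprema over sparse families, do not automatically inherit density of smooth functions from $H^{-1}$. You would either need to mollify within the cone of nonnegative functions and control $s_N$ of the (sign-changing) remainder via the lattice norm $\mathcal{H}^{-1}$, or---as the paper does---sidestep density entirely and prove \eqref{ClaimSob} directly by the local-means characterization of $H^{-1}$ (bounding $s_N(\omega)$ by the tail of a square function that is $\lesssim\|\omega\|_{H^{-1}}$ and therefore vanishes as $N\to\infty$). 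If you adopt the paper's local-means route for that lemma, the rest of your argument is identical.
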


We show that sparse stability not only provides a simplified approach to all previously known existence results from  \cite{DiPernaMajda}, \cite{LNT} and \cite{Tadmor} but, more importantly, it leads to the sharpening of the classical results.

We next detour to present in detail the construction of
sparse spaces (including their connection with negative Sobolev spaces; cf. \eqref{SNS}), sparse indices and sparse stability. 



\subsection{Negative Sobolev spaces via sparse RMT spaces\label{sec:functionsbackground}}

The norms of many familiar spaces in analysis are defined in terms of
coverings by disjoint cubes or \textquotedblleft packings\textquotedblright%
\ (e.g. spaces like BMO, John-Nirenberg spaces, Morrey spaces,
Campanato spaces, Brudnyi spaces, Lipschitz spaces, Garsia-Rodemich spaces, ...). In \cite{DMComptes}, we initiated the analysis of \textquotedblleft
sparse versions\textquotedblright\ of classical spaces,  obtained
modifying the requirements on the coverings: We replaced the usual packings of
cubes by the slightly bigger class of \textquotedblleft sparse
coverings\textquotedblright. We briefly recall the definition of sparse family of cubes.

Given 
$Q_{0}$ a fixed cube\footnote{In this paper all cubes are assumed to have sides
parallel to the coordinate axes.}, let $\mathcal{D}(Q_{0})$ be the
dyadic cubes in $Q_{0}$.

\begin{definition}[Sparse cubes]
\label{DefnSparse} A  (countable) family $(Q_{i})_{i\in
I} \subset \mathcal{D(}Q_{0})$ is called \emph{$\eta
$-sparse\footnote{In this paper the parameter $\eta$ will not play a role,
so in what follows we shall let $\eta=1/2$.}}, $\eta\in(0,1)$, if for every
$Q_{i}$ there exists a measurable subset $E_{Q_{i}}$ such that:
\begin{enumerate}
\item[(i)] the sets $E_{Q_{i}}$ are pairwise disjoint,
\item[(ii)] $\eta |Q_{i}|\leq|E_{Q_{i}}|$.
\end{enumerate}
We let $S(Q_{0})$ be the collection of all sparse families of
dyadic cubes in $Q_{0}.$ Analogously, one can introduce $S(\R^n)$, the set formed by all sparse  families of dyadic cubes in $\R^n$. 
\end{definition}

In particular, the sparse spaces $SR_{p,q}\log^{\alpha}(Q_{0})$ are
constructed modifying standard RMT spaces (cf. (\ref{aparece})) by means of replacing families of
packings, \textquotedblleft$(Q_{i})_{i\in I}\in\Pi(Q_{0})",$ by
sparse families, \textquotedblleft$(Q_{i})_{i\in I}\in S(Q_{0})"$.

\begin{definition}[Sparse RMT spaces]\label{DefSRMT}
Let $1 \leq p, q \leq \infty$ and $\alpha \in \R$. The \emph{sparse RMT space} $SR_{p,q}\log^{\alpha}(Q_{0})$ is formed by all those $f \in L^1(Q_0)$ such that
\begin{equation}
\Vert f\Vert_{SR_{p,q}\log^{\alpha}(Q_{0})}=\sup_{(Q_{i})_{i\in I}\in
S(Q_{0})}\left\{\sum_{i\in I}\bigg[\frac{(1-(\log|Q_{i}|)_{-})^{\alpha}%
}{|Q_{i}|^{\frac{1}{p^{\prime}}}}\,\int_{Q_{i}}|f|\bigg]^{q}\right\}^{\frac
{1}{q}}<\infty. \label{SRpq}%
\end{equation}
In particular, we let $SR_{p, q}(Q_0) = SR_{p, q} \log^0(Q_0)$. The corresponding spaces on $\R^n$ are introduced analogously. 
\end{definition}

\begin{remark}\label{RemarkMeasures}
	The definition of $SR_{p, q} \log^\alpha$ is well adapted to work with signed measures $\omega \in BM^+$. Indeed, we simply replace $\int_{Q_i} |f|$ in \eqref{SRpq}  by $\omega(Q_i)$. 
\end{remark}

Since we trivially have $\Pi(Q_{0})\subset S(Q_{0}),$ it follows that
\[
SR_{p,q}\log^{\alpha}(Q_{0})\subset R_{p,q}\log^{\alpha}(Q_{0}).
\]
However, in general, the sparse spaces are different\footnote{Note that $SR_{p, \infty} \log^\alpha (Q_0) = R_{p, \infty} \log^\alpha(Q_0) = M^{p, \alpha}(Q_0)$; cf. Remark \ref{Remark3}.} from their parent spaces. In our
context, the differences manifest themselves through the behavior of the maximal operators $M_{\lambda, Q_0}$ (cf. \eqref{maxlam}). The theory developed in Section
\ref{sec:sparsedommax} will play a crucial role in our analysis.


Suppose that $1\leq p<q<\infty,$ then a special case of Proposition
\ref{SparseDomination2} below shows that there exists 
$\{Q_{i}\}_{i\in I} \in S(Q_0)$ and a constant $c$ depending only on $p$ and $q$ such that
\begin{equation}
M_{n(\frac{1}{p}-\frac{1}{q}),Q_{0}}f(x)\leq c \,  \sum_{i\in I}\bigg(\frac
{1}{|Q_{i}|^{\frac{1}{p^{\prime}}+\frac{1}{q}}}\int_{Q_{i}}%
|f(y)|\,dy\bigg)\mathbf{1}_{Q_{i}}(x).\label{gar1}%
\end{equation}
The process of constructing such
coverings is referred as  \emph{sparse
domination} in the literature; cf. \cite{LN19}, \cite{H21} and the references given therein.  
From \eqref{gar1} and more or less standard arguments, we obtain the following  remarkable
result connecting $SR_{p, q}(\R^n)$ and classical Sobolev spaces $H^{-\lambda}_q(\R^n), \, \lambda \in (0, n)$ (cf. \eqref{rpot0}).

\begin{theorem}\label{CorSparSob}
Let $1\leq p<q<\infty$. If  $f \in L^{1}_{\emph{loc}}
(\mathbb{R}^{n}), \, f \geq0$ a.e.,  then
\[
\Vert f\Vert_{SR_{p,q}(\mathbb{R}^{n})}\approx\Vert f\Vert_{H_{q}^{-n(\frac
{1}{p}-\frac{1}{q})}(\mathbb{R}^{n})}.
\]
In general 
\[
SR_{p, q}(\R^{n}) \hookrightarrow H_{q}^{-n(\frac{1}{p}-\frac{1}{q}%
)}(\mathbb{R}^{n}).
\]
In particular, the canonical choice of parameters  $p=\frac{2n}{n+2}, \, n \geq 2,$ and $q=2$ gives\footnote{The role of
sparseness is crucial here. In particular, for the classical space
$R_{\frac{2n}{n+2},2},$ this approach fails dramatically (cf. (\ref{barnard}%
)).} 
\begin{equation}\label{SobIde}
	\Vert f\Vert_{SR_{\frac{2n}{n+2},2}(\mathbb{R}^{n})}\approx\Vert f\Vert_{H^{-1}(\mathbb{R}^{n})} \qquad \text{if} \qquad f \geq 0.
\end{equation}
Consequently
\begin{equation}\label{SobIde2}
	SR_{\frac{2n}{n+2},2}(\mathbb{R}^{n}) \hookrightarrow H^{-1}(\R^n). 
\end{equation}
\end{theorem}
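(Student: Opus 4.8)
The plan is to show that $\|f\|_{SR_{p,q}(\R^n)}$ is, up to constants, the $L^q$-norm of the fractional maximal operator $M_\lambda$, $\lambda:=n(\tfrac1p-\tfrac1q)\in(0,n)$ (the global version of the operator in \eqref{maxlam}), and then to invoke the classical comparison of $M_\lambda$ with the Riesz potential $I_\lambda$. The whole argument rests on the bookkeeping identity $\tfrac1{p'}+\tfrac1q=1-\tfrac\lambda n$, which lets one factor the summands in \eqref{SRpq} as
\[
\frac{1}{|Q_i|^{1/p'}}\int_{Q_i}|f|=|Q_i|^{1/q}\Big(|Q_i|^{\lambda/n-1}\int_{Q_i}|f|\Big),
\]
and on the elementary fact that, for a $\tfrac12$-sparse family $(Q_i)_i$ with major subsets $E_{Q_i}$ ($|Q_i|\le 2|E_{Q_i}|$, the $E_{Q_i}$ pairwise disjoint) and $1<q<\infty$,
\[
\Big\|\sum_i a_i\mathbf 1_{Q_i}\Big\|_{L^q}\lesssim\Big(\sum_i a_i^{q}|Q_i|\Big)^{1/q}\qquad(a_i\ge 0),
\]
which follows by $L^{q'}$-duality together with the boundedness of the Hardy--Littlewood maximal operator on $L^{q'}$ (note $q>p\ge1$, so $q>1$).

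For the lower estimate, by definition of $M_\lambda$ one has $|Q_i|^{\lambda/n-1}\int_{Q_i}|f|\le\inf_{Q_i}M_\lambda|f|$, so for any sparse family in $Q_0$,
\[
\Big(\frac{1}{|Q_i|^{1/p'}}\int_{Q_i}|f|\Big)^{q}=|Q_i|\big(\inf_{Q_i}M_\lambda|f|\big)^{q}\le 2\int_{E_{Q_i}}\big(M_\lambda|f|\big)^{q};
\]
summing over $i$ (disjointness of the $E_{Q_i}$) and taking the supremum over sparse families and over $Q_0$ (exhausting $\R^n$ by cubes) gives $\|f\|_{SR_{p,q}(\R^n)}\lesssim\|M_\lambda|f|\|_{L^q}$. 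For the upper estimate, the sparse domination \eqref{gar1} produces a sparse family, depending on $f$, with $M_\lambda|f|\le c\sum_i a_i\mathbf 1_{Q_i}$, $a_i=|Q_i|^{-1/p'-1/q}\int_{Q_i}|f|$; applying the sparse $L^q$-bound above and observing $\big(\sum_i a_i^q|Q_i|\big)^{1/q}=\big(\sum_i(|Q_i|^{-1/p'}\int_{Q_i}|f|)^q\big)^{1/q}\le\|f\|_{SR_{p,q}(\R^n)}$, one obtains $\|M_\lambda|f|\|_{L^q}\lesssim\|f\|_{SR_{p,q}(\R^n)}$. Hence $\|M_\lambda|f|\|_{L^q}\approx\|f\|_{SR_{p,q}(\R^n)}$ for every $f$.

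It remains to pass to $H_q^{-\lambda}$, i.e. to $I_\lambda$ (cf. \eqref{rpot0}, so that $\|f\|_{H_q^{-\lambda}(\R^n)}\approx\|I_\lambda f\|_{L^q}$). For $f\ge0$ the pointwise bound $M_\lambda f\le c\,I_\lambda f$ is immediate (decompose $I_\lambda f(x)$ into dyadic annuli about $x$ and use $\lambda<n$), while $\|I_\lambda f\|_{L^q}\lesssim\|M_\lambda f\|_{L^q}$ is the classical Muckenhoupt--Wheeden good-$\lambda$ inequality for the Riesz potential, valid for $0<\lambda<n$, $1<q<\infty$. This yields the equivalence $\|f\|_{SR_{p,q}(\R^n)}\approx\|f\|_{H_q^{-\lambda}(\R^n)}$ for $f\ge0$. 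For general $f$, since the $SR_{p,q}$-functional only sees $|f|$ and $|I_\lambda f|\le I_\lambda|f|$ pointwise,
\[
\|f\|_{H_q^{-\lambda}(\R^n)}\approx\|I_\lambda f\|_{L^q}\le\|I_\lambda|f|\|_{L^q}\approx\||f|\|_{H_q^{-\lambda}(\R^n)}\lesssim\|f\|_{SR_{p,q}(\R^n)},
\]
which is the asserted embedding; specializing to $p=\tfrac{2n}{n+2}$, $q=2$ gives $\lambda=1$ and $H_2^{-1}=H^{-1}$, hence \eqref{SobIde}--\eqref{SobIde2}.

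The only genuinely nontrivial inputs are the self-improving sparse bound \eqref{gar1} (already established in Proposition \ref{SparseDomination2}) and the Muckenhoupt--Wheeden comparison between $I_\lambda$ and $M_\lambda$; everything else is the exponent bookkeeping and the routine sparse $L^q$ estimate. The main point to watch is that each inequality is oriented in the direction required by the two-sided estimate -- in particular that positivity of $f$ is exactly what upgrades the general embedding $SR_{p,q}\hookrightarrow H_q^{-\lambda}$ into the norm equivalence -- and that the localized statement \eqref{gar1} exhausts correctly to $\R^n$.
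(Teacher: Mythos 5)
Your plan is essentially the paper's: characterize $\|f\|_{SR_{p,q}(\R^n)}$ via the $L^q$-norm of a fractional maximal function with exponent $\lambda=n(\frac1p-\frac1q)$, and then invoke the Muckenhoupt--Wheeden comparison with $I_\lambda$. The first stage, namely $\|f\|_{SR_{p,q}(\R^n)}\approx\|M_\lambda |f|\|_{L^q(\R^n)}$, obtained from the sparse domination \eqref{gar1} on one side and the sparseness condition on the other together with $L^{q'}$-duality and the Hardy--Littlewood maximal theorem, is a correct streamlined account of Theorem \ref{ThmSparseRieszMaximalRn} (with $\alpha=0$); the passage from the local statement on $Q_0$ to the global statement on $\R^n$, which you compress into ``exhausting by cubes'', is in fact the longest part of that proof in the paper, but your outline is sound.

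The gap is at the very last step. Your $M_\lambda$ is, by your own definition, the \emph{dyadic} fractional maximal operator (the global version of \eqref{maxlam}, i.e.\ \eqref{maxlamRn}), but the classical Muckenhoupt--Wheeden equivalence $\|I_\lambda f\|_{L^q}\approx\|\mathcal M_\lambda f\|_{L^q}$ is stated for the non-dyadic operator $\mathcal M_\lambda$ ranging over all cubes. The inequality you actually need, $\|I_\lambda f\|_{L^q}\lesssim\|M_\lambda f\|_{L^q}$, does not follow from the pointwise bound $M_\lambda f\le\mathcal M_\lambda f$ (which runs in the wrong direction), nor does a single dyadic grid support the good-$\lambda$ inequality for $I_\lambda$ directly, since one grid is blind to cubes straddling its boundaries. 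The paper supplies precisely the missing bridge: the $1/3$-translation trick of Christ \cite{Ch88}, eq.~\eqref{vale}, which gives $\|M_{\lambda,0}f\|_{L^q(\R^n)}\approx\|\mathcal M_\lambda f\|_{L^q(\R^n)}$; only after this can Muckenhoupt--Wheeden be invoked. Once that equivalence is inserted, your argument coincides with the paper's proof.
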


\subsection{Sparse indices\label{sec:despues}}

Since we deal with local problems, most of the analysis will be carried out on
cubes, but similar constructions are also possible in $\R^n$. Let $Q_{0} \subset \R^n, \, n \geq 2,$ be a fixed cube and let $\mathcal{Q} \in S(Q_{0}).$ For
$k\in\mathbb{N}_{0}:=\mathbb{N}\cup\{0\}$, let $\mathbb{D}_{\leq k;Q_{0}}:=\{Q:Q \in \mathcal{D}(Q_{0})$ with sidelength $\ell(Q)\leq 2^{-k}\ell(Q_{0})\}$,
$\mathbb{D}_{\leq k; Q_0}(\mathcal{Q)}:=\mathbb{D}_{\leq k;Q_{0}}\cap\mathcal{Q}$. When there is no danger of confusion,  we use the simplified notation $\mathbb{D}_{\leq k}$  and $\mathbb{D}_{\leq k}(\mathcal{Q)}$.

\begin{definition}[Sparse indices] 
\begin{enumerate}
\item[(i)] 
The \emph{sparse indices of $f \in L^{1}(Q_{0})$} are defined by\footnote{Sparse indices may
depend on the given cube $Q_{0}.$ However, since this dependance will not play
a role in our arguments, it will be safely omitted in the corresponding notation.}
\begin{equation}\label{spinf}
s_{N}(f)= \sup_{\mathcal{Q} \in S(Q_0)} \left[\sum_{Q \in \mathbb{D}_{\leq N-1} (\mathcal{Q})}\bigg(|Q|^{\frac{1}%
{n}-\frac{1}{2}}\int_{Q}|f|\bigg)^{2}\right]^{\frac{1}{2}}, \qquad N \in \mathbb{N}.
\end{equation}
\item[(ii)] Let $X$ be a function space, $X\subset L_{\text{loc}}^{1}(\mathbb{R}^{n})$. The \emph{sparse indices of $X(Q_0)$} are defined by
\begin{equation}
 s_{N}(X) =\sup_{\|f\|_{X(Q_0)} \leq 1} s_N(f). \label{delacompa1}%
\end{equation}
\end{enumerate}
\end{definition}

\begin{remark}
	The  definitions above can be extended in a natural way  to the setting of measures with distinguished sign.
\end{remark}

Compactness of embeddings into $H^{-1}$ can be characterized in terms of sparse indices. Specifically, we have the following result.

\begin{theorem}\label{teo:compacto}
Let $X$ be a function space\footnote{As usual, $ L^1_{\text{loc}, +}(\R^n) = \{f \in  L^1_{\text{loc}}(\R^n) : f \geq 0 \text{ a.e.}\}$. The additional assumption $f \geq 0$ is not restrictive  since $s_N(f) = s_N(|f|)$.} $X\subset L_{\emph{loc}, +}%
^{1}(\mathbb{R}^{n}), \, n \geq 2,$ (or more generally, $X \subset BM^+_c$). Then\footnote{Note that $s_1(X) = \sup_{N \in \mathbb{N}} s_N(X)$.}
\begin{enumerate}
\item[(i)] $s_1(X)<\infty\iff X_{c}\hookrightarrow H_{\emph{loc}}^{-1}(\R^n).$
\item[(ii)] $
\lim_{N\rightarrow\infty}s_{N}(X)=0 \iff X_{c}\overset{compactly}{\hookrightarrow}H_{\emph{loc}}^{-1}(\R^n).$
\end{enumerate}
\end{theorem}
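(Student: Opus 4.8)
The plan is to establish both equivalences by reducing the $H^{-1}$-statements to statements about the sparse quantities $s_N$, using Theorem~\ref{CorSparSob} (more precisely \eqref{SobIde}--\eqref{SobIde2}) as the bridge between $H^{-1}$ and the sparse RMT space $SR_{\frac{2n}{n+2},2}$. First I would observe that, by the footnote remark $s_N(f)=s_N(|f|)$ and the fact that the terms in \eqref{spinf} are monotone in $N$ (fewer cubes as $N$ grows), the quantities $s_N(X)$ form a nonincreasing sequence with $s_1(X)=\sup_N s_N(X)$; this makes both the boundedness statement in (i) and the decay statement in (ii) meaningful, and reduces (ii) to a statement about the tail contribution of small cubes. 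The key algebraic identity to record is that with $p=\frac{2n}{n+2}$ and $q=2$ one has $\frac1{p'}=1-\frac{n+2}{2n}=\frac{n-2}{2n}$ and the exponent appearing in \eqref{spinf}, namely $|Q|^{\frac1n-\frac12}$, is exactly $|Q|^{-\frac1{p'}}$ up to the harmless normalization, so that
\[
s_1(f)=\sup_{\mathcal{Q}\in S(Q_0)}\Big[\sum_{Q\in\mathcal{Q}}\big(|Q|^{\frac1n-\frac12}\textstyle\int_Q|f|\big)^2\Big]^{1/2}=\|f\|_{SR_{\frac{2n}{n+2},2}(Q_0)},
\]
i.e. the full sparse index $s_1$ is literally the $SR_{\frac{2n}{n+2},2}(Q_0)$ norm (here the $\log$-factor is trivial since $\alpha=0$, and $(1-(\log|Q|)_-)^0=1$).

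For (i), the implication $\Leftarrow$ then follows: if $X(Q_0)\hookrightarrow H^{-1}(\R^n)$ for every $Q_0$, then for $f\in X(Q_0)$ with $f\ge 0$ we have $\|f\|_{SR_{\frac{2n}{n+2},2}(Q_0)}\approx\|f\mathbf 1_{Q_0}\|_{H^{-1}(\R^n)}\lesssim\|f\|_{X(Q_0)}$ by \eqref{SobIde}, hence $s_1(f)\lesssim\|f\|_{X(Q_0)}$ and $s_1(X)<\infty$; for the $BM^+$ case one uses Remark~\ref{RemarkMeasures}. Conversely, if $s_1(X)<\infty$ then $\|f\mathbf 1_{Q_0}\|_{H^{-1}}\approx\|f\|_{SR_{\frac{2n}{n+2},2}(Q_0)}=s_1(f)\le s_1(X)\|f\|_{X(Q_0)}$ by \eqref{SobIde2}, giving $X(Q_0)\hookrightarrow H^{-1}(\R^n)$. (One should double-check the normalization discrepancy between $|Q|^{\frac1n-\frac12}$ in \eqref{spinf} and $|Q|^{-1/p'}$ times any constant implicit in the definition of $SR$; this only affects equivalence constants and is absorbed in $\approx$.)

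For (ii) the strategy is to quantify compactness via the tails $s_N$. The idea is that $f\mapsto f\mathbf 1_{Q_0}$ restricted to the ball of $X$ has image precompact in $H^{-1}$ iff the "high-frequency'' (small cube) part of the $SR$-norm is small uniformly over the $X$-ball, which is precisely $\lim_{N\to\infty}s_N(X)=0$. Concretely, for the direction $\Leftarrow$: given $\varepsilon>0$ choose $N$ with $s_N(X)<\varepsilon$; the contribution of cubes of sidelength $\le 2^{-N}\ell(Q_0)$ to $\|f\|_{H^{-1}}\approx s_1(f)$ is then $<\varepsilon$ uniformly, while the finitely-many-scales part is controlled by a space embedding compactly (e.g. into $L^2$ or a Morrey space on $Q_0$ with strictly better exponent), so one extracts a convergent subsequence by a diagonal/Fréchet–Kolmogorov-type argument in $H^{-1}$. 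For $\Rightarrow$: if $X(Q_0)\overset{compactly}{\hookrightarrow}H^{-1}$ but $s_N(X)\not\to0$, pick $f_N$ in the $X$-ball with $s_N(f_N)\ge\delta>0$; passing to a subsequence $f_N\to g$ in $H^{-1}$, one shows the $s_N$-tails of a convergent sequence must vanish (since $s_N(f)\le s_N(f-g)+s_N(g)\le\|f-g\|_{H^{-1}}\cdot C + s_N(g)$ and $s_N(g)\to 0$ because $g\in H^{-1}$), a contradiction. The main obstacle is making the splitting "finitely many coarse scales $+$ tail'' rigorous at the level of sparse families: a single sparse family $\mathcal Q\in S(Q_0)$ mixes cubes of all sizes, so one must argue that splitting $\mathcal Q$ into its part in $\mathbb D_{\le N-1}$ and its complement is (approximately) compatible with the supremum over sparse families — essentially that $s_1(f)^2\lesssim s_N(f)^2 + (\text{coarse part})^2$ with the coarse part involving only $O(1)$ scales, which requires care since the coarse part need not itself be a supremum over sparse families of dyadic cubes of bounded depth but can be handled because there are only finitely many generations above level $N$; this is exactly where the $\ell^2$ (rather than $\ell^\infty$) summation and the dyadic structure are used. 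I expect this scale-splitting lemma, together with verifying that the finite-depth piece embeds compactly into $H^{-1}$ via a classical argument (DeVore–Tao type, or simply that functions supported on $Q_0$ with controlled $SR$-norm restricted to $O(1)$ scales lie in a fixed bounded subset of a space compactly embedded in $H^{-1}_{\mathrm{loc}}$), to be the technical heart of the proof.
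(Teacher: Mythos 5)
Your part (i) and the skeleton of part (ii) coincide with the paper's approach: $s_1(f)=\|f\|_{SR_{\frac{2n}{n+2},2}(Q_0)}$, apply \eqref{SobIde}--\eqref{SobIde2}, and for compactness split the sparse sum at scale $N$ into coarse and fine pieces, with the fine piece bounded by $s_N(X)$. You also correctly identify the scale-splitting as the crux. However, there are two places where your sketch falls short of a proof.

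For the forward direction of (ii), the ``space embedding compactly, then a Fr\'echet--Kolmogorov argument'' step is unnecessary and is not actually carried out. The paper's device is more elementary: the coarse part is exactly $\|Tf\|_{\ell_2^L}$, where $T f = \big(|Q|^{\frac1n-\frac12}\int_Q f\big)_{Q\in\mathbb{D}_{\geq N_0}}$ is a \emph{finite-rank} linear map into $\ell_2^L$ (well defined and bounded because $X\subset L^1_{\text{loc},+}$), hence automatically compact. Combined with the uniform bound $\|f\|_{SR_{\frac{2n}{n+2},2}(Q_0)}\le \|Tf\|_{\ell_2^L}+\varepsilon$ on the unit ball of $X(Q_0)$, one extracts a Cauchy subsequence in $H^{-1}$ directly. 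Your formulation would need a concrete space $Y$ and a proof that the finite-depth restriction lands in a bounded subset of $Y\overset{compactly}{\hookrightarrow}H^{-1}_{\text{loc}}$ — none of which is as transparent as the finite-rank observation.

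For the converse direction of (ii), you assert ``$s_N(g)\to 0$ because $g\in H^{-1}$'' as if it were obvious, but this is precisely the nontrivial lemma (the paper's \eqref{ClaimSob}). It does \emph{not} follow merely from $s_1(g)<\infty$: $s_N(g)$ is a supremum of \emph{tails} of $\ell^2$-sums over adversarially chosen sparse families, and uniform smallness of such tails is not automatic. The paper proves it by bounding $\int_Q d\omega$ by local means $\chi_Q(\omega)$ (using $\omega\ge 0$ and \eqref{11e}), then invoking the local-means characterization of $H^{-1}$ (Triebel) to show $\sum_{k\ge N-1}2^{k(-1-n/2)2}\sum_{Q\in\mathbb{D}_k}\chi_Q(\omega)^2$ is the tail of a convergent series dominated by $\|\omega\|_{H^{-1}}^2$. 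Without this, the contradiction argument does not close. A minor but real technicality in the same step: since $f-g$ need not have a sign, the bound $s_1(f-g)\lesssim\|f-g\|_{H^{-1}}$ is incorrect; one must use the lattice version $\mathcal{H}^{-1}$, since $s_1(h)=s_1(|h|)\approx\||h|\|_{H^{-1}}=\|h\|_{\mathcal{H}^{-1}}$, which is what the paper does.
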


The proof of this result is given in Section \ref{Section4.1}. 

Sparse indices provide a very satisfactory criteria for $H^{-1}$-stability (cf. Theorem \ref{teoh-1}). 

\begin{corollary}\label{Corollary1}
	Let $X$ be a function space $X\subset L_{\emph{loc}, +}%
^{1}(\mathbb{R}^{n}), \, n \geq 2,$ (or more generally, $X \subset BM^+_c$) such that
\begin{equation}\label{LimitZero}
	\lim_{N\rightarrow\infty}s_{N}(X)=0. 
\end{equation}
 Suppose that $\{u^{\varepsilon}\}_{\varepsilon>0}$ is an approximate family of Euler solutions with related set of vorticities $\{\omega^\varepsilon\}_{\varepsilon > 0}$ uniformly bounded in $C((0, T); X)$. Then (passing to a subfamily if necessary) $u^{\varepsilon} \to u$ strongly in $L^{\infty
}([0,T];L_{\emph{loc}}^{2}(\mathbb{R}^{n};\mathbb{R}^{n})),$  where  $u$ is a solution to \eqref{Euler}.
\end{corollary}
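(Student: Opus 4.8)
The plan is to obtain Corollary \ref{Corollary1} as a short composition of two results already established: the characterization of compactness into $H^{-1}$ furnished by Theorem \ref{teo:compacto}(ii), and the classical strong-convergence principle of Theorem \ref{teoh-1}. \emph{Step 1: from sparse decay to a compact embedding.} First I would feed the hypothesis \eqref{LimitZero}, namely $\lim_{N\to\infty} s_N(X) = 0$, into Theorem \ref{teo:compacto}(ii); this is \emph{equivalent} to the compact embedding $X_c \overset{compactly}{\hookrightarrow} H^{-1}_{\text{loc}}(\mathbb{R}^n)$. Unwinding the definition recalled in the excerpt, this means precisely: for every cube $Q_0$, bounded subsets of $X(Q_0) = \{f \in X : \operatorname{supp} f \subset Q_0\}$ are relatively compact in $H^{-1}(\mathbb{R}^n)$. (When $X \subset BM^+_c$ one uses the measure-valued variant of Theorem \ref{teo:compacto}, valid by the same argument together with Remark \ref{RemarkMeasures}.)

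\emph{Step 2: vorticity bounds give $H^{-1}$-stability.} Next I would unwind the assumption that $\{\omega^\varepsilon\}_{\varepsilon>0}$ is uniformly bounded in $C((0,T);X)$, i.e. $M := \sup_{\varepsilon>0}\sup_{t\in(0,T)} \|\omega^\varepsilon(t)\|_{X} < \infty$. Fixing an arbitrary cube $Q_0$, every restriction $\omega^\varepsilon(t)\mathbf{1}_{Q_0}$ lies in a ball of $X(Q_0)$ of radius $\lesssim M$ (the relevant $X$-norms being monotone), so by Step 1 the family $\{\omega^\varepsilon(t)\mathbf{1}_{Q_0} : \varepsilon>0,\ t\in(0,T)\}$ is relatively compact in $H^{-1}(\mathbb{R}^n)$; let $K_{Q_0}$ be its closure, a compact subset of $H^{-1}(\mathbb{R}^n)$. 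Since $Q_0$ is arbitrary, the vorticities stay, uniformly in $t\in(0,T)$, inside a fixed compact subset of $H^{-1}_{\text{loc}}(\mathbb{R}^n)$, which by Definition \ref{def:h-1} is exactly the statement that $\{u^\varepsilon\}_{\varepsilon>0}$ is an $H^{-1}$-stable approximate family of Euler solutions (recall that being an approximate family in the sense of Definition \ref{def:approxsol} already provides the uniform $L^2_{\text{loc}}$ energy bound on the $u^\varepsilon$ themselves).

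\emph{Step 3: conclusion.} Finally I would apply Theorem \ref{teoh-1} to this $H^{-1}$-stable family, obtaining a subfamily converging strongly in $L^\infty([0,T]; L^2_{\text{loc}}(\mathbb{R}^n;\mathbb{R}^n))$ to a weak solution $u$ of \eqref{Euler}. This is the assertion of the corollary.

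Since each step cites a previously stated result, no genuinely new difficulty remains at this point — essentially all the substance lives in Theorem \ref{teo:compacto}. The two points that still demand a little care are bookkeeping ones: (a) passing from the global space $X$ to the local testing spaces $X(Q_0)$, so that the compact-embedding statement of Step 1 can be applied cube-by-cube and the compact sets $K_{Q_0}$ extracted; and (b) verifying that the notion of $H^{-1}$-stability appearing in Theorem \ref{teoh-1} is indeed ``the vorticities form, uniformly in $t$, a relatively compact family in $H^{-1}_{\text{loc}}$'' — and not some a priori stronger requirement — so that the bound in $C((0,T);X)$ together with Step 1 really does produce $H^{-1}$-stability.
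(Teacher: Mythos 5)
Your overall plan — feed \eqref{LimitZero} into Theorem \ref{teo:compacto}(ii) to get the compact embedding, translate the uniform $C((0,T);X)$ bound on the vorticities into $H^{-1}$-stability, and then invoke Theorem \ref{teoh-1} — is exactly the route the paper intends, and the paper itself treats the corresponding passage (for Theorem \ref{CorFinal}) as ``an immediate consequence''. So there is no disagreement on strategy.

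There is, however, a real loose end in your Step 2 that your own caveat (b) brushes against but does not resolve, and the resolution goes the \emph{opposite} way from what you suggest. What Step 2 delivers is only \emph{pointwise} precompactness: for every cube $Q_{0}$ the set $\{\omega^{\varepsilon}(t)\mathbf{1}_{Q_{0}}:\varepsilon>0,\ t\in(0,T)\}$ is relatively compact in $H^{-1}(\mathbb{R}^{n})$. But Definition \ref{def:h-1} does demand something stronger: precompactness of the family $\{\omega^{\varepsilon}\}_{\varepsilon>0}$ as a subset of $C((0,T);H^{-1}_{\text{loc}})$. Pointwise precompactness alone does not give this — one also needs equicontinuity in $t$ (an Arzelà--Ascoli/Aubin--Lions ingredient). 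That equicontinuity is precisely what the extra condition built into Definition \ref{def:approxsol} supplies: approximate solutions are by definition uniformly bounded in $\mathrm{Lip}((0,T);H^{-L}_{\text{loc}})$ for some $L>1$; taking $\operatorname{curl}$ this transfers to the vorticities, and interpolating this uniform Lipschitz bound in a very weak negative Sobolev space against the uniform relative compactness in $H^{-1}_{\text{loc}}$ yields the required equicontinuity in $H^{-1}_{\text{loc}}$. Your parenthetical remark that Definition \ref{def:approxsol} ``already provides the uniform $L^{2}_{\text{loc}}$ energy bound'' points at the wrong part of that definition for this step; it is the Lipschitz-in-time bound in $H^{-L}_{\text{loc}}$, not the energy bound, that closes the gap. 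This is the standard argument from \cite{LNT} and it is what makes the ``immediate'' in the paper honest, but it should be stated rather than left implicit.
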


The  new indices pose a challenge: Can we compute them? In  Section \ref{Section4.2} we provide explicit calculation of sparse indices for classical spaces like Lebesgue, Morrey, and RMT spaces. The results are in Table \ref{Tabla1} below. These computations combined with Theorem \ref{teo:compacto} give a unified
proof of (\ref{comp}) and (\ref{comp1}).

\begin{table}[h]
\centering
\begin{tabular}
[c]{|c|c|}\hline
$X$ & Upper estimate for $s_N(X)$\\\hline
& \\
$L^p, \quad p > \frac{2 n}{n+2}$ & $2^{-N (\frac{2+n}{2n}-\frac{1}{\min\{2,p\}}) n}$\\
& \\\hline
& \\
$M^{p, \alpha}, \quad p > \frac{n}{2}, \quad \alpha \in \R$ &
$2^{-N(\frac{2}{n} -\frac{1}{p}) \frac{n}{2}} N^{-\frac{\alpha}{2}}$\\
& \\\hline
& \\
$M^{\frac{n}{2}, \alpha}, \quad \alpha > 1$ & $N^{\frac{1-\alpha}{2}}$\\
& \\\hline
& \\
$R_{p, 2} \log^\alpha, \quad p > \frac{2 n}{n+2}, \quad \alpha \in \R$ & $2^{-N (\frac{2+n}{2n}-\frac{1}{p}) n} N^{-\alpha}$ \\
& \\\hline
& \\
$R_{\frac{2 n}{n+2}, 2} \log^\alpha, \quad \alpha > \frac{1}{2}$ & $N^{ \frac{1}{2} -\alpha}$ \\
&  \\\hline
\end{tabular}
\caption{Sparse indices for some classical function spaces}%
\label{Tabla1}%
\end{table}

 Furthermore,
understanding the rates of decay of the sparse indices will allow us to measure the degree of $H^{-1}$-compactness, and allow us to
extend the known results as we now explain.

\subsection{Sparse spaces\label{sec:preambledecay}}

So far, given a function space $X$, we analysed the decay of its sparse indices $s_N(X)$ (cf. \eqref{delacompa1}) in order to guarantee $H^{-1}$-stability, cf. Corollary \ref{Corollary1}. However, note that the definition of sparse indices $s_N(f)$ (cf. \eqref{spinf}) is independent of any space $X$. This simple observation leads to the following question: Can we use ``reverse engineering" to create new function spaces whose sparse indices have prescribed decay? A natural construction associated with this idea can
be described as follows.




\begin{definition}[Sparse spaces]\label{DefSpS}
Let $\Psi$ be a decay  (cf. \eqref{psi1}). The \emph{sparse space} $S_\Psi(Q_0)$ is formed by all $f \in L^1(Q_0)$ such that
\begin{equation}
\left\Vert f\right\Vert
_{S_{\Psi}(Q_{0})}=\sup_{N\in\mathbb{N}} \frac{s_N(f)}{\Psi(N)} <\infty.\label{psi2}%
\end{equation}
The counterparts on $\R^n$ and  for positive measures can be introduced analogously.
\end{definition}

Note, parentetically, the superficial similarity with the constructions of
Yudovich spaces and Extrapolation spaces in \cite{DMEuler}. As we shall soon
see this connection goes deeper and, moreover, some concrete calculations can
be effected which lead to the introduction of new Euler relevant function spaces.

From \eqref{psi2}, we clearly have%
\[
s_{N}(S_{\Psi}(Q_0)) \leq \Psi(N)
\]
therefore, by Theorem \ref{teo:compacto},
\begin{equation}\label{116}
S_{\Psi} (\R^n)_{c}\overset{compactly}{\hookrightarrow}H_{\text{loc}}^{-1}(\R^n).
\end{equation}
In particular, using sparse embeddings we can formulate a new $H^{-1}$-criteria.

\begin{theorem}[$H^{-1}$-stability via sparse embeddings]
\label{CorFinal} Suppose that
\begin{equation}\label{117}
	X_c \hookrightarrow S_\Psi(\R^n)_c
	\end{equation}
 holds for some decay $\Psi$, then $X$ is $H^{-1}$-stable,  in the sense that
\[
X_{c}\overset{compactly}{\hookrightarrow}H_{\emph{loc}}^{-1}(\mathbb{R}^{n}).
\]
In particular, if $\{u^{\varepsilon}\}_{\varepsilon > 0}$ is a family of approximate solutions of
the Euler equations, such that the related set of vorticities $\{\omega
^{\varepsilon}\}_{\varepsilon > 0}$ is uniformly bounded in $X$. Then there exists a subfamily
of $\{u^{\varepsilon}\}_{\varepsilon > 0}$ which converges strongly to a weak Euler solution in
$L^{\infty}([0,T];L_{\emph{loc}}^{2}(\mathbb{R}^{n}))$.
\end{theorem}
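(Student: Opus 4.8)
The plan is to chain together the embeddings and compactness results already established in the excerpt. The statement of Theorem~\ref{CorFinal} asserts that the hypothesis \eqref{117}, namely $X_c \hookrightarrow S_\Psi(\R^n)_c$ for some decay $\Psi$, forces the compact embedding $X_c \overset{compactly}{\hookrightarrow} H^{-1}_{\text{loc}}(\R^n)$, from which the statement about approximate solutions of the Euler equations follows by applying the $H^{-1}$-stability machinery (Theorem~\ref{teoh-1}).

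First I would localize: fix an arbitrary cube $Q_0$ and recall that by definition $X_c \hookrightarrow S_\Psi(\R^n)_c$ means $X(Q_0) \hookrightarrow S_\Psi(Q_0)$ with a bounded inclusion (uniformly in $Q_0$, or at least for each $Q_0$, which is all that is needed). Next, I would invoke \eqref{116}, which states $S_\Psi(\R^n)_c \overset{compactly}{\hookrightarrow} H^{-1}_{\text{loc}}(\R^n)$; this is itself a consequence of $s_N(S_\Psi(Q_0)) \le \Psi(N) \to 0$ together with part (ii) of Theorem~\ref{teo:compacto}. Composing a bounded linear map $X(Q_0) \to S_\Psi(Q_0)$ with a compact map $S_\Psi(Q_0) \to H^{-1}(\R^n)$ yields a compact map $X(Q_0) \to H^{-1}(\R^n)$; since $Q_0$ was arbitrary this gives precisely $X_c \overset{compactly}{\hookrightarrow} H^{-1}_{\text{loc}}(\R^n)$.

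For the second half of the statement, once we know $X$ is $H^{-1}$-stable in this sense, we argue exactly as in the classical theory. If $\{u^\varepsilon\}_{\varepsilon>0}$ is a family of approximate solutions with vorticities $\{\omega^\varepsilon\}_{\varepsilon>0}$ uniformly bounded in $X$, then by the compact embedding just established the set $\{\omega^\varepsilon\}$ is relatively compact in $H^{-1}_{\text{loc}}$, hence (by the div-curl / elliptic regularity link between vorticity and velocity, exactly as used to derive Theorem~\ref{teoh-1} from the compactness criterion) the family $\{u^\varepsilon\}$ is $H^{-1}$-stable in the sense of Definition~\ref{def:h-1}. Then Theorem~\ref{teoh-1} applies verbatim and produces a subfamily converging strongly in $L^\infty([0,T]; L^2_{\text{loc}}(\R^n))$ to a weak solution of \eqref{Euler}.

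The only genuinely substantive point is the alignment of the localized definitions: one must check that ``$X_c \hookrightarrow S_\Psi(\R^n)_c$'' really does feed correctly into the hypothesis ``$X(Q_0) \hookrightarrow S_\Psi(Q_0)$'' required to run the composition on each fixed cube, and that the resulting compactness on each $Q_0$ assembles into the global notion $X_c \overset{compactly}{\hookrightarrow} H^{-1}_{\text{loc}}(\R^n)$ as defined in the footnote of the excerpt. This is bookkeeping rather than a real obstacle, since all three notions are defined cube-by-cube. Everything else is a formal concatenation of Theorem~\ref{teo:compacto}(ii), the embedding \eqref{116}, and Theorem~\ref{teoh-1}; no new estimate is needed.
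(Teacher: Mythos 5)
Your proposal is correct and matches the paper's (very brief) proof: the authors explicitly say the result ``is an immediate consequence of \eqref{116} and Theorem \ref{teoh-1},'' which is exactly the composition $X_c \hookrightarrow S_\Psi(\R^n)_c \overset{compactly}{\hookrightarrow} H^{-1}_{\text{loc}}(\R^n)$ followed by the $H^{-1}$-stability machinery. Your additional bookkeeping about localizing cube-by-cube and assembling the local compactness into the global notion is the right way to unpack what the paper leaves implicit, and your remark that \eqref{116} itself rests on $s_N(S_\Psi) \le \Psi(N) \to 0$ together with Theorem~\ref{teo:compacto}(ii) is accurate.
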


The proof of this result is an immediate consequence of \eqref{116} and Theorem \ref{teoh-1}. 

Next we go a step further  and show that assumption \eqref{117} in Theorem \ref{CorFinal} is in fact necessary to establish $H^{-1}$-stability.  To do this, we need to introduce the natural generalization\footnote{Recall that $H^{-1}$-stability does not involve any function space $X$, but only approximate solutions; cf. Definition \ref{def:h-1}.} (say, function space-free) of \eqref{117} to approximate solutions: \emph{sparse stability}. As already anticipated in Theorem \ref{teo:exhaust} (cf. Section \ref{sec:construcciones} for its proof),  this new concept provides us with  a remarkable characterization of $H^{-1}$-stability in terms of sparseness.

\begin{definition}[Sparse stability]
We
say that a family $\{u^{\varepsilon}\}_{\varepsilon>0}$ of approximate
solutions of the Euler equation is \emph{sparse stable} if there exists a decay $\Psi$ such that the corresponding
set of vorticities $\{\omega^{\varepsilon}\}_{\varepsilon > 0}$ is uniformly bounded
in\footnote{Let $\mathbb{A}^{n}$ be the set of all anti-symmetric matrices of
order $n$ with real entries. In what follows, we will use the simplified notation $S_\Psi(\R^n)$ rather than $S_\Psi(\mathbb{R}
^{n};\mathbb{A}^{n})$.} $C(0,T;S_\Psi(\mathbb{R}%
^{n};\mathbb{A}^{n}))$. In particular,  $\{u^\varepsilon\}_{\varepsilon > 0}$ satisfies the \emph{admissible sparse stability} property if  $\Psi$ is an admissible\footnote{cf. Definition \ref{DefAdm}(i).} decay. 
\end{definition}

%

Applying sparse stability, it is thus possible, at least
theoretically, to improve all the known $H^{-1}$-stability results of
\cite{DiPernaMajda}, \cite{LNT}, \cite{Tadmor} (cf. Sections \ref{Section1Intro} and \ref{sec:tad}). However, to make the implied
extensions meaningful, we need to exhibit concrete instantiations.  In fact, we obtain significant improvements on the
classical results  and we show that our constructions
unexpectedly connect with the theory of Yudovich spaces \cite{Y95}, Vishik
spaces \cite{V99} and more specifically with the extrapolation spaces of
\cite{JM91} and \cite{DMEuler}.

\subsection{New extrapolation spaces guaranteeing strong convergence to Euler solutions}
As already mentioned in Section \ref{SectionPaving}, extrapolation constructions seem to be implicit in \eqref{comp}- (\ref{comp1}).  In Sections \ref{SectionDMaj} and \ref{Section4} we confirm this belief and show how the extrapolation theory of Jawerth--Milman \cite{JM91} (more precisely,  the updated account given recently  in  \cite{DMEuler}) can be successfully applied to construct concrete
examples of function spaces with prescribed sparse decay. In particular, these new spaces 
strictly contain the limiting spaces involved in (\ref{comp}) and
(\ref{comp1}), but are still $H^{-1}$-stable. As a consequence, we are able to extend the existence results for vortex sheets of \cite{DiPernaMajda} and \cite{Tadmor} to larger classes of vorticities.

\subsubsection{Sharpening Morrey regularity of DiPerna--Majda}\label{SecDM}

We introduce the distributional space $V_\Psi(\R^n)$ given by (cf. Definition \ref{DefV})
$$
\sum_{j=N}^{\infty}2^{-2j}\Vert\Delta_{j}f\Vert_{L^{\infty}(\mathbb{R}^{n}%
)}\lesssim\Psi(N)^{2}, \qquad N \in \mathbb{N}_0. 
$$
These spaces may be considered as ``dual" counterparts of  classical Vishik spaces proposed in \cite{V99} in connection with uniqueness issues for Euler flows; cf. Remark \ref{Remark1} for further explanations.  Applying the set of techniques explained in previous sections, we show sparse stability\footnote{In fact, sparse numbers of $V_\Psi$ behave like $\Psi$.} and non-concentration phenomenon in $V_\Psi$; cf. Theorem \ref{ThmVPsi}. A crucial point in our arguments is that $V_\Psi$ can be characterized as an extrapolation space of classical Besov spaces (cf. Theorem \ref{PropInterpol}). In particular, for the special decay $\Psi(t) = t^{\frac{1-\alpha}{2}}, \, \alpha > 1$, we have  (cf. Theorem \ref{Thm11})
$$M^{\frac{n}{2}, \alpha}(\R^n) \hookrightarrow V_\Psi(\R^n).$$
Furthermore, this result is sharp\footnote{However, it is unclear for us whether the Morrey gap problem can be solved using these techniques.}, i.e.,  we give a constructive method to produce functions  in $V_\Psi(\R^n)$ but not in $M^{\frac{n}{2}, \alpha}(\R^n)$. As a byproduct, we get a non-trivial improvement of \eqref{comp}.

\subsubsection{Sharpening Tadmor regularity} The results stated in Section \ref{SecDM} for Morrey spaces admit counterparts for RMT spaces. In this setting, the role of $V_\Psi$ is played by the new space  $T_\Psi(\R^n)$ (cf. Definition \ref{DefTPsi}), which admits the following nice characterization in terms of Fourier integrals: 
$$
	\int_{|\xi| > 2^N} (1+|\xi|^{2})^{-1} |\widehat{f}(\xi)|^2 \, d \xi \lesssim \Psi(N)^2, \qquad N \in \mathbb{N}_0.
$$
Then we establish sparse stability and non-concentration phenomenon in $T_\Psi$; cf. Theorem \ref{ThmTPsi}.  In particular, for the special decay $\Psi(t) = t^{\frac{1}{2}-\alpha}, \, \alpha > \frac{1}{2}$, we have (cf. Theorem \ref{Theorem4.8})
$$
	R_{\frac{2 n}{n+2}, 2} \log^\alpha(\R^n) \hookrightarrow T_\Psi(\R^n). 
$$
Again, this result is sharp. As a consequence, we improve Tadmor's embedding \eqref{comp1}.

%

\subsection{Energy conservation for physically realizable solutions via sparse stability}

In Section \ref{sec:physically} we show that our methods are sufficiently
robust to provide criteria for the preservation of energy by \emph{physically
realizable solutions}\footnote{Roughly speaking, physically realizable solutions are weak solutions of Euler equations that can be obtained as weak limits of vanishing viscosity; cf. Definition \ref{DefVV}.} of $2$D Euler equations  on the two-dimensional torus $\T^2$. Indeed, extending $L^p (\T^2)$-results\footnote{For general $L^p$ solutions with $p \geq 3/2$, conservation of energy can be derived from the well-known Besov-type criterion of Cheskidov, Constantin, Friedlander and Shvydkoy \cite{Ches2}; cf. also \cite[Theorem 1]{Ches} for an alternative and streamlined proof.}, $p > 1$, 
obtained by Cheskidov, Lopes-Filho, Nussenzveig-Lopes and Shvydkoy \cite{Ches} (cf. also \cite{Crippa} for the  case on the whole plane $\R^2$),  we show that our framework can be used
to provide conditions for physically realizable solutions  to
conserve energy. 

\begin{theorem}\label{ThmCon}
Let $u$ be a physically realizable weak solution of the $2$\emph{D} Euler equations with a physical realization $\{u^{\varepsilon}\}_{\varepsilon>0}$ satisfying admissible sparse stability. Then $u$ is conservative, i.e., $\|u(t)\|_{L^2(\mathbb{T}^2)} = \|u_0\|_{L^2(\mathbb{T}^2)}$. 
\end{theorem}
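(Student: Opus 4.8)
The plan is to combine the strong $L^2$-convergence furnished by sparse stability with the quantitative high-frequency control encoded in the space $S_\Psi$, and then to show that the anomalous dissipation of the viscous approximations vanishes in the limit.

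First I would note that, since $\{u^\varepsilon\}_{\varepsilon>0}$ is an admissibly sparse stable physical realization, the vorticities $\{\omega^\varepsilon\}_{\varepsilon>0}$ are uniformly bounded in $C(0,T;S_\Psi(\mathbb{T}^2))$ for some admissible decay $\Psi$. Invoking \eqref{116} (in its torus version) together with Theorem \ref{teo:compacto}(ii) and the compactness mechanism behind Theorem \ref{teoh-1}, the family $\{u^\varepsilon\}_{\varepsilon>0}$ is $H^{-1}$-stable; hence, passing to a subfamily, $u^\varepsilon\to u$ strongly in $C([0,T];L^2(\mathbb{T}^2))$, where $u$ is the given weak Euler solution. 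In particular $\|u^\varepsilon(t)\|_{L^2}\to\|u(t)\|_{L^2}$ for every $t$, and $\|u_0^\varepsilon\|_{L^2}\to\|u_0\|_{L^2}$ since $u_0^\varepsilon\to u_0$ in $L^2(\mathbb{T}^2)$ by the definition of a physical realization (Definition \ref{DefVV}).

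Next I would extract the quantitative ingredient from membership in $S_\Psi$. Since $s_N(\omega^\varepsilon)\lesssim\Psi(N)$ uniformly in $\varepsilon$ and $t$, the sparse domination estimates of Section \ref{sec:sparsedommax} — in the form underlying the Fourier-side characterization of $T_\Psi$ — would give
\[
\sup_{\varepsilon>0}\ \sup_{t\in[0,T]}\int_{|\xi|>2^N}(1+|\xi|^2)^{-1}\,|\widehat{\omega^\varepsilon}(t,\xi)|^2\,d\xi\ \lesssim\ \Psi(N)^2 .
\]
Combining this with the Biot--Savart bound $|\widehat{u^\varepsilon}(t,\xi)|\lesssim|\xi|^{-1}|\widehat{\omega^\varepsilon}(t,\xi)|$ on $\mathbb{T}^2$ yields the uniform high-frequency tail estimate
\[
\sup_{\varepsilon>0}\ \sup_{t\in[0,T]}\ \|P_{>N}u^\varepsilon(t)\|_{L^2(\mathbb{T}^2)}^2\ \lesssim\ \Psi(N)^2\ \xrightarrow[N\to\infty]{}\ 0 ,
\]
where $P_{>N}$ is the Littlewood--Paley projection onto frequencies $\gtrsim 2^N$ and where admissibility of $\Psi$ is used to guarantee the decay of the right-hand side.

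I would then reduce energy conservation to the vanishing of anomalous dissipation. The Navier--Stokes energy balance at viscosity $\varepsilon$ reads $\|u^\varepsilon(t)\|_{L^2}^2+2\varepsilon\int_0^t\|\nabla u^\varepsilon\|_{L^2}^2=\|u_0^\varepsilon\|_{L^2}^2$, and in two dimensions $\|\nabla u^\varepsilon\|_{L^2}=\|\omega^\varepsilon\|_{L^2}$ for mean-free divergence-free fields; so, letting $\varepsilon\to0$ along the subfamily and using the convergences above, $\|u(t)\|_{L^2}^2=\|u_0\|_{L^2}^2$ follows once $\varepsilon\int_0^t\|\omega^\varepsilon\|_{L^2}^2\to0$. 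Fixing $N$ and splitting $\|\nabla u^\varepsilon\|_{L^2}^2\lesssim 2^{2N}\|u^\varepsilon\|_{L^2}^2+\|\nabla P_{>N}u^\varepsilon\|_{L^2}^2$, the low-frequency part contributes $\lesssim\varepsilon\,2^{2N}\,T\,C$, which tends to $0$ as $\varepsilon\to0$ for each fixed $N$. For the high-frequency part I would use the frequency-localized energy identity
\[
\varepsilon\int_0^t\|\nabla P_{>N}u^\varepsilon\|_{L^2}^2\,ds=\tfrac12\|P_{>N}u_0^\varepsilon\|_{L^2}^2-\tfrac12\|P_{>N}u^\varepsilon(t)\|_{L^2}^2+\int_0^t\Pi_N(u^\varepsilon(s))\,ds ,
\]
with $\Pi_N$ the energy flux across frequency $2^N$: the first term is $\lesssim\Psi(N)^2$ by the tail bound, the second is nonnegative, and the flux term is controlled uniformly in $\varepsilon$ by a Constantin--E--Titi-type commutator estimate, exploiting that $\{\omega^\varepsilon\}$ is uniformly bounded in $S_\Psi$ and hence, for admissible $\Psi$, in a Lebesgue (or Besov) scale sufficient to make $\sup_\varepsilon|\int_0^t\Pi_N(u^\varepsilon)|=o(1)$ as $N\to\infty$, in the spirit of \cite{Ches}. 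Given $\delta>0$ one first chooses $N$ with $\Psi(N)^2+\sup_\varepsilon|\int_0^t\Pi_N(u^\varepsilon)|<\delta$, then lets $\varepsilon\to0$, obtaining $\limsup_{\varepsilon\to0}\varepsilon\int_0^t\|\omega^\varepsilon\|_{L^2}^2\lesssim\delta$; since $\delta$ is arbitrary, the anomalous dissipation vanishes and the theorem follows.

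The hard part will be the uniform-in-$\varepsilon$ control of the flux term $\int_0^t\Pi_N(u^\varepsilon)$, and in particular verifying that admissible sparse stability delivers exactly the amount of vorticity regularity required to run the commutator estimate — i.e.\ pinning down the embedding of $S_\Psi$ (for admissible $\Psi$) into a Lebesgue space $L^p$ with $p>1$, or into a Besov space controlling $\Pi_N$, parallel to the $L^p$-theory of \cite{Ches}. The remaining steps — the strong convergence, the extraction of the tail estimate from the sparse characterization, and the low-frequency dissipation bound — are routine once the machinery of the earlier sections is available.
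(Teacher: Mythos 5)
Your overall strategy is quite different from the paper's and has two genuine gaps in its load-bearing steps.

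Your first gap is the claim that uniform bounds on the sparse indices $s_N(\omega^\varepsilon)\lesssim\Psi(N)$ deliver a uniform Fourier tail bound $\int_{|\xi|>2^N}(1+|\xi|^2)^{-1}|\widehat{\omega^\varepsilon}|^2\,d\xi\lesssim\Psi(N)^2$. This would require $S_\Psi\hookrightarrow T_\Psi$, which the paper never asserts and which is false. The paper proves the opposite inclusion $T_\Psi^+\hookrightarrow S_\Psi$ (Theorem~\ref{ThmTPsi}(i)), and the two cannot coincide: $s_N(f)=s_N(|f|)$ is insensitive to cancellation, so a smooth band-limited function (whose high Fourier tail vanishes) can nevertheless have $s_N$ decaying only like $2^{-N}$, independently of the band. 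More generally, $S_\Psi$ is built to accommodate highly concentrated vorticities (Morrey-type objects) precisely because they need \emph{not} enjoy Besov-type frequency decay; that is the whole point of the sparse stability framework. Your second gap is the Constantin--E--Titi flux control: you explicitly reduce it to an embedding $S_\Psi\hookrightarrow L^p$ for some $p>1$, but no such embedding exists. Indeed $M^{n/2,\alpha}\hookrightarrow V_\Psi\hookrightarrow S_\Psi$ while Morrey vorticities of vortex-sheet type do not lie in $L^p$ for any $p>1$; restoring such integrability would collapse the theorem back to the $L^p$ result of \cite{Ches}. You correctly flag this as "the hard part," but the proposed fix is not available.

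The paper avoids both obstructions by a different mechanism: work with the viscous \emph{enstrophy} balance $\tfrac{d}{dt}\|\omega^\varepsilon\|_{L^2}^2=-2\varepsilon\|\nabla\omega^\varepsilon\|_{L^2}^2$ rather than the frequency-localized energy flux, and convert $\omega^\varepsilon\in S_\Psi$ into a quantitative $L^2$-interpolation inequality (Lemma~\ref{LemmaInt}), $\|\omega^\varepsilon\|_{L^2}\lesssim r^{-1}\Psi(-\log r)M+r\|\nabla\omega^\varepsilon\|_{L^2}$, proved directly from the sparse characterization of $L^2$ plus Poincar\'e on each dyadic cube. Optimizing in $r$ yields a sublinear relation between $x_\varepsilon=\varepsilon\int_\delta^t\|\omega^\varepsilon\|_{L^2}^2$ and $y_\varepsilon=\int_\delta^t\|\nabla\omega^\varepsilon\|_{L^2}^2$, and an ODE-comparison argument (Lemma~C.1 of \cite{Lan}) then forces $x_\varepsilon\to0$, i.e.\ the anomalous dissipation in the velocity energy balance vanishes. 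This requires no commutator estimate, no Littlewood--Paley projection, and no Lebesgue-regularity upgrade, and is exactly why the conclusion holds on the full admissible-decay scale. Your strong-convergence step and the reduction of the theorem to vanishing anomalous dissipation are correct and match the paper, but the intermediate machinery you propose does not close.
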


Very recently, Lanthaler, Mishra and Par\'es-Pulido \cite{Lan} proposed an interesting approach to energy conservation based on the so-called structure functions (i.e., the $L^2$-modulus of smoothness) of $\{u^\varepsilon\}_{\varepsilon > 0}$. On the other hand, Theorem \ref{ThmCon}    relies on the sparse indices of $\{\omega^\varepsilon\}_{\varepsilon > 0}$. Switching from $u^\varepsilon$ to $\omega^\varepsilon$ has important  advantages from the point of view of applications, as  it is illustrated in the following result. 

\begin{corollary}\label{CorCons}
		Let $X$ be a function space $X\subset L
^{1}(\mathbb{T}^{2})$ (or more generally, $X \subset BM^+$) with sparse indices $s_N(X)$ satisfying \eqref{LimitZero} and the admissibility condition (cf. Definition \ref{DefAdm}). Let $\{u^\varepsilon\}_{\varepsilon > 0}$ be a physical realization of the Euler solution $u$. If $\{\omega^\varepsilon\}_{\varepsilon > 0}$ is uniformly bounded in $X$, then $u$ is conservative. 
\end{corollary}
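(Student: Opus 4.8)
The plan is to deduce the corollary directly from Theorem \ref{ThmCon}: I will show that, under the stated hypotheses on $X$, any physical realization $\{u^{\varepsilon}\}_{\varepsilon>0}$ of $u$ whose vorticities are uniformly bounded in $X$ automatically satisfies the admissible sparse stability property, after which Theorem \ref{ThmCon} applies verbatim. The whole point is to produce the right decay $\Psi$, and the canonical candidate is the one built from the sparse indices of $X$ itself,
\[
\Psi(N) := s_N(X), \qquad N \in \mathbb{N},
\]
extended to a positive non-increasing function on $[0,\infty)$ tending to $0$ in any convenient way (say $\Psi(t) := s_{\max\{1,\lceil t\rceil\}}(X)$). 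First I would check that $\Psi$ is a decay (cf. \eqref{psi1}): positivity is clear (the case $X=\{0\}$ being trivial), monotonicity in $N$ follows from the inclusion $\mathbb{D}_{\leq N'-1}(\mathcal{Q}) \subset \mathbb{D}_{\leq N-1}(\mathcal{Q})$ for $N'\geq N$ together with the nonnegativity of the summands in \eqref{spinf}, and $\lim_{N\to\infty}\Psi(N)=0$ is precisely the hypothesis \eqref{LimitZero}. By assumption $\Psi=s_N(X)$ also satisfies the admissibility condition of Definition \ref{DefAdm}, so $\Psi$ is an admissible decay.

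Next I would record the tautological embedding $X \hookrightarrow S_\Psi$ with norm $\leq 1$: if $\|f\|_{X(Q_0)}\leq 1$, then by \eqref{delacompa1} we have $s_N(f)\leq s_N(X)=\Psi(N)$ for every $N$, whence $\|f\|_{S_\Psi(Q_0)}=\sup_N s_N(f)/\Psi(N)\leq 1$ by \eqref{psi2}; homogeneity then gives $\|f\|_{S_\Psi}\leq\|f\|_X$ in general. The same computation works verbatim on $\mathbb{T}^2$, for $BM^+$-valued objects (via Remark \ref{RemarkMeasures}), and componentwise for $\mathbb{A}^2$-valued vorticities. Since this inclusion is a bounded linear map, it carries $C(0,T;\cdot)$-bounded families to $C(0,T;\cdot)$-bounded families; therefore the hypothesis that $\{\omega^{\varepsilon}\}_{\varepsilon>0}$ is uniformly bounded in $X$ upgrades, with no loss, to uniform boundedness of $\{\omega^{\varepsilon}\}_{\varepsilon>0}$ in $C(0,T;S_\Psi(\mathbb{T}^2;\mathbb{A}^2))$ for the admissible decay $\Psi$ constructed above. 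By definition this is exactly the admissible sparse stability of $\{u^{\varepsilon}\}_{\varepsilon>0}$, and Theorem \ref{ThmCon} then yields $\|u(t)\|_{L^2(\mathbb{T}^2)}=\|u_0\|_{L^2(\mathbb{T}^2)}$ for all $t$.

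The main obstacle here is not analytic — all the substance lives inside Theorem \ref{ThmCon} — but rather one of matching definitions cleanly. The point deserving the most care is the compatibility between the phrase ``uniformly bounded in $X$'' and the $C(0,T;\cdot)$-boundedness demanded by the definition of (admissible) sparse stability; this is precisely where the \emph{continuity} (not merely the existence) of the embedding $X\hookrightarrow S_\Psi$ is used. A secondary, purely notational, issue is the passage between the scalar $2$D vorticity and its antisymmetric-matrix incarnation in $S_\Psi(\mathbb{T}^2;\mathbb{A}^2)$, and between the $\mathbb{R}^n$- and $\mathbb{T}^2$-versions of the sparse machinery. One may also view this argument as an instance of the general principle underlying Corollary \ref{Corollary1}: the estimate $\|f\|_{S_\Psi}\leq\|f\|_X$ says exactly that \eqref{LimitZero} together with admissibility of $s_N(X)$ forces $X_c\hookrightarrow S_\Psi(\mathbb{T}^2)_c$ for an admissible $\Psi$, which is the structural hypothesis behind Theorem \ref{ThmCon}.
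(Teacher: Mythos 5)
Your proof is correct and follows exactly the route the paper takes: the paper's entire argument is the one-line observation that $X \hookrightarrow S_\Psi(\T^2)$ with $\Psi(N) = s_N(X)$, after which Theorem \ref{ThmCon} applies. You have simply unfolded that one line — verifying that $\Psi$ is a genuine decay, that the embedding has norm at most one, and that boundedness in $X$ passes to the $C(0,T;S_\Psi)$-boundedness required by the definition of admissible sparse stability — all of which are precisely the checks the paper leaves implicit.
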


This result follows immediately from Theorem \ref{ThmCon} since $X \hookrightarrow S_\Psi(\T^2)$, where $\Psi(N) = s_N(X)$. In particular, Corollary \ref{CorCons} can be applied to all classical function spaces  exhibited in Table \ref{Tabla1}, as well as the new spaces $X = V_\Psi$ and $X = T_\Psi$. 

\subsection{Brief interlude: some references\label{sec:approximate}}
Existence and uniqueness of weak solutions for the $2$D Euler equations are well
established. In particular, we mention the concentration-cancellation result by Delort \cite{Delort}
(resp. Vecchi and Wu \cite{Vecc}) proving existence of weak solutions for
initial vorticities in $BM_{c}^{+}\cap H^{-1}$ (resp. in $L_{c}^{1}\cap
H^{-1});$ existence and uniqueness results of weak solutions for initial
bounded vorticities (resp. vorticities in Yudovich spaces) were established by
Yudovich \cite{Y63} (resp. \cite{Y95}), and the corresponding results
for vorticities in BMO obtained by Vishik \cite{V99}.
Still, important questions remain open. Notably, uniqueness of
weak solutions for vorticities in $L^{p}, \, p>1,$ remains an open problem,
although there are now indications, after the recent work of Vishik \cite{V2018}  (cf. also \cite{Adele}), that the answer is probably negative.

We close this introduction with the belief that,
given the central role of negative Sobolev spaces in PDEs, our methods could
find applications elsewhere.

\vspace{4mm}
\emph{Notation.} Given two normed spaces $X$ and $Y$, the symbol $X \hookrightarrow Y$ means that the identity operator  from $X$ into $Y$ is continuous. Given two positive quantities $A$ and $B$, we write $A \lesssim B$ if there is a constant $C > 0$ such that $A \leq C B$.  We also use $A \approx B$ if $A \lesssim B$ and $A \gtrsim B$. For $a\in\mathbb{R}$,  
$a_{-}=\min\{a,0\}$, $p^{\prime}$ denotes the dual
exponent of $p$ given by $\frac{1}{p}+\frac{1}{p^{\prime}}=1$ and $|E|$ is the
(Lebesgue) measure of a measurable set $E$.

\section{Background}

\subsection{Approximate solutions and $H^{-1}$-stability} For convenience of the reader, we  recall the well-known concepts of approximate solutions of Euler equations and their $H^{-1}$-stability, as introduced in \cite{DiPernaMajda} and \cite{LNT}, respectively.  

\begin{definition}
\label{def:approxsol}A family of velocity vector
fields $\{u^{\varepsilon}(\cdot,t)\}_{\varepsilon>0}$, $t\in\lbrack0,T],$ defines
an \emph{approximate solution} of (\ref{Euler}) if for some $L>1,$ it is
uniformly bounded in
$L^{\infty}([0,T];L_{c}^{2}(\mathbb{R}^{n};\mathbb{R}^{n}))\cap\text{Lip}%
((0,T);H_{\text{loc}}^{-L}(\mathbb{R}^{n};\mathbb{R}^{n})),$\footnote{The
uniform bound in $\text{Lip}((0,T); H_{\text{loc}}^{-L}(\mathbb{R}%
^{n};\mathbb{R}^{n}))$ is a technical assumption in order to guarantee that
initial vector fields $u^{\varepsilon}(\cdot,0)$ are well-defined. In
practice, this follows easily from the uniform energy bound $L^{\infty
}([0,T];L_{c}^{2}(\mathbb{R}^{n};\mathbb{R}^{n}))$, cf. \cite{DiPernaMajda}.}
with $\operatorname{div}u^{\varepsilon}=0$ (in the distributional sense), and
is weak consistent with \eqref{Euler}, in the sense that\footnote{The weak
formulation related to domains with boundary is analogous to that of
$\mathbb{R}^{n}$, taking into account the additional boundary condition
$u^{\varepsilon}\cdot n=0$ (in the trace sense).}
\begin{equation*}
\int_{0}^{T}\int_{\mathbb{R}^{n}}\varphi_{t}\cdot u^{\varepsilon}%
+(D\varphi\,u^{\varepsilon})\cdot u^{\varepsilon}\,dx\,dt+\int_{\mathbb{R}%
^{n}}\varphi(x,0)\cdot u^{\varepsilon}(x,0)\,dx\rightarrow0\qquad
\text{as}\quad\varepsilon\rightarrow0 
\end{equation*}
for every test field $\varphi\in C_{c}^{\infty}([0,T)\times\mathbb{R}%
^{n};\mathbb{R}^{n})$ with $\operatorname{div}\varphi=0$. Here, $D\varphi$ is
the Jacobian matrix of $\varphi$.
\end{definition}

\begin{remark}
If the family is constant, $u^{\varepsilon}\equiv u$ for all
$\varepsilon>0,$ then $u$ is in fact a classical \emph{weak solution} to \eqref{Euler}.
\end{remark}

\begin{remark}
There are standard methodologies to construct approximation solution families,
e.g. through mollification of initial data, Navier-Stokes approximate
solutions (also known as vanishing viscosity method), vortex blob approximations, discrete methods, ...
\end{remark}

\begin{definition}[$H^{-1}$-stability]\label{def:h-1} We
say that a family $\{u^{\varepsilon}\}_{\varepsilon>0}$ of approximate
solutions of the Euler equation is \emph{$H^{-1}$-stable} if the corresponding
set of vorticities $\{\omega^{\varepsilon}= \operatorname{curl}u^{\varepsilon
}\}_{\varepsilon > 0}$ (i.e., $\omega_{i,j}^{\varepsilon}=(u_{i}^{\varepsilon})_{x_{j}}%
-(u_{j}^{\varepsilon})_{x_{i}}$ for $i,j=1,\ldots,n$) is precompact
in $C((0,T);H_{\text{loc}}^{-1}(\mathbb{R}%
^{n};\mathbb{A}^{n})).$
\end{definition}

\subsection{Riesz-Morrey-Tadmor spaces}
 Let $\Pi(Q_{0})$ be the set of
families of packings\footnote{Families of pairwise disjoint cubes.}
$(Q_{i})_{i\in I},$ with $Q_{i}\in\mathcal{D}(Q_0)$.

\begin{definition}[\cite{Tadmor}]\label{DefinitionRMT}
The   \emph{Riesz-Morrey-Tadmor spaces}\footnote{Our
notation differs from \cite{Tadmor}, where the space $R_{p,q}\log^{\alpha
}(Q_{0})$ is instead denoted by $V^{pq}(\log V)^{\alpha}(Q_{0})$ (or simply by
$V^{pq,\alpha}(Q_{0})$). The reason behind this change of notation comes from
the Riesz theorem (cf. \eqref{rieszclasico}).}
(RMT spaces, in short) $R_{p,q}\log^{\alpha}(Q_{0}),$ $1\leq p,q\leq\infty
,\alpha\in\mathbb{R}$, are defined through the condition
\begin{equation}
\Vert f\Vert_{R_{p,q}\log^{\alpha}(Q_{0})}=\sup_{(Q_{i})_{i\in I}\in\Pi
(Q_{0})}\left\{\sum_{i\in I}\bigg[\frac{(1-(\log|Q_{i}|)_{-})^{\alpha}}%
{|Q_{i}|^{\frac{1}{p^{\prime}}}}\,\int_{Q_{i}}|f|\bigg]^{q}\right\}^{\frac
{1}{q}}<\infty.\label{aparece}%
\end{equation}
The corresponding spaces on $\R^n$ are defined analogously. 
\end{definition}

\begin{remark}\label{Remark3}
	In particular, \emph{Morrey spaces }are part of this
scale. Let $1\leq p\leq\infty,\alpha\in\mathbb{R},$ the Morrey space
$M^{p,\alpha}(Q_{0})$ is defined by
\begin{equation}
\Vert f\Vert_{M^{p,\alpha}(Q_0)}=\sup_{Q\in\mathcal{D(}Q_{0})}\frac{(1-(\log
|Q|)_{-})^{\alpha}}{|Q|^{\frac{1}{p^{\prime}}}}\,\int_{Q}|f|<\infty
.\label{aparecio}%
\end{equation}
Consequently, $M^{p,\alpha}(Q_{0})=R_{p,\infty}\log^{\alpha}(Q_{0}).$
\end{remark}

\begin{remark}
A similar comment to Remark \ref{RemarkMeasures} also applies to  $R_{p, q} \log^\alpha$ and $M^{p, \alpha}$.  
\end{remark}

\section{Characterization of sparse RMT spaces via maximal
operators\label{sec:sparsedommax}}

Let
$0\leq\lambda<n$ and $\alpha \in \R$. For $f\in L^{1}(Q_{0}),$  consider the maximal operator
\begin{equation}
M_{\lambda,\alpha,Q_{0}}f(x)=\sup_{\substack{Q\in\mathcal{D}(Q_{0})\\x\in
Q}} |Q|^{\frac{\lambda}{n}-1}  (1-(\log|Q|)_{-})^{\alpha}\int_{Q}|f(y)| \,dy,\qquad x\in Q_{0}%
.\label{maxlam}%
\end{equation}
In the absence of logarithmic weight (i.e., $\alpha=0$), we simply  write
$M_{\lambda,Q_{0}}$. In addition, if $\lambda=0$ then one recovers the classical
(dyadic) maximal operator $M_{Q_{0}}$.

In this section we show that, under some natural conditions, the sparse
$SR_{p,q}\log^{\alpha}$ spaces (cf. Definition \ref{DefSRMT}) admit simple characterizations in
terms of maximal operators \eqref{maxlam}. This is in sharp contrast with the parent spaces
$R_{p,q}\log^{\alpha}$. 

\begin{theorem}
\label{ThmSparseRieszMaximal} Suppose that $p,q,\alpha$ satisfy
\begin{equation}
1\leq p\leq q<\infty\qquad\text{and}\qquad\alpha\in\mathbb{R}\quad(\alpha
\leq0\text{ if }p=q). \label{AssumptionMon}%
\end{equation}
Then 
\[
SR_{p,q}\log^{\alpha}(Q_{0})=M_{n(\frac{1}{p}-\frac{1}{q}),\alpha,Q_{0}}%
L^{q}(Q_{0}).
\]
More precisely,
\begin{equation*}
\Vert f\Vert_{SR_{p,q}\log^{\alpha}(Q_{0})}\approx\Vert M_{n(\frac{1}{p}%
-\frac{1}{q}),\alpha,Q_{0}}f\Vert_{L^{q}(Q_{0})},
\end{equation*}
where the hidden constants of equivalence are independent of $f$ and $Q_{0}$.
\end{theorem}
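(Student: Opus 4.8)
The plan is to establish the two-sided equivalence
\[
\Vert f\Vert_{SR_{p,q}\log^{\alpha}(Q_{0})}\approx\Vert M_{n(\frac{1}{p}-\frac{1}{q}),\alpha,Q_{0}}f\Vert_{L^{q}(Q_{0})}
\]
by proving each inequality separately, exploiting the sparse domination machinery of Section \ref{sec:sparsedommax} for one direction and an elementary stopping-time/Calderón--Zygmund argument for the other. Throughout write $\lambda = n(\tfrac1p - \tfrac1q)$ (note $0 \le \lambda < n$ under \eqref{AssumptionMon}) and abbreviate $M = M_{\lambda,\alpha,Q_0}$.

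\emph{The estimate $\Vert M f\Vert_{L^q(Q_0)} \lesssim \Vert f\Vert_{SR_{p,q}\log^\alpha(Q_0)}$.} First I would use a sparse domination theorem (the generalization of \eqref{gar1} incorporating the logarithmic weight, which is a special case of Proposition \ref{SparseDomination2}): there is a sparse family $\{Q_i\}_{i\in I}\in S(Q_0)$ such that, pointwise,
\[
M f(x) \le c \sum_{i\in I}\Big(|Q_i|^{\frac{\lambda}{n}-1}(1-(\log|Q_i|)_-)^{\alpha}\int_{Q_i}|f|\Big)\mathbf{1}_{Q_i}(x).
\]
Since the sets $E_{Q_i}\subset Q_i$ are pairwise disjoint with $|E_{Q_i}|\ge\tfrac12|Q_i|$, one has $\sum_i a_i \mathbf{1}_{Q_i}(x) \lesssim \big(\sum_i a_i^q \mathbf{1}_{E_{Q_i}}(x)\big)^{1/q}\cdot(\text{something})$ — more precisely, for the $L^q$ norm I would invoke the standard fact that $\Vert\sum_i a_i\mathbf{1}_{Q_i}\Vert_{L^q} \lesssim \big(\sum_i (a_i|Q_i|^{1/q})^q\big)^{1/q}$ for a sparse family when $q \ge 1$ (this is where monotonicity $p\le q$ and $\alpha\le 0$ when $p=q$ from \eqref{AssumptionMon} enter, controlling the interaction of overlapping cubes of comparable size). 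Plugging in $a_i = |Q_i|^{\frac{\lambda}{n}-1}(1-(\log|Q_i|)_-)^{\alpha}\int_{Q_i}|f|$ and using $\lambda/n - 1 + 1/q = -1/p'$ turns the right-hand side into exactly the $SR_{p,q}\log^\alpha$ sum over that one sparse family, which is $\le \Vert f\Vert_{SR_{p,q}\log^\alpha(Q_0)}$.

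\emph{The estimate $\Vert f\Vert_{SR_{p,q}\log^\alpha(Q_0)} \lesssim \Vert M f\Vert_{L^q(Q_0)}$.} Fix an arbitrary sparse family $(Q_i)_{i\in I}\in S(Q_0)$ with associated disjoint sets $E_{Q_i}$. For each $i$ and each $x\in Q_i$ (in particular each $x\in E_{Q_i}$), the very definition of $M$ as a supremum over dyadic cubes containing $x$ gives
\[
|Q_i|^{\frac{\lambda}{n}-1}(1-(\log|Q_i|)_-)^{\alpha}\int_{Q_i}|f| \le M f(x).
\]
Raise to the $q$-th power, integrate over $E_{Q_i}$, use $|E_{Q_i}|\ge\tfrac12|Q_i|$, and sum over $i$ using disjointness of the $E_{Q_i}$:
\[
\sum_{i\in I}\Big(\frac{(1-(\log|Q_i|)_-)^{\alpha}}{|Q_i|^{1/p'}}\int_{Q_i}|f|\Big)^{q} = \sum_{i\in I}|Q_i|^{(\frac{\lambda}{n}-1)q+\frac{q}{p'}}\Big(\cdots\Big)^q \le 2\sum_{i\in I}\int_{E_{Q_i}}(M f)^q \le 2\Vert M f\Vert_{L^q(Q_0)}^q,
\]
where again $(\tfrac{\lambda}{n}-1)q + \tfrac{q}{p'} = 0$ cancels the extra measure factor. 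Taking the supremum over all sparse families yields the claim. This direction is essentially immediate; the reverse direction is the substantive one.

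\emph{Main obstacle.} The delicate point is the sparse-domination step for the direction $\Vert Mf\Vert_{L^q} \lesssim \Vert f\Vert_{SR_{p,q}\log^\alpha}$: one must produce a \emph{single} sparse family controlling $Mf$ pointwise while correctly carrying the logarithmic weight $(1-(\log|Q|)_-)^\alpha$ through the stopping-time construction, and then pass from the pointwise bound to the $L^q$ bound without losing the sparseness gain — it is precisely here that the restrictions in \eqref{AssumptionMon} ($p\le q$, and $\alpha\le0$ when $p=q$) are forced, since for $p > q$ or for positive $\alpha$ at $p=q$ the overlaps of a sparse family are no longer summable in $L^q$. I expect the cleanest route is to cite Proposition \ref{SparseDomination2} for the pointwise sparse bound and then reduce the $L^q$ passage to the elementary sparse $\ell^q$-estimate recalled above.
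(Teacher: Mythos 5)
Your two-sided argument is correct and follows the same route as the paper: cite Proposition \ref{SparseDomination2} for the pointwise sparse domination, then pass to $L^q$ via the sparse $\ell^q$-estimate (which is precisely the paper's dualization against $g\in L^{q'}$, replacement of $\mathbf 1_{Q_i}$ by $\mathbf 1_{E_{Q_i}}$ using $|E_{Q_i}|\ge\frac12|Q_i|$, and the Hardy--Littlewood maximal theorem); the converse direction is the same elementary disjointness argument as in the paper.

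One correction to the "main obstacle" paragraph, though, because it misattributes the role of the hypotheses \eqref{AssumptionMon}. The estimate $\bigl\|\sum_i a_i\mathbf 1_{Q_i}\bigr\|_{L^q}\lesssim\bigl(\sum_i a_i^q|Q_i|\bigr)^{1/q}$ for a sparse family holds for \emph{all} $1\le q<\infty$ and all nonnegative $a_i$, with no reference to $p$ or $\alpha$: the duality proof only needs $q'>1$, i.e.\ $q<\infty$. So nothing about overlaps "no longer being summable in $L^q$" forces \eqref{AssumptionMon} at this stage. The hypotheses $p\le q$ and ($\alpha\le 0$ if $p=q$) are consumed entirely inside Proposition \ref{SparseDomination2}: they guarantee the monotonicity of $\varphi(t)=t^{1/p-1/q}\bigl(1-(\log t)_-\bigr)^{\alpha}$ (or of the auxiliary majorant $\psi$ when $\alpha>\lambda$), which is what lets the stopping-time selection satisfy $\sum_i|Q_i|\le\frac12|Q_0|$ and hence be sparse. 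Since you do invoke Proposition \ref{SparseDomination2} with its stated hypotheses, your proof still goes through, but the parenthetical explanation should be replaced by the one just given.
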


\begin{remark}
When $p=q,$ the restriction $\alpha\leq0$ is necessary to avoid trivial cases.
To be more precise, if $\alpha>0$ and $p=q$, then
\[
\Vert M_{n(\frac{1}{p}-\frac{1}{q}),\alpha,Q_{0}}f\Vert_{L^{q}(Q_{0})}%
<\infty\implies f=0\quad\text{a.e. on}\quad Q_{0}.
\]
Indeed, since $M_{n(\frac{1}{p}-\frac{1}{q}),\alpha,Q_{0}}f(x)=M_{0,\alpha
,Q_{0}}f(x)<\infty$ \ a.e. $x\in Q_{0}$, we have
\begin{equation}
\frac{1}{|Q|}\int_{Q}|f|\leq(1-(\log|Q|)_{-})^{-\alpha}M_{0,\alpha,Q_{0}%
}f(x)\label{delotro}%
\end{equation}
for every $Q\in\mathcal{D}(Q_{0}),$ with $x\in Q,$ and $|Q|$ sufficiently
small. Taking limits on both sides of (\ref{delotro}) as $|Q|\rightarrow0$,
and applying the Lebesgue differentiation theorem, we conclude that $f(x)=0$
for every Lebesgue point $x$.
\end{remark}

Sparse domination principles underlie the characterizations of sparse spaces
via maximal functions.

\begin{proposition}
\label{SparseDomination2} Suppose that $p,q$ and $\alpha$ satisfy
\eqref{AssumptionMon}, and let $f\in L^{1}(Q_{0})$. Then there exists a family
$(Q_{i})_{i\in I}\in S(Q_{0})$ (depending on $f$ and the parameters $p,q$ and
$\alpha$) such that
\begin{equation}
M_{n(\frac{1}{p}-\frac{1}{q}),\alpha,Q_{0}}f(x)\leq2 \max\bigg\{1,e^{\frac
{1}{p}-\frac{1}{q}-\alpha}\bigg(\frac{pq\alpha}{q-p}\bigg)^{\alpha
}\bigg\}\,\sum_{i\in I}\bigg(\frac{\big(1-(\log|Q_{i}|)_{-}\big)^{\alpha}%
}{|Q_{i}|^{\frac{1}{p^{\prime}}+\frac{1}{q}}}\int_{Q_{i}}%
|f(y)|\,dy\bigg)\mathbf{1}_{Q_{i}}(x)\label{SparseDominationEq2}%
\end{equation}
for almost every $x\in Q_{0}$.
\end{proposition}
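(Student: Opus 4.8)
The plan is to establish \eqref{SparseDominationEq2} by a Calder\'{o}n--Zygmund stopping-time construction of principal cubes, adapted to the weighted averaging functional behind $M_{n(\frac1p-\frac1q),\alpha,Q_0}$. First I would reduce to the case $f\ge0$, since both sides of \eqref{SparseDominationEq2} only involve $|f|$. Writing $\lambda:=n(\frac1p-\frac1q)\in[0,n)$ and, for $t>0$, $w(t):=t^{\lambda/n}\big(1-(\log t)_-\big)^{\alpha}$, formula \eqref{maxlam} becomes
\[
M_{\lambda,\alpha,Q_0}f(x)=\sup_{\substack{Q\in\mathcal D(Q_0)\\ x\in Q}}a_Q,\qquad a_Q:=\frac{w(|Q|)}{|Q|}\int_Q f ,
\]
and the sum on the right of \eqref{SparseDominationEq2} is precisely $\sum_{i\in I}a_{Q_i}\mathbf 1_{Q_i}(x)$. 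So the proposition says exactly that the averaging functional $Q\mapsto a_Q$ admits a sparse domination, which places us in the framework of \cite{LN19,H21}.

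The main obstacle is that for $\alpha>0$ the weight $w$ is not monotone in $t$: under \eqref{AssumptionMon} a direct computation shows that $w$ is increasing on $(0,t^{\ast}]$ with $t^{\ast}=\min\{1,e^{1-n\alpha/\lambda}\}$, is decreasing on $[t^{\ast},1]$ down to $w(1)=1$ when $t^{\ast}<1$, and equals $t^{\lambda/n}$ for $t\ge1$; consequently $\sup_{0<s\le1}w(s)=w(t^{\ast})=e^{\lambda/n-\alpha}(n\alpha/\lambda)^{\alpha}$ when $t^{\ast}<1$, and $=1$ otherwise. Because the usual stopping threshold $2$ then fails to produce a sparse family, the first substantive step is to record the elementary bound
\[
\sup_{0<s\le t}w(s)\le C_0\,w(t)\quad(t>0),\qquad C_0:=\max\Big\{1,\ e^{\frac1p-\frac1q-\alpha}\Big(\tfrac{pq\alpha}{q-p}\Big)^{\alpha}\Big\},
\]
which follows by distinguishing the ranges $t\le t^{\ast}$ (where $w$ is increasing, so the ratio is $1$), $t^{\ast}<t\le1$ (where the ratio is $w(t^{\ast})/w(t)\le w(t^{\ast})$), and $t\ge1$ (where $w(t)=t^{\lambda/n}\ge1$, so the ratio is at most $\sup_{0<s\le1}w(s)$), using $\lambda/n=\frac1p-\frac1q$ and $n\alpha/\lambda=\frac{pq\alpha}{q-p}$. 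When $\alpha\le0$ (in particular when $p=q$), $w$ is already non-decreasing and $C_0=1$. This calculus estimate is where the explicit constant of the statement comes from, and I regard it as the delicate point of the proof; the remaining steps are standard.

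Finally, with $C_0$ in hand, I would construct the sparse family $\mathcal S\subset\mathcal D(Q_0)$ recursively: put $Q_0\in\mathcal S$, and for $L\in\mathcal S$ let its $\mathcal S$-children be the maximal dyadic cubes $L'\subsetneq L$ with $a_{L'}>2C_0\,a_L$. For sparseness: if $a_{L'}>2C_0 a_L$ then, since $|L'|<|L|$ gives $w(|L'|)\le C_0w(|L|)$, one deduces $\frac1{|L'|}\int_{L'}f>\frac2{|L|}\int_{L}f$, whence $\sum_{L'}|L'|\le\frac12|L|$ upon summing over the disjoint children; thus the sets $E_L:=L\setminus\bigcup_{L'}L'$ are pairwise disjoint with $|E_L|\ge\frac12|L|$, i.e.\ $\mathcal S\in S(Q_0)$. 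For the pointwise estimate: given $x\in Q_0$ and $Q\in\mathcal D(Q_0)$ with $x\in Q$, let $L=L(Q)$ be the smallest principal cube containing $Q$ (it exists since $Q$ has only finitely many dyadic ancestors in $Q_0$ and $Q_0\in\mathcal S$); then $a_Q\le2C_0\,a_L$, because otherwise $Q$ would sit inside a maximal $\mathcal S$-child of $L$, a principal cube strictly between $Q$ and $L$, contradicting minimality (the case $Q=L$ being trivial). Taking the supremum over all admissible $Q$ and bounding it by the sum gives
\[
M_{\lambda,\alpha,Q_0}f(x)\le2C_0\sup_{\substack{L\in\mathcal S\\ x\in L}}a_L\le2C_0\sum_{L\in\mathcal S}a_L\mathbf 1_L(x),
\]
which is \eqref{SparseDominationEq2} after relabelling $\mathcal S=(Q_i)_{i\in I}$, since $2C_0$ is exactly the constant appearing in the statement.
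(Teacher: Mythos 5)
Your proof is correct and follows essentially the same Calder\'{o}n--Zygmund stopping-time route as the paper: select (recursively) the maximal dyadic sub-cubes where the weighted average $a_Q = |Q|^{\lambda/n-1}(1-(\log|Q|)_-)^{\alpha}\int_Q|f|$ exceeds $2C_0$ times the parent's value, estimate $\sum|L'|\le\frac12|L|$ from the quasi-monotonicity of the weight, and obtain the pointwise bound by looking at the smallest principal cube above a given $Q$. The only difference is presentational: you encapsulate the paper's two-case analysis ($\alpha\le\lambda$ using $\varphi$ directly vs. $\alpha>\lambda$ using the non-decreasing majorant $\psi$) into the single quasi-monotonicity bound $\sup_{0<s\le t}w(s)\le C_0 w(t)$, and you phrase the iteration in the standard "principal cubes" language rather than the paper's "apply estimate (A) recursively on each $Q_i$". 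Your identification of $C_0$ with $\max\{1,e^{\lambda'-\alpha}(\alpha/\lambda')^{\alpha}\}$ (with $\lambda'=\frac1p-\frac1q$, giving $\alpha/\lambda'=\frac{pq\alpha}{q-p}$) as $\max\{1,\sup_{0<s\le1}w(s)\}$ is exactly where the constant in \eqref{SparseDominationEq2} arises in the paper's proof as well, including the convention $C_0=1$ when $\alpha\le0$. Two minor points worth noting, neither of which is a gap: (a) for $0<\alpha\le\lambda'$ one actually has $\sup_{0<s\le1}w(s)=1$ while the displayed formula for $C_0$ gives something $\ge1$, so the stated constant is then a slight overestimate (this is consistent with the paper); (b) the pairwise disjointness of the sets $E_L=L\setminus\bigcup_{L'}L'$ deserves a one-line remark (if $L_1\subsetneq L_2$ are principal then $L_1$ lies inside some $\mathcal S$-child of $L_2$, hence $L_1\cap E_{L_2}=\emptyset$), but this is routine.
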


\begin{remark}
As usual, if $p=q$ the constant $\max\{1,e^{\frac{1}{p}-\frac{1}{q}-\alpha
}(\frac{pq\alpha}{q-p})^{\alpha}\}$ in \eqref{SparseDominationEq2} should be
interpreted to be equal to $1$.
\end{remark}

\begin{proof}
[Proof of Proposition \ref{SparseDomination2}]The desired decomposition will
be obtained by a standard process of exhaustion, whereby for each cube of the
starting decomposition we shall apply the process again and again. We set up
the selection process by letting $\mathcal{Q}_{f}=\mathcal{Q}_{f,p,q,\alpha}$,
be the collection of  $Q\in \mathcal{D}( Q_{0})$ such that the
following condition is satisfied%
\begin{equation}
\frac{\big(1-(\log|Q|)_{-}\big)^{\alpha}}{|Q|^{\frac{1}{p^{\prime}}+\frac
{1}{q}}}\int_{Q}|f(y)|\,dy\geq2\max\bigg\{1,e^{\lambda-\alpha}\bigg(\frac
{\alpha}{\lambda}\bigg)^{\alpha}\bigg\}\,\frac{\big(1-(\log|Q_{0}%
|)_{-}\big)^{\alpha}}{|Q_{0}|^{\frac{1}{p^{\prime}}+\frac{1}{q}}}\int_{Q_{0}%
}|f(y)|\,dy,\label{ProofSparseDomination1}%
\end{equation}
where $\lambda:=\frac{1}{p}-\frac{1}{q}$. If the collection $\mathcal{Q}_{f}$
is empty then we let $E_{Q_{0}}=\{Q_{0}\},$ and we readily verify that
(\ref{SparseDominationEq2}) holds. Otherwise we continue the process selecting
$(Q_{i})_{i\in\mathbb{N}},$ the family of maximal dyadic cubes in
$\mathcal{Q}_{f}$. By construction, the selected family $(Q_{i})_{i\in
\mathbb{N}}$ is pairwise disjoint and, therefore, for almost every $x\in
Q_{0}$,%
\begin{align*}
M_{n(\frac{1}{p}-\frac{1}{q}),\alpha,Q_{0}}f(x) &  =M_{n(\frac{1}{p}-\frac
{1}{q}),\alpha,Q_{0}}f(x)\,\mathbf{1}_{Q_{0}\backslash\cup_{i=1}^{\infty}%
Q_{i}}(x) +\sum_{i=1}^{\infty}M_{n(\frac{1}{p}-\frac{1}{q}),\alpha,Q_{0}%
}f(x)\,\mathbf{1}_{Q_{i}}(x)  \nonumber\\
&  =:(A)+(B).
\end{align*}
Next we estimate each of the terms $(A)$ and $(B)$ separately.

\vspace{2mm} \textbf{Estimate (A)}: We claim that, for $x\not \in \cup
_{i=1}^{\infty}Q_{i}$,
\begin{equation}
M_{n(\frac{1}{p}-\frac{1}{q}),\alpha,Q_{0}}f(x) \leq2\, \max\bigg\{1,
e^{\lambda- \alpha} \bigg(\frac{\alpha}{\lambda} \bigg)^{\alpha}\bigg\} \,
\frac{\big( 1 - (\log|Q_{0}|)_{-} \big)^{\alpha}}{|Q_{0}|^{\frac{1}{p^{\prime
}}+\frac{1}{q}}}\int_{Q_{0}}|f(y)|\,dy. \label{Aux1}%
\end{equation}
Indeed, suppose, to the contrary, that for some $x\not \in \cup_{i=1}^{\infty
}Q_{i},$ \eqref{Aux1} does not hold. Then, by the definition of $M_{n(\frac
{1}{p}-\frac{1}{q}),\alpha,Q_{0}}$ (cf. \eqref{maxlam}), there exists a dyadic
cube $Q\subset Q_{0},$ such that $x\in Q,$ and $Q\in\mathcal{Q}_{f}.$
Consequently, there exists a maximal cube $Q_{i}$ such that $Q\subset Q_{i},$
but this leads to a contradiction since $x\notin Q_{i}.$ Therefore, for
$x\not \in \cup_{i=1}^{\infty}Q_{i}$, we have
\begin{equation*}
(A)\leq2\, \max\bigg\{1, e^{\lambda- \alpha} \bigg(\frac{\alpha}{\lambda}
\bigg)^{\alpha}\bigg\} \, \frac{\big( 1 - (\log|Q_{0}|)_{-} \big)^{\alpha}%
}{|Q_{0}|^{\frac{1}{p^{\prime}}+\frac{1}{q}}}\int_{Q_{0}}|f(y)|\,dy.
\end{equation*}
Moreover, from $(Q_{i})_{i\in\mathbb{N}}$ $\subset\mathcal{Q}_{f}$ (cf.
\eqref{ProofSparseDomination1}) we see that%
\begin{equation}
\label{hshash}\varphi(|Q_{i}|)|Q_{i}|^{-1}\int_{Q_{i}}|f(y)|\,dy\geq2\,
\max\bigg\{1, e^{\lambda- \alpha} \bigg(\frac{\alpha}{\lambda} \bigg)^{\alpha
}\bigg\} \, \varphi(|Q_{0}|)|Q_{0}|^{-1}\int_{Q_{0}}|f(y)|\,dy,
\end{equation}
where
\[
\varphi(t) := t^{\frac{1}{p} -\frac{1}{q}} \, \big(1-(\log t)_{-}
\big)^{\alpha}, \qquad t > 0.
\]

We distinguish two possible cases.

(I) Suppose first that $\alpha\leq\lambda$. Routine computations show, under
this assumption, that $\varphi$ is a non-decreasing function. It follows from
\eqref{hshash} that
\begin{align*}
\sum_{i=1}^{\infty}|Q_{i}|  &  \leq\frac{1}{2\,\max \{1,e^{\lambda-\alpha
} (\frac{\alpha}{\lambda})^{\alpha} \}\,}\frac{|Q_{0}|}%
{\varphi(|Q_{0}|)\int_{Q_{0}}|f(y)|\,dy}\,\sum_{i=1}^{\infty}\varphi
(|Q_{i}|)\int_{Q_{i}}|f(y)|\,dy\\
&  \leq\frac{1}{2\,\max \{1,e^{\lambda-\alpha} (\frac{\alpha}{\lambda
})^{\alpha} \}\,}\frac{|Q_{0}|}{\int_{Q_{0}}|f(y)|\,dy}\,\sum_{i=1}^{\infty}\int_{Q_{i}}|f(y)|\,dy  \leq\frac{|Q_{0}|}{2}.
\end{align*}
Note that, in the first step of above computations, we assume that $f$ is not
identically zero (almost everywhere) on $Q_{0}$; otherwise the desired result
\eqref{SparseDominationEq2} holds trivially. Therefore, if we assign to the
cube $Q_{0}$ the set%
\begin{equation}
E_{Q_{0}}:=Q_{0}\backslash\bigcup_{i=1}^{\infty}Q_{i}, \label{EQDef}%
\end{equation}
then%
\begin{equation}
\frac{\left\vert Q_{0}\right\vert }{2}\leq|E_{Q_{0}}|, \label{laitera2}%
\end{equation}
i.e., the sparseness condition given in Definition \ref{DefnSparse}(ii) holds.

(II) Suppose now that $\alpha>\lambda$. Let
\[
\psi(t):=\left\{
\begin{array}
[c]{ll}%
t^{\lambda}\,\big(1-\log t\big)^{\alpha}, & \text{if}\qquad t\in
(0,e^{1-\frac{\alpha}{\lambda}}),\\
& \\
e^{\lambda-\alpha}\,\big(\frac{\alpha}{\lambda}\big)^{\alpha}, &
\text{if}\qquad t\in\lbrack e^{1-\frac{\alpha}{\lambda}},\infty).
\end{array}
\right.
\]
It is plain that $\psi$ is a non-decreasing function such that, moreover,
$\varphi(t)\leq\psi(t)$ for $t>0$. By \eqref{hshash}, we have
\begin{align*}
\sum_{i=1}^{\infty}|Q_{i}|  &  \leq\frac{1}{2\,\max\{1,e^{\lambda-\alpha
}(\frac{\alpha}{\lambda})^{\alpha}\}\,}\frac{|Q_{0}|}{\varphi(|Q_{0}%
|)\int_{Q_{0}}|f(y)|\,dy}\,\sum_{i=1}^{\infty}\varphi(|Q_{i}|)\int_{Q_{i}%
}|f(y)|\,dy\\
&  \leq\frac{1}{2\,\max\{1,e^{\lambda-\alpha}(\frac{\alpha}{\lambda})^{\alpha
}\}\,}\frac{|Q_{0}|}{\varphi(|Q_{0}|)\int_{Q_{0}}|f(y)|\,dy}\,\sum
_{i=1}^{\infty}\psi(|Q_{i}|)\int_{Q_{i}}|f(y)|\,dy\\
&  \leq\frac{1}{2\,\max\{1,e^{\lambda-\alpha}(\frac{\alpha}{\lambda})^{\alpha
}\}}\frac{|Q_{0}|\psi(|Q_{0}|)}{\varphi(|Q_{0}|)\int_{Q_{0}}|f(y)|\,dy}%
\,\sum_{i=1}^{\infty}\int_{Q_{i}}|f(y)|\,dy\\
&  \leq\frac{|Q_{0}|}{2\max\{1,e^{\lambda-\alpha}(\frac{\alpha}{\lambda
})^{\alpha}\}}\frac{\psi(|Q_{0}|)}{\varphi(|Q_{0}|)}.
\end{align*}
Furthermore, using the estimate
\[
\frac{\psi(|Q_{0}|)}{\varphi(|Q_{0}|)}\leq\max\bigg\{1,e^{\lambda-\alpha
}\,\bigg(\frac{\alpha}{\lambda}\bigg)^{\alpha}\bigg\},
\]
we obtain
\[
\sum_{i=1}^{\infty}|Q_{i}|\leq\frac{|Q_{0}|}{2}.
\]
Hence the set $E_{Q_{0}}$ defined by \eqref{EQDef} satisfies the required
sparseness condition \eqref{laitera2}.

\vspace{2mm} \textbf{Estimate (B)}: We will show that the procedure used to
estimate $(A)$ can be iterated to estimate each term of the sum $(B)$. Fix
$i\in\mathbb{N}$. Observe that\textbf{ }for $x\in Q_{i},$ the maximality of
the $Q_{i}$'s and the nesting property of dyadic cubes, yield
\begin{equation}
M_{n(\frac{1}{p}-\frac{1}{q}),\alpha,Q_{0}}f(x)=M_{n(\frac{1}{p}-\frac{1}%
{q}),\alpha,Q_{i}}f(x). \label{igualdad}%
\end{equation}
Indeed, we only need to prove that the right-hand side is $\geq$ the
left-hand side. Consider $Q$ a generic dyadic cube such that $x\in Q\subset
Q_{0}.$ In particular, $Q\cap Q_{i}\neq\emptyset$. Now there are two possible
situations. Firstly, if $Q\subseteq Q_{i},$ the cube $Q$ enters in the
competition for computing both, the left- and right-hand sides of
(\ref{igualdad}), which is consistent with what we wish to prove. Assume now
that $Q_{i}$ $\subset Q.$ In this case, since $Q_{i}$ is a maximal element of
$\mathcal{Q}_{f}$, we must have that $Q\notin\mathcal{Q}_{f}.$ Therefore (cf.
\eqref{ProofSparseDomination1})
\begin{equation}
\frac{\big(1-(\log|Q|)_{-}\big)^{\alpha}}{|Q|^{\frac{1}{p^{\prime}}+\frac
{1}{q}}}\,\int_{Q}|f(y)|\,dy<2\,\max\bigg\{1,e^{\lambda-\alpha}\bigg(\frac
{\alpha}{\lambda}\bigg)^{\alpha}\bigg\}\,\frac{\big(1-(\log|Q_{0}%
|)_{-}\big)^{\alpha}}{|Q_{0}|^{\frac{1}{p^{\prime}}+\frac{1}{q}}}\,\int%
_{Q_{0}}|f(y)|\,dy. \label{210}%
\end{equation}
On the other hand, since $Q_{i}\in\mathcal{Q}_{f}$, it follows that%
\begin{align}
2\,\max\bigg\{1,e^{\lambda-\alpha}\bigg(\frac{\alpha}{\lambda}\bigg)^{\alpha
}\bigg\}\,\frac{\big(1-(\log|Q_{0}|)_{-}\big)^{\alpha}}{|Q_{0}|^{\frac
{1}{p^{\prime}}+\frac{1}{q}}}\int_{Q_{0}}|f(y)|\,dy  &  \leq\frac
{\big(1-(\log|Q_{i}|)_{-}\big)^{\alpha}}{|Q_{i}|^{\frac{1}{p^{\prime}}%
+\frac{1}{q}}}\int_{Q_{i}}|f(y)|\,dy\nonumber\\
&  \leq M_{n(\frac{1}{p}-\frac{1}{q}),\alpha,Q_{i}}f(x). \label{Estimnew23}%
\end{align}
Putting together \eqref{210} and \eqref{Estimnew23},
\[
\frac{\big(1-(\log|Q|)_{-}\big)^{\alpha}}{|Q|^{\frac{1}{p^{\prime}}+\frac
{1}{q}}}\,\int_{Q}|f(y)|\,dy<M_{n(\frac{1}{p}-\frac{1}{q}),\alpha,Q_{i}}f(x),
\]
and taking now the supremum over all possible dyadic cubes $Q\subset Q_{0}$
with $x\in Q$, we arrive at the desired upper estimate $\leq$ in
(\ref{igualdad}).

By \eqref{igualdad}, we can write (B) as follows%
\begin{equation}
(B) = \sum_{i=1}^{\infty}M_{n(\frac{1}{p}-\frac{1}{q}),\alpha,Q_{i}%
}f(x)\,\mathbf{1}_{Q_{i}}(x). \label{noq}%
\end{equation}
The proof can be now completed applying the procedure used to estimate $(A)$
to each of the terms that appear on the right-hand side of (\ref{noq}).
\end{proof}

We are now ready for the

\begin{proof}
[Proof of Theorem \ref{ThmSparseRieszMaximal}]Let $f\in SR_{p,q}\log^{\alpha
}(Q_{0})$. In light of Proposition \ref{SparseDomination2}, there exists
$(Q_{i})_{i\in I}\in S(Q_{0})$ (depending, in particular, on $f$) such that
the estimate \eqref{SparseDominationEq2} holds. Then, taking $L^{q}$-norms on
both sides of this estimate, we find%
\begin{equation*}
\Vert M_{n(\frac{1}{p}-\frac{1}{q}),\alpha,Q_{0}}f\Vert_{L^{q}(Q_{0})}%
\lesssim\,\bigg\|\sum_{i\in I}\bigg(\frac{\big(1-(\log|Q_{i}|)_{-}%
\big)^{\alpha}}{|Q_{i}|^{\frac{1}{p^{\prime}}+\frac{1}{q}}}\int_{Q_{i}%
}|f(y)|\,dy\bigg)\mathbf{1}_{Q_{i}}\bigg\|_{L^{q}(Q_{0})}.
\end{equation*}
To estimate the right-hand side, we shall use duality, the properties of
sparseness and the Hardy--Littlewood maximal theorem (recall that $q<\infty$).
This requires a number of elementary manipulations, but to facilitate the
reading we present all the steps,
\begin{align*}
\Vert M_{n(\frac{1}{p}-\frac{1}{q}),\alpha,Q_{0}}f\Vert_{L^{q}(Q_{0})}  &
\lesssim\sup_{\left\Vert g\right\Vert _{L^{q^{\prime}}(Q_{0})}\leq1}\int%
_{Q_{0}}\left(  \sum_{i\in I}\frac{\big(1-(\log|Q_{i}|)_{-}\big)^{\alpha}%
}{|Q_{i}|^{\frac{1}{p^{\prime}}+\frac{1}{q}}}\int_{Q_{i}}%
|f(y)|\,dy\,\mathbf{1}_{Q_{i}}(x)\right)  \left\vert g(x)\right\vert dx\\
&  \hspace{-2cm}=\sup_{\left\Vert g\right\Vert _{L^{q^{\prime}}(Q_{0})}\leq
1}\sum_{i\in I}\frac{\big(1-(\log|Q_{i}|)_{-}\big)^{\alpha}}{|Q_{i}|^{\frac
{1}{p^{\prime}}+\frac{1}{q}}}\int_{Q_{i}}|f(y)|\,dy\int_{Q_{i}}\left\vert
g(x)\right\vert dx\\
&  \hspace{-2cm}\lesssim\,\sup_{\left\Vert g\right\Vert _{L^{q^{\prime}}%
(Q_{0})}\leq1}\sum_{i\in I}\frac{\big(1-(\log|Q_{i}|)_{-}\big)^{\alpha}%
}{|Q_{i}|^{\frac{1}{p^{\prime}}+\frac{1}{q}}}\,\frac{|E_{Q_{i}}|}{|Q_{i}%
|}\,\int_{Q_{i}}|f(y)|\,dy\int_{Q_{i}}\left\vert g(x)\right\vert dx\\
&  \hspace{-2cm}=\sup_{\left\Vert g\right\Vert _{L^{q^{\prime}}(Q_{0})}\leq
1}\sum_{i\in I}\frac{\big(1-(\log|Q_{i}|)_{-}\big)^{\alpha}}{|Q_{i}|^{\frac
{1}{p^{\prime}}+\frac{1}{q}}}\int_{Q_{i}}|f(y)|\,dy\int_{E_{Q_{i}}}\left(
\frac{1}{|Q_{i}|}\int_{Q_{i}}\left\vert g(u)\right\vert du\right)  dx\\
&  \hspace{-2cm}\leq\sup_{\left\Vert g\right\Vert _{L^{q^{\prime}}(Q_{0})}%
\leq1}\sum_{i\in I}\frac{\big(1-(\log|Q_{i}|)_{-}\big)^{\alpha}}%
{|Q_{i}|^{\frac{1}{p^{\prime}}+\frac{1}{q}}}\int_{Q_{i}}|f(y)|\,dy\int%
_{E_{Q_{i}}}M_{Q_{0}}g(x)\,dx\\
&  \hspace{-2cm}=\sup_{\left\Vert g\right\Vert _{L^{q^{\prime}}(Q_{0})}\leq
1}\int_{Q_{0}}\left(  \sum_{i\in I}\frac{\big(1-(\log|Q_{i}|)_{-}%
\big)^{\alpha}}{|Q_{i}|^{\frac{1}{p^{\prime}}+\frac{1}{q}}}\int_{Q_{i}%
}|f(y)|\,dy\,\mathbf{1}_{E_{Q_{i}}}(x)\right)  M_{Q_{0}}g(x)\,dx\\
&  \hspace{-2cm}\leq\sup_{\left\Vert g\right\Vert _{L^{q^{\prime}}(Q_{0})}%
\leq1}\left\Vert M_{Q_{0}}g\right\Vert _{L^{q^{\prime}}(Q_{0})}\left\Vert
\sum_{i\in I}\frac{\big(1-(\log|Q_{i}|)_{-}\big)^{\alpha}}{|Q_{i}|^{\frac
{1}{p^{\prime}}+\frac{1}{q}}}\int_{Q_{i}}|f(y)|\,dy\,\mathbf{1}_{E_{Q_{i}}%
}\right\Vert _{L^{q}(Q_{0})}\\
&  \hspace{-2cm}\lesssim q\,\left\Vert \sum_{i\in I}\frac{\big(1-(\log
|Q_{i}|)_{-}\big)^{\alpha}}{|Q_{i}|^{\frac{1}{p^{\prime}}+\frac{1}{q}}}%
\int_{Q_{i}}|f(y)|\,dy\,\mathbf{1}_{E_{Q_{i}}}\right\Vert _{L^{q}(Q_{0})}\\
&  \hspace{-2cm}=q\,\left(  \sum_{i\in I}\left(  \frac{\big(1-(\log
|Q_{i}|)_{-}\big)^{\alpha}}{|Q_{i}|^{\frac{1}{p^{\prime}}+\frac{1}{q}}}%
\int_{Q_{i}}|f(y)|\,dy\right)  ^{q}\left\vert E_{Q_{i}}\right\vert \right)
^{1/q}\\
&  \hspace{-2cm}\leq q\,\left(  \sum_{i\in I}\left(  \frac{\big(1-(\log
|Q_{i}|)_{-}\big)^{\alpha}}{|Q_{i}|^{\frac{1}{p^{\prime}}}}\int_{Q_{i}%
}|f(y)|\,dy\right)  ^{q}\right)  ^{1/q}\\
&  \hspace{-2cm}\leq q\,\left\Vert f\right\Vert _{SR_{p,q}\log^{\alpha}%
(Q_{0})}.
\end{align*}

Conversely, for any $(Q_{i})_{i\in I}\in S(Q_{0})$, we have (recalling the
sparseness condition given in Definition \ref{DefnSparse}(ii))
\begin{align*}
\sum_{i\in I}\bigg(\frac{\big( 1 - (\log|Q_{i}|)_{-} \big)^{\alpha}}%
{|Q_{i}|^{\frac{1}{p^{\prime}}}}\int_{Q_{i}}|f(y)|\,dy\bigg)^{q}  &  \leq2 \,
\sum_{i\in I}\bigg(\frac{\big( 1 - (\log|Q_{i}|)_{-} \big)^{\alpha}}%
{|Q_{i}|^{\frac{1}{p^{\prime}}+\frac{1}{q}}}\int_{Q_{i}}|f(y)|\,dy\bigg)^{q}
|E_{Q_{i}}|\\
&  \hspace{-4cm} =2\int_{Q_{0}} \sum_{i\in I} \bigg(\frac{\big( 1 -
(\log|Q_{i}|)_{-} \big)^{\alpha}}{|Q_{i}|^{\frac{1}{p^{\prime}}+\frac{1}{q}}%
}\int_{Q_{i}}|f(y)|\,dy\bigg)^{q} \mathbf{1}_{E_{Q_{i}}}(x)\,dx\\
&  \hspace{-4cm}\leq2 \, \Vert M_{n(\frac{1}{p}-\frac{1}{q}),\alpha,Q_{0}%
}f\Vert_{L^{q}(Q_{0})}^{q}.
\end{align*}
Taking the supremum over all $(Q_{i})_{i\in I}\in S(Q_{0})$, we arrive at
\[
\Vert f\Vert_{SR_{p,q}\log^{\alpha}(Q_{0})}^{q} \leq2 \, \Vert M_{n(\frac
{1}{p}-\frac{1}{q}),\alpha,Q_{0}}f\Vert^{q}_{L^{q}(Q_{0})},
\]
concluding the proof.
\end{proof}

\begin{remark}
Note that, since the Hardy--Littlewood maximal function is not bounded on
$L^{1},$ the above proof does not work if $q=\infty$. However, Theorem
\ref{ThmSparseRieszMaximal} holds trivially if $q=\infty$ (for any value of
$\alpha\in\mathbb{R}$) since
\[
\Vert f\Vert_{SR_{p,\infty}\log^{\alpha}(Q_{0})}=\Vert f\Vert_{M^{p,\alpha
}(Q_{0})}=\Vert M_{\frac{n}{p},\alpha,Q_{0}}f\Vert_{L^{\infty}(Q_{0})};
\]
cf. (\ref{aparecio}) and (\ref{maxlam}).
\end{remark}

\subsection{Spaces defined on the whole space\label{sec:whole}}
%


The analogue of Theorem \ref{ThmSparseRieszMaximal} for $SR_{p,q}\log^{\alpha
}(\mathbb{R}^{n})$ can be now formulated in terms of the (dyadic) maximal
function
\begin{equation}
M_{\lambda,\alpha}f(x)=\sup_{\substack{Q\in\mathcal{D}(\mathbb{R}^{n})\\x\in
Q}}|Q|^{\frac{\lambda}{n}-1}\big(1-(\log|Q|)_{-}\big)^{\alpha}\int%
_{Q}|f(y)|\,dy,\qquad x\in\mathbb{R}^{n}, \label{maxlamRn}%
\end{equation}
where $\lambda\in\lbrack0,n)$ and\footnote{In the
absence of the $\log$-parameter (i.e., $\alpha=0$), we simply write
$M_{\lambda}$ instead of $M_{\lambda,0}$. If, in addition, $\lambda=0$ then we
get back the classical Hardy--Littlewood maximal function $M$.} $\alpha\in\mathbb{R}$. For
$q\in\lbrack1,\infty)$, we define
\[
M_{\lambda,\alpha}L^{q}(\mathbb{R}^{n})=\{f\in L_{\text{loc}}^{1}%
(\mathbb{R}^{n}):\Vert M_{\lambda,\alpha}f\Vert_{L^{q}(\mathbb{R}^{n})}%
<\infty\}.
\]

\begin{theorem}
\label{ThmSparseRieszMaximalRn} Suppose that $p,q, \alpha$ satisfy \eqref{AssumptionMon}.  Then
\[
SR_{p,q}\log^{\alpha}(\mathbb{R}^{n})=M_{n(\frac{1}{p}-\frac{1}{q}),\alpha
}L^{q}(\mathbb{R}^{n}).
\]

\end{theorem}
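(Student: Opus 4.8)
The plan is to bootstrap from the already proved local statement, Theorem~\ref{ThmSparseRieszMaximal}, by exhausting $\mathbb{R}^n$ with a well chosen increasing sequence of cubes. The only substantive difficulty is that $\mathbb{R}^n$ has no ``root cube'' to play the role of $Q_0$, so the stopping–time construction behind Proposition~\ref{SparseDomination2} cannot be run globally; I get around this by obtaining the global maximal estimate as a monotone limit of local ones, and I note at the end the alternative of proving a genuine $\mathbb{R}^n$–sparse domination directly.

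\textbf{The estimate $\|M_{n(1/p-1/q),\alpha}f\|_{L^q(\mathbb{R}^n)}\lesssim\|f\|_{SR_{p,q}\log^\alpha(\mathbb{R}^n)}$.} For $k\in\mathbb{N}$ set $Q_0^{(k)}=[-2^k,2^k)^n$. The key (elementary) observation is that, although $Q_0^{(k)}$ is not itself a dyadic cube of $\mathbb{R}^n$, its dyadic subdivision coincides exactly with $\{Q\in\mathcal{D}(\mathbb{R}^n):Q\subseteq Q_0^{(k)}\}$, because $[-2^k,2^k)$ splits into the two standard dyadic intervals $[-2^k,0)$ and $[0,2^k)$ and this persists at every subsequent level. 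Since every dyadic cube of $\mathbb{R}^n$ is bounded, $\bigcup_k\mathcal{D}(Q_0^{(k)})=\mathcal{D}(\mathbb{R}^n)$, so, extending $M_{n(1/p-1/q),\alpha,Q_0^{(k)}}f$ by $0$ outside $Q_0^{(k)}$, one has the pointwise (a.e.) monotone convergence
\[
M_{n(1/p-1/q),\alpha,Q_0^{(k)}}f\ \uparrow\ M_{n(1/p-1/q),\alpha}f\qquad\text{on }\mathbb{R}^n .
\]
Hence, by monotone convergence, $\|M_{n(1/p-1/q),\alpha}f\|_{L^q(\mathbb{R}^n)}=\lim_{k\to\infty}\|M_{n(1/p-1/q),\alpha,Q_0^{(k)}}f\|_{L^q(Q_0^{(k)})}$. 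Now I apply Theorem~\ref{ThmSparseRieszMaximal} on each $Q_0^{(k)}$ (legitimate since \eqref{AssumptionMon} holds, in particular $q<\infty$); crucially the equivalence constants there are independent of the underlying cube, so $\|M_{n(1/p-1/q),\alpha,Q_0^{(k)}}f\|_{L^q(Q_0^{(k)})}\lesssim\|f\|_{SR_{p,q}\log^\alpha(Q_0^{(k)})}$ uniformly in $k$. Finally, any sparse family of dyadic cubes contained in $Q_0^{(k)}$ is a sparse family in $\mathbb{R}^n$, whence $\|f\|_{SR_{p,q}\log^\alpha(Q_0^{(k)})}\le\|f\|_{SR_{p,q}\log^\alpha(\mathbb{R}^n)}$; letting $k\to\infty$ yields the claim (and shows $f\in L^1_{\mathrm{loc}}(\mathbb{R}^n)$ is all the regularity needed).

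\textbf{The reverse estimate.} Here no limiting argument is required: it is obtained exactly as in the final display block of the proof of Theorem~\ref{ThmSparseRieszMaximal}, simply replacing $Q_0$ by $\mathbb{R}^n$. Given any $(Q_i)_{i\in I}\in S(\mathbb{R}^n)$ with pairwise disjoint sets $E_{Q_i}$ satisfying $|Q_i|\le 2|E_{Q_i}|$ (Definition~\ref{DefnSparse}), one uses the algebraic identity $\tfrac{1}{p'}+\tfrac1q-1=\tfrac1q-\tfrac1p$ to write, for $x\in E_{Q_i}\subseteq Q_i$,
\[
\frac{\big(1-(\log|Q_i|)_-\big)^{\alpha}}{|Q_i|^{1/p'}}\int_{Q_i}|f|
=|Q_i|^{1/q}\,|Q_i|^{\frac{1}{p}-\frac{1}{q}-1}\big(1-(\log|Q_i|)_-\big)^{\alpha}\int_{Q_i}|f|
\le (2|E_{Q_i}|)^{1/q}\,M_{n(1/p-1/q),\alpha}f(x),
\]
then raises to the $q$-th power, integrates over $E_{Q_i}$, sums in $i$, and invokes the disjointness of the $E_{Q_i}$ to conclude $\sum_i\big(\tfrac{(1-(\log|Q_i|)_-)^{\alpha}}{|Q_i|^{1/p'}}\int_{Q_i}|f|\big)^q\le 2\,\|M_{n(1/p-1/q),\alpha}f\|_{L^q(\mathbb{R}^n)}^q$. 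Taking the supremum over $S(\mathbb{R}^n)$ gives $\|f\|_{SR_{p,q}\log^\alpha(\mathbb{R}^n)}\le 2^{1/q}\|M_{n(1/p-1/q),\alpha}f\|_{L^q(\mathbb{R}^n)}$, and combining the two estimates proves the asserted equality with equivalent norms.

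\textbf{Where the work is.} The only genuine obstacle is the lack of a global root cube, so that Proposition~\ref{SparseDomination2} cannot be quoted directly; the exhaustion above circumvents this, at the small cost of the remark that the cubes $[-2^k,2^k)^n$ carry precisely the ambient dyadic structure, which is exactly what makes the local maximal operators increase to the global one. An alternative, more self-contained route would be to establish an $\mathbb{R}^n$-version of Proposition~\ref{SparseDomination2} via a Calderón--Zygmund stopping-time argument performed over all dyadic levels at once (selecting, at level $2^j$, the maximal dyadic cubes on which the normalized average $\tfrac{(1-(\log|Q|)_-)^{\alpha}}{|Q|^{1/p'+1/q}}\int_Q|f|$ exceeds $2^j$, which automatically yields a sparse family) and then repeating the $L^q$-duality estimate of Theorem~\ref{ThmSparseRieszMaximal}; I would only pursue this if the exhaustion argument encountered an unexpected difficulty, which I do not anticipate.
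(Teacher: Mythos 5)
Your overall exhaustion strategy coincides with the paper's (which also works with the cubes $Q_{k}=[-2^{k},2^{k}]^{n}$ and a limiting argument), and your direct global proof of the easy estimate $\Vert f\Vert_{SR_{p,q}\log^{\alpha}(\mathbb{R}^{n})}\le 2^{1/q}\Vert M_{n(\frac1p-\frac1q),\alpha}f\Vert_{L^{q}(\mathbb{R}^{n})}$ is correct and even a little leaner than the paper's route. However, the hard direction as you wrote it rests on two claims that are false as stated, and they are precisely the points where the paper's proof invests its effort. First, the asserted pointwise monotone convergence $M_{n(\frac1p-\frac1q),\alpha,Q_{0}^{(k)}}f\uparrow M_{n(\frac1p-\frac1q),\alpha}f$ fails: the root cube $Q_{0}^{(k)}=[-2^{k},2^{k})^{n}$ is itself a competitor in the local operator \eqref{maxlam}, but it is neither a dyadic cube of $\mathbb{R}^{n}$ nor an element of $\mathcal{D}(Q_{0}^{(k+1)})$, and dyadic cubes never straddle the coordinate hyperplanes, so the global operator cannot see the root's average. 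Concretely, for $n=1$, $\lambda=\frac1p-\frac1q$, $f=\mathbf{1}_{[-1,0)}$ and $x\in(0,1)$ one has $M_{\lambda,\alpha,Q_{0}^{(k)}}f(x)=(2^{k+1})^{\lambda-1}>0=M_{\lambda,\alpha}f(x)$, and the left-hand side is strictly decreasing in $k$; for suitable $f\in L^{1}_{\text{loc}}$ with mass piled on the left half-line the local maximal functions even blow up at points where the global one vanishes. So monotone convergence cannot be invoked. What is true, and is all you need for your inequality, is the one-sided bound $M_{n(\frac1p-\frac1q),\alpha}f\le\liminf_{k}M_{n(\frac1p-\frac1q),\alpha,Q_{0}^{(k)}}f$ (every dyadic cube of $\mathbb{R}^{n}$ containing $x$ eventually belongs to $\mathcal{D}(Q_{0}^{(k)})$), after which Fatou's lemma replaces monotone convergence.

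Second, your inequality $\Vert f\Vert_{SR_{p,q}\log^{\alpha}(Q_{0}^{(k)})}\le\Vert f\Vert_{SR_{p,q}\log^{\alpha}(\mathbb{R}^{n})}$ is justified by the assertion that any sparse family of dyadic cubes contained in $Q_{0}^{(k)}$ is a sparse family in $\mathbb{R}^{n}$; this is false for families containing the root $Q_{0}^{(k)}$, which Definition \ref{DefnSparse} allows. The statement survives only up to a dimensional constant, obtained by replacing the root by its $2^{n}$ first-generation children (which are genuine dyadic cubes of $\mathbb{R}^{n}$), exactly as in Case (II) of the paper's proof of \eqref{FatouMax4}; the constant-factor version suffices for your purposes. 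With these two repairs (Fatou plus root-splitting) your argument closes, and since you handle the easy direction directly on $\mathbb{R}^{n}$ you genuinely avoid the harder halves of the paper's comparisons \eqref{CorSparSobProof1} and \eqref{FatouMax4}; your sketched fallback (a global stopping time) would need similar care, since without a root cube maximal cubes need not exist. But as written, the two limiting claims on which your main estimate relies are incorrect.
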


\begin{proof}
Consider the sequence of (not dyadic) cubes
\[
Q_{k} := [-2^{k}, 2^{k}]^{n}, \qquad k \in\mathbb{N}.
\]
According to Theorem \ref{ThmSparseRieszMaximal}, with equivalence constants
independent of $f$ and $k$,
\begin{equation}
\label{FatouMax0}\|f \mathbf{1}_{Q_{k}}\|_{SR_{p, q} \log^{\alpha}(Q_{k})}
\approx\|M_{n (\frac{1}{p} -\frac{1}{q}), \alpha, Q_{k}} (f \mathbf{1}_{Q_{k}%
})\|_{L^{q}(Q_{k})}.
\end{equation}

We claim that, for every $k\in\mathbb{N}$ and $x\in Q_{k}$,
\begin{equation}
M_{n(\frac{1}{p}-\frac{1}{q}),\alpha,Q_{k}}(f\mathbf{1}_{Q_{k}})(x)\approx
M_{n(\frac{1}{p}-\frac{1}{q}),\alpha}(f\mathbf{1}_{Q_{k}})(x),
\label{CorSparSobProof1}%
\end{equation}
(cf. \eqref{maxlam} and \eqref{maxlamRn}). Indeed, the estimate $\lesssim$
follows from the simple observation that $\mathcal{D}(Q_{k})\backslash
\{Q_{k}\}\subset\mathcal{D}(\mathbb{R}^{n})$, and the fact that the first
(dyadic) generation of $Q_{k}$, say $\{Q_{k,l}^{1}:l=1,\ldots,2^{n}\}$, gives
a pairwise disjoint decomposition of $Q_{k}$. In particular,
\[
Q_{k}=\bigcup_{l=1}^{2^{n}}Q_{k,l}^{1}%
\]
and $|Q_{k,l}^{1}|=2^{-n}|Q_{k}|$. Hence, given any $x\in Q_{k}$,
\begin{align*}
|Q_{k}|^{\frac{1}{p}-\frac{1}{q}-1}\,\int_{Q_{k}}|f(y)|\,dy  &  =2^{n(\frac
{1}{p}-\frac{1}{q}-1)}\,\sum_{l=1}^{2^{n}}|Q_{k,l}^{1}|^{\frac{1}{p}-\frac
{1}{q}-1}\int_{Q_{k,l}^{1}}|f(y)|\,dy\\
&  \leq2^{n(\frac{1}{p}-\frac{1}{q})}\,M_{n(\frac{1}{p}-\frac{1}{q}),\alpha
}(f\mathbf{1}_{Q_{k}})(x).
\end{align*}
Accordingly
\begin{align*}
M_{n(\frac{1}{p}-\frac{1}{q}),\alpha,Q_{k}}(f\mathbf{1}_{Q_{k}})(x)  &
\leq\sup_{\substack{Q\in\mathcal{D}(Q_{k})\backslash\{Q_{k}\}\\x\in
Q}}\,|Q|^{\frac{1}{p}-\frac{1}{q}-1}(1-(\log|Q|)_{-})^{\alpha}\,\int_{Q\cap
Q_{k}}|f(y)|\,dy\\
&  \hspace{1cm}+|Q_{k}|^{\frac{1}{p}-\frac{1}{q}-1}\,\int_{Q_{k}}|f(y)|\,dy\\
&  \lesssim\sup_{\substack{Q\in\mathcal{D}(\mathbb{R}^{n})\\x\in Q}%
}\,|Q|^{\frac{1}{p}-\frac{1}{q}-1}(1-(\log|Q|)_{-})^{\alpha}\,\int%
_{Q}|f(y)|\,\mathbf{1}_{Q_{k}}(y)\,dy\\
&  \hspace{1cm}+M_{n(\frac{1}{p}-\frac{1}{q}),\alpha}(f\mathbf{1}_{Q_{k}%
})(x)\\
&  \approx M_{n(\frac{1}{p}-\frac{1}{q}),\alpha}(f\mathbf{1}_{Q_{k}})(x).
\end{align*}

Next we focus on the estimate $\gtrsim$ in \eqref{CorSparSobProof1}. Consider
$x\in Q_{k},$ and $Q\in\mathcal{D}(\mathbb{R}^{n})$ with $Q\ni x$ and moreover
$Q\not \subset Q_{k}$ (indeed, if $Q\subset Q_{k}$ then $Q\in\mathcal{D}%
(Q_{k})$). We cannot assert that $Q_{k}\subset Q$ (recall that $Q_{k}$ is not
dyadic), but what is certainly true is that there exists $Q_{k}^{1}%
\in\mathcal{D}(Q_{k}),$ in the first dyadic generation (i.e., $2\,\ell
(Q_{k}^{1})=\ell(Q_{k})$), such that $Q_{k}^{1}\subset Q$ (because
$\mathcal{D}(Q_{k})\backslash\{Q_{k}\}\subset\mathcal{D}(\mathbb{R}^{n})$ and
$x\in Q_{k}$). Note that, in particular, $|Q|\geq|Q_{k}^{1}|=\ell(Q_{k}%
^{1})^{n}=2^{-n}\ell(Q_{k})^{n}=2^{-n}|Q_{k}|=2^{kn}>1$. Since $\frac{1}%
{p}-\frac{1}{q}-1<0,\ $we have
\begin{align*}
|Q|^{\frac{1}{p}-\frac{1}{q}-1}(1-(\log|Q|)_{-})^{\alpha}  &  =|Q|^{\frac
{1}{p}-\frac{1}{q}-1}\leq|Q_{k}^{1}|^{\frac{1}{p}-\frac{1}{q}-1}%
=2^{n(1+\frac{1}{q}-\frac{1}{p})}\,|Q_{k}|^{\frac{1}{p}-\frac{1}{q}-1}\\
&  =2^{n(1+\frac{1}{q}-\frac{1}{p})}\,|Q_{k}|^{\frac{1}{p}-\frac{1}{q}%
-1}(1-(\log|Q_{k}|)_{-})^{\alpha},
\end{align*}
which yields
\begin{align*}
|Q|^{\frac{1}{p}-\frac{1}{q}-1}(1-(\log|Q|)_{-})^{\alpha}\int_{Q\cap Q_{k}%
}|f(y)|\,dy  &  \leq 2^{n(1+\frac{1}{q}-\frac{1}{p})}\,|Q_{k}|^{\frac{1}{p}%
-\frac{1}{q}-1}(1-(\log|Q_{k}|)_{-})^{\alpha}\int_{Q_{k}}|f(y)|\,dy\\
&  \leq2^{n(1+\frac{1}{q}-\frac{1}{p})}\,M_{n(\frac{1}{p}%
-\frac{1}{q}),\alpha,Q_{k}}(f\mathbf{1}_{Q_{k}})(x).
\end{align*}
Consequently,
\[
M_{n(\frac{1}{p}-\frac{1}{q}),\alpha}(f\mathbf{1}_{Q_{k}})(x)\leq
2^{n(1+\frac{1}{q}-\frac{1}{p})}\,M_{n(\frac{1}{p}-\frac{1}{q}),\alpha,Q_{k}%
}(f\mathbf{1}_{Q_{k}})(x).
\]
This completes the proof of \eqref{CorSparSobProof1}.

On the other hand, since
\begin{equation}
\label{CubesStruct}Q_{k} \subset Q_{k+1} \qquad\text{and} \qquad\bigcup_{k
\in\mathbb{N}} Q_{k} = \mathbb{R}^{n},
\end{equation}
we have
\begin{equation}
\label{FatouMax1}\lim_{k \to\infty} M_{n (\frac{1}{p} - \frac{1}{q}), \alpha}
(f \mathbf{1}_{Q_{k}}) (x) \mathbf{1}_{Q_{k}} (x) = M_{n (\frac{1}{p} -
\frac{1}{q}), \alpha} f (x), \qquad x \in\mathbb{R}^{n}.
\end{equation}
Indeed, given any fixed $x \in\mathbb{R}^{n}$, we have
\[
M_{n (\frac{1}{p} - \frac{1}{q}), \alpha} (f \mathbf{1}_{Q_{k}}) (x)
\mathbf{1}_{Q_{k}} (x) \leq M_{n (\frac{1}{p} - \frac{1}{q}), \alpha} f (x),
\qquad\text{for all} \qquad k \in\mathbb{N},
\]
and (cf. \eqref{CubesStruct})
\begin{equation}
\label{FatouMax2}M_{n (\frac{1}{p} - \frac{1}{q}), \alpha} (f \mathbf{1}%
_{Q_{k}}) (x) \mathbf{1}_{Q_{k}} (x) \leq M_{n (\frac{1}{p} - \frac{1}{q}),
\alpha} (f \mathbf{1}_{Q_{k+1}}) (x) \mathbf{1}_{Q_{k+1}} (x).
\end{equation}
By the monotone convergence theorem for sequences of real numbers, we derive
\begin{align*}
\lim_{k \to\infty} M_{n (\frac{1}{p} - \frac{1}{q}), \alpha} (f \mathbf{1}%
_{Q_{k}}) (x) \mathbf{1}_{Q_{k}} (x)  &  = \sup_{k \in\mathbb{N}} M_{n
(\frac{1}{p} - \frac{1}{q}), \alpha} (f \mathbf{1}_{Q_{k}}) (x)\\
&  = \sup_{\substack{Q\in\mathcal{D}(\mathbb{R}^{n})\\x\in Q}}
|Q|^{\frac{1}{p}-\frac{1}{q}-1} \big(1- (\log|Q|)_{-} \big)^{\alpha} \sup_{k
\in\mathbb{N}} \int_{Q \cap Q_{k}}|f(y)| \,dy\\
&  = M_{n (\frac{1}{p} - \frac{1}{q}), \alpha} f (x),
\end{align*}
where we have used \eqref{CubesStruct} in the last step.

It follows from \eqref{CorSparSobProof1} that
\[
\Vert M_{n(\frac{1}{p}-\frac{1}{q}),\alpha,Q_{k}}(f\mathbf{1}_{Q_{k}}%
)\Vert_{L^{q}(Q_{k})}\approx\Vert M_{n(\frac{1}{p}-\frac{1}{q}),\alpha
}(f\mathbf{1}_{Q_{k}})\mathbf{1}_{Q_{k}}\Vert_{L^{q}(\mathbb{R}^{n})},
\]
uniformly with respect to $k$, consequently applying the monotone convergence
theorem (cf. \eqref{FatouMax1} and \eqref{FatouMax2}):
\begin{equation}
\lim_{k\rightarrow\infty}\Vert M_{n(\frac{1}{p}-\frac{1}{q}),\alpha,Q_{k}%
}(f\mathbf{1}_{Q_{k}})\Vert_{L^{q}(Q_{k})}\approx\Vert M_{n(\frac{1}{p}%
-\frac{1}{q}),\alpha}f\Vert_{L^{q}(\mathbb{R}^{n})}. \label{FatouMax3}%
\end{equation}

Next we deal with the left-hand side of \eqref{FatouMax0}. We claim that
\begin{equation}
\label{FatouMax4}\|f \mathbf{1}_{Q_{k}}\|_{SR_{p, q} \log^{\alpha}(Q_{k})}
\approx\|f \mathbf{1}_{Q_{k}}\|_{SR_{p, q} \log^{\alpha}(\mathbb{R}^{n})}%
\end{equation}
uniformly with respect to $k$ and $f$.

The estimate $\lesssim$ can be obtained as follows. Given any $(Q_{i})_{i\in
I}\in S(Q_{k})$, there are two possible scenarios. (I) $Q_{i}\neq Q_{k}$ for
every $i\in I$. Then $(Q_{i})_{i\in I}\in S(\mathbb{R}^{n}),$ since
$\mathcal{D}(Q_{k})\backslash\{Q_{k}\}\subset\mathcal{D}(\mathbb{R}^{n})$.
Clearly, this implies $\Vert f\mathbf{1}_{Q_{k}}\Vert_{SR_{p,q}\log^{\alpha
}(Q_{k})}\leq\Vert f\mathbf{1}_{Q_{k}}\Vert_{SR_{p,q}\log^{\alpha}%
(\mathbb{R}^{n})}$. (II) Suppose now that there is $i_{0}\in I$ such that
$Q_{i_{0}}=Q_{k}$. In particular, $(Q_{i})_{i\in I\backslash\{i_{0}\}}\in
S(\mathbb{R}^{n})$ and $Q_{i}\subset Q_{k}$ for $i\in I\backslash\{i_{0}\}$.
Now, the first dyadic decomposition of $Q_{k}$ (i.e., $\{Q_{k,l}%
^{1}:l=1,\ldots,2^{n}\}$) is formed by pairwise disjoint cubes in
$\mathcal{D}(\mathbb{R}^{n})$ (so, in particular, $\{Q_{k,l}^{1}%
:l=1,\ldots,2^{n}\}\in S(\mathbb{R}^{n})$) with $|Q_{k,l}^{1}|=2^{-n}|Q_{k}|$.
Hence, in this case, we can split the sum related to the $SR_{p,q}\log
^{\alpha}(Q_{k})$-norm as
\begin{align*}
\sum_{i\in I}\bigg(\frac{(1-(\log|Q_{i}|)_{-})^{\alpha}}{|Q_{i}|^{\frac
{1}{p^{\prime}}}}\,\int_{Q_{i}}|f(y)|\,dy\bigg)^{q}  &  =\\
&  \hspace{-5cm}\sum_{\substack{i\in I\\i\neq i_{0}}}\bigg(\frac
{(1-(\log|Q_{i}|)_{-})^{\alpha}}{|Q_{i}|^{\frac{1}{p^{\prime}}}}\,\int_{Q_{i}%
}|f(y)|\,dy\bigg)^{q}+\bigg(\frac{1}{|Q_{k}|^{\frac{1}{p^{\prime}}}}%
\,\int_{Q_{k}}|f(y)|\,dy\bigg)^{q}\\
&  \hspace{-5cm}\leq\Vert f\mathbf{1}_{Q_{k}}\Vert_{SR_{p,q}\log^{\alpha
}(\mathbb{R}^{n})}^{q}+\frac{1}{2^{\frac{nq}{p^{\prime}}}}\,\bigg(\sum
_{l=1}^{2^{n}}\frac{1}{|Q_{k,l}^{1}|^{\frac{1}{p^{\prime}}}}\int_{Q_{k,l}^{1}%
}|f(y)|\,dy\bigg)^{q}\\
& \hspace{-5cm}\lesssim \Vert f\mathbf{1}_{Q_{k}}\Vert_{SR_{p,q}\log^{\alpha
}(\mathbb{R}^{n})}^{q}+ \sum_{l=1}^{2^{n}} \bigg(\frac{1}{|Q_{k,l}^{1}|^{\frac{1}{p^{\prime}}}}\int_{Q_{k,l}^{1}%
}|f(y)|\,dy\bigg)^q\\
&  \hspace{-5cm}\lesssim\Vert f\mathbf{1}_{Q_{k}}\Vert_{SR_{p,q}\log^{\alpha
}(\mathbb{R}^{n})}^{q}.
\end{align*}
Therefore, taking the supremum over all possible $(Q_{i})_{i\in I}\in
S(Q_{k})$, we achieve
\[
\Vert f\mathbf{1}_{Q_{k}}\Vert_{SR_{p,q}\log^{\alpha}(Q_{k})}\lesssim\Vert
f\mathbf{1}_{Q_{k}}\Vert_{SR_{p,q}\log^{\alpha}(\mathbb{R}^{n})},
\]
i.e., the estimate $\lesssim$ in \eqref{FatouMax4} is shown.

To deal with the converse estimate, for any $\mathcal{Q} = (Q_{i})_{i \in I}
\in S(\mathbb{R}^{n})$, we consider the index set
\[
I_{k} := \{i \in I : Q_{i} \subset Q_{k}\}.
\]
Therefore we can split
\begin{align}
\sum_{i \in I} \bigg( \frac{(1-(\log|Q_{i}|)_{-})^{\alpha}}{|Q_{i}|^{\frac
{1}{p^{\prime}}}} \, \int_{Q_{i}} |f(y)| \, \mathbf{1}_{Q_{k}}(y) \, dy
\bigg)^{q}  &  =\nonumber\\
&  \hspace{-6cm} \sum_{i \in I_{k}} \bigg( \frac{(1-(\log|Q_{i}|)_{-}%
)^{\alpha}}{|Q_{i}|^{\frac{1}{p^{\prime}}}} \, \int_{Q_{i}} |f(y)| \, dy
\bigg)^{q}\nonumber\\
&  \hspace{-5cm}+ \sum_{i \in I \backslash I_{k}} \bigg( \frac{(1-(\log
|Q_{i}|)_{-})^{\alpha}}{|Q_{i}|^{\frac{1}{p^{\prime}}}} \, \int_{Q_{i} \cap
Q_{k}} |f(y)| \, dy \bigg)^{q}\nonumber\\
&  \hspace{-6cm} =: R_{1} + R_{2}. \label{SparseSumDecomp1}%
\end{align}

Note that $(Q_{i})_{i \in I_{k}} \in S(Q_{k})$ (since $(Q_{i})_{i \in I_{k}}
\in S(\mathbb{R}^{n}) \cap\mathcal{D}(Q_{k})$). Accordingly
\begin{equation}
\label{SparseSumDecomp2}R_{1} \leq\|f \mathbf{1}_{Q_{k}}\|_{SR_{p, q}
\log^{\alpha}(Q_{k})}^{q}.
\end{equation}

Concerning $R_{2}$, we argue as follows. Let $i\in I\backslash I_{k}$, i.e.,
$Q_{i}\not \subset Q_{k}$. Assume further that $Q_{i}\cap Q_{k}\neq\emptyset$.
Note that $Q_{k}$ is not a dyadic cube in $\mathbb{R}^{n}$, but its first
dyadic generation $\{Q_{k,l}^{1}:l=1,\ldots,2^{n}\}$ is formed by dyadic cubes
in $\mathbb{R}^{n}$. Since $Q_{k}$ can be expressed as the disjoint union of
the cubes $Q_{k,l}^{1}$, we can assert that there exists a unique $l(i)$ such
that $Q_{i}\cap Q_{k,l(i)}^{1}\neq\emptyset$. By the structure of dyadic cubes
in $\mathbb{R}^{n}$, we have either $Q_{i}\subset Q_{k,l(i)}^{1}$ or
$Q_{k,l(i)}^{1}\subset Q_{i}$. The former is not possible; otherwise,
$Q_{i}\subset Q_{k}$ but $i\not \in I_{k}$. Hence $Q_{k,l(i)}^{1}\subset
Q_{i}$, therefore
\begin{equation}
\int_{Q_{i}\cap Q_{k}}|f(y)|\,dy=\int_{Q_{k,l(i)}^{1}}|f(y)|\,dy.
\label{SparseSumDecomp3}%
\end{equation}

For $l \in\{1, \ldots, 2^{n}\}$, we define
\[
\mathcal{Q}_{k, l} := \{Q_{i} : i \in I \backslash I_{k} \quad\text{and} \quad
Q^{1}_{k, l} \subset Q_{i}\}.
\]
The above argument leads to
\begin{equation}
\label{SparseSumDecomp4}(Q_{i})_{i \in I \backslash I_{k}} = \bigcup
_{l=1}^{2^{n}} \mathcal{Q}_{k, l}.
\end{equation}
Moreover, since the $Q_{i}$'s are dyadic cubes in $\mathbb{R}^{n}$, the
elements of $\mathcal{Q}_{k, l} = \{Q_{1}, Q_{2}, \ldots\}$ can be ordered in
such a way that
$
Q^{1}_{k, l} \subset Q_{1} \subset Q_{2} \subset\ldots
$
We cannot exclude the possibility that some of the cubes in $\mathcal{Q}_{k,
l}$ coincide, but the number of these cubes is uniformly bounded by the sparse
constant $\eta$ in Definition \ref{DefnSparse}. Therefore, by
\eqref{SparseSumDecomp3} and \eqref{SparseSumDecomp4},
\begin{align}
R_{2}  &  \leq\sum_{i \in I \backslash I_{k}} \bigg( \frac{1}{|Q_{i}%
|^{\frac{1}{p^{\prime}}}} \, \int_{Q^{1}_{k, l(i)}} |f(y)| \, dy
\bigg)^{q}\nonumber\\
&  = \sum_{l=1}^{2^{n}} \bigg(\int_{Q^{1}_{k, l}} |f(y)| \, dy \bigg)^{q}
\sum_{Q_{i} \in\mathcal{Q}_{k, l}} \frac{1}{|Q_{i}|^{\frac{q}{p^{\prime}}}%
}\nonumber\\
&  \lesssim\sum_{l=1}^{2^{n}} \bigg(\int_{Q^{1}_{k, l}} |f(y)| \, dy \bigg)^{q}
\sum_{j=k}^{\infty}2^{-j \frac{n q}{p^{\prime}}}\nonumber\\
&  \approx\sum_{l=1}^{2^{n}} \bigg(\int_{Q^{1}_{k, l}} |f(y)| \, dy \bigg)^{q}
2^{-k \frac{n q}{p^{\prime}}}\nonumber\\
&  = \sum_{l=1}^{2^{n}} \bigg(\frac{1}{|Q^{1}_{k, l}|^{\frac{1}{p^{\prime}}}}
\int_{Q^{1}_{k, l}} |f(y)| \, dy \bigg)^{q}\nonumber\\
&  \leq\|f \mathbf{1}_{Q_{k}}\|^{q}_{SR_{p, q} \log^{\alpha}(Q_{k})}.
\label{SparseSumDecomp5}%
\end{align}

Combining \eqref{SparseSumDecomp1}, \eqref{SparseSumDecomp2} and
\eqref{SparseSumDecomp5},
\[
\sum_{i\in I}\bigg(\frac{(1-(\log|Q_{i}|)_{-})^{\alpha}}{|Q_{i}|^{\frac
{1}{p^{\prime}}}}\,\int_{Q_{i}}|f(y)|\,\mathbf{1}_{Q_{k}}(y)\,dy\bigg)^{q}%
\lesssim\Vert f\mathbf{1}_{Q_{k}}\Vert_{SR_{p,q}\log^{\alpha}(Q_{k})}^{q},
\]
for all $(Q_{i})_{i\in I}\in S(\mathbb{R}^{n})$. In particular,
\[
\Vert f\mathbf{1}_{Q_{k}}\Vert_{SR_{p,q}\log^{\alpha}(\mathbb{R}^{n})}%
\lesssim\Vert f\mathbf{1}_{Q_{k}}\Vert_{SR_{p,q}\log^{\alpha}(Q_{k})},
\]
completing the proof of \eqref{FatouMax4}.

By the lattice property of sparse RMT spaces, we have (recall
\eqref{CubesStruct})
\[
\Vert f\mathbf{1}_{Q_{k}}\Vert_{SR_{p,q}\log^{\alpha}(\mathbb{R}^{n})}%
\leq\Vert f\mathbf{1}_{Q_{k+1}}\Vert_{SR_{p,q}\log^{\alpha}(\mathbb{R}^{n})}%
\]
and
\[
\Vert f\mathbf{1}_{Q_{k}}\Vert_{SR_{p,q}\log^{\alpha}(\mathbb{R}^{n})}%
\leq\Vert f\Vert_{SR_{p,q}\log^{\alpha}(\mathbb{R}^{n})},\qquad k\in
\mathbb{N}.
\]
Hence, applying the monotone convergence theorem and the dominated convergence
theorem, we obtain
\begin{align}
\lim_{k\rightarrow\infty}\Vert f\mathbf{1}_{Q_{k}}\Vert_{SR_{p,q}\log^{\alpha
}(\mathbb{R}^{n})}  &  =\sup_{k\in\mathbb{N}}\Vert f\mathbf{1}_{Q_{k}}%
\Vert_{SR_{p,q}\log^{\alpha}(\mathbb{R}^{n})}\nonumber\\
&  =\sup_{(Q_{i})_{i\in I}\in S(\mathbb{R}^{n})}\sup
_{k\in\mathbb{N}}\bigg\{\sum_{i\in I}\bigg[\frac{\big(1-(\log|Q_{i}%
|)_{-}\big)^{\alpha}}{|Q_{i}|^{\frac{1}{p^{\prime}}}}\,\int_{Q_{i}\cap Q_{k}%
}|f|\bigg]^{q}\bigg\}^{\frac{1}{q}}\nonumber\\
&  =\sup_{(Q_{i})_{i\in I}\in S(\mathbb{R}^{n})}\lim
_{k\rightarrow\infty}\bigg\{\sum_{i\in I}\bigg[\frac{\big(1-(\log|Q_{i}%
|)_{-}\big)^{\alpha}}{|Q_{i}|^{\frac{1}{p^{\prime}}}}\,\int_{Q_{i}\cap Q_{k}%
}|f|\bigg]^{q}\bigg\}^{\frac{1}{q}}\label{SparseSumDecomp6}\\
&  =\sup_{(Q_{i})_{i\in I}\in S(\mathbb{R}^{n})}\bigg\{\sum
_{i\in I}\bigg[\frac{\big(1-(\log|Q_{i}|)_{-}\big)^{\alpha}}{|Q_{i}|^{\frac
{1}{p^{\prime}}}}\,\int_{Q_{i}}|f|\bigg]^{q}\bigg\}^{\frac{1}{q}}\nonumber\\
&  =\Vert f\Vert_{SR_{p,q}\log^{\alpha}(\mathbb{R}^{n}%
)}.\nonumber
\end{align}
Finally, taking limits on both sides of \eqref{FatouMax0} as $k\rightarrow
\infty,$ and invoking \eqref{FatouMax3} and \eqref{SparseSumDecomp6}, we
achieve the desired estimate
\[
\Vert f\Vert_{SR_{p,q}\log^{\alpha}(\mathbb{R}^{n})}\approx\Vert M_{n(\frac
{1}{p}-\frac{1}{q}),\alpha}f\Vert_{L^{q}(\mathbb{R}^{n})}.
\]

\end{proof}

\section{A sparse approach to $H^{-1}$-stability\label{Section1.3}}

As already mentioned in Section \ref{sec:functionsbackground}, one of the main features of the theory of sparse function spaces lies in the
fact that, unlike their classical parent spaces, they often admit complete
explicit characterizations. Indeed, Theorem \ref{CorSparSob}  provides
us with the following surprising (informal) characterization: \emph{$SR_{p,q}$
spaces can be identified with negative Sobolev spaces}. Before we give the
proof of this result, we introduce some basic notation.

Consider the \emph{Riesz potential operators}
$I_{\lambda}, \, \lambda \in (0, n),$ formally defined, for $f\in L_{\text{loc}}^{1}(\mathbb{R}%
^{n}),$ by
\begin{equation*}
I_{\lambda}f(x):=\int_{\mathbb{R}^{n}}\frac{f(y)}{|x-y|^{n-\lambda}%
}\,dy,\qquad x\in\mathbb{R}^{n}.
\end{equation*}
For $1<q<\infty,$ we let
\begin{equation}
H_{q}^{-\lambda}(\mathbb{R}^{n}):=\{f\in L_{\text{loc}}^{1}(\mathbb{R}%
^{n}):\left\Vert f\right\Vert _{H_{q}^{-\lambda}}=\left\Vert I_{\lambda
}f\right\Vert _{L^{q}(\mathbb{R}^{n})}<\infty\},\label{rpot0}%
\end{equation}
the \emph{Riesz potential space}, and its associated lattice%
\begin{equation}
\mathcal{H}_{q}^{-\lambda}(\mathbb{R}^{n}):=\{f\in L_{\text{loc}}%
^{1}(\mathbb{R}^{n}):\left\Vert f\right\Vert _{\mathcal{H}_{q}^{-\lambda}%
}=\left\Vert I_{\lambda} (\left\vert f\right\vert) \right\Vert _{L^{q}%
(\mathbb{R}^{n})}<\infty\}.\label{rpot1}%
\end{equation}
It is plain that%
\begin{equation*}
\mathcal{H}_{q}^{-\lambda}(\mathbb{R}^{n})\subset H_{q}^{-\lambda}%
(\mathbb{R}^{n}).
\end{equation*}
Furthermore, as it is customary, we shall suppress the subindex $q=2$ and
simply write%
\begin{equation}\label{SLat}
\mathcal{H}^{-\lambda}(\mathbb{R}^{n}):=\mathcal{H}_{2}^{-\lambda}%
(\mathbb{R}^{n})\text{ \ \ (resp. }H^{-\lambda}(\mathbb{R}^{n}):=H_{2}%
^{-\lambda}(\mathbb{R}^{n})).
\end{equation}


%
%

\subsection{Proof of Theorem \ref{CorSparSob}}

In order to be able to use a result of Muckenhoupt-Wheeden \cite{MW74} we
introduce the fractional maximal\footnote{Compare with the dyadic local version $M_{\lambda,Q_0}$ defined in (\ref{maxlam}).} operator, defined for $f\in L_{\text{loc}}%
^{1}(\mathbb{R}^{n})$,%
\begin{equation*}
\mathcal{M}_{\lambda}f(x):=\sup_{x\in Q}|Q|^{\frac{\lambda}{n}-1}\int%
_{Q}|f(y)|\,dy,\qquad x\in\mathbb{R}^{n},
\end{equation*}
where the supremum runs over all (not necessarily dyadic) cubes $Q$ in
$\mathbb{R}^{n},$ with $x\in Q$. It is plain (cf. \eqref{maxlamRn}) that
$M_{\lambda,0}f(x)\leq\mathcal{M}_{\lambda}f(x),$ and, although this pointwise
inequality cannot be reversed, it is well-known that by the $1/3$-translation
trick (cf. \cite{Ch88}) we have the equivalence%
\begin{equation}
\Vert M_{\lambda,0}f\Vert_{L^{q}(\mathbb{R}^{n})}\approx\Vert\mathcal{M}%
_{\lambda}f\Vert_{L^{q}(\mathbb{R}^{n})}.\label{vale}%
\end{equation}

Putting together Theorem \ref{ThmSparseRieszMaximalRn} (with $\alpha=0$ and
$\lambda= n (\frac{1}{p} -\frac{1}{q})$) and \eqref{vale}, we get%
\begin{equation}
\Vert f\Vert_{SR_{p,q}(\mathbb{R}^{n})}\approx\Vert\mathcal{M}_{n(\frac{1}%
{p}-\frac{1}{q})}f\Vert_{L^{q}(\mathbb{R}^{n})}. \label{CorSparSobProof6}%
\end{equation}
For the maximal operator $\mathcal{M}_{\lambda}$ we have the trivial estimate
\[
\mathcal{M}_{\lambda}f(x)\leq c_{n} I_{\lambda}(|f|)(x),\qquad x\in
\mathbb{R}^{n},
\]
where $c_{n}$ depends only on $n$. In fact, via the Muckenhoupt--Wheeden
theorem \cite[Theorem 1]{MW74}, we achieve, for $0<q<\infty,$
\begin{equation}
\Vert\mathcal{M}_{\lambda}f\Vert_{L^{q}(\mathbb{R}^{n})}\approx\Vert
I_{\lambda}(|f|)\Vert_{L^{q}(\mathbb{R}^{n})}. \label{CorSparSobProof7}%
\end{equation}

Combining \eqref{rpot1}, \eqref{CorSparSobProof6} and \eqref{CorSparSobProof7}
we arrive at
\[
\Vert f\Vert_{SR_{p,q}(\mathbb{R}^{n})}\approx\Vert f\Vert_{\mathcal{H}%
_{q}^{-n(\frac{1}{p}-\frac{1}{q})}(\mathbb{R}^{n})},
\]
as we wished to show. \qed
%
%
%


Our next result refers to the limiting case $p=q$ in Theorem \ref{CorSparSob}
and it can be viewed as the sparse counterpart of the Riesz's theorem (cf.
\cite[p. 1062]{DMComptes})%
\begin{equation}
R_{p,p}(\mathbb{R}^{n})=L^{p}(\mathbb{R}^{n}).\label{rieszclasico}%
\end{equation}

\begin{theorem}
\label{CorSparLeb} Let $1<p<\infty$. Then
\[
SR_{p,p}(\mathbb{R}^{n})=L^{p}(\mathbb{R}^{n}).
\]

\end{theorem}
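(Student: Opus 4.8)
\textbf{Proof proposal for Theorem \ref{CorSparLeb}.} The plan is to derive this as the limiting endpoint $p=q$ of the maximal-operator characterization from Theorem \ref{ThmSparseRieszMaximalRn}, specialized to $\lambda = n(\frac1p-\frac1q)=0$ and $\alpha=0$. Indeed, with these parameters the fractional maximal operator $M_{0,0}$ appearing in \eqref{maxlamRn} is exactly the classical (dyadic) Hardy--Littlewood maximal operator $M$, so Theorem \ref{ThmSparseRieszMaximalRn} yields
\[
\Vert f\Vert_{SR_{p,p}(\mathbb{R}^{n})}\approx\Vert Mf\Vert_{L^{p}(\mathbb{R}^{n})},
\]
with constants independent of $f$. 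First I would invoke this equivalence; then the result follows from the classical Hardy--Littlewood maximal theorem, which for $1<p<\infty$ gives $\Vert Mf\Vert_{L^p(\mathbb{R}^n)}\approx \Vert f\Vert_{L^p(\mathbb{R}^n)}$ (the bound $\lesssim$ is the maximal theorem, and $\gtrsim$ is trivial since $|f|\le Mf$ a.e. by the Lebesgue differentiation theorem). Chaining the two equivalences gives $SR_{p,p}(\mathbb{R}^n)=L^p(\mathbb{R}^n)$ with equivalent norms.

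One subtlety to address carefully: Theorem \ref{ThmSparseRieszMaximalRn} is stated under assumption \eqref{AssumptionMon}, which in the case $p=q$ requires $\alpha\le 0$. Here we are in exactly that borderline case with $\alpha=0$, so the hypothesis is satisfied and the theorem applies directly — I would just note this explicitly so the reader sees we are not overstepping the stated range. It is also worth recalling, as the preceding remark in the excerpt points out, that for $p=q$ and $\alpha>0$ the space collapses to $\{0\}$, which is why the statement is confined to $\alpha=0$; this motivates viewing Theorem \ref{CorSparLeb} as the genuine sparse analogue of Riesz's identity \eqref{rieszclasico}.

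The main (and essentially only) obstacle is bookkeeping rather than substance: making sure the reduction $\lambda=0$ is legitimate and that the maximal operator in \eqref{maxlamRn} with $\lambda=\alpha=0$ is literally $M$, the dyadic Hardy--Littlewood maximal function, so that the $L^p$-boundedness for $p>1$ is available off the shelf. Alternatively — and this gives a slightly more self-contained route that avoids even mentioning the $1/3$-trick — one can argue directly from the sparse domination in Proposition \ref{SparseDomination2} with $p=q$, $\alpha=0$: the constant there is $1$, so \eqref{SparseDominationEq2} reads $M_{0,0}f(x)\le 2\sum_{i\in I}(|Q_i|^{-1}\int_{Q_i}|f|)\mathbf{1}_{Q_i}(x)$ for a sparse family $(Q_i)$, and one estimates the $L^p$-norm of the right-hand side by duality and the $L^{p'}$-boundedness of $M$ exactly as in the proof of Theorem \ref{ThmSparseRieszMaximal}, while the reverse inequality $\Vert f\Vert_{SR_{p,p}}\le 2\Vert f\Vert_{L^p}$ comes from the same disjointness-of-$E_{Q_i}$ computation performed there. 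Either way the proof is short; I would present the first (cleaner) route and remark on the second.
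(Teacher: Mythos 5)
Your proposal is correct and matches the paper's proof essentially verbatim: both invoke Theorem \ref{ThmSparseRieszMaximalRn} with $p=q$ and $\alpha=0$ to reduce to $\Vert Mf\Vert_{L^p(\mathbb{R}^n)}\approx\Vert f\Vert_{L^p(\mathbb{R}^n)}$ via the Hardy--Littlewood maximal theorem. The extra care you take in checking hypothesis \eqref{AssumptionMon} at the borderline $p=q$, $\alpha=0$ is worth keeping, but the approach is the same one the paper uses.
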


\begin{proof}
This is an immediate consequence of Theorem \ref{ThmSparseRieszMaximalRn}
(with $p=q$ and $\alpha=0$) and the classical Hardy--Littlewood maximal
theorem:
\[
\Vert Mf\Vert_{L^{p}(\mathbb{R}^{n})}\lesssim\Vert f\Vert_{L^{p}(\mathbb{R}%
^{n})},\qquad p>1.
\]

\end{proof}

\subsection{On the difference between $R_{1,2}(\mathbb{R}^{2})$ and
$SR_{1,2}(\mathbb{R}^{2})$\label{sec:lions}}

In view of Theorem \ref{CorSparLeb} and \eqref{rieszclasico},
\[
SR_{p,p}(\mathbb{R}^{n})=L^{p}(\mathbb{R}^{n})=R_{p,p}(\mathbb{R}^{n}%
),\qquad1<p<\infty.
\]
One may be tempted to think that $SR_{p,q}(\mathbb{R}^{n})=R_{p,q}%
(\mathbb{R}^{n})$ for general values of $p$ and $q$. However, this is far from
being true. Next we concentrate on the most relevant case for the purposes of
this paper, i.e., we will show that the embedding
\[
SR_{1,2}(\mathbb{R}^{2})\subset R_{1,2}(\mathbb{R}^{2})
\]
is strict, in the sense that,
\begin{equation}
SR_{1,2}(\mathbb{R}^{2})\neq R_{1,2}(\mathbb{R}^{2}).\label{GoalSR12}%
\end{equation}
As a by-product (cf. \eqref{SobIde})
\begin{equation}
R_{1,2}(\mathbb{R}^{2})\nsubseteq H^{-1}(\mathbb{R}^{2}).\label{agregada}%
\end{equation}

We shall use an elementary but indirect method: It is well known (see, e.g.,
\cite[p. 141]{Lio96}) that the largest rearrangement invariant space embedded
in $H_{\text{loc}}^{-1}(\mathbb{R}^{2})$ is the \emph{Lorentz space}%
\footnote{Without loss of generality, we may assume that $|Q_{0}|=1$.}
\[
L^{(1,2)}(Q_{0}):=\bigg\{f:\left\Vert f\right\Vert _{L^{(1,2)}(Q_{0})}=\left[
\int_{0}^{1}(tf^{\ast\ast}(t))^{2} \, \frac{dt}{t}\right]  ^{\frac{1}{2}}%
<\infty\bigg\}.
\]
As usual, here $f^{\ast}$ is the non-increasing rearrangement of a measurable
function $f$ and $f^{\ast\ast}$ its maximal function, $f^{\ast\ast}%
(t)=t^{-1}\int_{0}^{t}f^{\ast}(s)\,ds$. It is easy to see that\footnote{Using
that $t\mapsto tf^{**}(t)$ is an increasing function, we have, for every
$u\in(0,1)$,
\[
\Vert f\Vert_{L^{(1,2)}(Q_{0})}\geq\left[  \int_{u}^{1}(tf^{**}(t))^{2}%
\frac{dt}{t}\right]  ^{\frac{1}{2}}\geq(-\log u)^{\frac{1}{2}}uf^{**}(u).
\]
}
\[
L^{(1,2)}(Q_{0})\subset L^{(1,\infty)}(\log L)^{\frac{1}{2}}(Q_{0}),
\]
where $L^{(1,\infty)}(\log L)^{\frac{1}{2}}(Q_{0})$ is the
\emph{Lorentz--Zygmund space} defined by\footnote{A basic reference to
Lorentz--Zygmund spaces is \cite{BR80}.}
\[
\left\Vert f\right\Vert _{L^{(1,\infty)}(\log L)^{\frac{1}{2}}(Q_{0})}%
:=\sup_{0<t<1}t(1-\log t)^{\frac{1}{2}}f^{\ast\ast}(t);
\]
Then 
\[
L^{(1,\infty)}(\log L)^{\frac{1}{2}}(Q_{0})\nsubseteq H_{\text{loc}}%
^{-1}(\mathbb{R}^{2}),
\]
which in turn yields (cf. Theorem \ref{CorSparSob})
\begin{equation}
L^{(1,\infty)}(\log L)^{\frac{1}{2}}(Q_{0})\nsubseteq SR_{1,2,\text{loc}%
}(\mathbb{R}^{2}).\label{Coin1}%
\end{equation}
On the other hand, by the Hardy--Littlewood inequality for rearrangements (see
e.g. \cite[Lemma 2.1, p. 44]{BS88}), and (\ref{Coin1}),
\begin{align*}
\left\Vert f\right\Vert _{R_{1,2}(Q_{0})}^{2} &  =\sup_{(Q_{i})_{i\in I}\in
\Pi(Q_{0})}\sum_{i\in I}\left(  \int_{Q_{i}}\left\vert f\right\vert \right)
^{2}\\
&  \leq\sup_{(Q_{i})_{i\in I}\in\Pi(Q_{0})}\sum_{i\in I} \bigg(\int_{0}^{|Q_{i}%
|}f^{\ast} \bigg)\bigg(  \int_{Q_{i}}\left\vert f\right\vert \bigg)  \\
&  =\sup_{(Q_{i})_{i\in I}\in\Pi(Q_{0})}\sum_{i\in I}|Q_{i}|\,f^{\ast\ast
}(|Q_{i}|)\int_{Q_{i}}\left\vert f\right\vert \\
&  \leq\left\Vert f\right\Vert _{L^{(1,\infty)}(\log L)^{\frac{1}{2}}(Q_{0}%
)}\sup_{(Q_{i})\in\Pi(Q_{0})}\sum_{i\in I}(1-\log|Q_{i}|)^{-\frac{1}{2}}%
\int_{Q_{i}}\left\vert f\right\vert \\
&  \leq\left\Vert f\right\Vert _{L^{(1,\infty)}(\log L)^{\frac{1}{2}}(Q_{0}%
)}\sup_{(Q_{i})_{i\in I}\in\Pi(Q_{0})}\sum_{i\in I}\int_{Q_{i}}\left\vert
f\right\vert \\
&  \leq\left\Vert f\right\Vert _{L^{(1,\infty)}(\log L)^{\frac{1}{2}}(Q_{0}%
)}\left\Vert f\right\Vert _{L^{1}(Q_{0})}\\
&  \leq\left\Vert f\right\Vert _{L^{(1,\infty)}(\log L)^{\frac{1}{2}}(Q_{0}%
)}^{2}.
\end{align*}
It follows that
\begin{equation}
L^{(1,\infty)}(\log L)^{\frac{1}{2}}(Q_{0})\subset R_{1,2,\text{loc}%
}(\mathbb{R}^{2}).\label{Coin2}%
\end{equation}
Consequently, \eqref{GoalSR12} now follows from \eqref{Coin1} and \eqref{Coin2}.

\subsection{Proof of Theorem \ref{teo:compacto}\label{Section4.1}}

(i):  Let $Q_0$ be a cube. Recalling that
\begin{equation}
\Vert f\Vert_{SR_{\frac{2n}{n+2},2}(Q_{0})}=\sup_{(Q_{i})_{i\in I}\in
S(Q_{0})}\left\{  \sum_{i\in I}\bigg(|Q_{i}|^{\frac{1}{n}-\frac{1}{2}}%
\int_{Q_{i}}|f|\bigg)^{2}\right\}  ^{\frac{1}{2}},\label{Compactness3+}%
\end{equation}
we observe $s_1 (f) = \|f\|_{SR_{\frac{2 n}{n+2}, 2} (Q_0)}$ (cf. \eqref{spinf}). Then
$$
	s_1(X) = \sup_{\|f\|_{X(Q_0)} \leq 1}  \|f\|_{SR_{\frac{2 n}{n+2}, 2} (Q_0)} < \infty \iff X(Q_0) \hookrightarrow SR_{\frac{2 n}{n+2}, 2} (Q_0). 
$$
The desired assertions follow immediately from \eqref{SobIde} and \eqref{SobIde2}.


(ii): We need to introduce some notation: Given $\mathcal{Q}=(Q_{i})_{i\in
I}\in S(Q_{0})$, note that 
$\mathcal{Q}$ can be split as $\mathcal{Q}=\cup_{k=0}^{\infty}\mathbb{D}%
_{k;Q_{0}}(\mathcal{Q})$, where $\mathbb{D}_{k;Q_{0}}:=\{Q \in \mathcal{D}(Q_{0}): \ell(Q) = 2^{-k}\ell(Q_{0})\}$,
$\mathbb{D}_{k; Q_0}(\mathcal{Q)}:=\mathbb{D}_{k;Q_{0}}\cap\mathcal{Q}$.  When there is no danger of confusion,  we use the simplified notation $\mathbb{D}_{k}$  and $\mathbb{D}_{k}(\mathcal{Q)}$. By construction, $\mathbb{D}_{k}(\mathcal{Q})$
is formed by pairwise disjoint cubes (i.e., $\mathbb{D}_{k}%
(\mathcal{Q})\subset\Pi(Q_{0})$), and for $Q_{i}\in\mathbb{D}_{k
}(\mathcal{Q}),$ we have $|Q_{i}|=2^{-kn}|Q_{0}|$.

Assume that $\lim_{N \to \infty} s_N(X) =0$, i.e., given any $\varepsilon>0$ there exists $N_{0}\in\mathbb{N}$,
such that for all $N>N_{0}$
\begin{equation}
\sup_{\left\Vert f\right\Vert _{X(Q_{0})}} s_{N}(f)\leq\varepsilon.\label{Compactness31+}%
\end{equation}
Let $\mathcal{Q}=(Q_{i})_{i\in I}\in S(Q_{0})$ and let $f\in X(Q_{0})$ be such
that $\left\Vert f\right\Vert _{X(Q_{0})}\leq1.$ Then
\begin{align}
\left\{  \sum_{i\in I}\bigg(|Q_{i}|^{\frac{1}{n}-\frac{1}{2}}\int_{Q_{i}%
}|f|\bigg)^{2}\right\}  ^{\frac{1}{2}} &  =\left\{  \sum_{k=0}^{\infty}%
\sum_{i\in I:Q_{i}\in\mathbb{D}_{k}(\mathcal{Q})}\bigg(|Q_{i}|^{\frac{1}%
{n}-\frac{1}{2}}\int_{Q_{i}}|f|\bigg)^{2}\right\}  ^{\frac{1}{2}}\nonumber\\
&  \leq I+II,\label{Compactness4+}%
\end{align}
where
\[
I:=\left\{  \sum_{k=0}^{N_{0}}\sum_{i\in I:Q_{i}\in\mathbb{D}_{k}%
(\mathcal{Q})}\bigg(|Q_{i}|^{\frac{1}{n}-\frac{1}{2}}\int_{Q_{i}}%
|f|\bigg)^{2}\right\}  ^{\frac{1}{2}}%
\]
and
\[
II:=\left\{  \sum_{k=N_{0}+1}^{\infty}\sum_{i\in I:Q_{i}\in\mathbb{D}%
_{k}(\mathcal{Q})}\bigg(|Q_{i}|^{\frac{1}{n}-\frac{1}{2}}\int_{Q_{i}%
}|f|\bigg)^{2}\right\}  ^{\frac{1}{2}}.
\]

It follows from (\ref{spinf}) and \eqref{Compactness31+} that
\begin{equation}
II = \left\{ \sum_{i\in I:Q_{i}\in\mathbb{D}%
_{\leq N_0 +1}(\mathcal{Q})}\bigg(|Q_{i}|^{\frac{1}{n}-\frac{1}{2}}\int_{Q_{i}%
}|f|\bigg)^{2}\right\}  ^{\frac{1}{2}}  \leq s_{N_0 + 2}(f) \leq\varepsilon.\label{Compactness41+}%
\end{equation}
On the other hand, we obviously have
\begin{equation}
I\leq\left\{  \sum_{k=0}^{N_{0}}\sum_{Q\in\mathbb{D}_{k}}%
\bigg(|Q|^{\frac{1}{n}-\frac{1}{2}}\int_{Q}|f|\bigg)^{2}\right\}  ^{\frac
{1}{2}}.\label{Compactness6+}%
\end{equation}

Combining \eqref{Compactness4+}, \eqref{Compactness41+} and
\eqref{Compactness6+}, we find that, for all $(Q_{i})_{i\in I}\in S(Q_{0})$
and for all $f$ in the unit ball of $X(Q_{0}),$%
\[
\left\{  \sum_{i\in I}\bigg(|Q_{i}|^{\frac{1}{n}-\frac{1}{2}}\int_{Q_{i}%
}|f|\bigg)^{2}\right\}  ^{\frac{1}{2}}\leq\left\{  \sum_{k=0}^{N_{0}}%
\sum_{Q\in\mathbb{D}_{k}}\bigg(|Q|^{\frac{1}{n}-\frac{1}{2}}\int%
_{Q}|f|\bigg)^{2}\right\}  ^{\frac{1}{2}}+\varepsilon.
\]
Therefore, by \eqref{Compactness3+},
\begin{equation}
\Vert f\Vert_{SR_{\frac{2n}{n+2},2}(Q_{0})}\leq\left\{  \sum_{k=0}^{N_{0}}%
\sum_{Q\in\mathbb{D}_{k}}\bigg(|Q|^{\frac{1}{n}-\frac{1}{2}}\int%
_{Q}|f|\bigg)^{2}\right\}  ^{\frac{1}{2}}+\varepsilon.\label{Compactness7+}%
\end{equation}

Let $\mathbb{D}_{\geq N_{0};Q_{0}} = \mathbb{D}_{\geq N_0 }=\cup_{k=0}^{N_{0}}\mathbb{D}_{k}$, then
the cardinality of $\mathbb{D}_{\geq N_{0}}$ is $L:=\frac{2^{n(N_{0}+1)}%
-1}{2^{n}-1}$. Consider the linear operator
\[
T:f\in X(Q_{0})\mapsto\bigg(|Q|^{\frac{1}{n}-\frac{1}{2}}\int_{Q}%
f\bigg)_{Q\in\mathbb{D}_{\geq N_{0}}}\in\ell_{2}^{L}.
\]
It is easy to see that $T$ is well-defined: If $f\in X(Q_0)$ (and hence $f \geq 0$) then
\begin{align*}
\Vert Tf\Vert_{\ell_{2}^{L}} &  = \left\{  \sum_{k=0}^{N_{0}}\sum
_{Q\in\mathbb{D}_{k}}\bigg(|Q|^{\frac{1}{n}-\frac{1}{2}}\int%
_{Q}|f|\bigg)^{2}\right\}  ^{\frac{1}{2}}\\
&  =|Q_{0}|^{\frac{1}{n}-\frac{1}{2}}\,\left\{  \sum_{k=0}^{N_{0}}%
2^{-kn(\frac{1}{n}-\frac{1}{2})2}\sum_{Q\in\mathbb{D}_{k}}\bigg(\int%
_{Q}|f|\bigg)^{2}\right\}  ^{\frac{1}{2}}\\
&  \leq|Q_{0}|^{\frac{1}{n}-\frac{1}{2}}\,\left\{  \sum_{k=0}^{N_{0}%
}2^{-kn(\frac{1}{n}-\frac{1}{2})2}\bigg(\sum_{Q\in\mathbb{D}_{k}}%
\int_{Q}|f|\bigg)^{2}\right\}  ^{\frac{1}{2}}\\
&  =|Q_{0}|^{\frac{1}{n}-\frac{1}{2}}\,\left\{  \sum_{k=0}^{N_{0}}%
2^{-kn(\frac{1}{n}-\frac{1}{2})2}\right\}  ^{\frac{1}{2}}\,\Vert f\Vert_{L^{1}(Q_{0})}\\
&  \lesssim2^{N_{0}(\frac{n}{2}-1)}\,\Vert f\Vert_{X(Q_{0})}.
\end{align*}
Furthermore, $T$ is compact, since it is a finite rank operator. We can
equivalently rewrite \eqref{Compactness7+} in terms of $T$ as follows, for
every $f\in X(Q_{0}),\,\Vert f\Vert_{X(Q_{0})}\leq1$,
\begin{equation}
\Vert f\Vert_{SR_{\frac{2n}{n+2},2}(Q_{0})}\leq\Vert Tf\Vert_{\ell_{2}^{L}%
}+\varepsilon.\label{Compactness8+}%
\end{equation}

Let $\{f_{l}\}_{l\in \mathbb{N}}$ be a bounded sequence in $X(Q_{0})$ (without loss we
may assume that for all $l$, $\Vert f_{l}\Vert_{X}\leq\frac{1}{2}$). The
compactness of $T:X(Q_{0})\rightarrow\ell_{2}^{L}$ guarantees (modulo passing
to a subsequence) that $\{Tf_{l}\}_{l \in \mathbb{N}}$ is convergent in $\ell_{2}^{L}$.
Accordingly, there exists $l_{0}$ such that
\[
\Vert Tf_{l}-Tf_{l^{\prime}}\Vert_{\ell_{2}^{L}}\leq\varepsilon,\qquad
\text{if}\qquad l,l^{\prime}\geq l_{0}.
\]
Therefore, by \eqref{Compactness8+},
\[
\Vert f_{l}-f_{l^{\prime}}\Vert_{SR_{\frac{2n}{n+2},2}(Q_{0})}\leq
2\varepsilon.
\]
Consequently, from \eqref{SobIde2} we
see that $\{f_{l}\}_{l \in \mathbb{N}}$ is a Cauchy sequence in $H^{-1}$.

Next we show the converse statement, i.e., if\footnote{Recall that $X \subset L^1_{\text{loc}, +}(\R^n)$, cf. \eqref{SLat}.}  $X_{c}\overset{compactly}{\hookrightarrow}\mathcal{H}_{\text{loc}}^{-1}(\R^n)$  then
\begin{equation}\label{Target}
	\lim_{N \to \infty} s_N(X) =0. 
\end{equation}
By assumption $U_{X(Q_0)}$, the closure of the unit ball of $X(Q_0)$, is a  compact set in $\mathcal{H}^{-1}(\R^n)$. In particular, for any $\delta > 0$ there exist $f_1, \ldots, f_L \in U_{X(Q_0)}$ such that
$$
	U_{X(Q_0)} \subset\bigcup_{l=1}^{L}B\Big(f_{l},\frac{\delta}{2}\Big),
$$
where $B(f_{l},\delta/2)$ denotes the ball in $\mathcal{H}^{-1}$ centered at $f_l$ and radius $\delta/2$. Hence, for any $f \in X(Q_0), \|f\|_{X(Q_0)} \leq 1$, there exists $l \in \{1, \ldots, L\}$ such that
$$
	\|f-f_l\|_{\mathcal{H}^{-1}(\R^n)} < \frac{\delta}{2}. 
$$
As a consequence (cf. \eqref{SobIde})
\begin{align}
	s_N(f) &\leq s_N(f-f_l) + s_N(f_l)  \leq s_1 (f-f_l) + s_N(f_l) \nonumber \\
	& \lesssim \|f-f_l\|_{\mathcal{H}^{-1}(\R^n)} + s_N(f_l)  < \frac{\delta}{2} + \sup_{l \in \{1, \ldots, L\}}s_N(f_l). \label{422} 
\end{align}

Assume momentarily that
\begin{equation}\label{ClaimSob}
	\lim_{N \to \infty} s_N(\omega) = 0, \qquad \text{for every} \qquad \omega \in \mathcal{H}^{-1}(\R^n). 
\end{equation}
In particular,  we have, for $N$ sufficiently large depending only on $\delta$, 
$$
	 \sup_{l \in \{1, \ldots, L\}} s_N(f_l) \leq \frac{\delta}{2}.
$$
Inserting this estimate into \eqref{422}, we conclude that \eqref{Target} holds.

To complete the proof, it remains to show \eqref{ClaimSob}: 
Fix $\chi\in C^{\infty}(\mathbb{R}^{n})$ with\footnote{One may think that
$\chi(x) = e^{-\frac{1}{1-|x|^{2}}} \mathbf{1}_{B(0, 1)}(x)$.} $\text{supp }
\chi\subset B(0, 1)$ and $\chi\geq0$. For every dyadic cube $Q_{j m}
\in\mathbb{D}_{j}$, we let\footnote{$\chi_{j m}(f)$ should be adequately
interpreted in the distributional sense.}
\begin{equation}
\label{Chijm}\chi_{j m}(f) := (\chi_{j m}, f) = \int_{\mathbb{R}^{n}} \chi_{j
m}(x) f(x) \, dx, \qquad m \in\mathbb{Z}^{n},
\end{equation}
where $\chi_{j m}(x) := 2^{j n} \chi(2^{j} x -m)$. Without loss of generality,
we may assume that $\text{supp } \chi_{j m} \subset d Q_{j m}$ for a fixed
constant $d > 1$ and
\begin{equation}
\label{11e}\inf_{x \in Q_{j m}} \chi_{j m}(x) \gtrsim2^{j n}.
\end{equation}
 Then,
we have
\begin{align}
s_{N}(\omega) &  =\sup_{\mathcal{Q}\in
S(\R^n)}\left[\sum_{k=N-1}^{\infty}\sum_{i\in I:Q_{i}\in\mathbb{D}_{k}(\mathcal{Q)}%
}\Big(|Q_{i}|^{\frac{1}{n}-\frac{1}{2}}\int_{Q_{i}}d\omega\Big)^{2}%
\right]^{\frac{1}{2}}\nonumber\\
&  \leq\left[\sum_{k=N-1}^{\infty}\sum_{Q\in\mathbb{D}_{k}}\Big(|Q|^{\frac
{1}{n}-\frac{1}{2}}\int_{Q}d\omega\Big)^{2}\right]^{\frac{1}{2}}\nonumber\\
&  =\left[\sum_{k=N-1}^{\infty}\sum_{Q\in\mathbb{D}_{k}}\Big(2^{-kn(\frac
{1}{n}+\frac{1}{2})}\int_{Q}2^{kn}\,d\omega\Big)^{2}\right]^{\frac{1}{2}%
}\nonumber\\
&  \leq\left[\sum_{k=N-1}^{\infty}2^{k(-1-\frac{n}{2})2}\sum_{Q\in
\mathbb{D}_{k}}\Big(\int_{Q}\chi_{Q}(x)\,d\omega\Big)^{2}\right]^{\frac{1}{2}%
}.\label{0.21}%
\end{align}
Note that the last step is true because both $\chi_{Q}\geq0$ and $\omega\geq
0$. Furthermore, using well-known estimates of function spaces in terms of local means  (see e.g. \cite[Theorem 1.15]{Triebel}), we get
\[
\left[\sum_{k=0}^{\infty}2^{k(-1-\frac{n}{2})2}\sum_{Q\in\mathbb{D}_{k}%
}\Big(\int_{Q}\chi_{Q}(x)\,d\omega\Big)^{2}\right]^{\frac{1}{2}}\lesssim
\Vert\omega\Vert_{H^{-1}(\R^n)}.
\]
In particular, this implies
\[
\lim_{N\rightarrow\infty}\left[\sum_{k=N-1}^{\infty}2^{k(-1-\frac{n}{2})2}%
\sum_{Q\in\mathbb{D}_{k}}\Big(\int_{Q}\chi_{Q}(x)\,d\omega\Big)^{2}%
\right]^{\frac{1}{2}}=0
\]
provided that $\omega\in H^{-1}(\R^n)\cap BM^{+}_c$ and (cf. \eqref{0.21})
\[
\lim_{N\rightarrow\infty} s_{N}(\omega)=0.
\]
This shows the desired result \eqref{ClaimSob}. \qed

\section{Computability of sparse indices\label{Section4.2}}

In this section we compute the sparse indices for familar scales of spaces.


\begin{proposition}
[Sparse indices for $L^{p}$]\label{ThmSparseLebesgue} Let $n\geq2$ and 
$p>\frac{2n}{n+2}$.  Then, for every $N\in\mathbb{N}$,
\begin{equation}
s_{N}(L^p)\lesssim 2^{-Nn(\frac{2+n}{2n}-\frac{1}{\min\{2,p\}})}. \label{sNLp}%
\end{equation}
\end{proposition}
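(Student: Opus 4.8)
The plan is to exploit the elementary structure of the index set $\mathbb{D}_{\le N-1}(\mathcal{Q})$. Although $\mathcal{Q}$ itself is only sparse, each \emph{generation} $\mathbb{D}_{k}(\mathcal{Q})$ consists of \emph{pairwise disjoint} cubes, all of measure $|Q|=2^{-kn}|Q_0|$ (cf. the discussion in the proof of Theorem~\ref{teo:compacto}(ii)), and $\mathbb{D}_{\le N-1}(\mathcal{Q})=\bigcup_{k\ge N-1}\mathbb{D}_{k}(\mathcal{Q})$. This generation-wise disjointness is the only geometric input needed; no finer overlap control of the sparse family is required.

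First I would reduce to the case $r:=\min\{2,p\}\in\big(\tfrac{2n}{n+2},2\big]$. Since $Q_0$ has finite measure and $r\le p$, Hölder's inequality gives $\|f\|_{L^{r}(Q_0)}\lesssim\|f\|_{L^{p}(Q_0)}$, so that $s_N(L^p)\lesssim s_N(L^{r})$ and it suffices to prove $s_N(L^{r})\lesssim 2^{-Nn(\frac{2+n}{2n}-\frac1r)}$ for $r$ in that range (here and below implicit constants may depend on $n$, $p$ and $Q_0$, which is harmless). Fix now $f$ with $\|f\|_{L^{r}(Q_0)}\le1$ and $\mathcal{Q}\in S(Q_0)$. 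Splitting by generations and using $|Q|^{\frac1n-\frac12}=(2^{-kn}|Q_0|)^{\frac1n-\frac12}$ for $Q\in\mathbb{D}_k(\mathcal{Q})$,
$$
\sum_{Q\in\mathbb{D}_{\le N-1}(\mathcal{Q})}\Big(|Q|^{\frac1n-\frac12}\int_Q|f|\Big)^{2}=\sum_{k=N-1}^{\infty}(2^{-kn}|Q_0|)^{\frac2n-1}\sum_{Q\in\mathbb{D}_k(\mathcal{Q})}\Big(\int_Q|f|\Big)^{2}.
$$
For each cube in a generation I apply Hölder, $\int_Q|f|\le(2^{-kn}|Q_0|)^{1-\frac1r}\|f\|_{L^{r}(Q)}$, and then use the disjointness of $\mathbb{D}_k(\mathcal{Q})$ together with the embedding $\ell^{r}\hookrightarrow\ell^{2}$ (valid since $r\le2$) to obtain
$$
\sum_{Q\in\mathbb{D}_k(\mathcal{Q})}\|f\|_{L^{r}(Q)}^{2}\le\Big(\sum_{Q\in\mathbb{D}_k(\mathcal{Q})}\|f\|_{L^{r}(Q)}^{r}\Big)^{2/r}\le\|f\|_{L^{r}(Q_0)}^{2}\le1.
$$
Collecting the powers of $2^{-kn}$, the $k$-th summand is bounded by $|Q_0|^{2\beta}\,2^{-2kn\beta}$ with $\beta:=\frac1n+\frac12-\frac1r=\frac{2+n}{2n}-\frac1r$, and $\beta>0$ precisely because $r>\frac{2n}{n+2}$. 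Summing the geometric series over $k\ge N-1$ gives $s_N(f)^{2}\lesssim 2^{-2Nn\beta}$, and taking the supremum over $\mathcal{Q}\in S(Q_0)$ and over the unit ball of $L^{r}(Q_0)$ yields $s_N(L^{r})\lesssim 2^{-Nn\beta}$, which is the claim.

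The computation itself is routine once the generation-wise splitting is in place; the only point requiring a little care is the bookkeeping of exponents, whose net effect is that the series over $k$ converges exactly under the stated restriction $p>\frac{2n}{n+2}$. In other words, that hypothesis is not merely a convenient sufficient condition but is structurally the sharp threshold for this argument, which is why it reappears as the range of validity in Table~\ref{Tabla1}.
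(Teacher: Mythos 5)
Your proof is correct and follows essentially the same route as the paper's: Hölder on each cube, disjointness within each dyadic generation of the sparse family, and summation of the resulting geometric series. The only cosmetic difference is that you dispose of the case split $p\le 2$ versus $p>2$ upfront by reducing to $r=\min\{2,p\}$ via $L^p(Q_0)\hookrightarrow L^r(Q_0)$, whereas the paper keeps $p$ and handles $p>2$ by an extra Hölder step against the cardinality of $\mathbb{D}_k$ inside the main estimate; both yield the same bound with $|Q_0|$-dependent constants.
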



\begin{proof}
 We can estimate $s_{N}(f)$ (cf. (\ref{spinf})) as
follows: Let $\mathcal{Q} = (Q_i)_{i \in I} \in S(Q_0)$, by H\"{o}lder's inequality, we have
\begin{align}
\sum_{Q \in \mathbb{D}_{\leq N-1} (\mathcal{Q})}\bigg(|Q|^{\frac{1}%
{n}-\frac{1}{2}}\int_{Q}|f|\bigg)^{2}  
& =  \sum_{k=N-1}^{\infty}\sum_{i\in I:Q_{i}\in\mathbb{D}_{k}(\mathcal{Q}%
)}  \bigg(|Q_i|^{\frac{1}%
{n}-\frac{1}{2}}\int_{Q_i}|f|\bigg)^{2} \nonumber  \\
& \leq \sum_{k=N-1}^{\infty}\sum_{i\in I:Q_{i}\in\mathbb{D}_{k}(\mathcal{Q}%
)}   |Q_{i}|^{\frac{2}{n}-\frac{2}{p}+1}\bigg(\int_{Q_{i}}|f|^{p}\bigg)^{\frac
{2}{p}}\nonumber\\
&  =|Q_{0}|^{(\frac{2+n}{2n}-\frac{1}{p})2}\,\sum_{k=N-1}^{\infty}%
2^{-kn(\frac{2+n}{2n}-\frac{1}{p})2}\sum_{i\in I:Q_{i}\in\mathbb{D}_{k}%
}\bigg(\int_{Q_{i}}|f|^{p}\bigg)^{\frac{2}{p}}.\label{4.10}%
\end{align}

We distinguish two possible cases. First, assume that $p \leq2$. Then
\begin{align}
\sum_{k=N-1}^{\infty}2^{-kn(\frac{2+n}{2n}-\frac{1}{p})2}\sum_{i\in I:Q_{i}%
\in\mathbb{D}_{k}}\bigg(\int_{Q_{i}}|f|^{p}\bigg)^{\frac{2}{p}}  &
\leq\sum_{k=N-1}^{\infty}2^{-kn(\frac{2+n}{2n}-\frac{1}{p})2} \bigg(\sum_{i\in
I:Q_{i}\in\mathbb{D}_{k}} \int_{Q_{i}}|f|^{p}\bigg)^{\frac{2}{p}%
}\nonumber\\
&  \leq\|f\|_{L^{p}(Q_{0})}^{2} \, \sum_{k=N-1}^{\infty
}2^{-kn(\frac{2+n}{2n}-\frac{1}{p})2}\nonumber\\
&   \leq c_{n} \, \bigg(\frac{2+n}{2n}-\frac{1}{p} \bigg)^{-1} \,
2^{-N n (\frac{2 + n}{2 n} -\frac{1}{p}) 2} \, \|f\|_{L^{p}(Q_{0})}^{2}.
\label{4.11}%
\end{align}
On the other hand, if $p > 2$ then, by H\"older's inequality,
\begin{align}
\sum_{k=N-1}^{\infty}2^{-kn(\frac{2+n}{2n}-\frac{1}{p})2}\sum_{i\in I:Q_{i}%
\in\mathbb{D}_{k}}\bigg(\int_{Q_{i}}|f|^{p}\bigg)^{\frac{2}{p}}  &
\leq\sum_{k=N-1}^{\infty}2^{-kn(\frac{2+n}{2n}-\frac{1}{2})2} \,
\bigg( \sum_{i\in I:Q_{i}\in\mathbb{D}_{k}} \int_{Q_{i}}|f|^{p}
\bigg)^{\frac{2}{p}}\nonumber\\
& \hspace{-5cm}  \leq\|f\|^{2}_{L^{p}(Q_{0})} \, \sum_{k=N-1}^{\infty
}2^{-kn(\frac{2+n}{2n}-\frac{1}{2})2}  \leq c_{n} \, 2^{-N n (\frac{2+n}{2 n} -\frac{1}{2}) 2} \,
\|f\|_{L^{p}(Q_{0})}^{2}. \label{4.12}%
\end{align}

Combining \eqref{4.10}--\eqref{4.12} (and noting that all estimates are uniform with respect to $\mathcal{Q}$) we obtain  
$$
	s_N(f) \lesssim  2^{-Nn(\frac{2+n}{2n}-\frac{1}{\min\{2,p\}})} \|f\|_{L^p(Q_0)}.
$$
Taking now the supremum over all $f\in L^p(Q_0), \|f\|_{L^p(Q_0)} \leq 1$, we achieve the desired estimate \eqref{sNLp}. 
\end{proof}

%
%
%
%


\begin{proposition}[Sparse indices for $M^{p, \alpha}$] \label{ThmSparseMorrey} Let $n\geq2$. If $N$ is sufficiently large\footnote{To be more precise, $2^{(N-1)n}>|Q_{0}|$. This assumption is not restrictive since we are only interested in the asymptotic behaviour of indices. For the sake of completeness, we mention that $s_N(M^{p, \alpha})$ with $2^{(N-1)n}\leq |Q_{0}|$ can be also computed using the same ideas, but now the log-parameter $\alpha$ does not play any role.} then
\[
s_{N}(M^{p, \alpha})\lesssim \left\{
\begin{array}
[c]{ll}%
 2^{-N(\frac{2}{n}%
-\frac{1}{p})\frac{n}{2}} N^{-\frac{\alpha}{2}}, & \text{if}\qquad p>\frac{n}{2}, \quad \alpha \in \R,\\
& \\
N^{\frac{-\alpha + 1}{2}}, &
\text{if}\qquad p = \frac{n}{2}, \quad \alpha > 1.
\end{array}
\right.
\]
\end{proposition}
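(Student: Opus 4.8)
The plan is to bound $s_N(f)$ for a fixed $f$ with $\|f\|_{M^{p,\alpha}(Q_0)}\le 1$, uniformly over sparse families $\mathcal{Q}\in S(Q_0)$, and then pass to the supremum over the unit ball. Fix $\mathcal{Q}$ and split $\mathbb{D}_{\le N-1}(\mathcal{Q})=\bigcup_{k\ge N-1}\mathbb{D}_k(\mathcal{Q})$, where each $\mathbb{D}_k(\mathcal{Q})$ is a packing of dyadic subcubes of $Q_0$ of common volume $v_k:=2^{-kn}|Q_0|$. Two elementary consequences of the Morrey norm drive the argument. First, applied cube by cube (and assuming $N$ large enough that $v_{N-1}<1$), it gives $\int_Q|f|\le v_k^{1/p'}(1-\log v_k)^{-\alpha}$ for every $Q\in\mathbb{D}_k(\mathcal{Q})$. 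Second, taking $Q=Q_0$ in the definition of $\|\cdot\|_{M^{p,\alpha}(Q_0)}$ shows $M^{p,\alpha}(Q_0)\hookrightarrow L^1(Q_0)$, so, since the cubes in $\mathbb{D}_k(\mathcal{Q})$ are pairwise disjoint and contained in $Q_0$, one has $\sum_{Q\in\mathbb{D}_k(\mathcal{Q})}\int_Q|f|\le\|f\|_{L^1(Q_0)}\lesssim 1$.

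The crucial step is to \emph{split the square}: with $x_Q:=\int_Q|f|\ge 0$,
\[
S_k:=\sum_{Q\in\mathbb{D}_k(\mathcal{Q})}\Bigl(|Q|^{\frac1n-\frac12}x_Q\Bigr)^2=v_k^{2(\frac1n-\frac12)}\sum_{Q}x_Q^2\le v_k^{2(\frac1n-\frac12)}\Bigl(\max_Q x_Q\Bigr)\Bigl(\sum_Q x_Q\Bigr),
\]
and inserting the two bounds above yields
\[
S_k\lesssim v_k^{\,2(\frac1n-\frac12)+\frac1{p'}}\,(1-\log v_k)^{-\alpha}=|Q_0|^{\frac2n-\frac1p}\,2^{-kn(\frac2n-\frac1p)}\,(1-\log v_k)^{-\alpha}.
\]
The exponent $\frac2n-\frac1p$ is strictly positive exactly when $p>\tfrac n2$ and vanishes at the critical value $p=\tfrac n2$; this is precisely where the Sobolev-type threshold of the statement comes from. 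It is also the reason why the naive estimate — the Morrey bound on each cube combined with the crude count $\#\mathbb{D}_k(\mathcal{Q})\le 2^{kn}$ — is too lossy: one must spend the $L^1$ bound on one of the two factors in $x_Q^2$.

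It then remains to sum $S_k$ over $k\ge N-1$. Since $1-\log v_k=1-\log|Q_0|+kn\log 2$, for $N$ large one has $1-\log v_k\approx k$ uniformly on $k\ge N-1$, with $1-\log v_{N-1}\approx N$. For $p>\tfrac n2$ the sum $\sum_{k\ge N-1}2^{-kn(\frac2n-\frac1p)}k^{-\alpha}$ is essentially geometric, hence comparable to its first term $2^{-Nn(\frac2n-\frac1p)}N^{-\alpha}$ (a one-line separate check covers $\alpha<0$, where the polynomial factor grows but is still swallowed by the geometric decay); this gives $s_N(f)^2\lesssim 2^{-Nn(\frac2n-\frac1p)}N^{-\alpha}$, i.e. $s_N(f)\lesssim 2^{-N(\frac2n-\frac1p)\frac n2}N^{-\alpha/2}$. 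For $p=\tfrac n2$ and $\alpha>1$ the geometric factor is absent and $\sum_{k\ge N-1}k^{-\alpha}\approx N^{1-\alpha}$, giving $s_N(f)\lesssim N^{(1-\alpha)/2}$. All bounds being uniform in $\mathcal{Q}$ and in $f$ in the unit ball of $M^{p,\alpha}(Q_0)$, taking suprema produces the two claimed estimates for $s_N(M^{p,\alpha})$. The single genuinely delicate point is the one already flagged, namely arranging the max/sum splitting so that the exponent $\frac2n-\frac1p$ (and with it the Sobolev threshold $p=n/2$) emerges; the rest is routine summation of geometric and harmonic series together with bookkeeping of the logarithmic weight.
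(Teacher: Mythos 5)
Your proof is correct and follows essentially the same route as the paper's: square out the term, spend the Morrey bound $\int_Q|f|\lesssim|Q|^{1/p'}(1-\log|Q|)^{-\alpha}$ on one factor of $\int_Q|f|$ (your $\max_Q x_Q$ step is equivalent, since $|Q|$ is constant across a fixed dyadic generation so the Morrey bound is uniform there), spend the $L^1$ bound (i.e., $M^{p,\alpha}\hookrightarrow L^1$) on the other, and then sum the resulting series $\sum_{k\ge N-1}2^{-kn(\frac2n-\frac1p)}k^{-\alpha}$. The arithmetic and the case split $p>n/2$ vs.\ $p=n/2$, $\alpha>1$ match the paper exactly.
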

\begin{proof}
 By definition (cf. (\ref{aparecio}))
\[
\int_{Q}|f|\leq|Q|^{\frac{1}{p^{\prime}}}   (1-(\log |Q|)_-)^{-\alpha}\,\Vert f\Vert_{M^{p, \alpha}(Q_{0})}, 
\qquad \forall Q\in\mathcal{D}(Q_{0}).
\]
Therefore, for any $\mathcal{Q} = (Q_i)_{i \in I} \in S(Q_0)$, 
\begin{align}
\sum_{Q \in \mathbb{D}_{\leq N-1} (\mathcal{Q})}\bigg(|Q|^{\frac{1}%
{n}-\frac{1}{2}}\int_{Q}|f|\bigg)^{2}   &  =\sum_{k=N-1}^{\infty}\sum_{i\in I:Q_{i}%
\in\mathbb{D}_{k}(\mathcal{Q})}\bigg(|Q_{i}|^{\frac{1}{n}-\frac{1}{2}}%
\int_{Q_{i}}|f|\bigg)^{2}  \nonumber \\
&\hspace{-2.5cm}  \leq\Vert f\Vert_{M^{p, \alpha}(Q_{0})}\,\sum_{k=N-1}^{\infty}\sum_{i\in I:Q_{i}%
\in\mathbb{D}_{k}(\mathcal{Q})}|Q_{i}|^{\frac{2}{n}-\frac{1}{p}} (1-(\log |Q_i|)_-)^{-\alpha}   \,\int_{Q_{i}%
}|f| \nonumber \\
& \hspace{-2.5cm}  \approx \Vert f\Vert_{M^{p, \alpha}(Q_{0})} 
\,\sum_{k=N-1}^{\infty}2^{-kn(\frac{2}{n}-\frac{1}{p})} k^{-\alpha}   \sum_{i\in I:Q_{i}%
\in\mathbb{D}_{k}(\mathcal{Q})}\int_{Q_{i}}|f| \nonumber\\
&\hspace{-2.5cm}  \leq \Vert f\Vert_{M^{p, \alpha}(Q_{0})}\,\Vert f\Vert_{L^{1}(Q_{0})}\, \sum_{k=N-1}^{\infty}2^{-kn(\frac{2}{n}-\frac
{1}{p})} k^{-\alpha} \nonumber\\
& \hspace{-2.5cm}  \leq \Vert f\Vert_{M^{p, \alpha}(Q_{0})}^2  \sum_{k=N-1}^{\infty}2^{-kn(\frac{2}{n}-\frac
{1}{p})} k^{-\alpha}. \label{55}
\end{align}
Furthermore
\begin{equation}\label{56}
	 \sum_{k=N-1}^{\infty}2^{-kn(\frac{2}{n}-\frac
{1}{p})} k^{-\alpha} \approx \left\{
\begin{array}
[c]{ll}%
 2^{-N n (\frac{2}{n}%
-\frac{1}{p})} N^{-\alpha}, & \text{if}\qquad p>\frac{n}{2}, \quad \alpha \in \R,\\
& \\
N^{-\alpha + 1}, &
\text{if}\qquad p = \frac{n}{2}, \quad \alpha > 1.
\end{array}
\right.
\end{equation}
The desired result follows then from \eqref{55} and \eqref{56}.
\end{proof}

%
%
%


\begin{proposition}[Sparse indices for RMT spaces] \label{ThmSparseCongruentTadmor}
Let $n\geq2.$  If $N$ is sufficiently large then
\[
s_{N}(R_{p, 2} \log^\alpha) \lesssim \left\{
\begin{array}
[c]{ll}%
 2^{-Nn(\frac{n+2}{2n}-\frac{1}{p})} N^{-\alpha}, & \text{if}\qquad p>\frac{2n}{n+2}, \quad \alpha \in \R,\\
& \\
N^{-\alpha + \frac{1}{2}}, &
\text{if}\qquad p = \frac{2n}{n+2}, \quad \alpha > \frac{1}{2}.
\end{array}
\right.
\]

%
\end{proposition}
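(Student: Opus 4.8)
The plan is to mimic, verbatim in structure, the two proofs just given for $L^p$ and for $M^{p,\alpha}$, since the RMT space sits naturally ``between'' them: recall (Remark~\ref{Remark3}) that $M^{p,\alpha}=R_{p,\infty}\log^\alpha$, so the argument for $s_N(M^{p,\alpha})$ used the trivial per-cube estimate $\int_Q|f|\le |Q|^{1/p'}(1-(\log|Q|)_-)^{-\alpha}\|f\|_{M^{p,\alpha}}$ on each cube, whereas for $R_{p,2}\log^\alpha$ we have the stronger $\ell^2$-summed control over any packing. First I would fix $\mathcal{Q}=(Q_i)_{i\in I}\in S(Q_0)$ and split $\mathbb{D}_{\le N-1}(\mathcal{Q})=\bigcup_{k\ge N-1}\mathbb{D}_k(\mathcal{Q})$ exactly as in \eqref{4.10} and \eqref{55}, obtaining
\[
\sum_{Q\in\mathbb{D}_{\le N-1}(\mathcal{Q})}\Bigl(|Q|^{\frac1n-\frac12}\int_Q|f|\Bigr)^2
=|Q_0|^{2(\frac{n+2}{2n}-\frac1p)}\sum_{k=N-1}^{\infty}2^{-2kn(\frac{n+2}{2n}-\frac1p)}\sum_{i\in I:Q_i\in\mathbb{D}_k(\mathcal{Q})}\Bigl(\int_{Q_i}|f|\Bigr)^2,
\]
where I have pulled out the common factor $|Q_i|=2^{-kn}|Q_0|$. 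Here I also absorb the logarithmic weight: for $Q_i\in\mathbb{D}_k$ with $k$ large, $1-(\log|Q_i|)_-\approx k$, so it is harmless to insert $(1-(\log|Q_i|)_-)^{2\alpha}k^{-2\alpha}\approx 1$ into each summand to convert the bare integral into the RMT building block.

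Next, the key point: since $\mathbb{D}_k(\mathcal{Q})$ is a packing (pairwise disjoint cubes, as noted right after \eqref{Compactness3+}), it belongs to $\Pi(Q_0)$, and therefore
\[
\sum_{i\in I:Q_i\in\mathbb{D}_k(\mathcal{Q})}\Bigl(\frac{(1-(\log|Q_i|)_-)^{\alpha}}{|Q_i|^{1/p'}}\int_{Q_i}|f|\Bigr)^2\le \|f\|_{R_{p,2}\log^\alpha(Q_0)}^2
\]
for \emph{each} $k$ separately. Rewriting this in terms of the bare integrals (using $|Q_i|^{1/p'}=2^{-kn/p'}|Q_0|^{1/p'}$ and $1-(\log|Q_i|)_-\approx k$) gives $\sum_{i:Q_i\in\mathbb{D}_k(\mathcal{Q})}(\int_{Q_i}|f|)^2\lesssim k^{-2\alpha}2^{2kn/p'}|Q_0|^{2/p'}\|f\|_{R_{p,2}\log^\alpha(Q_0)}^2$. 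Substituting back, the powers of $2^{kn}$ combine to $2^{-2kn(\frac{n+2}{2n}-\frac1p)+2kn/p'}=2^{-2kn(\frac{n+2}{2n}-\frac1p-\frac1{p'})}=2^{-2kn(\frac{n+2}{2n}-1)}\cdot 2^{-2kn(1/p - 1/p')}$; let me instead just track it directly: the net exponent of $2^{-kn}$ is $2(\frac{n+2}{2n}-\frac1p)-\frac{2}{p'}=2(\frac{n+2}{2n}-\frac1p-1+\frac1p)=2(\frac{n+2}{2n}-1)=-\frac{n-2}{n}$. Wait --- that does not look right; I should be more careful and instead keep the $R_{p,2}\log^\alpha$-norm estimate applied with the \emph{correct} normalization so that what survives is $\sum_k 2^{-kn(\frac{n+2}{2n}-\frac1p)\cdot\text{(something)}}k^{-2\alpha}$. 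The clean way: bound $\sum_{i:Q_i\in\mathbb{D}_k}(\int_{Q_i}|f|)^2$ using BOTH the $R_{p,2}\log^\alpha$ control and, when needed, $\ell^1\hookrightarrow\ell^2$ plus $\sum_i\int_{Q_i}|f|\le\|f\|_{L^1(Q_0)}$; matching exponents this produces the geometric/harmonic series $\sum_{k\ge N-1}2^{-2kn(\frac{n+2}{2n}-\frac1p)}k^{-2\alpha}$, which by the same computation \eqref{56} equals $2^{-2Nn(\frac{n+2}{2n}-\frac1p)}N^{-2\alpha}$ when $p>\frac{2n}{n+2}$ and $N^{1-2\alpha}$ when $p=\frac{2n}{n+2}$ and $\alpha>\frac12$. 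Taking square roots and then the supremum over $\|f\|_{R_{p,2}\log^\alpha(Q_0)}\le1$ and over $\mathcal{Q}\in S(Q_0)$ yields the claimed bounds.

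The main obstacle --- and the step deserving genuine care rather than hand-waving --- is the bookkeeping at the endpoint $p=\frac{2n}{n+2}$: there the exponent $\frac{n+2}{2n}-\frac1p=0$, so the geometric decay in $k$ disappears entirely and one is left with $\sum_{k\ge N-1}k^{-2\alpha}\approx N^{1-2\alpha}$, which converges precisely because $\alpha>\frac12$; this is exactly where the logarithmic weight (previously absorbed as $k^{-2\alpha}$) does the essential work, so one must be sure the weight was correctly retained and not discarded, and that the $R_{p,2}\log^\alpha$-norm bound is applied on each dyadic layer $\mathbb{D}_k$ rather than on the whole family $\mathcal{Q}$ (the latter is not a packing). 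A secondary, minor point is justifying $1-(\log|Q_i|)_-\approx k$ uniformly for $Q_i\in\mathbb{D}_k$, $k\ge N-1$ with $N$ large, which is immediate from $|Q_i|=2^{-kn}|Q_0|$ and the normalization $2^{(N-1)n}>|Q_0|$ already invoked in Proposition~\ref{ThmSparseMorrey}. Everything else is the routine manipulation already displayed in \eqref{4.10}--\eqref{4.12} and \eqref{55}--\eqref{56}, so I would present only the layer-wise estimate and the resulting series, then cite \eqref{56} for its asymptotics.
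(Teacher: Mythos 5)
Your approach is exactly the paper's --- split $\mathbb{D}_{\le N-1}(\mathcal{Q})$ into dyadic layers, observe that each $\mathbb{D}_k(\mathcal{Q})$ is a packing in $\Pi(Q_0)$, apply the $R_{p,2}\log^\alpha$-norm bound on each layer, and sum the resulting series --- and your final series $\sum_{k\ge N-1}2^{-2kn(\frac{n+2}{2n}-\frac{1}{p})}k^{-2\alpha}$ together with the endpoint observation at $p=\frac{2n}{n+2}$ is correct. However, the displayed identity you start from is algebraically wrong: $\bigl(|Q_i|^{\frac{1}{n}-\frac{1}{2}}\bigr)^2=|Q_i|^{\frac{2-n}{n}}$, which does not equal $|Q_0|^{2(\frac{n+2}{2n}-\frac{1}{p})}\bigl(|Q_i|/|Q_0|\bigr)^{2(\frac{n+2}{2n}-\frac{1}{p})}$; the factor $|Q_i|^{-2/p'}(1-(\log|Q_i|)_-)^{2\alpha}$ has to remain attached to the integral so that the RMT building block appears, and only then does the power of $2^{-kn}$ come out as $2(\frac{n+2}{2n}-\frac{1}{p})$ (indeed $\frac{2}{n}-1+\frac{2}{p'}=\frac{n+2}{n}-\frac{2}{p}$). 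This premature extraction is precisely why your exponent-tracking returned $\frac{2-n}{n}$, which you correctly flagged as ``not looking right,'' but the fault was the first equality, not the arithmetic that followed. The proposed repair via ``$\ell^1\hookrightarrow\ell^2$ plus $\sum_i\int_{Q_i}|f|\le\|f\|_{L^1}$'' is a red herring imported from the Morrey proof: that device is needed there because $M^{p,\alpha}=R_{p,\infty}\log^\alpha$ only gives an $\ell^\infty$ bound per packing, whereas $R_{p,2}\log^\alpha$ already hands you the required $\ell^2$ bound on each layer directly, which is what the paper uses.
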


\begin{proof}
Let $f\in R_{p,2}\log^{\alpha}(Q_{0})$ and $\mathcal{Q} = (Q_i)_{i \in I} \in S(Q_0)$. Since $\mathbb{D}_{k; Q_0}(\mathcal{Q})\subset\mathbb{D}%
_{k;Q_0} \subset \Pi(Q_0)$, we get
\begin{align*}
\sum_{Q \in \mathbb{D}_{\leq N-1} (\mathcal{Q})}\bigg(|Q|^{\frac{1}%
{n}-\frac{1}{2}}\int_{Q}|f|\bigg)^{2}  &  =\sum_{k=N-1}^{\infty}\sum_{i\in I:Q_{i}%
\in\mathbb{D}_{k}(\mathcal{Q})}\bigg(|Q_{i}|^{\frac{1}{n}-\frac{1}{2}}%
\int_{Q_{i}}|f|\bigg)^{2}\\
& \hspace{-3cm}  \lesssim\sum_{k=N-1}^{\infty}2^{-kn2(\frac{n+2}{2n}-\frac{1}{p}%
)}\,(1+k)^{-2\alpha}\,\sum_{Q\in\mathbb{D}_{k}}\bigg(\frac
{\big(1-(\log|Q|)_{-}\big)^{\alpha}}{|Q|^{\frac{1}{p^{\prime}}}}\,\int%
_{Q}|f|\bigg)^{2}\\
&\hspace{-3cm}   \leq\Vert f\Vert_{R_{p,2}\log^{\alpha}(Q_{0})}^{2}\,\sum_{k=N-1}^{\infty
}2^{-kn2(\frac{n+2}{2n}-\frac{1}{p})}\,(1+k)^{-2\alpha}.
\end{align*}
Since the previous estimates are uniform with respect to $\mathcal{Q}$, we derive
\begin{equation}
s_{N}(f)^{2}\lesssim\mathcal{I}_{N}\,\Vert f\Vert_{R_{p,2}%
\log^{\alpha}(Q_{0})}^{2}\label{4.15}%
\end{equation}
where
\[
\mathcal{I}_{N}:=\sum_{k=N-1}^{\infty}2^{-kn2(\frac{n+2}{2n}-\frac{1}{p}%
)}\,(1+k)^{-2\alpha}.
\]

Observe that
\begin{equation}\label{4.16}
	\mathcal{I}_N \approx \left\{
\begin{array}
[c]{ll}%
 2^{-Nn2(\frac{n+2}{2n}-\frac{1}{p})}\,N^{-2\alpha}, & \text{if}\qquad p>\frac{2n}{n+2}, \quad \alpha \in \R,\\
& \\
N^{-2 \alpha + 1}, &
\text{if}\qquad p = \frac{2n}{n+2}, \quad \alpha > \frac{1}{2}.
\end{array}
\right.
\end{equation}
Plugging \eqref{4.16} into \eqref{4.15} and taking the supremum over all $f \in R_{p, 2} \log^\alpha, \|f\|_{R_{p, 2} \log^\alpha} \leq 1$, we arrive at the desired estimate for $s_N(R_{p, 2} \log^\alpha)$. 
\end{proof}

\begin{remark}
	The proof of Proposition \ref{ThmSparseCongruentTadmor} gives a slightly stronger result using the refined class $CR_{p, q} \log^\alpha$. These spaces are defined using
congruent cubes by the condition\footnote{Note that $CR_{p,q}\log^{\alpha
}(Q_{0})$ can be equivalently introduced as the set of all $f\in L^{1}(Q_{0})$
such that, for every $k\geq0$,
\[
\bigg\{\sum_{Q_{i}\in\mathbb{D}_{k;Q_{0}}}\bigg(\int_{Q_{i}}|f|\bigg)^{q}%
\bigg\}^{\frac{1}{q}}\lesssim\left\{
\begin{array}
[c]{ll}%
2^{\frac{kn}{p^{\prime}}}\,(1+k)^{-\alpha}, & \text{if}\qquad2^{k}\geq
\ell(Q_{0}),\\
& \\
2^{\frac{kn}{p^{\prime}}}, & \text{if}\qquad2^{k}\leq\ell(Q_{0}).
\end{array}
\right.
\]
}
\[
\Vert f\Vert_{CR_{p,q}\log^{\alpha}(Q_{0})}:=\sup_{k\in\mathbb{N}_{0}}%
\sup_{(Q_{i})_{i\in I}\in\mathbb{D}_{k;Q_{0}}}\left\{\sum_{i\in I}%
\bigg[\frac{\big(1-(\log|Q_{i}|)_{-}\big)^{\alpha}}{|Q_{i}|^{\frac
{1}{p^{\prime}}}}\,\int_{Q_{i}}|f|\bigg]^{q}\right\}^{\frac{1}{q}}<\infty.
\]
Clearly $\|f\|_{CR_{p,q}\log^{\alpha}(Q_{0})} \leq \|f\|_{R_{p,q}\log^{\alpha}(Q_{0})}$ and hence $$s_N(R_{p,q}\log^{\alpha}(Q_{0})) \leq s_N(CR_{p,q}\log^{\alpha}(Q_{0})).$$
Then Proposition \ref{ThmSparseCongruentTadmor} with $CR_{p, 2} \log^\alpha$ also holds.  
\end{remark}

\section{Proof of Theorem \ref{teo:exhaust}\label{sec:construcciones}}

(ii) $\implies$ (i): Assume that $\{u^\varepsilon\}_{\varepsilon > 0}$  is sparse stable, i.e., $\{\omega^\varepsilon\}_{\varepsilon > 0}$ is uniformly bounded in $S_\Psi(\R^n)$ for some decay $\Psi$.  It follows from \eqref{116} that $\{\omega^\varepsilon\}_{\varepsilon > 0}$ is a precompact set in $H^{-1}_{\text{loc}}(\R^n)$, i.e., $\{u^\varepsilon\}_{\varepsilon > 0}$ is $H^{-1}$-stable. 

(i) $\implies$ (ii): Define
\begin{equation}
\Psi(N):=\sup_{\varepsilon>0}s_{N}(\omega^{\varepsilon
}).\label{PsiDef}%
\end{equation}
It is clear that $\Psi$ is decreasing. Furthermore $
s_{N}(\omega^{\varepsilon})\leq\Psi(N),$
which yields that for all $\varepsilon>0$,
\[
\Vert\omega^{\varepsilon}\Vert_{S_{\Psi}(\R^n)}=\sup_{N\in\mathbb{N}%
}\frac{s_{N}(\omega^{\varepsilon})}{\Psi(N)}\leq1
\]
i.e., $\{\omega^{\varepsilon}\}_{\varepsilon > 0}$ is bounded in $S_{\Psi}(\R^n)$.

It remains to show that $\Psi(N)\rightarrow0$ as $N\rightarrow\infty$. The proof follows the same line of ideas as that of  \eqref{Target}. On
account of (i), the set $W=\overline{\{\omega^{\varepsilon}\}}_{\varepsilon > 0} \subset BM^+_c$ is compact (in
$\mathcal{H}^{-1}(\R^n)$). In particular, for any $\delta>0$ there exists
$\omega_{1},\ldots,\omega_{L}\in W$ such that
\[
W\subset\bigcup_{l=1}^{L}B\Big(\omega_{l},\frac{\delta}{2}\Big).
\]
 As a
byproduct, for any $\varepsilon>0$ one can find $l\in\{1,\ldots,L\}$ such
that
\begin{equation}
\Vert \omega^{\varepsilon}-\omega_{l} \Vert_{\mathcal{H}^{-1}(\R^n)}<\frac{\delta}%
{2}.\label{0.20new}%
\end{equation}
Therefore
\begin{equation*}
	s_N(\omega^\varepsilon) \leq s_N(\omega^\varepsilon-\omega_l) + s_N(\omega_l)  \lesssim \|\omega^\varepsilon-\omega_l\|_{\mathcal{H}^{-1}(\R^n)} +  s_N(\omega_l),
\end{equation*}
where the hidden equivalence constant is independent of $\varepsilon$. As a consequence (cf. \eqref{PsiDef} and \eqref{0.20new})
$$
	\Psi(N) \lesssim \frac{\delta}{2} + \sup_{l \in \{1, \ldots, L\}} s_N(\omega_l).
$$
Then, by  \eqref{ClaimSob},  $\lim_{N \to \infty} \Psi(N) =0$. 	\qed

\section{Sharpening Morrey regularity of DiPerna--Majda via $V_\Psi$}\label{SectionDMaj}

As already mentioned in Section \ref{Section1Intro}, a famous $2$D result  due to DiPerna and Majda \cite{DiPernaMajda} asserts strong convergence of approximate solutions with initial vortex sheet satisfying Morrey regularity $M^{1, \alpha}(\R^2)$ with $\alpha > 1$. This can be obtained from the compactness assertion  (cf. \eqref{comp})
\begin{equation}\label{71}
M^{\frac{n}{2},\alpha}_c(\R^n)\overset{compactly}{\hookrightarrow}H^{-1}_{\text{loc}}(\R^n
), \qquad \alpha > 1, \quad n \geq 2.
\end{equation}
The goal of this section is to show that these results can be further improved using sparse techniques developed in previous sections,  together with extrapolation techniques.

\subsection{$V_\Psi$-spaces} Given a decay function $\Psi,$ in this section we construct a new Besov-type
space $V_{\Psi}$, whose sparce indices are controlled by $\Psi.$

\begin{definition}\label{DefV}
Let $V_{\Psi}(\mathbb{R}^{n})$ be the space
formed by all tempered distributions $f\in\mathcal{S}^{\prime}(\mathbb{R}^{n})$ such that\footnote{As usual, $\{\Delta_j\}_{j \in \mathbb{N}_0}$ refers to  standard (inhomogeneous) Littlewood--Paley operators on $\R^n$. }
\[
\Vert f\Vert_{V_{\Psi}(\mathbb{R}^{n})}:=\sup_{N\in\mathbb{N}_{0}}\frac
{1}{\Psi(N)^{2}}\sum_{j=N}^{\infty}2^{-2j}\Vert\Delta_{j}f\Vert_{L^{\infty
}(\mathbb{R}^{n})} < \infty.
\]
Let
$
	V^+_\Psi(\R^n) = V_\Psi(\R^n) \cap BM^+_c.
$
\end{definition}

\begin{remark}
\label{Remark1} The construction of the space $V_{\Psi}$ is in some sense
``dual" to the one used to define the classical Vishik space\footnote{cf.
\cite{DMEuler}.} $B_{\Gamma}$, where $\Gamma:[0,\infty)\rightarrow(0,\infty)$
is an increasing function with $\lim_{t \to \infty} \Gamma(t) = \infty$ (\textquotedblleft a growth
function\textquotedblright) and the norm of $B_{\Gamma}$ is given by
\begin{equation}
\Vert f\Vert_{B_{\Gamma}(\mathbb{R}^{n})}:=\sup_{N\in\mathbb{N}_{0}}\frac
{1}{\Gamma(N)}\sum_{j=0}^{N}\Vert\Delta_{j}f\Vert_{L^{\infty}(\mathbb{R}^{n}%
)}.\label{VishikSpace}%
\end{equation}

Note that we could have elements $f\in$ $B_{\Gamma}(\R^n),$ such that $\sum
_{j=0}^{\infty}\Vert\Delta_{j}f\Vert_{L^{\infty}(\mathbb{R}^{n})}=\infty$ (i.e., $f$ does not belong to the Besov space\footnote{Recall that the Besov spaces $B^s_{p, q}(\R^n), s \in \R, p, q \in [1, \infty]$, are endowed with the norm:
$$
	\|f\|_{B^s_{p, q}(\R^n)} = \bigg(\sum_{j=0}^\infty 2^{j s q} \|\Delta_j f\|^q_{L^p(\R^n)} \bigg)^{\frac{1}{q}}.
$$
See e.g. \cite{S70}, \cite{BS88}, \cite{Triebel}.
}  $B_{\infty,1}^{0}(\mathbb{R}^{n})$),  as long as the growth of
the corresponding partial sums is controlled by $\Gamma$. On the other hand,
$V_{\Psi}(\mathbb{R}^{n})$ is formed by elements $f\in B_{\infty,1}%
^{-2}(\mathbb{R}^{n})$, the classical Besov space of negative order, equipped
with
\begin{equation}
\Vert f\Vert_{B_{\infty,1}^{-2}(\mathbb{R}^{n})}=\sum_{j=0}^{\infty}%
2^{-2j}\Vert\Delta_{j}f\Vert_{L^{\infty}(\mathbb{R}^{n})},\label{1}%
\end{equation}
such that the remainder of the corresponding series in \eqref{1} has a
prescribed decay given by $\Psi(N)^{2}$. The connection of these spaces
becomes apparent through the use of stream functions. Let $\omega\in V_{\Psi
}(\R^n),$ and let $\psi$ be a stream function, i.e. $\Delta\psi=\omega.$ Using
Fourier multipliers one can show that
\[
\omega\in V_{\Psi}(\R^n) \iff\sup_{N\in\mathbb{N}_{0}}\frac{1}{\Psi(N)^{2}}\sum
_{j=N}^{\infty}\Vert\Delta_{j}\psi\Vert_{L^{\infty}(\mathbb{R}^{n})}<\infty.
\]
In other words, the space $V_{\Psi}(\R^n)$ is formed by vorticities $\omega$ with
corresponding stream functions $\psi$ satisfying the \textquotedblleft dual" of the 
Vishik condition  \eqref{VishikSpace}.
\end{remark}

\subsection{$V_\Psi$-regularity of Euler flows}  In this section, we restrict ourselves to the following sufficiently rich class of decays.

\begin{definition}[Admissible/doubling decays]\label{DefAdm}
Let $\Psi$ be a decay. We say that $\Psi$ is:
	\begin{enumerate}
	\item[(i)] \emph{admissible}\footnote{This is a very weak assumption on the monotonicity properties of $\Psi$.  Basic examples of admissible decays are $\Psi(t) = t^{-\lambda}, \Psi(t) = (\log t)^{-\lambda}$, where $\lambda > 0$, (or more generally, concatenations of logarithms) and their products. Exponential decays $\Psi(t) = 2^{-C t}, \, C \geq 1$, are excluded. However, this is not restrictive since $V_{2^{-C t}}(\R^n) \hookrightarrow V_{t^{-\lambda}}(\R^n)$.}  provided that
	$$
\sum_{r=0}^{N}(2^{r}\Psi(r))^{2}\lesssim(2^{N}\Psi(N))^{2}, \qquad N \in \mathbb{N}_0.
$$
\item[(ii)] \emph{doubling} provided that $\Psi(ct)\gtrsim\Psi(t),$ for some $c>1$.
\end{enumerate}
\end{definition}

We are now ready to state  the main result of this section. 

\begin{theorem}\label{ThmVPsi} 
Let $\Psi$ be an admissible doubling decay.  
Then:
	\begin{enumerate}
		\item[(i)]  $V^+_\Psi(\R^n)_c \hookrightarrow S_\Psi(\R^n)_{c}$. As a consequence (cf. \eqref{116}), $V^+_\Psi(\R^n)_c\overset{compactly}{\hookrightarrow}H^{-1}_{\emph{loc}}(\R^n
)$. 
		\item[(ii)] Let $\{u^\varepsilon\}_{\varepsilon > 0}$ be a family of approximate solutions to Euler equations \eqref{Euler}, such that the related family of vorticities $\{\omega^\varepsilon\}_{\varepsilon > 0}$ is uniformly bounded in $L^\infty([0, T]; V^+_\Psi(\R^n)_c)$. Then $\{u^\varepsilon\}_{\varepsilon > 0}$ has a strong limit $u$ in $L^\infty([0, T]; L^2_{\emph{loc}}(\R^n))$, where $u$ is a solution with no concentrations. 
	\end{enumerate}
\end{theorem}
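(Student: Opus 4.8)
The whole statement reduces to the embedding in (i), after which (ii) is purely formal.

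\emph{Proof of (i).} Fix a cube $Q_{0}\supset\operatorname{supp}\omega$ for $\omega\in V^{+}_{\Psi}(\mathbb{R}^{n})_{c}$, normalized so that $|Q_{0}|=1$, and write $\mathbb{D}_{k}=\mathbb{D}_{k;Q_{0}}$. Since $\mathbb{D}_{k}(\mathcal{Q})\subset\Pi(Q_{0})$ for every $\mathcal{Q}\in S(Q_{0})$, the computation leading to \eqref{0.21} — which uses nothing about $\omega$ beyond $\omega\geq0$ — gives, with the bump functions $\chi_{Q}$ of \eqref{Chijm}--\eqref{11e},
\[
s_{N}(\omega)^{2}\ \leq\ \sum_{k=N-1}^{\infty}2^{2k(-1-\frac{n}{2})}\sum_{Q\in\mathbb{D}_{k}}\Big(\int_{\mathbb{R}^{n}}\chi_{Q}\,d\omega\Big)^{2}.
\]
For each fixed $k$ the $\chi_{Q}$, $Q\in\mathbb{D}_{k}$, have bounded overlap, so using $\omega\geq0$ we bound $\sum_{Q\in\mathbb{D}_{k}}(\int\chi_{Q}\,d\omega)^{2}\leq\big(\sup_{Q}\int\chi_{Q}\,d\omega\big)\sum_{Q}\int\chi_{Q}\,d\omega\lesssim 2^{kn}\,\|\omega\|_{BM}\,\|\chi_{2^{-k}}\ast\omega\|_{L^{\infty}(\mathbb{R}^{n})}$, where $\chi_{2^{-k}}(x):=2^{kn}\chi(2^{k}x)$; hence
\[
s_{N}(\omega)^{2}\ \lesssim\ \|\omega\|_{BM}\,\sum_{k=N-1}^{\infty}2^{-2k}\,\|\chi_{2^{-k}}\ast\omega\|_{L^{\infty}(\mathbb{R}^{n})}.
\]
Since $\widehat{\chi}(0)=\int\chi>0$, the local means characterization of Besov spaces (cf. \cite[Theorem 1.15]{Triebel}) together with elementary Fourier-support estimates give, for a fixed $M>2$, $\|\chi_{2^{-k}}\ast\omega\|_{\infty}\lesssim\sum_{j=0}^{k+c}\|\Delta_{j}\omega\|_{\infty}+\sum_{j>k+c}2^{-(j-k)M}\|\Delta_{j}\omega\|_{\infty}$; multiplying by $2^{-2k}$, summing over $k\geq N-1$ and interchanging the sums yields
\[
\sum_{k=N-1}^{\infty}2^{-2k}\|\chi_{2^{-k}}\ast\omega\|_{\infty}\ \lesssim\ 2^{-2N}\sum_{j=0}^{N-1}\|\Delta_{j}\omega\|_{\infty}\ +\ \sum_{j=N-c}^{\infty}2^{-2j}\|\Delta_{j}\omega\|_{\infty}.
\]

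The second term is $\leq\Psi(N-c)^{2}\|\omega\|_{V_{\Psi}}\lesssim\Psi(N)^{2}\|\omega\|_{V_{\Psi}}$ by the definition of $\|\cdot\|_{V_{\Psi}}$ and the doubling of $\Psi$. For the first term, the definition of $V_{\Psi}$ gives $\|\Delta_{j}\omega\|_{\infty}\leq 2^{2j}\Psi(j)^{2}\|\omega\|_{V_{\Psi}}$, hence $2^{-2N}\sum_{j<N}\|\Delta_{j}\omega\|_{\infty}\leq\|\omega\|_{V_{\Psi}}\,2^{-2N}\sum_{j=0}^{N-1}(2^{j}\Psi(j))^{2}$, and the admissibility condition $\sum_{r\leq N}(2^{r}\Psi(r))^{2}\lesssim(2^{N}\Psi(N))^{2}$ bounds this again by $\Psi(N)^{2}\|\omega\|_{V_{\Psi}}$. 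Collecting the estimates, $s_{N}(\omega)\lesssim\Psi(N)\big(\|\omega\|_{BM}+\|\omega\|_{V_{\Psi}}\big)$ for all $N$, i.e. $\|\omega\|_{S_{\Psi}(\mathbb{R}^{n})}\lesssim\|\omega\|_{BM}+\|\omega\|_{V_{\Psi}}$, which is the claimed embedding $V^{+}_{\Psi}(\mathbb{R}^{n})_{c}\hookrightarrow S_{\Psi}(\mathbb{R}^{n})_{c}$; composing with \eqref{116} gives the compact embedding into $H^{-1}_{\text{loc}}(\mathbb{R}^{n})$.

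\emph{Proof of (ii).} This is immediate from (i): with $X=V^{+}_{\Psi}(\mathbb{R}^{n})_{c}$, assumption \eqref{117} of Theorem \ref{CorFinal} is precisely part (i), so Theorem \ref{CorFinal} furnishes a subfamily of $\{u^{\varepsilon}\}$ converging strongly in $L^{\infty}([0,T];L^{2}_{\text{loc}}(\mathbb{R}^{n}))$ to a weak solution of \eqref{Euler}, i.e. with no concentrations. (Equivalently one may invoke Corollary \ref{Corollary1}, since $s_{N}(V^{+}_{\Psi})\leq s_{N}(S_{\Psi})\leq\Psi(N)\to0$.)

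I expect the main obstacle to be the low-frequency ``leakage'' term $2^{-2N}\sum_{j<N}\|\Delta_{j}\omega\|_{\infty}$: the partial Littlewood--Paley sums of $\omega$ carry, a priori, no decay in $N$, and it is exactly the admissibility of $\Psi$ that forces this term to inherit the rate $\Psi(N)^{2}$; the doubling hypothesis only serves to absorb the bounded index shifts produced by the local means estimate. Everything else parallels the proof of \eqref{ClaimSob}.
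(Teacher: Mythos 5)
Your proof is correct, and it follows a genuinely different route from the paper's. The paper's proof of (i) first establishes a wavelet characterization of $V_\Psi$ (Proposition~\ref{PropositionWavelet}), itself obtained via the $K$-functional/extrapolation description (Theorem~\ref{PropInterpol}), and then estimates the local means $\chi_{jm}(f)$ by splitting the wavelet expansion $f=f_j+f^j$ at level $j$ and exploiting the compact support, pointwise bounds, and vanishing moments of the Daubechies wavelets. You instead bypass wavelets entirely: starting from the same reduction to $\sum_{k\geq N-1}2^{-2k}\|\chi_{2^{-k}}\ast\omega\|_{\infty}$ (the step behind \eqref{0.21}), you compare the local means with the Littlewood--Paley pieces $\Delta_j\omega$ already built into the definition of $V_\Psi$, using the Fourier decay of $\widehat{\chi}$ for the high frequencies $j>k$ (not moment conditions, since $\widehat{\chi}(0)\neq 0$). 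Both routes produce the same two terms — the tail $\sum_{j\geq N}2^{-2j}\|\Delta_j\omega\|_\infty$, absorbed by the definition of $V_\Psi$ and doubling, and the low-frequency leakage $2^{-2N}\sum_{j<N}\|\Delta_j\omega\|_\infty$, absorbed precisely by the admissibility condition — and your closing remark correctly identifies this as the crux. The wavelet machinery the paper builds is more work for this theorem, but it is reused for the sharpness statement in Theorem~\ref{Thm11} and for the $T_\Psi$ counterpart, which your more elementary argument does not provide.

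Two small points to tidy. First, you end with $\|\omega\|_{S_\Psi}\lesssim\|\omega\|_{BM}+\|\omega\|_{V_\Psi}$ via AM--GM, while the asserted embedding is with respect to the $V_\Psi$-norm; you should add the (easy) observation that for $\omega\geq 0$ with $\operatorname{supp}\omega\subset Q_0$ one has $\|\omega\|_{BM}=\omega(\mathbb{R}^n)\lesssim_{Q_0}\|\omega\|_{B^{-2}_{\infty,1}}\lesssim\Psi(0)^2\|\omega\|_{V_\Psi}$, e.g.\ by pairing $\omega$ against a fixed nonnegative $\phi\in C_c^\infty$ with $\phi\geq 1$ on $Q_0$ and splitting via Littlewood--Paley. (The paper makes the analogous absorption with $\|f\|_{L^1(Q_0)}$.) Second, $\int\chi_Q\,d\omega$ is $\widetilde{\chi}_{2^{-k}}\ast\omega$ at a lattice point with $\widetilde{\chi}(x)=\chi(-x)$, not $\chi_{2^{-k}}\ast\omega$; this is harmless since $\widetilde{\chi}$ has the same smoothness, support, and $\widehat{\widetilde{\chi}}(0)=\widehat{\chi}(0)>0$, but it is worth saying.
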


Specialising the previous result with $\Psi(t) = t^{\frac{1-\alpha}{2}}$, we are able to improve\footnote{We only focus on the most interesting case $p = n/2$, but similar improvements can also be obtained in the non-critical regime $p > n/2$.} \eqref{71} in the following sense. 

\begin{theorem}\label{Thm11}
Assume that $\alpha > 1$. Then $$M^{\frac{n}{2}, \alpha}(\R^n) \hookrightarrow V_\Psi(\R^n).$$ Furthermore, this embedding is strict in the sense that $M^{\frac{n}{2}, \alpha}(\R^n) \neq V_\Psi(\R^n)$.
\end{theorem}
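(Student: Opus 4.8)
The plan is to prove the embedding $M^{\frac n2,\alpha}(\R^n)\hookrightarrow V_\Psi(\R^n)$ and its strictness in turn, where $\Psi(t)=t^{\frac{1-\alpha}{2}}$, so that $\Psi(N)^2\approx(1+N)^{1-\alpha}$ is a genuine decay precisely because $\alpha>1$.

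\textbf{The embedding.} The core is a pointwise Littlewood--Paley bound
\[
\|\Delta_j f\|_{L^\infty(\R^n)}\lesssim 2^{2j}(1+j)^{-\alpha}\,\|f\|_{M^{\frac n2,\alpha}(\R^n)},\qquad j\in\mathbb N_0 .
\]
To get it I would write $\Delta_j f=\phi_j*f$ with $\phi_j(x)=2^{jn}\phi(2^jx)$, $\phi\in\mathcal S$, decompose $\R^n$ into the grid $\mathcal G_j$ of cubes of sidelength $2^{-j}$, and use the rapid decay of $\phi$ to dominate $|\Delta_j f(x)|$ by $C\,2^{jn}\sup_{Q\in\mathcal G_j}\int_Q|f|$. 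Since $p=\frac n2$ gives $\frac1{p'}=\frac{n-2}{n}$, the Morrey estimate $\int_Q|f|\le |Q|^{1/p'}(1-(\log|Q|)_-)^{-\alpha}\|f\|_{M^{\frac n2,\alpha}}$ applied to cubes of measure $\approx 2^{-jn}$ (each covered by $O(1)$ dyadic cubes of comparable size, so the bound applies up to constants) gives $\int_Q|f|\lesssim 2^{-j(n-2)}(1+j)^{-\alpha}\|f\|_{M^{\frac n2,\alpha}}$, and multiplying by $2^{jn}$ yields the claim. Hence $2^{-2j}\|\Delta_j f\|_{L^\infty}\lesssim(1+j)^{-\alpha}\|f\|_{M^{\frac n2,\alpha}}$, and summing the tail (here $\alpha>1$ is used)
\[
\sum_{j=N}^\infty 2^{-2j}\|\Delta_j f\|_{L^\infty}\lesssim \|f\|_{M^{\frac n2,\alpha}}\sum_{j=N}^\infty(1+j)^{-\alpha}\approx (1+N)^{1-\alpha}\,\|f\|_{M^{\frac n2,\alpha}}\approx\Psi(N)^2\,\|f\|_{M^{\frac n2,\alpha}} .
\]
Dividing by $\Psi(N)^2$ and taking $\sup_{N\in\mathbb N_0}$ gives $\|f\|_{V_\Psi}\lesssim\|f\|_{M^{\frac n2,\alpha}}$; the membership $f\in\mathcal S'$ is immediate since the Morrey condition forces the local $L^1$-norms of $f$ to grow at most polynomially.

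\textbf{Strictness.} I would construct $f\in V_\Psi(\R^n)$ with $\|f\|_{M^{\frac n2,\alpha}(\R^n)}=\infty$ by a lacunary superposition of well-separated bumps. Fix $g\in\mathcal S(\R^n)$ with $\widehat g$ supported in one dyadic Littlewood--Paley annulus and $\|g\|_{L^\infty}=1$, a rapidly increasing sequence $k_j\uparrow\infty$ (say $k_j=2^j$, so the frequency annuli of the pieces are pairwise disjoint), centres $x_j$ with $|x_j-x_{j'}|$ growing suitably fast, and set
\[
f:=\sum_{j\ge 1}a_j\,g\big(2^{k_j}(\cdot-x_j)\big),\qquad a_j:=2^{2k_j}k_j^{\,1-\alpha} .
\]
Each term has $L^1$-norm $\approx a_j2^{-nk_j}=k_j^{1-\alpha}2^{(2-n)k_j}$, which is summable over $j$ for $n\ge2$ and $\alpha>1$, so the series converges in $L^1$, hence in $\mathcal S'$; and by disjointness of frequency supports $\|\Delta_m f\|_{L^\infty}\approx a_j$ for $m\approx k_j$ and $\approx 0$ otherwise. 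For $N$ with $k_{j_0}\le N<k_{j_0+1}$ one then gets $\sum_{m\ge N}2^{-2m}\|\Delta_m f\|_{L^\infty}\approx\sum_{j\ge j_0}2^{-2k_j}a_j=\sum_{j\ge j_0}k_j^{1-\alpha}\approx k_{j_0}^{1-\alpha}\approx\Psi(N)^2$, so $f\in V_\Psi(\R^n)$. On the other hand, testing the Morrey functional on the cube $Q_j$ of sidelength $\approx2^{-k_j}$ centred at $x_j$ gives $\frac{(1-(\log|Q_j|)_-)^\alpha}{|Q_j|^{1/p'}}\int_{Q_j}|f|\approx k_j^\alpha\cdot2^{k_j(n-2)}\cdot a_j2^{-nk_j}=k_j^\alpha\,2^{-2k_j}\,a_j=k_j\to\infty$, whence $\|f\|_{M^{\frac n2,\alpha}(\R^n)}=\infty$.

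\textbf{Main difficulty.} The delicate point is the bookkeeping in the strictness step: one must ensure the Morrey norm genuinely diverges and is not rescued by averaging. Cubes of measure $<1$ are handled by spreading the centres $x_j$ far enough apart that each such cube meets only one bump, reducing to the single-bump computation; for cubes of measure $\ge1$ the logarithmic weight is identically $1$ and the total $L^1$-mass of $f$ is finite, so those cubes contribute only a bounded amount. The Littlewood--Paley bound in the embedding part is routine but should be done with a fixed decay order $M>n$ when summing the Schwartz tails of $\phi_j$ over the grid $\mathcal G_j$; everything else rests on the elementary estimate $\sum_{j\ge N}(1+j)^{-\alpha}\approx(1+N)^{1-\alpha}$, which is exactly where the hypothesis $\alpha>1$ is used.
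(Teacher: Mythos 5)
Your proof is correct, and it takes a genuinely different route from the paper's. For the embedding, the paper expresses the $V_\Psi$-norm through its wavelet characterization (Proposition~\ref{PropositionWavelet}), which itself rests on the extrapolation description of $V_\Psi$ as an extrapolation space for the pair $(B^{-2}_{\infty,1},B^{0}_{\infty,1})$ (Theorem~\ref{PropInterpol}), and then bounds the wavelet coefficients by the Morrey norm: $|\lambda_{kl}| \lesssim 2^{kn}\int_{cQ_{kl}}|f| \lesssim 2^{2k}(1+k)^{-\alpha}\|f\|_{M^{n/2,\alpha}}$. You instead prove a direct pointwise Littlewood--Paley estimate $\|\Delta_j f\|_{L^\infty}\lesssim 2^{2j}(1+j)^{-\alpha}\|f\|_{M^{n/2,\alpha}}$ by tiling $\R^n$ into grid cubes at scale $2^{-j}$ and summing the Schwartz tails of the convolution kernel; this dispenses with the wavelet and extrapolation machinery entirely, though the underlying numerology is the same. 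For strictness, the paper tests the putative reverse inequality $\Vert\cdot\Vert_{M^{n/2,\alpha}}\lesssim\Vert\cdot\Vert_{V_\Psi}$ on a single normalized wavelet $2^{(2-n/2)K}\Upsilon_{K(0,\dots,0)}$, showing the ratio of norms grows like $K$; this establishes that the reverse \emph{embedding} is unbounded, and one implicitly invokes the open mapping / closed graph theorem to upgrade that to the set-theoretic $M^{n/2,\alpha}\neq V_\Psi$. You instead superpose a lacunary family of bumps with disjoint frequency supports and super-exponentially growing scales $k_j=2^j$, tuning the amplitudes $a_j=2^{2k_j}k_j^{1-\alpha}$ so that $\|f\|_{V_\Psi}\approx 1$ while the Morrey functional evaluated on the $j$-th bump is $\approx k_j\to\infty$. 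This produces an explicit element of $V_\Psi\setminus M^{n/2,\alpha}$, which is formally stronger and avoids the functional-analytic appeal. Two small points worth tightening in a final write-up: (a) in the Littlewood--Paley estimate, state explicitly that each grid cube of sidelength $2^{-j}$ is covered by $O(1)$ dyadic cubes of the same sidelength so that the dyadic Morrey norm applies, and that the $j=0$ (low-pass) block contributes $\lesssim\|f\|_{M^{n/2,\alpha}}$ with weight $1$ since $|Q|=1$ has no log gain; (b) in the strictness construction, to control the bleed-through of the tails $g_{j'}$ ($j'\neq j$) onto $Q_j$ one must choose the separations $|x_j-x_{j'}|$ growing super-exponentially against the amplitudes $a_{j'}=2^{2k_{j'}}k_{j'}^{1-\alpha}$; you flag this but leave it implicit.
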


The proofs of these results are based on extrapolation methods developed in the following section. 

\subsection{Extrapolation characterization of $V_\Psi$}

Let $(A_0, A_1)$ be an interpolation pair\footnote{Loosely speaking, $A_0+A_1$ makes sense.} of Banach spaces. Recall that the \emph{$K$-functional} relative to $(A_0, A_1)$ is defined by
$$
	K(t, f; A_0, A_1) = \|f\|_{A_0 + t A_1} = \inf_{f = f_0 + f_1} (\|f_0\|_{A_0} + t \|f_1\|_{A_1})
$$
for $t > 0$ and $f \in A_0 + A_1$.

\begin{theorem}
\label{PropInterpol} Suppose that  $\Psi$ is an admissible doubling decay. Then
\begin{equation}\label{LI}
\Vert f\Vert_{V_{\Psi}(\mathbb{R}^{n})}\approx\sup_{t\in(0,1)}\frac
{K(t,f;B_{\infty,1}^{-2}(\mathbb{R}^{n}),B_{\infty,1}^{0}(\mathbb{R}^{n}%
))}{\Psi(-\log t)^{2}}.
\end{equation}

\end{theorem}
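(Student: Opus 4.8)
The plan is to compute the $K$-functional $K(t, f; B^{-2}_{\infty,1}, B^0_{\infty,1})$ explicitly, up to equivalence, in terms of the Littlewood--Paley pieces $\Delta_j f$, and then to match the resulting expression against the definition of $\|f\|_{V_\Psi}$. Since both $B^{-2}_{\infty,1}$ and $B^0_{\infty,1}$ are $\ell^1$-sums (in $j$) of the Banach spaces $L^\infty(\R^n)$ with the weights $2^{-2j}$ and $2^{0\cdot j}=1$ respectively, the couple is, up to the natural retract/coretract given by $f\mapsto(\Delta_j f)_{j\ge 0}$, a weighted $\ell^1$-couple $(\ell^1(2^{-2j}), \ell^1(1))$ of copies of $L^\infty$. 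For such couples the $K$-functional is classical: one has, with equivalence constants independent of $t$ and $f$,
\[
K(t, f; B^{-2}_{\infty,1}, B^0_{\infty,1}) \approx \sum_{j\ge 0} \min\{2^{-2j}, t\}\,\|\Delta_j f\|_{L^\infty(\R^n)}.
\]
(The $\gtrsim$ direction is the triangle inequality applied to any admissible splitting; the $\lesssim$ direction comes from the explicit near-optimal decomposition that puts the low-frequency block $j \le \tfrac12\log_2(1/t)$ into $B^0_{\infty,1}$ and the high-frequency block into $B^{-2}_{\infty,1}$.) I would first record this as a lemma, or simply cite the standard description of $K$-functionals for weighted $\ell^q$-valued couples.

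Next, I would substitute $t = 2^{-2N}$ (equivalently $N = -\tfrac12\log_2 t$, which up to a harmless factor is $-\log t$ as written, absorbed by the doubling hypothesis on $\Psi$) and split the sum at the index $j = N$:
\[
\sum_{j\ge 0}\min\{2^{-2j}, t\}\,\|\Delta_j f\|_{L^\infty} = \sum_{j=0}^{N-1} t\,\|\Delta_j f\|_{L^\infty} + \sum_{j=N}^{\infty} 2^{-2j}\,\|\Delta_j f\|_{L^\infty}.
\]
The second sum is exactly the quantity appearing in the definition of $V_\Psi$, so that immediately gives $\sup_N \Psi(N)^{-2}\sum_{j\ge N}2^{-2j}\|\Delta_j f\|_{L^\infty} \lesssim \sup_{t}\Psi(-\log t)^{-2} K(t,f;\dots)$. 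The content of the theorem is therefore the reverse inequality, which reduces to showing that the ``low-frequency tail'' $t\sum_{j=0}^{N-1}\|\Delta_j f\|_{L^\infty} = 2^{-2N}\sum_{j=0}^{N-1}\|\Delta_j f\|_{L^\infty}$ is dominated by $\sup_M \Psi(M)^{-2}\sum_{j\ge M}2^{-2j}\|\Delta_j f\|_{L^\infty}$ times $\Psi(N)^2$, and this is precisely where the \emph{admissibility} of $\Psi$ enters.

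The key step — and the main obstacle — is this low-frequency estimate. Here is the mechanism I would use. Set $a_j := 2^{-2j}\|\Delta_j f\|_{L^\infty}$ and $S := \|f\|_{V_\Psi}$, so that $\sum_{j\ge M} a_j \le S\,\Psi(M)^2$ for every $M$. Then
\[
2^{-2N}\sum_{j=0}^{N-1}\|\Delta_j f\|_{L^\infty} = 2^{-2N}\sum_{j=0}^{N-1} 2^{2j} a_j \le 2^{-2N}\sum_{j=0}^{N-1} 2^{2j}\Big(\sum_{r\ge j} a_r\Big) \le 2^{-2N}\,S\sum_{j=0}^{N-1} 2^{2j}\Psi(j)^2,
\]
using $a_j \le \sum_{r\ge j}a_r \le S\Psi(j)^2$. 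Now the admissibility hypothesis $\sum_{r=0}^{N}(2^r\Psi(r))^2 \lesssim (2^N\Psi(N))^2$ gives exactly $2^{-2N}\sum_{j=0}^{N-1}2^{2j}\Psi(j)^2 \lesssim \Psi(N)^2$, whence the low-frequency tail is $\lesssim S\,\Psi(N)^2$. Combining with the trivial bound on the high-frequency part, $\min\{2^{-2j},t\}\le 2^{-2j}$ for $j\ge N$, yields $K(2^{-2N}, f;\dots) \lesssim S\,\Psi(N)^2 = \|f\|_{V_\Psi}\,\Psi(N)^2$. Finally one passes from the discrete parameter $t = 2^{-2N}$ to arbitrary $t\in(0,1)$: for $t\in[2^{-2(N+1)}, 2^{-2N}]$ one has $K(t,f;\dots)\le K(2^{-2N},f;\dots)$ and $\Psi(-\log t)\gtrsim \Psi(-\log 2^{-2N}) = \Psi(2N\log 2)\gtrsim \Psi(N)$ by the doubling property, so the supremum over continuous $t$ is comparable to the supremum over the dyadic sequence. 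This closes the equivalence \eqref{LI}. The only genuinely delicate point is bookkeeping the constants in the doubling/admissibility reductions so that the final equivalence constants are independent of $f$; everything else is the standard $K$-functional computation for $\ell^1$-couples.
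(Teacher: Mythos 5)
Your proof is correct and follows essentially the same route as the paper: both use the retract onto weighted $\ell^1(L^\infty)$-couples to compute $K(t,f;B^{-2}_{\infty,1},B^0_{\infty,1})\approx\sum_j\min\{2^{-2j},t\}\|\Delta_j f\|_{L^\infty}$, then split at $j\approx N$ with $t=2^{-2N}$, absorb the low-frequency block via the admissibility hypothesis (the paper extracts $\sup_j 2^{-2j}\Psi(j)^{-2}\|\Delta_j f\|_{L^\infty}\le\|f\|_{V_\Psi}$ exactly as you do through $a_j\le\sum_{r\ge j}a_r$), and finally pass from dyadic $t$ to all $t\in(0,1)$ via doubling. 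No discrepancy worth noting.
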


\begin{remark}
This result shows that the $V_{\Psi}$ spaces can be described as extrapolation
spaces\footnote{with respect to the so-called $\Sigma$-method of extrapolation (cf.
\cite{JM91}).} relative to the
classical Besov pair $(B_{\infty,1}^{-2},B_{\infty,1}^{0})$, a fact that will
be very useful later, since it enables the transfer of fundamental properties
of the classical Besov spaces to $V_{\Psi}$.
\end{remark}

\begin{remark}
	The assumption $\Psi$ is doubling is necessary in order to ensure that the right-hand side of \eqref{LI} is non trivial. For instance, for the admissible decay $\Psi(t) = 2^{-C t}, \, C \in (\frac{1}{2},1)$,  $$\sup_{t\in(0,1)}\frac
{K(t,f;B_{\infty,1}^{-2}(\mathbb{R}^{n}),B_{\infty,1}^{0}(\mathbb{R}^{n}%
))}{t^{2C}} < \infty \iff f = 0.$$
\end{remark}

\begin{proof}[Proof of Theorem \ref{PropInterpol}]
We use the retraction method of interpolation theory. Recall that $\ell
_{1}^{s}(L^{\infty}(\mathbb{R}^{n}))$, $\,s\in\mathbb{R}$, is the
vector-valued sequence space equipped with the norm
\begin{equation*}
\Vert\{f_{j}\}_{j\in\mathbb{N}_{0}}\Vert_{\ell_{1}^{s}(L^{\infty}%
(\mathbb{R}^{n}))}=\sum_{j=0}^{\infty}2^{js}\Vert f_{j}\Vert_{L^{\infty
}(\mathbb{R}^{n})}. 
\end{equation*}
It is well known (cf. \cite[Appendix A1]{DMEuler}) that $B_{\infty,1}%
^{s}(\mathbb{R}^{n})$ is a retract of $\ell_{1}%
^{s}(L^{\infty}(\mathbb{R}^{n}))$ via
\[
f\mapsto\{\Delta_{j}f\}_{j\in\mathbb{N}_{0}}.
\]
As a consequence,
\[
K(t,f;B_{\infty,1}^{-2}(\mathbb{R}^{n}),B_{\infty,1}^{s}(\mathbb{R}%
^{n}))\approx K(t,\{\Delta_{j}f\}_{j\in\mathbb{N}_{0}};\ell_{1}^{-2}%
(L^{\infty}(\mathbb{R}^{n})),\ell_{1}^{s}(L^{\infty}(\mathbb{R}^{n}))).
\]
Furthermore, a well-known estimate for $K$-functionals asserts
\[
K(t,\{f_{j}\}_{j\in\mathbb{N}_{0}};\ell_{1}^{-2}(L^{\infty}(\mathbb{R}%
^{n})),\ell_{1}^{s}(L^{\infty}(\mathbb{R}^{n})))\approx\sum_{j=0}^{\infty
}2^{js}\min\{2^{(-2-s)j},t\}\Vert f_{j}\Vert_{L^{\infty}(\mathbb{R}^{n})}.
\]
Hence (letting $s=0$)
\begin{align*}
K(2^{-2N},f;B_{\infty,1}^{-2}(\mathbb{R}^{n}),B_{\infty,1}^{0}(\mathbb{R}%
^{n}))  &  \approx\sum_{j=0}^{\infty}\min\{2^{-2j},2^{-2N}\}\Vert\Delta
_{j}f\Vert_{L^{\infty}(\mathbb{R}^{d})}\\
&  \hspace{-4cm}=2^{-2N}\sum_{j=0}^{N}\Vert\Delta_{j}f\Vert_{L^{\infty
}(\mathbb{R}^{n})}+\sum_{j=N+1}^{\infty}2^{-2j}\Vert\Delta_{j}f\Vert
_{L^{\infty}(\mathbb{R}^{n})}.
\end{align*}
This implies that
\begin{equation}
\sup_{N\in\mathbb{N}_{0}}\frac{K(2^{-2N},f;B_{\infty,1}^{-2}(\mathbb{R}%
^{n}),B_{\infty,1}^{s}(\mathbb{R}^{n}))}{\Psi(N)^{2}}\approx\mathcal{A}+\Vert
f\Vert_{V_{\Psi}(\mathbb{R}^{n})}, \label{1.2new}%
\end{equation}
where
\[
\mathcal{A}:=\sup_{N\in\mathbb{N}_{0}}\frac{2^{-2N}\sum_{j=0}^{N}\Vert
\Delta_{j}f\Vert_{L^{\infty}(\mathbb{R}^{n})}}{\Psi(N)^{2}}.
\]
Furthermore, we claim that
\begin{equation}
\mathcal{A}\lesssim\Vert f\Vert_{V_{\Psi}(\mathbb{R}^{n})}. \label{1.1}%
\end{equation}
Indeed, by admissibility of $\Psi$,
\begin{align*}
\sum_{j=0}^{N}\Vert\Delta_{j}f\Vert_{L^{\infty}(\mathbb{R}^{n})}  &
=\sum_{j=0}^{N}2^{2j}\Psi(j)^{2}\,\frac{2^{-2j}}{\Psi(j)^{2}}\,\Vert\Delta
_{j}f\Vert_{L^{\infty}(\mathbb{R}^{n})}\\
&  \leq\sup_{j\in\mathbb{N}_{0}}\frac{2^{-2j}}{\Psi(j)^{2}}\,\Vert\Delta
_{j}f\Vert_{L^{\infty}(\mathbb{R}^{n})}\sum_{j=0}^{N}2^{2j}\Psi(j)^{2}\\
&  \lesssim2^{2N}\Psi(N)^{2}\sup_{j\in\mathbb{N}_{0}}\frac{2^{-2j}}%
{\Psi(j)^{2}}\,\Vert\Delta_{j}f\Vert_{L^{\infty}(\mathbb{R}^{n})}\\
&  \leq2^{2N}\Psi(N)^{2}\Vert f\Vert_{V_{\Psi}(\mathbb{R}^{n})},
\end{align*}
which yields the desired estimate \eqref{1.1}.

Combining \eqref{1.2new} and \eqref{1.1}, we see that
\[
\sup_{N\in\mathbb{N}_{0}}\frac{K(2^{-2N},f;B_{\infty,1}^{-2}(\mathbb{R}%
^{n}),B_{\infty,1}^{s}(\mathbb{R}^{n}))}{\Psi(N)^{2}}\approx\Vert
f\Vert_{V_{\Psi}(\mathbb{R}^{n})}.
\]
Note that, by basic monotonicity properties of the expressions involved (recall that $\Psi$ is doubling),
\begin{align*}
\sup_{t\in(0,1)}\frac{K(t,f;B_{\infty,1}^{-2}(\mathbb{R}^{n}),B_{\infty,1}%
^{0}(\mathbb{R}^{n}))}{\Psi(-\log t)^{2}}  & \\
&  \hspace{-4cm}  \approx \sup_{N\in\mathbb{N}_{0}}K(2^{-2N},f;B_{\infty,1}%
^{-2}(\mathbb{R}^{n}),B_{\infty,1}^{0}(\mathbb{R}^{n}))\sup_{t\in
(2^{-2(N+1)},2^{-2N})}\frac{1}{\Psi(-\log t)^{2}}\\
&  \hspace{-4cm}\approx\sup_{N\in\mathbb{N}_{0}}\frac{K(2^{-2N},f;B_{\infty
,1}^{-2}(\mathbb{R}^{n}),B_{\infty,1}^{0}(\mathbb{R}^{n}))}{\Psi(N)^{2}}.
\end{align*}
\end{proof}

\begin{remark}
\label{Remark16} The proof above shows that $B_{\infty,1}^{0}(\mathbb{R}^{n})$
plays an auxiliary role in Theorem \ref{PropInterpol}, in the sense that the
same result obtains if we replace it by $B_{\infty,1}^{s}(\mathbb{R}^{n}),$
for any $s>-2$. More precisely, suppose that the admissibility condition given in Definition \ref{DefAdm}(i) is replaced by
\[
\sum_{r=0}^{N}2^{(2+s)r}\Psi(r)^{2}\lesssim2^{(2+s)N}\Psi(N)^{2},\qquad
N\in\mathbb{N}_{0}.
\]
Then, using the same methodology we readily see that
\[
\Vert f\Vert_{V_{\Psi}(\mathbb{R}^{n})}\approx\sup_{t\in(0,1)}\frac
{K(t,f;B_{\infty,1}^{-2}(\mathbb{R}^{n}),B_{\infty,1}^{s}(\mathbb{R}^{n}%
))}{\Psi(-\log t)^{2}}.
\]

\end{remark}

\subsection{Proof of Theorem \ref{ThmVPsi}\label{SecThm1}}

The proof relies strongly on the extrapolation
description of $V_{\Psi}$. In particular, Theorem \ref{PropInterpol} will be
applied to decompose functions in $V_{\Psi}$ in terms of wavelets (cf.
Proposition \ref{PropositionWavelet} below).
%
%

 Let $\{\Upsilon^G_{N l}: N \in \mathbb{N}_0, G \in G^N, l \in \mathbb{Z}^n\}$ be an orthonormal wavelet\footnote{We briefly recall that such bases may be constructed in a standard way from two compactly supported (Daubechies) wavelets $\psi_F \in C^1(\R)$ (\emph{father wavelet}) and $\psi_M \in C^1(\R)$ (\emph{mother wavelet}) satisfying certain moment conditions. More precisely
 $$
 	\Upsilon^G_{N l} (x) = 2^{N n/2} \prod_{r=1}^n \psi_{G_r} (2^N x_r-l_r).
 $$
 Here $G^0 = \{F, M\}^n$ and $G^N = \{F, M\}^{n*}, N \in \mathbb{N}$, where $*$ indicates that at least one of the components of $G \in G^N$ must be an $M$. The role played by
the tensor index $G \in G^{N}$ is auxiliary (note that $\text{card } G^{N}
\approx1$). To simplify the exposition, the index $G$ may be
safely removed from our computations.}  basis in $L^2(\R^n)$.

\begin{proposition}
\label{PropositionWavelet} Suppose that $\Psi$ is an admissible doubling decay. Then, $f\in V_{\Psi}(\mathbb{R}^{n})$ if and only
if
\begin{equation}
f=\sum_{N\in\mathbb{N}_{0},G\in G^{N},l\in\mathbb{Z}^{n}}\lambda_{N l}%
^{G}2^{-Nn/2}\Upsilon_{N l}^{G},\qquad\{\lambda_{N l}^{G}\}\in v_{\Psi}
\label{PropositionWavelet1}%
\end{equation}
(unconditional convergence in the sense of $\mathcal{S}^{\prime}%
(\mathbb{R}^{n})$), where
	$$
		\|\{\lambda^{G}_{N l}\}\|_{v_\Psi} := \sup_{N \in \mathbb{N}_0} \frac{1}{\Psi(N)^2} \, \sum_{k=N}^\infty 2^{- 2 k} \sup_{G \in G^k, l \in \mathbb{Z}^n} |\lambda^{G}_{k l}| < \infty. 
	$$
 The representation of $f$ is unique, and the coefficients
$\lambda_{N l}^{G}$ are determined by
\begin{equation}
\lambda_{N l}^{G}=2^{Nn/2}(f,\Upsilon_{N l}^{G}), \label{PropositionWavelet2}%
\end{equation}
and the operator
\begin{equation}
I:f\mapsto\{\lambda_{N l}^{G}\} \label{OperI}%
\end{equation}
defines an isomorphism of $V_{\Psi}(\mathbb{R}^{n})$ onto $v_{\Psi}$. Furthermore
\begin{equation}
\Vert f\Vert_{V_{\Psi}(\mathbb{R}^{n})}\approx\Vert\{\lambda_{N l}
\}\Vert_{v_{\Psi}}.\label{vanilla}%
\end{equation}
\end{proposition}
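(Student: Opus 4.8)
The plan is to deduce Proposition \ref{PropositionWavelet} directly from the extrapolation description of $V_{\Psi}$ given in Theorem \ref{PropInterpol}, by transferring the well-known wavelet characterizations of the classical Besov spaces $B_{\infty,1}^{-2}(\mathbb{R}^{n})$ and $B_{\infty,1}^{0}(\mathbb{R}^{n})$ through the $\Sigma$-method of extrapolation. First I would recall that the map $f \mapsto \{2^{Nn/2}(f,\Upsilon_{Nl}^{G})\}$ is an isomorphism between $B_{\infty,1}^{s}(\mathbb{R}^{n})$ and the sequence space $b_{\infty,1}^{s}$ with norm $\sum_{N} 2^{Ns}\, 2^{-Nn/2}\sup_{G,l}|\lambda_{Nl}^{G}|$ (the factor $2^{-Nn/2}$ coming from the normalization $2^{-Nn/2}\Upsilon_{Nl}^{G}$ chosen in \eqref{PropositionWavelet1}); this is standard, e.g.\ from \cite{Triebel}. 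Hence the wavelet coefficient map is a common retraction/coretraction for the pair $(B_{\infty,1}^{-2},B_{\infty,1}^{0})$ onto $(b_{\infty,1}^{-2},b_{\infty,1}^{0})$, so $K$-functionals are preserved up to equivalence:
\[
K(t,f;B_{\infty,1}^{-2},B_{\infty,1}^{0})\approx K(t,\{\lambda_{Nl}^{G}\};b_{\infty,1}^{-2},b_{\infty,1}^{0}).
\]

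Next I would compute the latter $K$-functional explicitly. Since $b_{\infty,1}^{s}$ is an $\ell_{1}$-type space with weight $2^{Ns}$ acting on the scalars $a_{N}:=2^{-Nn/2}\sup_{G,l}|\lambda_{Nl}^{G}|$, the standard formula for $K$-functionals of weighted $\ell_{1}$ spaces gives
\[
K(t,\{\lambda_{Nl}^{G}\};b_{\infty,1}^{-2},b_{\infty,1}^{0})\approx\sum_{N=0}^{\infty}\min\{2^{-2N},t\}\,\sup_{G\in G^{N},l\in\mathbb{Z}^{n}}|\lambda_{Nl}^{G}|,
\]
exactly mirroring the computation already carried out for $\{\Delta_{j}f\}$ in the proof of Theorem \ref{PropInterpol} (the $2^{-Nn/2}$ weights cancel against the normalization in \eqref{PropositionWavelet1}). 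Taking $t=2^{-2N}$, splitting the sum at $j=N$, and using the admissibility of $\Psi$ exactly as in the derivation of \eqref{1.1} to discard the low-frequency partial-sum term $\mathcal{A}$, I obtain
\[
\sup_{N\in\mathbb{N}_{0}}\frac{K(2^{-2N},f;B_{\infty,1}^{-2},B_{\infty,1}^{0})}{\Psi(N)^{2}}\approx\sup_{N\in\mathbb{N}_{0}}\frac{1}{\Psi(N)^{2}}\sum_{k=N}^{\infty}2^{-2k}\sup_{G\in G^{k},l\in\mathbb{Z}^{n}}|\lambda_{kl}^{G}|=\|\{\lambda_{Nl}^{G}\}\|_{v_{\Psi}},
\]
and combining with Theorem \ref{PropInterpol} (the doubling hypothesis lets me pass between $\sup_{t\in(0,1)}$ and $\sup_{N\in\mathbb{N}_{0}}$ as in the last display of its proof) yields $\|f\|_{V_{\Psi}(\mathbb{R}^{n})}\approx\|\{\lambda_{Nl}^{G}\}\|_{v_{\Psi}}$, which is \eqref{vanilla}.

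It then remains to package this equivalence into the stated isomorphism. Uniqueness of the representation and formula \eqref{PropositionWavelet2} follow from the orthonormality of the wavelet basis in $L^{2}(\mathbb{R}^{n})$ together with the fact that $V_{\Psi}(\mathbb{R}^{n})\hookrightarrow B_{\infty,1}^{-2}(\mathbb{R}^{n})\hookrightarrow\mathcal{S}'(\mathbb{R}^{n})$, so that the pairings $(f,\Upsilon_{Nl}^{G})$ are well-defined and the wavelet series reproduces $f$; that $V_{\Psi}\hookrightarrow B_{\infty,1}^{-2}$ is immediate from the $N=0$ term in the definition of $\|\cdot\|_{V_{\Psi}}$ together with admissibility (or directly: $\sum_{j\ge 0}2^{-2j}\|\Delta_{j}f\|_{L^\infty}\le\Psi(0)^{2}\|f\|_{V_\Psi}$ if $\Psi(0)$ is normalized, else absorb constants). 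Conversely, given $\{\lambda_{Nl}^{G}\}\in v_{\Psi}$, the series \eqref{PropositionWavelet1} converges unconditionally in $\mathcal{S}'$ and defines an element of $V_{\Psi}$ with the right norm, again by the established equivalence and the corresponding classical statement for $B_{\infty,1}^{-2}$. Thus $I$ in \eqref{OperI} is a bounded bijection with bounded inverse. I do not anticipate a serious obstacle here; the one point demanding care is the bookkeeping of the normalization factor $2^{-Nn/2}$ in \eqref{PropositionWavelet1} versus the $2^{Nn/2}$ in \eqref{PropositionWavelet2} against the standard $b_{\infty,1}^{s}$ sequence norms, and confirming that $G$ is genuinely harmless because $\operatorname{card}G^{N}\approx 1$ (so that $\sup_{G}$ and $\sum_{G}$ are interchangeable up to constants) — both are routine but easy to slip on.
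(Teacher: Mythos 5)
Your proposal follows essentially the same route as the paper's proof: use the wavelet isomorphism for classical Besov spaces (Triebel, Theorem 1.20) to transfer the $K$-functional for $(B_{\infty,1}^{-2},B_{\infty,1}^{0})$ onto the sequence pair, compute that $K$-functional explicitly as in the proof of Theorem~\ref{PropInterpol} with $\ell_\infty$ in place of $L^\infty$, and then apply Theorem~\ref{PropInterpol} together with admissibility and doubling. One small bookkeeping slip you flagged yourself and which does not propagate: with the convention $\lambda_{Nl}^{G}=2^{Nn/2}(f,\Upsilon_{Nl}^{G})$ the sequence norm for $B_{\infty,1}^{s}$ should be $\sum_{N}2^{Ns}\sup_{G,l}|\lambda_{Nl}^{G}|$ with no extra $2^{-Nn/2}$ factor, which is what your displayed $K$-functional formula actually uses, not what you wrote in the parenthetical.
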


\begin{proof}
By the classical wavelet theory for Besov spaces \cite[Theorem 1.20]{Triebel}, the
operator $I$ given by \eqref{OperI} acts as an isomorphism
\begin{equation}
I:B_{\infty,1}^{-2}(\mathbb{R}^{n})\longrightarrow \ell^{-2}_1(\ell_\infty)%
\qquad\text{and}\qquad I:B_{\infty,1}^{0}(\mathbb{R}^{n})\longrightarrow \ell_1(\ell_\infty).\label{proofWav1}%
\end{equation}
As usual,  $\ell^s_1 (\ell_\infty), \, s \in \R,$ is the mixed sequence space formed by all those $\{\lambda_{N l}\}$ such that
$$
	\|\{\lambda_{N l}\}\|_{\ell^s_1 (\ell_\infty)} = \sum_{N=0}^\infty 2^{N s} \sup_{l \in \mathbb{Z}^n} |\lambda_{N l}| < \infty.
	$$
We let $\ell_1(\ell_\infty) = \ell_1^0 (\ell_\infty)$. 

It follows from \eqref{proofWav1} that
\[
K(t,f;B_{\infty,1}^{-2}(\mathbb{R}^{n}),B_{\infty,1}^{0}(\mathbb{R}%
^{n}))\approx K(t,\{\lambda_{N l}\};\ell^{-2}_1(\ell_\infty), \ell_1(\ell_\infty)).
\]
Consequently, combining with Theorem \ref{PropInterpol} yields,
\begin{equation}
\Vert f\Vert_{V_{\Psi}(\mathbb{R}^{n})}\approx\sup_{t\in(0,1)}\frac
{K(t,\{\lambda_{N l}\};\ell^{-2}_1(\ell_\infty), \ell_1(\ell_\infty))}{\Psi(-\log
t)^{2}}.\label{proofWav3}%
\end{equation}
At
this point the method of proof developed in Theorem \ref{PropInterpol} can be
applied line by line (with $\ell_{\infty}$ now playing the role played by
$L^{\infty}(\mathbb{R}^{n})$) to show that
\begin{equation}
\sup_{t\in(0,1)}\frac
{K(t,\{\lambda_{N l}\};\ell^{-2}_1(\ell_\infty), \ell_1(\ell_\infty))}{\Psi(-\log
t)^{2}}  \approx\Vert\{\lambda_{N l}\}\Vert_{v_{\Psi}%
}.\label{proofWav4}%
\end{equation}
Combining \eqref{proofWav3} and \eqref{proofWav4}, we obtain \eqref{vanilla}.

For $f\in V_{\Psi}(\mathbb{R}^{n}),$ the convergence of the wavelet expansion
\eqref{PropositionWavelet1}, as well as the uniqueness of wavelet coefficients
given by \eqref{PropositionWavelet2}, is guaranteed by the fact that $V_{\Psi
}(\mathbb{R}^{n})\hookrightarrow B_{\infty,1}^{-2}(\mathbb{R}^{n})$ (cf.
Remark \ref{Remark1}) since the corresponding assertions are valid for
classical Besov spaces.
\end{proof}

\begin{proof}
[Proof of Theorem \ref{ThmVPsi}]

(i): Without loss of generality, we may assume that
$Q_{0}=(0,1)^{n}$. Let $L\in\mathbb{N}_{0}$ and $\mathcal{Q} = (Q_i)_{i \in I}\in S(Q_{0})$. If $f \geq 0$ is compactly supported in $Q_0$, then
\begin{align*}
 \left[\sum_{Q \in \mathbb{D}_{\leq L-1} (\mathcal{Q})}\bigg(|Q|^{\frac{1}%
{n}-\frac{1}{2}}\int_{Q}|f|\bigg)^{2}\right]^{\frac{1}{2}} &  =\left[\sum_{k=L-1}^{\infty}\sum_{i\in I:Q_{i}%
\in\mathbb{D}_{k}(\mathcal{Q)}}\bigg(|Q_{i}|^{\frac{1}{n}-\frac{1}{2}}%
\int_{Q_{i}}f\bigg)^{2}\right]^{\frac{1}{2}}\\
&  \leq\Vert f\Vert_{L^{1}(Q_{0})}^{\frac{1}{2}}\,\left[\sum_{k=L-1}^{\infty
}2^{-kn(\frac{1}{n}-\frac{1}{2})2}\sup_{Q\in\mathbb{D}_{k}}\int_{Q}%
f\right]^{\frac{1}{2}}\\
&  \leq\Vert f\Vert_{L^{1}(Q_{0})}^{\frac{1}{2}}\Psi(L)\sup_{N\in
\mathbb{N}_{0}}\left[\frac{1}{\Psi(N)^{2}}\sum_{k=N-1}^{\infty}2^{k(-2+n)}%
\sup_{Q\in\mathbb{D}_{k}}\int_{Q}f\right]^{\frac{1}{2}}\\
&  \leq\Psi(L)\sup_{N\in\mathbb{N}_{0}}\frac{1}{\Psi(N)^{2}}\sum
_{k=N-1}^{\infty}2^{k(-2+n)}\sup_{Q\in\mathbb{D}_{k}}\int_{Q}f.
\end{align*}
As a by-product (cf. \eqref{psi2})
\begin{equation*}
\Vert f\Vert_{S_{\Psi}(Q_{0})}=\sup_{L\in\mathbb{N}}\frac{s_{L}(f)}{\Psi(L)}\leq\sup_{N\in\mathbb{N}_{0}%
}\frac{1}{\Psi(N)^{2}}\sum_{k=N-1}^{\infty}2^{k(-2+n)}\sup_{Q\in\mathbb{D}%
_{k}}\int_{Q}f.
\end{equation*}
Hence the desired embedding $(V^+_\Psi(\R^n))_c \hookrightarrow (S_\Psi(\R^n))_c$ follows if we show that
\begin{equation}
\sup_{N\in\mathbb{N}_{0}}\frac{1}{\Psi(N)^{2}}\sum_{k=N-1}^{\infty}%
2^{k(-2+n)}\sup_{Q\in\mathbb{D}_{k}}\int_{Q}f\lesssim\Vert f\Vert_{V_{\Psi
}(\mathbb{R}^{n})}.\label{5}%
\end{equation}

Let $f \in V_{\Psi}(\mathbb{R}^{n})$. According to Proposition
\ref{PropositionWavelet}, $f$ can be expressed as
\begin{equation}
\label{11c}f = \sum_{r=0}^{\infty}\sum_{l \in\mathbb{Z}^{n}} \lambda_{r l}
2^{-r n/2} \Upsilon_{r l},
\end{equation}
where $\lambda_{r, l}$ is given by \eqref{PropositionWavelet2} The wavelets
$\Upsilon_{r l}$ can be chosen such that
\begin{equation}
\label{11b}\text{supp } \Upsilon_{r l} \subset c Q_{r l} = c (2^{-r} l + 2^{-r} Q_0),
\end{equation}
\begin{equation}
\label{11a}|\Upsilon_{r l}(x)| \lesssim2^{r n/2}, \qquad r \in\mathbb{N}_{0},
\quad l \in\mathbb{Z}^{n},
\end{equation}
and there exists $A \in\mathbb{N}, \, A > 2$, satisfying
\begin{equation}
\label{11}\int_{\mathbb{R}^{n}} x^{\beta}\Upsilon_{r l}(x) \, dx = 0,
\qquad|\beta| < A, \qquad r \in\mathbb{N}, \qquad l \in\mathbb{Z}^{n}.
\end{equation}
%
%

We are going to compute $\chi_{j m}(f)$ given by \eqref{Chijm}. For every $j\in\mathbb{N}_{0}$, we can split $f$ as follows (cf. \eqref{11c})
\begin{equation}
f=f_{j}+f^{j}:=\sum_{r=0}^{j}\sum_{l\in\mathbb{Z}^{n}}\lambda_{rl}%
2^{-rn/2}\Upsilon_{rl}+\sum_{r=j+1}^{\infty}\sum_{l\in\mathbb{Z}^{n}}%
\lambda_{rl}2^{-rn/2}\Upsilon_{rl}. \label{11d}%
\end{equation}
Then, for $m\in\mathbb{Z}^{n}$,
\begin{equation}
\chi_{jm}(f)=\chi_{jm}(f_{j})+\chi_{jm}(f^{j}). \label{6}%
\end{equation}
Next, we estimate each of these terms separately.

We first estimate $\chi_{jm}(f_{j})$. For $r\leq j$, we let (recall that $\text{supp } \chi_{j m} \subset d Q_{j m}$)
\begin{equation}
\ell_{r}^{j}(m):=\{l\in\mathbb{Z}^{n}:dQ_{jm}\cap cQ_{rl}\neq\emptyset\}.
\label{7}%
\end{equation}
Obviously (cf. \eqref{11b})
\begin{equation}
\chi_{jm}(\Upsilon_{rl})=0\qquad\text{for}\qquad l\not \in \ell_{r}^{j}(m).
\label{8}%
\end{equation}
On the other hand, if $l\in\ell_{r}^{j}(m)$ then (noting that $|\chi_{j m}(x)| \lesssim 2^{j n}$)
\begin{equation}
|\chi_{jm}(\Upsilon_{rl})|\leq\int_{dQ_{jm}}|\chi_{jm}(x)||\Upsilon
_{rl}(x)|\,dx\lesssim2^{jn}\int_{dQ_{jm}}|\Upsilon_{rl}(x)|\,dx\lesssim
2^{rn/2}, \label{9}%
\end{equation}
where we have used the property \eqref{11a} in the last step. In light of
\eqref{8} and \eqref{9}, we derive
\begin{align}
|\chi_{jm}(f_{j})|  &  \leq\sum_{r=0}^{j}\sum_{l\in\mathbb{Z}^{n}}%
|\lambda_{rl}|2^{-rn/2}|\chi_{jm}(\Upsilon_{rl})|\nonumber\\
&  =\sum_{r=0}^{j}\sum_{l\in\ell_{r}^{j}(m)}|\lambda_{rl}|2^{-rn/2}|\chi
_{jm}(\Upsilon_{rl})|\nonumber\\
&  \lesssim\sum_{r=0}^{j}\sum_{l\in\ell_{r}^{j}(m)}|\lambda_{rl}|. \label{221}%
\end{align}
Note that since $\text{card }\ell_{r}^{j}(m)\approx1,$ if $0\leq r\leq j,$
from (\ref{221}) it follows that
\begin{equation}
|\chi_{jm}(f_{j})|\lesssim\sum_{r=0}^{j}\sup_{l\in\mathbb{Z}^{n}}|\lambda
_{rl}|. \label{10}%
\end{equation}

Next we deal with $\chi_{jm}(f^{j})$. Let $r>j$ and $l\in\ell_{r}^{j}(m)$ (cf.
\eqref{7}). Using the Taylor expansion of $\chi_{jm}$ around $2^{-r}l$ and
using the cancellation conditions \eqref{11}, one can show that
\begin{align*}
|\chi_{jm}(\Upsilon_{rl})|  &  =\bigg|\int_{\mathbb{R}^{n}}\chi_{jm}%
(x)\Upsilon_{rl}(x)\,dx\bigg|\\
&  \leq\sum_{|\gamma|=A}\sup_{x\in\mathbb{R}^{n}}|D^{\gamma}\chi_{jm}%
(x)|\int_{\mathbb{R}^{n}}|\Upsilon_{rl}(x)||x-2^{-r}l|^{A}\,dx\\
&  \lesssim2^{j(n+A)}\int_{\mathbb{R}^{n}}2^{rn/2}|\psi(2^{r}x-l)|2^{-rA}%
|2^{r}x-l|^{A}\,dx\\
&  =2^{(j-r)(n+A)}2^{rn/2}\int_{\mathbb{R}^{n}}|\psi(x)||x|^{A}\,dx\\
&  \approx2^{(j-r)(n+A)}2^{rn/2}.
\end{align*}
Using this estimate we obtain
\begin{equation}
|\chi_{jm}(f^{j})|    \leq\sum_{r=j}^{\infty}\sum_{l\in\ell_{r}^{j}%
(m)}|\lambda_{rl}|2^{-rn/2}|\chi_{jm}(\Upsilon_{rl})|  \lesssim\sum_{r=j}^{\infty}2^{(j-r)(n+A)}\sum_{l\in\ell_{r}^{j}(m)}%
|\lambda_{rl}|. \label{222}%
\end{equation}
Note that $\text{card }\ell_{r}^{j}(m)\approx2^{n(r-j)}$ if $r\geq j$. Hence,
by \eqref{222},
\begin{equation}
|\chi_{jm}(f^{j})|    \lesssim\sum_{r=j}^{\infty}2^{(j-r)(n+A)}\sup
_{l\in\mathbb{Z}^{n}}|\lambda_{rl}|\,2^{n(r-j)}  =2^{jA}\sum_{r=j}^{\infty}2^{-rA}\sup_{l\in\mathbb{Z}^{n}}|\lambda_{rl}|.
\label{16}%
\end{equation}

Putting together \eqref{6}, \eqref{10} and \eqref{16},
\begin{equation*}
|\chi_{jm}(f)|   \leq|\chi_{jm}(f_{j})|+|\chi_{jm}(f^{j})|  \lesssim\sum_{r=0}^{j}\sup_{l\in\mathbb{Z}^{n}}|\lambda_{rl}|+2^{jA}%
\sum_{r=j}^{\infty}2^{-rA}\sup_{l\in\mathbb{Z}^{n}}|\lambda_{rl}|
\end{equation*}
uniformly with respect to $m\in\mathbb{Z}^{n}$. Consequently,
\begin{equation}
\sup_{m\in\mathbb{Z}^{n}}|\chi_{jm}(f)|\lesssim\sum_{r=0}^{j}\sup
_{l\in\mathbb{Z}^{n}}|\lambda_{rl}|+2^{jA}\sum_{r=j}^{\infty}2^{-rA}\sup
_{l\in\mathbb{Z}^{n}}|\lambda_{rl}|. \label{17}%
\end{equation}

Let $N\in\mathbb{N}_{0}$. From \eqref{17}, changing the order of summation and using $A > 2$,  we get
\begin{align*}
\sum_{j=N}^{\infty}2^{-2j}\sup_{m\in\mathbb{Z}^{n}}|\chi_{jm}(f)|  &
\lesssim\sum_{j=N}^{\infty}2^{-2j}\sum_{r=0}^{j}\sup_{l\in\mathbb{Z}^{n}%
}|\lambda_{rl}| +\sum_{j=N}^{\infty}2^{(A-2)j}\sum_{r=j}^{\infty}2^{-rA}%
\sup_{l\in\mathbb{Z}^{n}}|\lambda_{rl}|\\
&  \lesssim2^{-2N}\sum_{r=0}^{N}\sup_{l\in\mathbb{Z}^{n}}|\lambda_{rl}%
|+\sum_{r=N}^{\infty}2^{-2r}\sup_{l\in\mathbb{Z}^{n}}|\lambda_{rl}|.
\end{align*}
It follows that
\begin{align}
\sup_{N\in\mathbb{N}_{0}}\frac{1}{\Psi(N)^{2}}\sum_{j=N}^{\infty}2^{-2j}%
\sup_{m\in\mathbb{Z}^{n}}|\chi_{jm}(f)|  &  \lesssim\sup_{N\in\mathbb{N}_{0}%
}\frac{2^{-2N}}{\Psi(N)^{2}}\,\sum_{r=0}^{N}\sup_{l\in\mathbb{Z}^{n}}%
|\lambda_{rl}|\nonumber\\
&  \hspace{0.5cm}+\sup_{N\in\mathbb{N}_{0}}\frac{1}{\Psi(N)^{2}}\sum
_{r=N}^{\infty}2^{-2r}\sup_{l\in\mathbb{Z}^{n}}|\lambda_{rl}|\nonumber\\
&  =:\mathcal{I}+\mathcal{II}. \label{18}%
\end{align}

Next we show $\mathcal{I}\lesssim\mathcal{II}$. Indeed, by condition (i) from Definition \ref{DefAdm},
\begin{align*}
\sum_{r=0}^{N}\sup_{l\in\mathbb{Z}^{n}}|\lambda_{rl}|  &  =\sum_{r=0}%
^{N}2^{2r}\Psi(r)^{2}\frac{2^{-2r}}{\Psi(r)^{2}}\sup_{l\in\mathbb{Z}^{n}%
}|\lambda_{rl}|\\
&  \leq\left(  \sup_{M\in\mathbb{N}_{0}}\,\left(  \frac{2^{-2M}}{\Psi(M)^{2}%
}\sup_{l\in\mathbb{Z}^{n}}|\lambda_{Ml}|\right)  \right)  \sum_{r=0}^{N}%
2^{2r}\Psi(r)^{2}\\
&  \lesssim\left(  \sup_{M\in\mathbb{N}_{0}}\left(  \frac{2^{-2M}}{\Psi
(M)^{2}}\sup_{l\in\mathbb{Z}^{n}}|\lambda_{Ml}|\right)  \right)  2^{2N}%
\Psi(N)^{2}.
\end{align*}
Consequently,%
\begin{equation*}
\frac{2^{-2N}}{\Psi(N)^{2}}\sum_{r=0}^{N}\sup_{l\in\mathbb{Z}^{n}}%
|\lambda_{rl}|    \lesssim\sup_{M\in\mathbb{N}_{0}}\left(  \,\frac{1}%
{\Psi(M)^{2}}\sum_{r=M}^{\infty}2^{-2r}\sup_{l\in\mathbb{Z}^{n}}|\lambda
_{rl}|\right)   =\mathcal{II}.
\end{equation*}
Now, taking supremum over all $N\in\mathbb{N}_{0}$, we arrive at
\[
\mathcal{I}\lesssim\mathcal{II}.
\]
Consequently \eqref{18} reads as
\begin{equation}
\sup_{N\in\mathbb{N}_{0}}\frac{1}{\Psi(N)^{2}}\sum_{j=N}^{\infty}2^{-2j}%
\sup_{m\in\mathbb{Z}^{n}}|\chi_{jm}(f)|\lesssim\mathcal{II}. \label{19}%
\end{equation}

Since $f\geq0$ and \eqref{11e} holds, we have (using the notation $\chi
_{Q}=\chi_{jm}$ if $Q=Q_{jm} \in \mathbb{D}_j$)
\[
\sup_{N\in\mathbb{N}_{0}}\frac{1}{\Psi(N)^{2}}\sum_{k=N-1}^{\infty}%
2^{k(-2+n)}\sup_{Q\in\mathbb{D}_{k}}\int_{Q}f\lesssim\sup_{N\in\mathbb{N}_{0}%
}\frac{1}{\Psi(N)^{2}}\sum_{k=N-1}^{\infty}2^{-2k}\sup_{Q\in\mathbb{D}_{k}%
}\chi_{Q}(f).
\]
Consequently by \eqref{18}-\eqref{19},
\[
\sup_{N\in\mathbb{N}_{0}}\frac{1}{\Psi(N)^{2}}\sum_{k=N-1}^{\infty}%
2^{k(-2+n)}\sup_{Q\in\mathbb{D}_{k}}\int_{Q}f\lesssim\sup_{N\in\mathbb{N}_{0}%
}\frac{1}{\Psi(N)^{2}}\sum_{r=N}^{\infty}2^{-2r}\sup_{l\in\mathbb{Z}^{n}%
}|\lambda_{rl}|,
\]
which combined with Proposition \ref{PropositionWavelet} yields
\[
\sup_{N\in\mathbb{N}_{0}}\frac{1}{\Psi(N)^{2}}\sum_{k=N-1}^{\infty}%
2^{k(-2+n)}\sup_{Q\in\mathbb{D}_{k}}\int_{Q}f\lesssim\Vert f\Vert_{V_{\Psi
}(\mathbb{R}^{n})},
\]
concluding the proof of \eqref{5}.

(ii): Combine (i) with Theorem \ref{CorFinal}. 
\end{proof}

\subsection{Proof of Theorem \ref{Thm11}\label{Section3}}
%
%
%
%
%
%
%
 Let $\Psi(t) = t^{\frac{1-\alpha}{2}}, \, \alpha > 1$. It follows from \eqref{PropositionWavelet2}, \eqref{11b} and \eqref{11a}
that
\begin{align*}
\sum_{k=N}^{\infty}2^{-2k}\sup_{l\in\mathbb{Z}^{n}}|\lambda
_{k l}|  &  \lesssim\sum_{k=N}^{\infty}2^{-2k}\sup_{l\in\mathbb{Z}^{n}}2^{kn/2}\int_{cQ_{kl}}|f||\Upsilon_{k l}|\\
&  \lesssim\sum_{k=N}^{\infty}2^{-2k}\sup_{l\in\mathbb{Z}^{n}}2^{kn}%
\int_{cQ_{kl}}|f|\\
&  \lesssim\Vert f\Vert_{M^{\frac{n}{2},\alpha}(\mathbb{R}^{n})}\sum
_{k=N}^{\infty}2^{-2k}\sup_{l\in\mathbb{Z}^{n}}2^{kn}|Q_{kl}|^{1-\frac{2}{n}%
}(1-(\log|Q_{kl}|)_{-})^{-\alpha}\\
&  \approx\Vert f\Vert_{M^{\frac{n}{2},\alpha}(\mathbb{R}^{n})}\sum
_{k=N}^{\infty}(1+k)^{-\alpha}\\
&  \approx N^{-\alpha+1}\,\Vert f\Vert_{M^{\frac{n}{2},\alpha}(\mathbb{R}%
^{n})}=\Psi(N)^{2}\,\Vert f\Vert_{M^{\frac{n}{2},\alpha}(\mathbb{R}^{n})}.
\end{align*}
Taking the supremum over all $N\in\mathbb{N}_{0}$ and invoking Proposition
\ref{PropositionWavelet}, we get
\begin{equation}
M^{\frac{n}{2},\alpha}(\mathbb{R}^{n})\hookrightarrow V_{\Psi}(\mathbb{R}^{n})
\label{fromMtoV}%
\end{equation}
as desired.

 To show that \eqref{fromMtoV} is strict, we apply again Proposition \ref{PropositionWavelet},
and the fact that $\alpha>1,$ to derive 
\begin{equation}
\Vert2^{(2-\frac{n}{2})K}\Upsilon_{K (0,\ldots,0)}\Vert_{V_{\Psi
}(\mathbb{R}^{n})}\approx\sup_{0\leq N\leq K}\frac{1}{N^{-\alpha+1}}=\frac
{1}{K^{-\alpha+1}}. \label{1.2}%
\end{equation}
On the other hand, we have
\begin{align*}
\Vert \Upsilon_{K (0,\ldots,0)}\Vert_{M^{\frac{n}{2},\alpha}(\mathbb{R}%
^{n})}  &  =\sup_{Q }\frac{(1-(\log|Q|)_-)^{\alpha}}{|Q|^{\frac{n-2}{n}%
}}\,\int_{Q}|\Upsilon_{K,(0,\ldots,0)}|\\
&  \geq \frac{(1-(\log|Q_{K(0,\ldots,0)} |)_-)^{\alpha}}{|Q_{K(0,\ldots,0)}%
|^{\frac{n-2}{n}}}\,\int_{Q_{K(0,\ldots,0)}}|\Upsilon_{K,(0,\ldots,0)}|\\
&  \approx\frac{K^{\alpha}}{2^{-K(n-2)}}\,\int_{Q_{K(0,\ldots,0)}}%
|\Upsilon_{K,(0,\ldots,0)}|\\
&  \gtrsim\frac{K^{\alpha}}{2^{-K(n-2)}}\,2^{-Kn}2^{Kn/2}=2^{K(\frac{n}{2}%
-2)}K^{\alpha},
\end{align*}
where we have used the wavelet properties in the penultimate step. 
Consequently,
\begin{equation}
\Vert2^{(2-\frac{n}{2})K}\Upsilon_{K (0,\ldots,0)}\Vert_{M^{\frac{n}%
{2},\alpha}(\mathbb{R}^{n})}\gtrsim K^{\alpha}. \label{1.3}%
\end{equation}

We now argue by contradiction. Suppose that, to the contrary,
\begin{equation*}
V_{\Psi}(\mathbb{R}^{n})\hookrightarrow M^{\frac{n}{2},\alpha}(\mathbb{R}%
^{n}).
\end{equation*}
In particular, we have, uniformly with respect to $K,$
\[
\Vert2^{(2-\frac{n}{2})K}\Upsilon_{K (0,\ldots,0)}\Vert_{M^{\frac{n}%
{2},\alpha}(\mathbb{R}^{n})}\lesssim\Vert2^{(2-\frac{n}{2})K}\Upsilon
_{K (0,\ldots,0)}\Vert_{V_{\Psi}(\mathbb{R}^{n})}.
\]
Combining the last inequality with \eqref{1.2} and \eqref{1.3}, yields
\[
K^{\alpha}\lesssim K^{\alpha-1},
\]
and letting $K\rightarrow\infty$ we arrive at a contradiction. \qed

\section{Sharpening Tadmor's regularity via $T_\Psi$\label{Section4}}

As already mentioned in Section \ref{sec:tad}, Tadmor \cite{Tadmor} proposed an approach, based on $R_{p, 2} \log^\alpha$-spaces, guaranteeing existence of Euler solutions with no concentration. In particular, in the distinguished  $2$D case, the author was able to improve the Morrey regularity of vortex sheets obtained in \cite{DiPernaMajda} from $\alpha = 1$ to the borderline regularity $\alpha = 1/2$. This is an application of the $H^{-1}$-stability method since (cf. \eqref{comp1})
\begin{equation}
R_{\frac{2 n}{n+2},2}\log^{\alpha} (\R^n)_c\overset{compactly}{\hookrightarrow}H_{\text{loc}}%
^{-1}(\R^n), \qquad \alpha > \frac{1}{2}.\label{comp1.8}%
\end{equation}

The goal of this section is to show that the results from \cite{Tadmor} admit improvements in terms of new scales of extrapolation spaces ($T_\Psi$-spaces, cf. Definition \ref{DefTPsi} below) and the method of sparse stability developed in previous sections. 

\subsection{$T_\Psi$-spaces}\label{Sub81}
To motivate the constructions that follow it is instructive to compare the
scalings of the spaces involved in the critical embeddings \eqref{comp}. For a function
space $X(\mathbb{R}^{n}),$ let the scaling parameter of $X$ be the number
$i_{X}$ such for all $\lambda>0,$ and all $\Vert f\Vert_{X}=1,$ we have
$\lambda^{-i_{X}}\Vert f(\lambda\cdot)\Vert_{X}=1.$ For the Morrey spaces
$M^{p}(\mathbb{R}^{n}),$ for $\lambda>0,$ and $\Vert f\Vert_{M^{p}%
(\mathbb{R}^{n})}=1,$ we have $\Vert f(\lambda\cdot)\Vert_{M^{p}%
(\mathbb{R}^{n})}=\lambda^{-n/p}$ so that $i_{M^{p}(\mathbb{R}^{n})}=-n/p.$
Likewise, for $H^{-1}(\mathbb{R}^{n})$, $i_{H^{-1}(\mathbb{R}^{n})}=-1-n/2.$
Comparing scaling parameters in the critical case $p=n/2,$ we see that
$\ i_{M^{n/2}(\mathbb{R}^{n})}=i_{H^{-1}(\mathbb{R}^{n})},$ only when $n=2,$
in which case the common value of the parameter is $-2.$ This suggests to seek
for an alternative to (\ref{comp}) where the involved
spaces have the same scaling parameter as $H^{-1}$ (i.e., $-1-n/2$). With this
in mind, we propose a new space $T_{\Psi}$, a variant of $V_{\Psi}$ introduced in Definition \ref{DefV}, which is
obtained by measuring the dyadic frequencies $\Delta_{j}f$ in the $L^{2}$-norm
rather than the $L^{\infty}$-norm.

\begin{definition}\label{DefTPsi}
Let $T_{\Psi}(\mathbb{R}^{n})$ be the space formed by all $f\in\mathcal{S}%
^{\prime}(\mathbb{R}^{n})$ such that
\[
\Vert f\Vert_{T_{\Psi}(\mathbb{R}^{n})}^{2}:=\sup_{N\in\mathbb{N}_{0}}\frac
{1}{\Psi(N)^{2}}\sum_{j=N}^{\infty}2^{-2j}\Vert\Delta_{j}f\Vert_{L^{2}%
(\mathbb{R}^{n})}^{2} < \infty. 
\]
Let $T^+_\Psi(\R^n) = T_\Psi(\R^n) \cap BM^+_c$. 
\end{definition}

\begin{remark}
	Similarly as in Remark  \ref{Remark1}, the space $T_\Psi$ can be seen as a dual counterpart of classical Vishik spaces involving the Besov space $B^{-1}_{2, 2}$. Note that $B^{-1}_{2, 2}$ can be identified with the inhomogeneous version of $H^{-1}$. 
\end{remark}

\begin{remark}
The space $T_{\Psi}$ admits a somewhat simpler characterization in terms of
Fourier transforms. Namely
\[
\Vert f\Vert_{T_{\Psi}(\mathbb{R}^{n})}^{2}\approx\sup_{N\in\mathbb{N}_{0}%
}\frac{1}{\Psi(N)^{2}}\int_{|\xi|>2^{N}-1}(1+|\xi|^{2})^{-1}\,|\widehat{f}%
(\xi)|^{2}\,d\xi.
\]
Indeed, this is a consequence of Plancherel's and Fubini's theorem, together
with basic properties of\footnote{Here $\varphi_j$ denotes the Fourier multiplier associated with $\Delta_j$, i.e., $\widehat{\Delta_j f}(\xi) = \varphi_j(\xi) \widehat{f}(\xi)$.} $\{\varphi_{j}\}_{j\in\mathbb{N}_{0}}$,
\begin{align*}
\Vert f\Vert_{T_{\Psi}(\mathbb{R}^{n})}^{2}  &  \approx\sup_{N\in
\mathbb{N}_{0}}\frac{1}{\Psi(N)^{2}}\int_{\mathbb{R}^{n}}\sum_{j=N}^{\infty
}[2^{-j}\varphi_{j}(\xi)]^{2}\,|\widehat{f}(\xi)|^{2}\,d\xi\\
&  \approx\sup_{N\in\mathbb{N}_{0}}\frac{1}{\Psi(N)^{2}}\int_{\mathbb{R}^{n}%
}(1+|\xi|^{2})^{-1}\mathbf{1}_{(2^{N}-1,\infty)}(|\xi|)\,|\widehat{f}%
(\xi)|^{2}\,d\xi.
\end{align*}
\end{remark}

\subsection{$T_\Psi$-regularity of Euler flows} We state now the main results of this section

\begin{theorem}\label{ThmTPsi} 
Let $\Psi$ be an admissible doubling decay.  
Then:
	\begin{enumerate}
		\item[(i)]  $T^+_\Psi(\R^n)_c \hookrightarrow S_\Psi(\R^n)_{c}$. As a consequence (cf. \eqref{116}), $T^+_\Psi(\R^n)_c\overset{compactly}{\hookrightarrow}H^{-1}_{\emph{loc}}(\R^n
)$. 
		\item[(ii)] Let $\{u^\varepsilon\}_{\varepsilon > 0}$ be a family of approximate solutions to Euler equations \eqref{Euler}, such that the related family of vorticities $\{\omega^\varepsilon\}_{\varepsilon > 0}$ is uniformly bounded in $L^\infty([0, T]; T^+_\Psi(\R^n)_c)$. Then $\{u^\varepsilon\}_{\varepsilon > 0}$ has a strong limit $u$ in $L^\infty([0, T]; L^2_{\emph{loc}}(\R^n))$, where $u$ is a solution with no concentrations. 
	\end{enumerate}
\end{theorem}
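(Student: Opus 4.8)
The plan is to run the argument of Theorem~\ref{ThmVPsi} with $T_\Psi$ in place of $V_\Psi$. Part~(ii) should then be immediate: granting (i), the hypothesis gives a uniform bound of $\{\omega^\varepsilon\}$ in $L^\infty([0,T];S_\Psi(\R^n)_c)$, so the conclusion follows from Theorem~\ref{CorFinal} exactly as in the $V_\Psi$ case. Hence everything reduces to the embedding in (i), i.e. to proving $s_N(\omega)\lesssim\Psi(N)\,\|\omega\|_{T_\Psi}$ for $\omega\in T^+_\Psi(\R^n)$ supported in a fixed cube $Q_0$ (WLOG $|Q_0|=1$), together with \eqref{116} to get the compact embedding into $H^{-1}_{\text{loc}}$.

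First I would reduce to a cube estimate. Since for every $\mathcal{Q}\in S(Q_0)$ the family $\mathbb{D}_k(\mathcal{Q})$ is a pairwise disjoint subfamily of $\mathbb{D}_{k;Q_0}$, one has
$
s_N(\omega)^2\le\sum_{k\ge N-1}2^{k(n-2)}\sum_{Q\in\mathbb{D}_{k;Q_0}}\omega(Q)^2 .
$
Here is the one genuine departure from the $V_\Psi$ proof: I would \emph{not} estimate the inner sum by $\|\omega\|_{BM}\sup_Q\omega(Q)$ (that bound is too lossy when $n\ge2$) but keep the full $\ell^2$-sum over the generation-$k$ cubes. Then, as for $V_\Psi$, I would fix a smooth bump $\chi\in C_c^\infty(\R^n)$, $\chi\ge0$, $\chi\ge1$ on the unit cube, put $\chi_Q(x)=2^{kn}\chi(2^kx-m)$ for $Q=Q_{km}\in\mathbb{D}_{k;Q_0}$, so that $\chi_Q\gtrsim2^{kn}$ on $Q$ and $\operatorname{supp}\chi_Q\subset dQ$; since $\omega\ge0$, $\omega(Q)\lesssim2^{-kn}\chi_Q(\omega)$, whence $\sum_{Q\in\mathbb{D}_{k;Q_0}}\omega(Q)^2\lesssim2^{-2kn}\sum_m\chi_{km}(\omega)^2$. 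Using a \emph{smooth} bump rather than $\mathbf{1}_Q$ (or the dyadic averages $E_k\omega$) is the decisive point, and it is exactly where nonnegativity of $\omega$ is used.

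The core step is to bound $\sum_m\chi_{km}(\omega)^2$ by $\{\|\Delta_j\omega\|_{L^2}^2\}_j$. Splitting $\omega=S_k\omega+(\mathrm{Id}-S_k)\omega$ into low ($\le k$) and high ($>k$) frequencies, for the low part I would use Cauchy--Schwarz, the bounded overlap of $\{\operatorname{supp}\chi_{km}\}_m$ and Littlewood--Paley theory to get $\sum_m\chi_{km}(S_k\omega)^2\lesssim2^{kn}\|S_k\omega\|_{L^2}^2\approx2^{kn}\sum_{j\le k}\|\Delta_j\omega\|_{L^2}^2$; for the high part I would write $\chi_{km}((\mathrm{Id}-S_k)\omega)=\sum_{r>k}\langle\Delta_r\chi_{km},\widetilde\Delta_r\omega\rangle$ with $\widetilde\Delta_r=\Delta_{r-1}+\Delta_r+\Delta_{r+1}$, exploit the Schwartz decay $\|\Delta_r\chi_{km}\|_{L^2}\lesssim_M 2^{-M(r-k)}2^{kn/2}$ for every $M$ (smoothness of $\chi$), a routine spatial localization near $\operatorname{supp}\chi_{km}$, and Cauchy--Schwarz in $r$, to obtain $\sum_m\chi_{km}((\mathrm{Id}-S_k)\omega)^2\lesssim_M 2^{kn}\sum_{r>k}2^{-M(r-k)}\|\Delta_r\omega\|_{L^2}^2$ for all $M$. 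Setting $a_j=\|\Delta_j\omega\|_{L^2}^2$ this yields $\sum_{Q\in\mathbb{D}_{k;Q_0}}\omega(Q)^2\lesssim_M 2^{-kn}\big(\sum_{j\le k}a_j+\sum_{r>k}2^{-M(r-k)}a_r\big)$, hence $s_N(\omega)^2\lesssim_M\sum_{k\ge N-1}2^{-2k}\sum_{j\le k}a_j+\sum_{k\ge N-1}2^{-2k}\sum_{r>k}2^{-M(r-k)}a_r$. Interchanging summations (with $M>2$) the right-hand side is $\lesssim 2^{-2N}\sum_{j<N}a_j+\sum_{j\ge N-1}2^{-2j}a_j$; the tail term is $\le\Psi(N-1)^2\|\omega\|_{T_\Psi}^2\lesssim\Psi(N)^2\|\omega\|_{T_\Psi}^2$ directly from the definition of $\|\cdot\|_{T_\Psi}$ (admissibility already forces $\Psi(N-1)\lesssim\Psi(N)$), while, using $2^{-2j}a_j\le\Psi(j)^2\|\omega\|_{T_\Psi}^2$ and the admissibility inequality $\sum_{r=0}^N(2^r\Psi(r))^2\lesssim(2^N\Psi(N))^2$, one gets $2^{-2N}\sum_{j<N}a_j\lesssim\Psi(N)^2\|\omega\|_{T_\Psi}^2$. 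This gives $s_N(\omega)\lesssim\Psi(N)\|\omega\|_{T_\Psi}$, i.e. (i).

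I expect the main obstacle to be precisely the high-frequency interaction. If one pairs $\mathbf{1}_Q$ with $\omega$ directly (equivalently, works with the conditional expectations $E_k\omega$), the frequencies $2^r\gg2^k$ enter only through boundary effects and contribute with a mere polynomial-in-$(r-k)$ gain, which cannot be reabsorbed by the $T_\Psi$-norm when $n\ge2$ (a quick computation with, say, $a_r\sim2^{r(n+2)}$ makes the resulting series diverge); switching to a smooth bump — legitimate because $\omega\ge0$ — is what restores arbitrarily fast decay, and keeping the $\ell^2$-over-cubes structure intact throughout is the other point requiring care. Everything else is parallel to the proof of Theorem~\ref{ThmVPsi}; alternatively, one may first record the extrapolation and wavelet characterizations of $T_\Psi$ (the analogues of Theorem~\ref{PropInterpol} and Proposition~\ref{PropositionWavelet}, with the Besov pair $(B_{\infty,1}^{-2},B_{\infty,1}^{0})$ replaced by $(B_{2,2}^{-1},B_{2,2}^{0})$ and $\ell_\infty$ by $\ell_2$ — the $K$-functional computation being identical since $B_{2,2}^s$ is a retract of $\ell_2^s(L^2)$) and then rerun the above with wavelet coefficients, estimating $\sum_m\chi_{km}(\omega)^2$ via the $\ell^2$-in-$l$ coefficient norm.
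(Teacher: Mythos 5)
Your proof is correct, and it reaches the key embedding $s_N(\omega)\lesssim\Psi(N)\,\Vert\omega\Vert_{T_\Psi(\mathbb{R}^n)}$ by a genuinely different middle route than the paper. The paper first establishes the extrapolation identity of Theorem~\ref{PropInterpolDoubling}, transfers it to the wavelet characterization of Proposition~\ref{PropositionWaveletT}, and then estimates $\chi_{jm}(f)$ against compactly supported wavelet atoms, with the vanishing moments \eqref{11} yielding the geometric decay $2^{(j-r)(n+A)}$ for $r>j$. You bypass the retraction/extrapolation machinery entirely and work directly with the Littlewood--Paley pieces $\Delta_r\omega$, extracting the same kind of decay from the rapid frequency and spatial localization of $\Delta_r\chi_{km}$ when $r>k$. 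The essential structural choices coincide with the paper's: you correctly identify that one must keep the $\ell^2$-over-cubes sum intact (the $\Vert\omega\Vert_{BM}\cdot\sup_Q\omega(Q)$ shortcut used for $V_\Psi$ is indeed too crude here, since $T_\Psi$ controls only an $\ell^2$-average over the generation-$k$ cubes), test against a nonnegative smooth bump rather than $\mathbf{1}_Q$ (this is where $\omega\ge0$ enters), and close with the low/high split, admissibility absorbing the low block $2^{-2N}\sum_{j<N}\Vert\Delta_j\omega\Vert_{L^2}^2$ into $\Psi(N)^2\Vert\omega\Vert_{T_\Psi}^2$ and the trivial bound $\Psi(N-1)\lesssim\Psi(N)$ handling the tail. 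What the paper's wavelet detour buys is reuse --- Proposition~\ref{PropositionWaveletT} also drives Theorem~\ref{Theorem4.8} and the $V_\Psi$/$T_\Psi$ comparison examples --- and compact supports of the atoms make the spatial bookkeeping automatic; what your direct route buys is economy and self-containedness, at the mild cost of having to actually justify the spatial localization of $\Delta_r\chi_{km}$ away from $dQ_{km}$ before summing the off-diagonal pairings over $m$ (you flag this as routine, and it is, but it is precisely where the nonlocality of $\Delta_r$ must be paid for --- the paper avoids it entirely). Your closing observation that one could instead record the $(B^{-1}_{2,2},B^{0}_{2,2})$-retraction and wavelet analogue and rerun the computation in coefficient space is in fact the paper's proof.
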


Specialising the previous result with $\Psi(t) = t^{-\alpha + \frac{1}{2}}$, we are able to improve 	\eqref{comp1.8} in the following sense.

\begin{theorem}
\label{Theorem4.8} Assume that $\alpha>\frac{1}{2}$. Then
$$
R_{\frac{2n}{n+2},2}\log^{\alpha}(\mathbb{R}^{n})\hookrightarrow T_{\Psi
}(\mathbb{R}^{n}). \label{414}%
$$
Furthermore, this embedding is strict in the sense that $R_{\frac{2n}{n+2},2}\log^{\alpha}(\mathbb{R}^{n})\neq T_{\Psi}(\mathbb{R}%
^{n}).$

\end{theorem}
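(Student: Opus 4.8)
The plan is to establish the embedding by a direct Littlewood--Paley estimate, exploiting the observation from Section~\ref{Sub81} that $R_{\frac{2n}{n+2},2}$ and $H^{-1}$ share the scaling parameter $-1-\tfrac n2$, so that all dyadic factors will cancel exactly. Fix $\Psi(t)=(1+t)^{\frac12-\alpha}$ (an admissible doubling decay since $\alpha>\tfrac12$; only its asymptotics are used), write $\mathbb{D}_j$ for the dyadic cubes of $\R^n$ of sidelength $2^{-j}$, and recall that $p=\frac{2n}{n+2}$ gives $\tfrac1{p'}=\frac{n-2}{2n}$. The core is the ``local means'' bound
\[
\|\Delta_j f\|_{L^2(\R^n)}^2\;\lesssim\;2^{jn}\sum_{Q\in\mathbb{D}_j}\Big(\int_Q|f|\Big)^2,\qquad j\in\mathbb{N}_0,
\]
which I would prove by writing $\Delta_j f=\phi_j\ast f$ with $\phi_j(x)=2^{jn}\phi(2^jx)$, $\phi\in\mathcal{S}(\R^n)$, using $|\Delta_j f(x)|\lesssim 2^{jn}\sum_{Q\in\mathbb{D}_j}(1+2^j\operatorname{dist}(x,Q))^{-M}\int_Q|f|$, integrating over each dyadic cell and invoking the discrete Young inequality $\|a\ast b\|_{\ell^2(\mathbb{Z}^n)}\le\|a\|_{\ell^1(\mathbb{Z}^n)}\|b\|_{\ell^2(\mathbb{Z}^n)}$ with $M>n$ (a Dirac mass shows the bound is sharp, in accordance with the scalings). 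Since $\mathbb{D}_j\in\Pi(\R^n)$ and $|Q|=2^{-jn}$ is constant on $\mathbb{D}_j$, Definition~\ref{DefinitionRMT} gives $\sum_{Q\in\mathbb{D}_j}(\int_Q|f|)^2\lesssim(1+j)^{-2\alpha}2^{-j(n-2)}\|f\|_{R_{\frac{2n}{n+2},2}\log^\alpha(\R^n)}^2$. Multiplying the two displays, the factor $2^{-2j}\cdot2^{jn}\cdot2^{-j(n-2)}=1$ cancels, so $2^{-2j}\|\Delta_j f\|_{L^2}^2\lesssim(1+j)^{-2\alpha}\|f\|_{R_{\frac{2n}{n+2},2}\log^\alpha}^2$; summing over $j\ge N$ (using $\alpha>\tfrac12$) yields
\[
\sum_{j=N}^\infty 2^{-2j}\|\Delta_j f\|_{L^2}^2\;\lesssim\;(1+N)^{1-2\alpha}\|f\|_{R_{\frac{2n}{n+2},2}\log^\alpha}^2\;=\;\Psi(N)^2\,\|f\|_{R_{\frac{2n}{n+2},2}\log^\alpha}^2,
\]
that is, $\|f\|_{T_\Psi}\lesssim\|f\|_{R_{\frac{2n}{n+2},2}\log^\alpha}$, the claimed embedding. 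The argument also applies to signed measures, so together with Theorem~\ref{ThmTPsi} it improves \eqref{comp1.8}.

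For the strictness I would follow the scheme of the proof of Theorem~\ref{Thm11} and test on a single wavelet $\Upsilon_{K,0}$. Since $\Upsilon_{K,0}$ is frequency-localized around $|\xi|\sim 2^K$ with $\|\Upsilon_{K,0}\|_{L^2}=1$, one shows (using the regularity of the wavelet for $j\gg K$ and its cancellation conditions for $j\ll K$, exactly as set up in the proof of Theorem~\ref{ThmVPsi}) that $\sum_{j=N}^\infty 2^{-2j}\|\Delta_j\Upsilon_{K,0}\|_{L^2}^2\approx 2^{-2K}$ for $0\le N\le K$ and is negligible for $N>K$; as $\Psi$ is decreasing this gives $\|\Upsilon_{K,0}\|_{T_\Psi}^2\approx 2^{-2K}/\Psi(K)^2\approx 2^{-2K}(1+K)^{2\alpha-1}$. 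On the other hand, testing the $R_{\frac{2n}{n+2},2}\log^\alpha$-norm against a dyadic cube $Q^\ast$ of sidelength $\approx 2^{-K}$ containing $\operatorname{supp}\Upsilon_{K,0}$, and using $\operatorname{supp}\Upsilon_{K,0}\subset Q^\ast$ and $|\Upsilon_{K,0}|\lesssim 2^{Kn/2}$, gives
\[
\|\Upsilon_{K,0}\|_{R_{\frac{2n}{n+2},2}\log^\alpha}\;\gtrsim\;\frac{(1+K)^\alpha}{|Q^\ast|^{1/p'}}\,\|\Upsilon_{K,0}\|_{L^1}\;\approx\;(1+K)^\alpha\,2^{-K}.
\]
Hence $\|\Upsilon_{K,0}\|_{R_{\frac{2n}{n+2},2}\log^\alpha}/\|\Upsilon_{K,0}\|_{T_\Psi}\gtrsim(1+K)^{1/2}\to\infty$, so no continuous inclusion $T_\Psi(\R^n)\hookrightarrow R_{\frac{2n}{n+2},2}\log^\alpha(\R^n)$ can hold; combined with the embedding already proved (and the closed graph theorem, both spaces being complete and contained in $\mathcal{S}'$) this forces $R_{\frac{2n}{n+2},2}\log^\alpha(\R^n)\neq T_\Psi(\R^n)$. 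If an explicit witness is wanted, one takes $g=\sum_m 2^{-m}\|\Upsilon_{K_m,0}\|_{T_\Psi}^{-1}\,\Upsilon_{K_m,0}(\cdot-x_m)$ with $K_m$ increasing rapidly (e.g.\ $K_m=16^m$) and the $x_m$ spread out so the supports are pairwise disjoint: disjointness forces $\|g\|_{R_{\frac{2n}{n+2},2}\log^\alpha}\gtrsim\sup_m 2^{-m}(1+K_m)^{1/2}=\infty$, while the triangle inequality, the Cauchy--Schwarz inequality and $\sum_m 2^{-m}<\infty$ give $g\in T_\Psi(\R^n)$.

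The main obstacle is the pointwise estimate $\|\Delta_j f\|_{L^2}^2\lesssim 2^{jn}\sum_{Q\in\mathbb{D}_j}(\int_Q|f|)^2$: although a standard ``local means''-type bound, it is precisely the bridge between the Littlewood--Paley definition of $T_\Psi$ and the congruent-cube form of the $R$-norm, and once it is available the remainder of the embedding is exact bookkeeping in which the scale matching between $R_{\frac{2n}{n+2},2}$ and $H^{-1}$ absorbs all the exponents. The strictness is then a routine single-wavelet computation parallel to Theorem~\ref{Thm11}, the only delicate point being the rapid decay of $\|\Delta_j\Upsilon_{K,0}\|_{L^2}$ away from $j\approx K$, which follows from the smoothness and the vanishing moments of the wavelet.
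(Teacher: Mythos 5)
Your argument is correct, and it takes a genuinely different (though closely parallel) route to the paper's. For the embedding, the paper works entirely on the wavelet side: it invokes Proposition~\ref{PropositionWaveletT} and bounds the wavelet coefficients by $|\lambda_{kl}^G|\lesssim 2^{kn/2}\int_{Q_{kl}}|f|$, which after inserting the congruent-cube packing $\mathbb D_k\in\Pi(\R^n)$ into Definition~\ref{DefinitionRMT} gives the same cancellation of dyadic powers you observe. You instead prove a continuous Littlewood--Paley ``local means'' inequality $\|\Delta_jf\|_{L^2}^2\lesssim 2^{jn}\sum_{Q\in\mathbb D_j}(\int_Q|f|)^2$ via the Schwartz decay of $\phi_j$ and discrete Young, which is the same bridge in a different guise (your sketch of that lemma is sound; the exponent bookkeeping $2^{-2j}\cdot 2^{jn}\cdot 2^{-j(n-2)}=1$ and $\tfrac1{p'}=\tfrac{n-2}{2n}$ is exactly right, and $\mathbb D_j$ is a legitimate packing). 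What the paper's route buys is reuse of Proposition~\ref{PropositionWaveletT}, which is also the workhorse of Theorem~\ref{ThmTPsi}; your route is more self-contained and makes the scaling-parameter matching (the ``informal'' observation of Section~\ref{Sub81}) literally the source of the cancellation. For strictness the two proofs are essentially the same single-wavelet test; the paper computes $\|2^K\Upsilon_{K,0}\|_{T_\Psi}\approx K^{\alpha-\frac12}$ directly from the sequence-space isomorphism, while you derive the equivalent $\|\Upsilon_{K,0}\|_{T_\Psi}\approx 2^{-K}K^{\alpha-\frac12}$ from the Littlewood--Paley side, which requires (and you do note) the two-sided frequency localization of the wavelet; your concluding ratio $\gtrsim K^{1/2}$ and the closed-graph/explicit-witness step match the paper's contradiction argument. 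One small point to watch: when you test the $R$-norm you want a lower bound on $\int_{Q^\ast}|\Upsilon_{K,0}|$, and the clean choice (as in the paper) is $Q^\ast=Q_{K,(0,\dots,0)}$ itself together with the standard wavelet property $\int_{Q_{K,0}}|\Upsilon_{K,0}|\gtrsim 2^{-Kn/2}$, rather than a cube containing the full support (which need not exist at scale $2^{-K}$).
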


\subsection{Extrapolation characterization of $T_\Psi$}
The next result represents the $T_{\Psi}$ spaces as extrapolation spaces for
the pair $(H^{-1},L^{2}).$ Since the proof follows closely the one for Theorem
\ref{PropInterpol}, we shall leave the details to the interested reader.

\begin{theorem}
\label{PropInterpolDoubling} Suppose that $\Psi$ is an admissible doubling decay. Then\footnote{Since $T_\Psi$ is not   homogeneous, the space $H^{-1}(\R^n)$  in \eqref{valecuatro} should be adequately interpreted from the context as the inhomogeneous counterpart of \eqref{SLat}, equipped with the  norm $\|f\|_{H^{-1}(\R^n)} = \|(I-\Delta)^{-\frac{1}{2}} f\|_{L^2(\R^n)}$. Recall the well-known fact that $\|(-\Delta)^{-\frac{1}{2}} f\|_{L^2(\R^n)} \approx \|I_1 f\|_{L^2(\R^n)}$.}
\begin{equation}
\Vert f\Vert_{T_{\Psi}(\mathbb{R}^{n})}\approx\sup_{t\in(0,1)}\frac
{K(t,f;H^{-1}(\mathbb{R}^{n}),L^{2}(\mathbb{R}^{n}))}{\Psi(-\log t)}.
\label{valecuatro}%
\end{equation}

\end{theorem}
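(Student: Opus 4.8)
The plan is to imitate the proof of Theorem \ref{PropInterpol} using the retraction/coretraction machinery for Besov spaces, now applied to the pair $(H^{-1}(\mathbb{R}^n),L^2(\mathbb{R}^n))$ and to $L^2$-valued sequence spaces rather than $L^\infty$-valued ones. First I would recall that, via the inhomogeneous Littlewood--Paley decomposition $f\mapsto\{\Delta_j f\}_{j\in\mathbb{N}_0}$, the space $H^{-1}(\mathbb{R}^n)=B^{-1}_{2,2}(\mathbb{R}^n)$ is a retract of $\ell_2^{-1}(L^2(\mathbb{R}^n))$ and $L^2(\mathbb{R}^n)=B^0_{2,2}(\mathbb{R}^n)$ is a retract of $\ell_2^0(L^2(\mathbb{R}^n))$, both via the same projector; hence
\[
K(t,f;H^{-1}(\mathbb{R}^n),L^2(\mathbb{R}^n))\approx K\big(t,\{\Delta_j f\};\ell_2^{-1}(L^2(\mathbb{R}^n)),\ell_2^0(L^2(\mathbb{R}^n))\big).
\]
Then I would use the standard formula for the $K$-functional between weighted $\ell_2$-spaces: writing $a_j=\|\Delta_j f\|_{L^2(\mathbb{R}^n)}$,
\[
K\big(t,\{\Delta_j f\};\ell_2^{-1}(L^2),\ell_2^0(L^2)\big)^2\approx\sum_{j=0}^\infty \min\{2^{-2j},t^2\}\,a_j^2
=\;t^2\!\!\sum_{j\,:\,2^{-2j}>t^2}\!\!a_j^2+\!\!\sum_{j\,:\,2^{-2j}\le t^2}\!\!2^{-2j}a_j^2.
\]
Setting $t=2^{-N}$ this becomes $2^{-2N}\sum_{j=0}^{N}a_j^2+\sum_{j=N+1}^\infty 2^{-2j}a_j^2$, which is exactly the $T_\Psi$-type expression plus the extra "partial sum" term $\mathcal{A}_N:=2^{-2N}\sum_{j=0}^N a_j^2$.

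The heart of the argument — and the only place where the admissibility hypothesis on $\Psi$ enters — is to show that this extra term is dominated: $\sup_N \mathcal{A}_N/\Psi(N)^2\lesssim \|f\|_{T_\Psi(\mathbb{R}^n)}^2$. This is proved exactly as estimate \eqref{1.1} in Theorem \ref{PropInterpol}: write $a_j^2=2^{2j}\Psi(j)^2\cdot\frac{2^{-2j}}{\Psi(j)^2}a_j^2$, pull out $\sup_j \frac{2^{-2j}}{\Psi(j)^2}a_j^2\le\|f\|_{T_\Psi}^2$, and use $\sum_{j=0}^N 2^{2j}\Psi(j)^2\lesssim 2^{2N}\Psi(N)^2$ (Definition \ref{DefAdm}(i), with the admissibility exponent here being $2+s=2$, i.e. the case $s=0$ covered by Remark \ref{Remark16}). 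One concludes
\[
\sup_{N\in\mathbb{N}_0}\frac{K(2^{-N},f;H^{-1}(\mathbb{R}^n),L^2(\mathbb{R}^n))^2}{\Psi(N)^2}\approx \mathcal{A}+\|f\|_{T_\Psi(\mathbb{R}^n)}^2\approx\|f\|_{T_\Psi(\mathbb{R}^n)}^2,
\]
where $\mathcal{A}=\sup_N \mathcal{A}_N/\Psi(N)^2$.

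Finally I would pass from the discrete supremum over $t=2^{-N}$ to the continuous supremum over $t\in(0,1)$ using that $\Psi$ is doubling: for $t\in(2^{-(N+1)},2^{-N})$ one has $K(t,f;\cdot,\cdot)\approx K(2^{-N},f;\cdot,\cdot)$ by monotonicity and quasi-concavity of the $K$-functional, and $\Psi(-\log t)\approx\Psi(N)$ by the doubling property; hence
\[
\sup_{t\in(0,1)}\frac{K(t,f;H^{-1}(\mathbb{R}^n),L^2(\mathbb{R}^n))}{\Psi(-\log t)}\approx\sup_{N\in\mathbb{N}_0}\frac{K(2^{-N},f;H^{-1}(\mathbb{R}^n),L^2(\mathbb{R}^n))}{\Psi(N)}\approx\|f\|_{T_\Psi(\mathbb{R}^n)}.
\]
(The exponent bookkeeping is slightly different from Theorem \ref{PropInterpol}: there the scale was $2^{-2N}$ because the Besov order gap was $2$, whereas here the order gap between $H^{-1}$ and $L^2$ is $1$, so one uses $t=2^{-N}$ and $\Psi(-\log t)$ rather than $\Psi(-\log t)^2$; the $T_\Psi$-norm is defined with squares, so the matching is with $K(t,f;\cdot,\cdot)$ not its square in the final display, which is why the statement reads $K/\Psi(-\log t)$ and not $K^2/\Psi(-\log t)^2$.) The only genuine obstacle is the domination of the partial-sum term $\mathcal{A}$, which is precisely what admissibility is designed to handle; everything else is a transcription of the $V_\Psi$ argument with $L^2$ in place of $L^\infty$ and with the frequency weight $2^{-j}$ in place of $2^{-2j}$ inside the $K$-functional computation. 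The use of the inhomogeneous $H^{-1}$, as flagged in the footnote, is what makes the retraction onto $\ell_2^{-1}(L^2)$ legitimate and ensures the endpoint $N=0$ term is finite.
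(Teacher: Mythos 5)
Your proposal is correct and follows the route the paper prescribes (the paper itself says the proof ``follows closely the one for Theorem \ref{PropInterpol}'' and leaves the details to the reader); the key adaptations you make --- retraction onto $\ell_2^s(L^2)$ rather than $\ell_1^s(L^\infty)$, the $\ell_2$-couple $K$-functional formula $K^2\approx\sum_j\min\{2^{-2j},t^2\}a_j^2$, the substitution $t=2^{-N}$ matching the smoothness gap of $1$ (vs.\ $2$ in the $V_\Psi$ case), and admissibility in the form $\sum_{r\le N}2^{2r}\Psi(r)^2\lesssim 2^{2N}\Psi(N)^2$ --- are exactly what is needed. One tiny labeling nit: the admissibility variant relevant to $T_\Psi$ is the unnumbered remark immediately following Theorem \ref{PropInterpolDoubling} (exponent $2(1+s)$), not Remark \ref{Remark16} (which concerns $V_\Psi$, exponent $2+s$); they coincide at $s=0$, so your argument is unaffected.
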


\begin{remark}
A variant of Remark \ref{Remark16} also holds for the $T_{\Psi}(\mathbb{R}%
^{n})$ spaces. Indeed, suppose that the admissibility condition stated Definition \ref{DefAdm}(i) is replaced by
\[
\sum_{r=0}^{N}(2^{(1+s)r}\Psi(r))^{2}\lesssim(2^{(1+s)N}\Psi(N))^{2}.
\]
Let $H^{s}(\mathbb{R}^{n})$ be the standard (fractional) Sobolev space endowed
with the norm $\Vert f\Vert_{H^{s}(\mathbb{R}^{n})}=\Vert(I-\Delta
)^{s/2}f\Vert_{L^{2}(\mathbb{R}^{n})},$ then formula (\ref{valecuatro}) holds
if we replace the pair $(H^{-1}(\mathbb{R}^{n}),L^{2}(\mathbb{R}^{n}))$ by
$(H^{-1}(\mathbb{R}^{n}),H^{s}(\mathbb{R}^{n})),$ where $s>-1.$
\end{remark}

%
%
%
%

\subsection{Proof of Theorem \ref{ThmTPsi}}
For the proof we will use the following analogue of Proposition
\ref{PropositionWavelet}, that can be obtained mutatis mutandi and we
therefore leave its proof to the interested reader.

\begin{proposition}
\label{PropositionWaveletT} Let $\{\Upsilon^G_{N l}:N\in\mathbb{N}%
_{0},\,G\in G^{N},\,l\in\mathbb{Z}^{n}\}$ be a wavelet system satisfying
\eqref{11b}--\eqref{11} with\footnote{The explanation behind $A> 1$ comes from Theorem \ref{PropInterpolDoubling} and  well-known wavelet assumptions on $H^{-1}$.} $A>1$. Assume that $\Psi$ is an admissible doubling decay. Then, $f\in T_{\Psi}%
(\mathbb{R}^{n})$ if and only if
\[
f=\sum_{N\in\mathbb{N}_{0},G\in G^{N},l\in\mathbb{Z}^{n}}\lambda_{N l}%
^{G}2^{-Nn/2}\Upsilon_{N l}^{G},\qquad\{\lambda_{N l}^{G}\}\in t_{\Psi}%
\]
(unconditional convergence in the sense of $\mathcal{S}^{\prime}%
(\mathbb{R}^{n})$), where
\begin{equation}
\Vert\{\lambda^{G}_{N l}\}\Vert_{t_{\Psi}}^{2} :=\sup_{N\in\mathbb{N}_{0}}%
\frac{1}{\Psi(N)^{2}}\,\sum_{k=N}^{\infty}2^{k(-2-n)}\sum_{G\in G^{k}%
,l\in\mathbb{Z}^{n}}|\lambda_{k l}^{G}|^{2}<\infty. \label{Deft}%
\end{equation}
This representation is unique in the sense that the coefficients $\lambda
_{N l}^{G}$ are determined by \eqref{PropositionWavelet2} and the operator $I$
given by \eqref{OperI} defines an isomorphism from $T_{\Psi}(\mathbb{R}^{n})$
onto $t_{\Psi}$. Furthermore
\begin{equation}
\Vert f\Vert_{T_{\Psi}(\mathbb{R}^{n})}\approx\Vert\{\lambda_{l}^{N,G}%
\}\Vert_{t_{\Psi}}. \label{vanilla2}%
\end{equation}

\end{proposition}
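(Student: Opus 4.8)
The plan is to establish Proposition \ref{PropositionWaveletT} by mimicking, \emph{mutatis mutandis}, the proof of Proposition \ref{PropositionWavelet}, now working with the couple $(H^{-1}(\R^n),L^2(\R^n))=(B^{-1}_{2,2}(\R^n),B^0_{2,2}(\R^n))$ instead of $(B^{-2}_{\infty,1},B^0_{\infty,1})$, and using Theorem \ref{PropInterpolDoubling} in place of Theorem \ref{PropInterpol}. As there, the argument has three parts: reduce to sequence spaces, run the extrapolation identity, and transfer convergence and uniqueness of the wavelet expansion from $H^{-1}$.

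First I would transfer to sequence spaces. By the wavelet characterization of Besov spaces \cite[Theorem 1.20]{Triebel} --- where the moment condition \eqref{11} is now required with $A>1$, which is precisely what makes the wavelet system admissible for the negative-order space $H^{-1}$ --- the operator $I$ of \eqref{OperI}--\eqref{PropositionWavelet2} is an isomorphism of $H^{-1}(\R^n)$, resp. $L^2(\R^n)$, onto the weighted sequence space $\ell^{-1-n/2}_2(\ell_2)$, resp. $\ell^{-n/2}_2(\ell_2)$; the exponents record the normalization $\lambda^G_{Nl}=2^{Nn/2}(f,\Upsilon^G_{Nl})$. Bundling the square-summation over the (essentially harmless) indices $(G,l)$ into the scalars $c_N:=\big(2^{-Nn}\sum_{G\in G^N,l\in\mathbb{Z}^n}|\lambda^G_{Nl}|^2\big)^{1/2}$, this says $\|f\|_{H^{-1}}\approx\big(\sum_N 2^{-2N}c_N^2\big)^{1/2}$ and $\|f\|_{L^2}\approx\big(\sum_N c_N^2\big)^{1/2}$, so the couple is, up to equivalence, the couple $(\ell^{-1}_2,\ell^0_2)$ of scalar sequences $(c_N)$, and the standard $K$-functional formula for weighted $\ell_2$ couples gives
\[
K(t,f;H^{-1},L^2)^2\approx\sum_{N=0}^\infty\min\{2^{-2N},t^2\}\,c_N^2 .
\]

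Next I would plug this into Theorem \ref{PropInterpolDoubling}. Taking $t=2^{-N}$ (the supremum over $t\in(0,1)$ reduces to the discrete one since $\Psi$ is doubling) one obtains
\[
\|f\|^2_{T_\Psi(\R^n)}\approx\sup_{N\in\mathbb{N}_0}\frac{1}{\Psi(N)^2}\Big(2^{-2N}\sum_{j=0}^{N}c_j^2+\sum_{j=N+1}^\infty 2^{-2j}c_j^2\Big)=:\mathcal{A}+\|\{\lambda^G_{Nl}\}\|^2_{t_\Psi},
\]
the identification of the second term with the square of the norm in \eqref{Deft} being just $2^{j(-2-n)}\sum_{G,l}|\lambda^G_{jl}|^2=2^{-2j}c_j^2$ (the shift between a tail at $N$ and one at $N+1$ is absorbed by the doubling of $\Psi$). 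Exactly as in the proof of Theorem \ref{PropInterpol}, the admissibility hypothesis --- here precisely $\sum_{r=0}^N(2^r\Psi(r))^2\lesssim(2^N\Psi(N))^2$, that is, Definition \ref{DefAdm}(i) --- gives $\mathcal{A}\lesssim\|\{\lambda^G_{Nl}\}\|^2_{t_\Psi}$, whence $\|f\|_{T_\Psi(\R^n)}\approx\|\{\lambda^G_{Nl}\}\|_{t_\Psi}$, which is \eqref{vanilla2} and shows that $I$ restricts to an isomorphism $T_\Psi(\R^n)\to t_\Psi$. Finally, evaluating the defining supremum at $N=0$ yields $T_\Psi(\R^n)\hookrightarrow B^{-1}_{2,2}(\R^n)=H^{-1}(\R^n)$, so the unconditional $\mathcal{S}'$-convergence of $f=\sum_{N,G,l}\lambda^G_{Nl}2^{-Nn/2}\Upsilon^G_{Nl}$ and the uniqueness of the coefficients \eqref{PropositionWavelet2}, both valid for $H^{-1}$, pass to $T_\Psi(\R^n)$.

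I do not expect any conceptual obstacle --- the scheme is rigid once Theorem \ref{PropInterpolDoubling} is available --- so the main work is bookkeeping: getting the exponents in the two sequence-space isomorphisms right (so that the weight $2^{-Nn}$ produced by the $2^{Nn/2}$ normalization combines with the Besov weights to reproduce exactly the weight $2^{N(-2-n)}$ of \eqref{Deft}), and checking that the admissibility condition used here is the one in Definition \ref{DefAdm}(i) --- it is, because the auxiliary endpoint here has smoothness $s=0$, matching the remark following Theorem \ref{PropInterpolDoubling}. All of this parallels the proofs of Theorems \ref{PropInterpol} and \ref{PropInterpolDoubling}, and of Proposition \ref{PropositionWavelet}.
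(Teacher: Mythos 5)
Your proof is correct and follows essentially the approach the paper intends: the paper states that Proposition \ref{PropositionWaveletT} "can be obtained mutatis mutandi" from Proposition \ref{PropositionWavelet} (whose proof reduces to sequence spaces via the wavelet isomorphism, computes the $K$-functional for the resulting weighted $\ell_q$ pair, and applies the extrapolation identity), and that is exactly what you do, with $(H^{-1},L^2)\cong(\ell_2^{-1},\ell_2^0)$ in place of $(\ell_1^{-2}(\ell_\infty),\ell_1^0(\ell_\infty))$ and Theorem \ref{PropInterpolDoubling} in place of Theorem \ref{PropInterpol}. The bookkeeping you flag — the $2^{-Nn}$ from the $\lambda^G_{Nl}=2^{Nn/2}(f,\Upsilon^G_{Nl})$ normalization combining with the Besov weight to give $2^{N(-2-n)}$, the admissibility condition being the $s=0$ case, and the index shift $N\leftrightarrow N+1$ being absorbed by doubling — is exactly right.
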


\begin{proof}[Proof of Theorem \ref{ThmTPsi}]
(i): Let  $L\in\mathbb{N}_{0}$.  Given $f \geq 0$ compactly supported on $Q_0$ (without loss of generality, we may assume $Q_0 = (0, 1)^n$), we observe
that
\begin{equation*}
s_{L}(f)    = \sup_{\mathcal{Q} \in S(Q_0)}  \bigg[\sum_{k=L}^{\infty}2^{k(-2+n)}\sum_{i\in I:Q_{i}\in\mathbb{D}%
_{k}(\mathcal{Q)}}\bigg(\int_{Q_{i}}f\bigg)^{2}\bigg]^{\frac{1}{2}}.
\end{equation*}
Using Proposition \ref{PropositionWaveletT} we will show that%
\begin{equation}
\sup_{L\in\mathbb{N}_{0}}\frac{1}{\Psi(L)}\,\bigg[\sum_{k=L}^{\infty
}2^{k(-2+n)}\sum_{i\in I:Q_{i}\in\mathbb{D}_{k}(\mathcal{Q)}}\bigg(\int%
_{Q_{i}}f\bigg)^{2}\bigg]^{\frac{1}{2}}\lesssim\,\Vert f\Vert_{T_{\Psi
}(\mathbb{R}^{n})}, \label{ProofThm1T1}%
\end{equation}
with a constant independent of the sparse family $\mathcal{Q}$. Then the desired
(local) embedding
\[
\Vert f\Vert_{S_{\Psi}(Q_{0})}\lesssim\Vert f\Vert_{T_{\Psi
}(\mathbb{R}^{n})}
\]
follows readily. 

Let $\chi$ be a smooth cut-off function introduced in the proof of Theorem  \ref{teo:compacto}, and define the corresponding coefficients $\chi_{jm}(f)$ via
\eqref{Chijm}. According to \eqref{11d}, \eqref{6}, \eqref{221} and \eqref{222}, these coefficients can be estimated as follows
%
\begin{equation}
|\chi_{jm}(f)|  \lesssim\sum_{r=0}^{j}\sum_{l\in\ell_{r}^{j}(m)}|\lambda_{rl}|+\sum
_{r=j}^{\infty}2^{(j-r)(n+A)}\sum_{l\in\ell_{r}^{j}(m)}|\lambda_{rl}|,
\label{ProofThm1T2}%
\end{equation}
where $\ell^j_r(m)$, which was introduced in \eqref{7}, satisfies
\begin{equation}
\text{card }\ell_{r}^{j}(m)\approx\left\{
\begin{array}
[c]{cl}%
2^{n(r-j)} & \text{if}\quad r\geq j,\\
& \\
1 & \text{if}\quad r\leq j.
\end{array}
\right.  \label{ProofThm1T3}%
\end{equation}
Using H\"{o}lder's inequality and \eqref{ProofThm1T3},
\[
\sum_{l\in\ell_{r}^{j}(m)}|\lambda_{rl}|\lesssim\bigg(\sum_{l\in\ell_{r}%
^{j}(m)}|\lambda_{rl}|^{2}\bigg)^{1/2}\times\left\{
\begin{array}
[c]{cl}%
2^{n\frac{r-j}{2}} & \text{if}\quad r\geq j,\\
& \\
1 & \text{if}\quad r\leq j,
\end{array}
\right.
\]
and inserting this into \eqref{ProofThm1T2}, we achieve
\begin{equation}
|\chi_{jm}(f)|\lesssim\sum_{r=0}^{j}\bigg(\sum_{l\in\ell_{r}^{j}(m)}%
|\lambda_{rl}|^{2}\bigg)^{1/2}+2^{j(A+\frac{n}{2})}\sum_{r=j}^{\infty
}2^{-r(A+\frac{n}{2})}\,\bigg(\sum_{l\in\ell_{r}^{j}(m)}|\lambda_{rl}%
|^{2}\bigg)^{1/2}. \label{ProofThm1T4}%
\end{equation}

Let $\varepsilon\in(0,\min\{1,A-1\})$ (recall that $A>1$ is fixed). By
H\"{o}lder's inequality, the two terms given in the right-hand side of
\eqref{ProofThm1T4} can be bounded by
\[
\sum_{r=0}^{j}\bigg(\sum_{l\in\ell_{r}^{j}(m)}|\lambda_{rl}|^{2}%
\bigg)^{1/2}\lesssim2^{j\varepsilon}\left(  \sum_{r=0}^{j}2^{-r\varepsilon
2}\sum_{l\in\ell_{r}^{j}(m)}|\lambda_{rl}|^{2}\right)  ^{1/2}%
\]
and
\[
\sum_{r=j}^{\infty}2^{-r(A+\frac{n}{2})}\,\bigg(\sum_{l\in\ell_{r}^{j}%
(m)}|\lambda_{rl}|^{2}\bigg)^{1/2}\lesssim2^{-j\varepsilon}\left(  \sum
_{r=j}^{\infty}2^{-r(A+\frac{n}{2}-\varepsilon)2}\sum_{l\in\ell_{r}^{j}%
(m)}|\lambda_{rl}|^{2}\right)  ^{1/2}.
\]
Hence
\[
|\chi_{jm}(f)|^{2}\lesssim2^{j\varepsilon2}\sum_{r=0}^{j}2^{-r\varepsilon
2}\sum_{l\in\ell_{r}^{j}(m)}|\lambda_{rl}|^{2}+2^{j(A+\frac{n}{2}%
-\varepsilon)2}\sum_{r=j}^{\infty}2^{-r(A+\frac{n}{2}-\varepsilon)2}\sum
_{l\in\ell_{r}^{j}(m)}|\lambda_{rl}|^{2}%
\]
and summing up over all $m\in\mathbb{Z}^{n}$, we have
\begin{equation}
\sum_{m\in\mathbb{Z}^{n}}|\chi_{jm}(f)|^{2}    \lesssim2^{j\varepsilon2}%
\sum_{r=0}^{j}2^{-r\varepsilon2}\sum_{m\in\mathbb{Z}^{n}}\sum_{l\in\ell
_{r}^{j}(m)}|\lambda_{rl}|^{2} +2^{j(A+\frac{n}{2}-\varepsilon)2}\sum_{r=j}^{\infty
}2^{-r(A+\frac{n}{2}-\varepsilon)2}\sum_{m\in\mathbb{Z}^{n}}\sum_{l\in\ell
_{r}^{j}(m)}|\lambda_{rl}|^{2}. \label{ProofThm1T5}%
\end{equation}
Furthermore, we remark that
\[
\text{card }\{m\in\mathbb{Z}^{n}:l\in\ell_{r}^{j}(m)\}\approx\left\{
\begin{array}
[c]{cl}%
2^{n(j-r)} & \text{if}\quad r\leq j,\\
& \\
1 & \text{if}\quad r\geq j.
\end{array}
\right.
\]
Using this information and changing the order of summation in
\eqref{ProofThm1T5}, we arrive at
\begin{align*}
\sum_{m\in\mathbb{Z}^{n}}|\chi_{jm}(f)|^{2}  &  \lesssim2^{j\varepsilon2}%
\sum_{r=0}^{j}2^{-r\varepsilon2}2^{n(j-r)}\sum_{l\in\mathbb{Z}^{n}}%
|\lambda_{rl}|^{2}+2^{j(A+\frac{n}{2}-\varepsilon)2}\sum_{r=j}^{\infty
}2^{-r(A+\frac{n}{2}-\varepsilon)2}\sum_{l\in\mathbb{Z}^{n}}|\lambda_{rl}%
|^{2},
\end{align*}
where the equivalence constant is independent of $j$. Summing up the last
estimate over all $j\geq L$ and using Fubini's theorem (recall that
$\varepsilon<\min\{1,A-1\}$), we have
\begin{align*}
\sum_{j=L}^{\infty}2^{j(-2-n)}\sum_{m\in\mathbb{Z}^{n}}|\chi_{jm}(f)|^{2}  & \\
&\hspace{-3.75cm}
\lesssim\sum_{j=L}^{\infty}2^{j(-1+\varepsilon)2}\sum_{r=0}^{j}%
2^{-r(\varepsilon2+n)}\sum_{l\in\mathbb{Z}^{n}}|\lambda_{rl}|^{2}+\sum_{j=L}^{\infty}2^{j2(-1+A-\varepsilon)}\sum_{r=j}^{\infty
}2^{-r(A+\frac{n}{2}-\varepsilon)2}\sum_{l\in\mathbb{Z}^{n}}|\lambda_{rl}%
|^{2}\\
& \hspace{-3.75cm} \lesssim2^{L(-1+\varepsilon)2}\sum_{r=0}^{L}2^{-r(\varepsilon2+n)}%
\sum_{l\in\mathbb{Z}^{n}}|\lambda_{rl}|^{2} +\sum_{r=L}^{\infty}2^{-r(\varepsilon2+n)}\sum_{l\in
\mathbb{Z}^{n}}|\lambda_{rl}|^{2}\sum_{j=r}^{\infty}2^{j(-1+\varepsilon)2}\\
&  \hspace{-3cm}+\sum_{r=L}^{\infty}2^{-r(A+\frac{n}{2}-\varepsilon)2}%
\sum_{l\in\mathbb{Z}^{n}}|\lambda_{rl}|^{2}\sum_{j=L}^{r}%
2^{j2(-1+A-\varepsilon)}\\
&  \hspace{-3.75cm}  \lesssim2^{L(-1+\varepsilon)2}\sum_{r=0}^{L}2^{-r(\varepsilon2+n)}%
\sum_{l\in\mathbb{Z}^{n}}|\lambda_{rl}|^{2}+\sum_{r=L}^{\infty}2^{-r(2+n)}\sum_{l\in\mathbb{Z}^{n}%
}|\lambda_{rl}|^{2}.
\end{align*}
Hence
\begin{equation}
\sup_{L\in\mathbb{N}_{0}}\frac{1}{\Psi(L)}\left[  \sum_{j=L}^{\infty
}2^{j(-2-n)}\sum_{m\in\mathbb{Z}^{n}}|\chi_{jm}(f)|^{2}\right]  ^{1/2}%
\lesssim\mathcal{I}+\mathcal{II}, \label{ProofThm1T6}%
\end{equation}
where
\[
\mathcal{I}:=\sup_{L\in\mathbb{N}_{0}}\frac{2^{L(-1+\varepsilon)}}{\Psi
(L)}\left[  \sum_{r=0}^{L}2^{-r(\varepsilon2+n)}\sum_{l\in\mathbb{Z}^{n}%
}|\lambda_{rl}|^{2}\right]  ^{1/2}%
\]
and
\[
\mathcal{II}:=\sup_{L\in\mathbb{N}_{0}}\frac{1}{\Psi(L)}\left[  \sum
_{r=L}^{\infty}2^{-r(2+n)}\sum_{l\in\mathbb{Z}^{n}}|\lambda_{rl}|^{2}\right]
^{1/2}.
\]

We have
\begin{align}
\left[  \sum_{r=0}^{L}2^{-r(\varepsilon2+n)}\sum_{l\in\mathbb{Z}^{n}}%
|\lambda_{rl}|^{2}\right]  ^{1/2}  &  \leq\left[  \sum_{r=0}^{L}%
2^{r(1-\varepsilon)2}\Psi(r)^{2}\right]  ^{1/2}\,\sup_{M\in\mathbb{N}_{0}%
}\frac{2^{-M(1+\frac{n}{2})}}{\Psi(M)}\,\left[\sum_{l\in\mathbb{Z}^{n}%
}|\lambda_{Ml}|^{2}\right]^{1/2}\nonumber\\
&  \leq\left[  \sum_{r=0}^{L}2^{r(1-\varepsilon)2}\Psi(r)^{2}\right]
^{1/2}\,\mathcal{II}. \label{ProofThm1T7}%
\end{align}
Furthermore, the following estimate holds
\begin{equation}
\left[  \sum_{r=0}^{L}2^{r(1-\varepsilon)2}\Psi(r)^{2}\right]  ^{1/2}%
\lesssim2^{L(1-\varepsilon)}\Psi(L). \label{ProofThm1T8}%
\end{equation}
Indeed, by monotonicity properties, a simple change of variables and the
doubling property of $\Psi$ (cf. (ii) in Definition \ref{DefAdm}),
\begin{align*}
\sum_{r=0}^{L}2^{r(1-\varepsilon)2}\Psi(r)^{2}  &  \approx\int_{0}%
^{L}2^{t(1-\varepsilon)2}\Psi(t)^{2}\,dt  \approx\int_{0}^{L(1-\varepsilon)}2^{t2}\Psi\bigg(\frac{t}{1-\varepsilon
}\bigg)^{2}\,dt\\
&  \approx\int_{0}^{L(1-\varepsilon)}2^{t2}\Psi(t)^{2}\,dt  \approx\sum_{r=0}^{\lfloor L(1-\varepsilon)\rfloor}2^{r2}\Psi(r)^{2}\\
&  \lesssim2^{L(1-\varepsilon)2}\Psi(L)^{2},
\end{align*}
where\footnote{As usual, $\lfloor x \rfloor$ denotes the integer part of $x \in \R$.} the last step follows from (i) in Definition \ref{DefAdm}. This proves \eqref{ProofThm1T8}.
Applying now \eqref{ProofThm1T8} in \eqref{ProofThm1T7}, we find
\[
\left[  \sum_{r=0}^{L}2^{-r(\varepsilon2+n)}\sum_{l\in\mathbb{Z}^{n}}%
|\lambda_{rl}|^{2}\right]  ^{1/2}\lesssim2^{L(1-\varepsilon)}\Psi
(L)\,\mathcal{II},
\]
i.e., we have shown that $\mathcal{I}\lesssim\mathcal{II}$. As a consequence
(cf. \eqref{ProofThm1T6})
\[
\sup_{L\in\mathbb{N}_{0}}\frac{1}{\Psi(L)}\left[  \sum_{j=L}^{\infty
}2^{j(-2-n)}\sum_{m\in\mathbb{Z}^{n}}|\chi_{jm}(f)|^{2}\right]  ^{1/2}%
\lesssim\mathcal{II},
\]
or equivalently (cf. \eqref{Deft})
\[
\sup_{L\in\mathbb{N}_{0}}\frac{1}{\Psi(L)}\left[  \sum_{j=L}^{\infty
}2^{j(-2-n)}\sum_{m\in\mathbb{Z}^{n}}|\chi_{jm}(f)|^{2}\right]  ^{1/2}%
\lesssim\Vert\{\lambda_{rl}\}\Vert_{t_{\Psi}}.
\]
Consequently, invoking Proposition \ref{PropositionWaveletT},%
\begin{equation}
\sup_{L\in\mathbb{N}_{0}}\frac{1}{\Psi(L)}\left[  \sum_{j=L}^{\infty
}2^{j(-2-n)}\sum_{m\in\mathbb{Z}^{n}}|\chi_{jm}(f)|^{2}\right]  ^{1/2}%
\lesssim\Vert f\Vert_{T_{\Psi}(\mathbb{R}^{n})}. \label{ProofThm1T9}%
\end{equation}

On the other hand, the assumption $f\geq0$ and \eqref{11e} enable us to
derive
\begin{align*}
\sup_{L\in\mathbb{N}_{0}}\frac{1}{\Psi(L)}\,\bigg[\sum_{k=L}^{\infty
}2^{k(-2+n)}\sum_{i\in I:Q_{i}\in\mathbb{D}_{k}(\mathcal{Q)}}\bigg(\int%
_{Q_{i}}f\bigg)^{2}\bigg]^{\frac{1}{2}}  &  =\\
&  \hspace{-7cm}\sup_{L\in\mathbb{N}_{0}}\frac{1}{\Psi(L)}\,\bigg[\sum
_{k=L}^{\infty}2^{k(-2-n)}\sum_{i\in I:Q_{i}\in\mathbb{D}_{k}(\mathcal{Q)}%
}\bigg(\int_{Q_{i}}2^{kn}f\bigg)^{2}\bigg]^{\frac{1}{2}}\\
&  \hspace{-7cm}\lesssim\sup_{L\in\mathbb{N}_{0}}\frac{1}{\Psi(L)}%
\,\bigg[\sum_{k=L}^{\infty}2^{k(-2-n)}\sum_{i\in I:Q_{i}\in\mathbb{D}%
_{k}(\mathcal{Q)}}\chi_{Q_{i}}(f)^{2}\bigg]^{\frac{1}{2}}\\
&  \hspace{-7cm}\lesssim\Vert f\Vert_{T_{\Psi}(\mathbb{R}^{n})},
\end{align*}
where in the last step we used \eqref{ProofThm1T9}. This concludes the proof
of \eqref{ProofThm1T1} and hence (i). 

Invoking Theorem \ref{CorFinal},  the statement (ii) is a consequence of (i).  
\end{proof}

\subsection{Proof of Theorem \ref{Theorem4.8}}
To avoid unnecessary technicalities, we assume, without loss of
generality, that the constant $c~\ $in \eqref{11b} is equal to $1.$ From \eqref{PropositionWavelet2} and 
\eqref{11a}, we find
\begin{align*}
\sum_{k=N}^{\infty}2^{k(-2-n)}\sum_{G\in G^{k},l\in\mathbb{Z}^{n}}|\lambda
_{k l}^{G}|^{2}  &  \lesssim\sum_{k=N}^{\infty}2^{k(-2+n)}\sum_{l\in
\mathbb{Z}^{n}}\bigg(\int_{Q_{kl}}|f|\bigg)^{2}\\
&  \approx \sum_{k=N}^{\infty}2^{k(-2+n)}2^{-k (-2+n)}k^{-2\alpha}%
\sum_{l\in\mathbb{Z}^{n}}\left(   \frac{|\log|Q_{kl}||^{\alpha}%
}{|Q_{kl}|^{\frac{1}{(\frac{2n}{n+2})^{\prime}}}}\int_{Q_{kl}}|f|\right)
^{2}\\
&  \leq\Vert f\Vert_{R_{\frac{2n}{n+2},2}\log^{\alpha}(\mathbb{R}^{n})}%
^{2}\,\sum_{k=N}^{\infty}k^{-2\alpha} \approx N^{-2\alpha+1}\,\Vert f\Vert_{R_{\frac{2n}{n+2},2}\log^{\alpha
}(\mathbb{R}^{n})}^{2}.
\end{align*}
Consequently,
\[
\sup_{N\in\mathbb{N}_{0}}\frac{1}{\Psi(N)^{2}}\,\sum_{k=N}^{\infty}%
2^{k(-2-n)}\sum_{G\in G^{k},l\in\mathbb{Z}^{n}}|\lambda_{k l}^{G}|^{2}%
\lesssim\Vert f\Vert_{R_{\frac{2n}{n+2},2}\log^{\alpha}(\mathbb{R}^{n})}^{2},
\]
where $\Psi(t)=t^{-\alpha+\frac{1}{2}}$. Then by (\ref{Deft}) and (\ref{vanilla2})
it follows that
\begin{equation}\label{strictR}
R_{\frac{2n}{n+2},2}\log^{\alpha}(\mathbb{R}^{n})\hookrightarrow T_{\Psi
}(\mathbb{R}^{n}),\qquad\Psi(t)=t^{-\alpha+\frac{1}{2}}.
\end{equation}

To show that the embedding \eqref{strictR} is strict we argue by contradiction. Suppose to the contrary that
\[
R_{\frac{2n}{n+2},2}\log^{\alpha}(\mathbb{R}^{n})=T_{\Psi}(\mathbb{R}^{n}).
\]
In particular (for a fixed $G$)
\begin{equation}
\Vert2^{K}\Upsilon_{K (0,\ldots,0)}^{G}\Vert_{T_{\Psi}(\mathbb{R}^{n})}%
\approx\Vert2^{K}\Upsilon_{K (0,\ldots,0)}^{G}\Vert_{R_{\frac{2n}{n+2},2}%
\log^{\alpha}(\mathbb{R}^{n})}\quad\text{for every}\quad K\in\mathbb{N}.
\label{417}%
\end{equation}
Using Proposition \ref{PropositionWaveletT}, we compute
\[
\Vert2^{K}\Upsilon_{K (0,\ldots,0)}^{G}\Vert_{T_{\Psi}(\mathbb{R}^{n})}%
\approx\sup_{N\leq K}\frac{1}{N^{-\alpha+\frac{1}{2}}}=\frac{1}{K^{-\alpha
+\frac{1}{2}}},
\]
which combined with \eqref{417} results in
\begin{align*}
\frac{1}{K^{-\alpha+\frac{1}{2}}}  &  \approx\Vert2^{K}\Upsilon_{K (0,\ldots
,0)}^{G}\Vert_{R_{\frac{2n}{n+2},2}\log^{\alpha}(\mathbb{R}^{n})}\\
&  \gtrsim \frac{|\log|Q_{K(0,\ldots,0)}||^{\alpha}}{|Q_{K(0,\ldots,0)}%
|^{\frac{n-2}{2n}}}\,\int_{Q_{K(0,\ldots,0)}}2^{K}|\Upsilon_{K (0,\ldots
,0)}^{G}|\\
&  \approx\frac{K^{\alpha}}{2^{-\frac{Kn}{2}}}\,\int_{Q_{K(0,\ldots,0)}%
}|\Upsilon_{K (0,\ldots,0)}^{G}|\\
&  \gtrsim\frac{K^{\alpha}}{2^{-\frac{Kn}{2}}}\,2^{\frac{Kn}{2}}\,|Q_{K(0,\ldots
,0)}|=K^{\alpha}.
\end{align*}
Taking limits as $K\rightarrow\infty$ we arrive to a contradiction. \qed

\section{Comparison between $V_{\Psi}$ and $T_{\Psi}$}

In view of the results obtained in Sections \ref{SectionDMaj} and \ref{Section4}, it is natural to compare\footnote{According to the discussion at the beginning of Section \ref{Sub81}, we may restrict our attention to the $2$D setting.} the
spaces $V_{\Psi}(\mathbb{R}^{2})$ and $T_{\Psi}(\mathbb{R}^{2})$ for a fixed decay $\Psi$. In this
section we show that neither space contains the other. More precisely, we
construct explicit examples of functions showing that $T_{\Psi}(\mathbb{R}%
^{2})\backslash V_{\Psi}(\mathbb{R}^{2})\neq\emptyset$ and $V_{\Psi
}(\mathbb{R}^{2})\backslash T_{\Psi}(\mathbb{R}^{2})\neq\emptyset$.

\begin{example}[$T_{\Psi}(\mathbb{R}^{2})\backslash V_{\Psi}(\mathbb{R}%
^{2})\neq\emptyset$]
\label{Section5.1} Given a scalar sequence $\{c_{N}\}_{N\in\mathbb{N}_{0}}$,
let $f$ be given by \eqref{PropositionWavelet1}, where
\[
\lambda_{N l}=\left\{
\begin{array}
[c]{cl}%
2^{2N}c_{N}, & N\in\mathbb{N}_{0},\quad l=(0,\ldots,0),\\
0, & \text{otherwise}.
\end{array}
\right.
\]
We compute the norms of $f$ in $V_{\Psi}$ and $T_{\Psi}$ using Propositions
\ref{PropositionWavelet} and \ref{PropositionWaveletT}, respectively,
\[
\Vert f\Vert_{V_{\Psi}(\mathbb{R}^{2})}\approx\sup_{N\in\mathbb{N}_{0}}%
\frac{1}{\Psi(N)^{2}}\,\sum_{k=N}^{\infty}|c_{k}|
\]
and
\[
\Vert f\Vert_{T_{\Psi}(\mathbb{R}^{2})}\approx\sup_{N\in\mathbb{N}_{0}}%
\frac{1}{\Psi(N)}\,\bigg(\sum_{k=N}^{\infty}|c_{k}|^{2}\bigg)^{\frac{1}{2}}.
\]
Therefore, if we select $\{c_{N}\}_{N\in\mathbb{N}_{0}}\in\ell_{2}$ such
that
\begin{equation}
\sum_{k=N}^{\infty}c_{k}^{2}\approx\Psi(N)^{2} \label{Approx}%
\end{equation}
then $\Vert f\Vert_{T_{\Psi}(\mathbb{R}^{2})}\approx1.$ The existence of such
sequences (even with $\approx$ replaced by $=$ in \eqref{Approx}) is
guaranteed by classical results in approximation theory (see e.g. \cite[Section 2.5]{Timan}). On the other hand, since
\[
\sum_{k=N}^{\infty}|c_{k}| \geq\left(  \inf_{l\geq N}\frac{1}{|c_{l}|}\right)  \,\sum_{k=N}^{\infty
}|c_{k}|^{2}\approx\frac{\Psi(N)^{2}}{\sup_{l\geq N}|c_{l}|},
\]
we have%
\begin{equation}
\frac{1}{\Psi(N)^{2}}\,\sum_{k=N}^{\infty}|c_{k}|\gtrsim\frac{1}{\sup_{l\geq
N}|c_{l}|} \label{davos}%
\end{equation}
but since $\{c_{N}\}_{N\in\mathbb{N}_{0}}\in\ell_{2},$ we have $\lim
_{N\rightarrow\infty}|c_{N}|=0$ and therefore the left-hand side of
(\ref{davos}) tends to $\infty$ as $N\rightarrow\infty,$ showing that
$f\not \in V_{\Psi}(\mathbb{R}^{2})$.
\end{example}

\begin{example}[$V_{\Psi}(\mathbb{R}^{2})\backslash T_{\Psi}(\mathbb{R}%
^{2}) \neq \emptyset$]
\label{Section5.2}  Given a scalar sequence $\{c_{l}\}_{l\in\mathbb{Z}^{n}}$, define $f$
using \eqref{PropositionWavelet1} with
\[
\lambda_{N l}=\left\{
\begin{array}
[c]{cl}%
c_{l}, & N=0,\quad l\in\mathbb{Z}^{n},\\
0, & \text{otherwise}.
\end{array}
\right.
\]
Computing norms using Propositions \ref{PropositionWavelet} and
\ref{PropositionWaveletT} \ we find
\[
\Vert f\Vert_{V_{\Psi}(\mathbb{R}^{2})}\approx\sup_{l\in\mathbb{Z}^{n}}%
|c_{l}| \qquad \text{and} \qquad \Vert f\Vert_{T_{\Psi}(\mathbb{R}^{2})}\approx\bigg(\sum_{l\in\mathbb{Z}^{n}%
}|c_{l}|^{2}\bigg)^{1/2}.
\]
Thus, if we select $\{c_{l}\}_{l\in\mathbb{Z}^{n}}\in\ell_{\infty}%
\backslash\ell_{2}$ we obtain an example of $f\in V_{\Psi}(\mathbb{R}^{2})$
but $f\not \in T_{\Psi}(\mathbb{R}^{2})$.
\end{example}

\begin{remark}
\label{Remark5.1} The above computations show a stronger assertion: Given any
decays $\Psi$ and $\Phi$, one can always construct $f\in V_{\Psi}%
(\mathbb{R}^{2})$ such that $f\not \in T_{\Phi}(\mathbb{R}^{2})$.
\end{remark}

The previous remark shows that for different decays $\Psi\neq\Phi,$ $V_{\Psi
}(\mathbb{R}^{2})$ cannot be contained in $T_{\Phi}(\mathbb{R}^{2}).$ The next
result shows that under some additional condition the reverse inclusion is
possible. 

\begin{proposition}
Suppose that
\begin{equation}
\sum_{j=N}^{\infty}\Phi(j)\lesssim\Psi(N)^{2},\qquad N\in\mathbb{N}_{0}.
\label{5.2}%
\end{equation}
Then
\[
T_{\Phi}(\mathbb{R}^{2})\hookrightarrow V_{\Psi}(\mathbb{R}^{2}).
\]

\end{proposition}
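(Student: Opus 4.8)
The plan is to reduce the whole statement to Bernstein's inequality together with a one-term lower bound extracted directly from the definition of the $T_{\Phi}$-norm. Since we are working on $\mathbb{R}^{2}$, each (inhomogeneous) Littlewood--Paley block $\Delta_{j}f$, $j\in\mathbb{N}_{0}$, has Fourier transform supported in a ball of radius $\lesssim 2^{j}$, so Bernstein's inequality gives
\[
\|\Delta_{j}f\|_{L^{\infty}(\mathbb{R}^{2})}\lesssim 2^{j}\,\|\Delta_{j}f\|_{L^{2}(\mathbb{R}^{2})},\qquad j\in\mathbb{N}_{0},
\]
with an implied constant independent of $j$ and $f$. (Note that $f\in T_{\Phi}(\mathbb{R}^{2})$ already forces $f\in B^{-1}_{2,2}(\mathbb{R}^{2})$ by taking $N=0$ in Definition \ref{DefTPsi}, so all the blocks make sense as $L^{2}$ functions.) Consequently, for every $N\in\mathbb{N}_{0}$,
\[
\sum_{j=N}^{\infty}2^{-2j}\,\|\Delta_{j}f\|_{L^{\infty}(\mathbb{R}^{2})}\lesssim\sum_{j=N}^{\infty}2^{-j}\,\|\Delta_{j}f\|_{L^{2}(\mathbb{R}^{2})}=\sum_{j=N}^{\infty}\Big(2^{-2j}\,\|\Delta_{j}f\|_{L^{2}(\mathbb{R}^{2})}^{2}\Big)^{1/2}.
\]

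Next I would exploit the definition of the $T_{\Phi}$-norm termwise. Writing $b_{j}:=2^{-2j}\|\Delta_{j}f\|_{L^{2}(\mathbb{R}^{2})}^{2}$, the defining inequality of $T_{\Phi}$ applied with the one-term tail (i.e.\ keeping only the index $j$) yields $b_{j}\le\Phi(j)^{2}\,\|f\|_{T_{\Phi}(\mathbb{R}^{2})}^{2}$, hence $\sqrt{b_{j}}\le\Phi(j)\,\|f\|_{T_{\Phi}(\mathbb{R}^{2})}$. Summing over $j\ge N$ and invoking the hypothesis \eqref{5.2},
\[
\sum_{j=N}^{\infty}\sqrt{b_{j}}\le\|f\|_{T_{\Phi}(\mathbb{R}^{2})}\sum_{j=N}^{\infty}\Phi(j)\lesssim\Psi(N)^{2}\,\|f\|_{T_{\Phi}(\mathbb{R}^{2})}.
\]
Combining the two displays gives $\sum_{j=N}^{\infty}2^{-2j}\|\Delta_{j}f\|_{L^{\infty}(\mathbb{R}^{2})}\lesssim\Psi(N)^{2}\,\|f\|_{T_{\Phi}(\mathbb{R}^{2})}$, and dividing by $\Psi(N)^{2}$ and taking the supremum over $N\in\mathbb{N}_{0}$ produces $\|f\|_{V_{\Psi}(\mathbb{R}^{2})}\lesssim\|f\|_{T_{\Phi}(\mathbb{R}^{2})}$, which is exactly the claimed embedding.

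There is no serious obstacle here; the only points requiring a line of care are (a) checking that both $\Delta_{0}f$ and $\Delta_{j}f$ ($j\ge1$) have Fourier support in a ball of radius comparable to $2^{j}$, so a single Bernstein estimate covers all $j$, and (b) observing that the passage from $\|\Delta_{j}f\|_{L^{2}}$ back into the $T_{\Phi}$-norm is lossless precisely because $2^{-j}\|\Delta_{j}f\|_{L^{2}}=b_{j}^{1/2}$, which is the structural reason the summability assumption \eqref{5.2} is phrased with $\Psi(N)^{2}$ rather than $\Psi(N)$. One could try to be less wasteful than the one-term bound via a Cauchy--Schwarz argument, but since \eqref{5.2} is tailored exactly to the one-term estimate, no refinement is needed.
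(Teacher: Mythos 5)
Your proposal is correct and follows essentially the same route as the paper: pass from $L^\infty$ to $L^2$ on each Littlewood--Paley block via Bernstein/Nikolskii's inequality, then extract the bound $2^{-j}\|\Delta_j f\|_{L^2}\le\Phi(j)\|f\|_{T_\Phi}$ from the one-term tail in the $T_\Phi$-norm, and finally sum using \eqref{5.2}. The paper merely factors out $\sup_M\frac{2^{-M}}{\Phi(M)}\|\Delta_M f\|_{L^2}$ first and identifies it with $\|f\|_{T_\Phi}$ at the end, which is the same estimate written in a different order.
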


\begin{proof}
We use Nikolskii's inequality for entire functions of exponential type (see e.g. \cite[Section 4.9.53]{Timan}) to
estimate
\begin{align*}
\sum_{j=N}^{\infty}2^{-2j}\Vert\Delta_{j}f\Vert_{L^{\infty}(\mathbb{R}^{2})}
&  \lesssim\sum_{j=N}^{\infty}2^{-j}\Vert\Delta_{j}f\Vert_{L^{2}%
(\mathbb{R}^{2})}\\
&  \leq\left(  \sup_{M\in\mathbb{N}_{0}}\frac{2^{-M}}{\Phi(M)}\,\Vert
\Delta_{M}f\Vert_{L^{2}(\mathbb{R}^{2})}\right)  \,\sum_{j=N}^{\infty}%
\Phi(j)\\
&  \lesssim \Psi(N)^{2}  \sup_{M\in\mathbb{N}_{0}}\frac{2^{-M}}{\Phi(M)}%
\,\Vert\Delta_{M}f\Vert_{L^{2}(\mathbb{R}^{2})} \\
&  \leq \Psi(N)^{2}\,\sup_{M\in\mathbb{N}_{0}}\frac{1}{\Phi(M)}%
\bigg(\sum_{j=M}^{\infty}2^{-2j}\Vert\Delta_{j}f\Vert_{L^{2}(\mathbb{R}^{2}%
)}^{2}\bigg)^{1/2}\\
&  =\Psi(N)^{2}\,\Vert f\Vert_{T_{\Phi}(\mathbb{R}^{2})}.
\end{align*}
Therefore $T_{\Phi}(\mathbb{R}^{2})\hookrightarrow V_{\Psi}(\mathbb{R}^{2})$.
\end{proof}

\begin{remark}
The assumption \eqref{5.2} is quite restrictive, in particular, it forces the
series $\sum_{j=0}^{\infty}\Phi(j)$ to be convergent. This automatically
excludes many interesting examples of spaces $T_{\Phi}$ such as the
corresponding one to the decay $\Phi(j)=j^{-\alpha}$ with $\alpha\in(0,1]$,
which are connected with $R$-spaces (cf. Theorem \ref{Theorem4.8}). In
conclusion, the analysis of $T_{\Phi}$ can not be, in general, reduced to
study of the simpler spaces $V_{\Psi}$.
\end{remark}

\section{A sparse approach to energy conservation\label{sec:physically}}

Throughout this section, we work with the following special class of approximate solutions on $\mathbb{T}^2 \equiv [0, 2 \pi]^2$ introduced in \cite[Definition 3]{Ches}. 

\begin{definition}\label{DefVV}
	Let $u \in C(0, T; L^2(\T^2))$ with $u_0 \in L^2(\T^2)$. We say that a weak solution $u$ of Euler equations is  \emph{physically realizable} with initial velocity $u_0$ provided that there exists a family $\{u^\varepsilon\}_{\varepsilon > 0}$ of solutions of Navier-Stokes equations with viscosity $\varepsilon$, such that $u^\varepsilon \rightharpoonup u$ weakly* in $L^\infty(0, T; L^2(\T^2))$ and $u^\varepsilon_0 \to u_0$ strongly in $L^2(\T^2)$. In this case, $\{u^\varepsilon\}_{\varepsilon > 0}$ is called a \emph{physical realization} of $u$. 
\end{definition}

Next we provide the proof of Theorem \ref{ThmCon}. In this regard, the following interpolation inequality involving sparse function
spaces plays a crucial role. This result is of independent interest. 

\begin{lemma}
\label{LemmaInt} Let $Q_{0}$ be a cube in $\mathbb{R}^{2}$ or $Q_{0}%
=\mathbb{T}^{2}$, and let $\Psi$ be an admissible decay. Assume that $f\in S_{\Psi}(Q_{0})\cap\dot{H}^{1}%
(Q_{0}).$ Then, with absolute constants, we have
\begin{equation}
\Vert f-f_{Q_{0}}\Vert_{L^{2}(Q_{0})}\lesssim\frac{\Psi(-\log r)}{r}\Vert
f\Vert_{S_{\Psi}(Q_{0})}+r\Vert\nabla f\Vert_{L^{2}(Q_{0})}%
,\qquad\forall r\in(0,1).\label{IntIneqS}%
\end{equation}

\end{lemma}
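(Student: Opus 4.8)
The idea is a classical Littlewood--Paley / dyadic cube decomposition of $f - f_{Q_0}$, splitting frequencies (or equivalently dyadic cube scales) at the threshold $r$. First I would cover $Q_0$, for each $k \in \mathbb{N}_0$, by the dyadic cubes $\mathbb{D}_{k; Q_0}$ of sidelength $2^{-k}\ell(Q_0)$, and write the standard telescoping identity expressing $f - f_{Q_0}$ in terms of the martingale differences $f_{Q} - f_{\widehat Q}$ ($\widehat Q$ the dyadic parent of $Q$). Fix $r \in (0,1)$ and let $N = N(r)$ be the integer with $2^{-N} \approx r$ (so $-\log r \approx N$ up to absolute constants). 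Then decompose $f - f_{Q_0} = g + h$ where $g$ collects the contributions from scales $k \leq N$ (the ``low-frequency'' part, controlled by $S_\Psi$) and $h$ collects scales $k > N$ (the ``high-frequency'' part, controlled by $\dot H^1$). I would aim to prove $\|g\|_{L^2(Q_0)} \lesssim \frac{\Psi(N)}{r}\|f\|_{S_\Psi(Q_0)}$ and $\|h\|_{L^2(Q_0)} \lesssim r \|\nabla f\|_{L^2(Q_0)}$.

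\textbf{The high-frequency part.} This is the routine half. On scales finer than $2^{-N}$, the martingale-difference square function is dominated by the $L^2$ norm of the gradient times the scale: for each cube $Q \in \mathbb{D}_{k;Q_0}$ one has the Poincaré-type bound $|f_Q - f_{\widehat Q}| \lesssim 2^{-k} \left(\frac{1}{|Q|}\int_{\widehat Q} |\nabla f|^2\right)^{1/2}$, and summing the squares over disjoint cubes at each scale $k > N$ and then over $k$ gives a geometric series in $2^{-k}$ whose sum is $\approx 2^{-N} \approx r$; orthogonality (disjointness of cubes at a fixed scale, bounded overlap of parents) controls the cross terms. This yields $\|h\|_{L^2(Q_0)} \lesssim r \|\nabla f\|_{L^2(Q_0)}$. (Here $n = 2$ is used: the Poincaré constant and the exponent arithmetic $|Q|^{1/n - 1/2} = |Q|^0 = 1$ when $n = 2$ are both clean in dimension two, which is why the lemma is stated for $\mathbb{R}^2$ and $\mathbb{T}^2$.)

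\textbf{The low-frequency part --- the main obstacle.} The heart of the matter is to bound $\|g\|_{L^2(Q_0)}$ by the sparse norm $\|f\|_{S_\Psi(Q_0)}$. Here I would write $g = \sum_{k=0}^{N} \sum_{Q \in \mathbb{D}_{k;Q_0}} (f_Q - f_{\widehat Q}) \mathbf{1}_{Q}$ and estimate its $L^2$ norm; since the cubes at a fixed scale $k$ are disjoint, $\|g\|_{L^2(Q_0)}^2 \approx \sum_{k=0}^N \sum_{Q \in \mathbb{D}_{k;Q_0}} |f_Q - f_{\widehat Q}|^2 |Q|$. Using $|f_Q - f_{\widehat Q}| \lesssim \frac{1}{|Q|}\int_{\widehat Q}|f|$ and $|Q| \approx |\widehat Q|$, this is $\lesssim \sum_{k=0}^N \sum_{Q} |Q|^{-1}\left(\int_{\widehat Q}|f|\right)^2$. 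Now I rewrite $|Q|^{-1} = |Q|^{2(\frac 1n - \frac 12)} \cdot |Q|^{2/n - 1} = |Q|^{2(\frac1n - \frac12)} \cdot 2^{2k(1-2/n)}\ell(Q_0)^{\dots}$; in $n = 2$ this gives $|Q|^{-1} = |Q|^{2(\frac 12 - \frac 12)} \cdot$ (no extra factor $= 1$), wait --- more carefully, $|Q| = 2^{-2k}|Q_0|$ so $|Q|^{-1} = 2^{2k}|Q_0|^{-1}$, and I want to compare with the summand $(|Q|^{1/n - 1/2}\int_Q |f|)^2 = (\int_Q|f|)^2$ appearing in $s_N(f)$. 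The point is that the sparse index $s_1(f) = \|f\|_{SR_{\frac{2n}{n+2},2}(Q_0)}$ controls $\sum_{Q \in \mathcal{Q}}(\int_Q |f|)^2$ over \emph{any} sparse (in particular any disjoint) family, and more precisely $s_{N}(f)$ controls the tail over scales $\geq N-1$. So the dyadic sum $\sum_{k \leq N}\sum_{Q \in \mathbb{D}_k}(\int_{\widehat Q}|f|)^2$ telescopes and is bounded, scale by scale, using that $\mathbb{D}_k \in \Pi(Q_0) \subset S(Q_0)$; each scale contributes $\lesssim \|f\|_{S_\Psi(Q_0)}^2 \Psi(k)^2$ by definition of $S_\Psi$ via $s_{k}(f) \leq \Psi(k)\|f\|_{S_\Psi(Q_0)}$ restricted to a single scale. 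Multiplying by the factor $2^{2k}|Q_0|^{-1}$ coming from $|Q|^{-1}$ and summing the resulting $\sum_{k=0}^N 2^{2k}\Psi(k)^2$ is exactly where the \textbf{admissibility} of $\Psi$ (Definition \ref{DefAdm}(i), $\sum_{r=0}^N (2^r\Psi(r))^2 \lesssim (2^N\Psi(N))^2$) enters: it collapses the sum to $\lesssim (2^N\Psi(N))^2 = r^{-2}\Psi(N)^2$ up to constants. This produces $\|g\|_{L^2(Q_0)} \lesssim \frac{\Psi(N)}{r}\|f\|_{S_\Psi(Q_0)} \approx \frac{\Psi(-\log r)}{r}\|f\|_{S_\Psi(Q_0)}$, and combining with the high-frequency bound finishes the lemma.

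The genuinely delicate point to get right is the bookkeeping that converts the single-scale estimate $\sum_{Q \in \mathbb{D}_k}(\int_Q |f|)^2 \lesssim \Psi(k)^2\|f\|_{S_\Psi(Q_0)}^2$ (which requires extracting from $s_k(f) \leq \Psi(k)\|f\|_{S_\Psi}$ only the $k$-th slice, using that $\mathbb{D}_k$ is itself a sparse family lying entirely in generation $k$) into the weighted sum over $k \leq N$, and then invoking admissibility with the correct exponent $2^{2k}$ matching the weight $|Q|^{-1} = 2^{2k}|Q_0|^{-1}$ in dimension two. I expect this weight-matching plus the admissibility summation to be the crux; everything else is standard Littlewood--Paley/Poincaré estimation.
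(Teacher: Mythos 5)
Your proof is correct, and the essential ideas match the paper's: split the dyadic scales at $M\approx -\log_2 r$, control the coarse scales $k\le M$ by the slice bound $\sum_{Q\in\mathbb{D}_{k}}(\int_Q|f|)^2\le s_{k+1}(f)^2\le\Psi(k+1)^2\|f\|_{S_\Psi}^2$ together with the admissibility inequality $\sum_{k\le M}2^{2k}\Psi(k)^2\lesssim 2^{2M}\Psi(M)^2$, and control the fine scales $k>M$ by Poincar\'e plus disjointness, which collapses a geometric series to $2^{-2M}\|\nabla f\|_{L^2}^2$.

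Where you genuinely diverge from the paper is in the starting identity. The paper invokes the sparse $L^2$-characterization
$\|f-f_{Q_0}\|_{L^2(Q_0)}\approx\sup_{\mathcal{Q}\in S(Q_0)}\bigl[\sum_i\bigl(\frac{1}{|Q_i|}\int_{Q_i}|f-f_{Q_i}|\bigr)^2|Q_i|\bigr]^{1/2}$
(Theorem~\ref{CorSparLeb} together with \cite{DMComptes}) and then estimates $J_1(M)+J_2(M)$ uniformly over arbitrary sparse families $\mathcal{Q}$. You instead expand $f-f_{Q_0}$ in the dyadic martingale-difference series and use orthogonality of martingale differences to write $\|f-f_{Q_0}\|_{L^2}^2=\sum_{k\ge1}\sum_{Q\in\mathbb{D}_k}|f_Q-f_{\widehat Q}|^2|Q|$, then estimate low and high blocks. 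Your route is more elementary and self-contained, since it replaces the nontrivial sparse characterization (the $\lesssim$ direction of which needs a Carleson-type or sparse-domination argument) with the classical Pythagorean identity for a dyadic martingale; on the other hand, the paper's route keeps the whole argument inside the sparse machinery developed elsewhere in the paper, which is the point of their presentation. The single-scale use of $s_{k+1}(f)$, the admissibility step, the Poincar\'e step, and the final optimization $r\approx 2^{-M}$ are identical in both. Two bookkeeping remarks: the telescoping sum should start at $k=1$ (the root $Q_0$ has no parent), and the dimension $n=2$ is not really forced by the exponent arithmetic --- what makes the exponents close is that the factor $|Q|^{-1}$ differs from the sparse weight $|Q|^{2(1/n-1/2)}$ by exactly $\ell(Q)^{-2}$, which is what Poincar\'e returns on the other side; $n=2$ mostly normalizes the sparse weight to $1$ and matches the intended $2$D application.
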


\begin{remark}[cf. eq. (11) in \cite{Ches}] Let us show how (\ref{IntIneqS}) can be
applied to produce a classical Gagliardo-Nirenberg inequality. Let $f\in L^{p}%
,\,p\in(1,2),$ then by Proposition \ref{ThmSparseLebesgue}, with decay $\Psi(t)=2^{-2t(1-\frac
{1}{p})},$
$
L^{p}(Q_{0})\hookrightarrow S_{\Psi}(Q_{0}).$
Applying (\ref{IntIneqS}) for this special decay, we obtain 
\begin{equation*}
\Vert f-f_{Q_{0}}\Vert_{L^{2}(Q_{0})}   \lesssim r^{1-\frac{2}{p}}\Vert f\Vert_{L^{p}(Q_{0})}+r\Vert\nabla
f\Vert_{L^{2}(Q_{0})}
\end{equation*}
for all $r \in (0, 1)$. 
Optimizing the right-hand side by equating both terms, i.e. selecting $r =\Big(  \frac{\Vert\nabla f\Vert_{L^{2}(Q_{0})}}{\Vert f\Vert_{L^{p}(Q_{0}%
)}} \Big)^{-\frac{p}{2}},$ we find
\[
\Vert f-f_{Q_{0}}\Vert_{L^{2}(Q_0)}\lesssim\Vert\nabla f\Vert_{L^{2}(Q_0)}^{1-\frac
{p}{2}}\Vert f\Vert_{L^{p}(Q_0)}^{\frac{p}{2}}.
\]
\end{remark}

\begin{proof}
[Proof of Lemma \ref{LemmaInt}]We will use the sparse characterization of
$L^{2}$ (cf. Theorem \ref{CorSparLeb} and \cite{DMComptes}):
\begin{equation}
\Vert f-f_{Q_{0}}\Vert_{L^{2}(Q_{0})}\approx\sup_{\mathcal{Q}=(Q_{i})_{i\in
I}\in S(Q_{0})}\left\{  \sum_{i\in I}\bigg(\frac{1}{|Q_{i}|}\int_{Q_{i}%
}|f-f_{Q_{i}}|\bigg)^{2}|Q_{i}|\right\}  ^{1/2}\label{v3.1}%
\end{equation}
and
\begin{equation}
\Vert f\Vert_{L^{2}(Q_{0})}\approx\sup_{\mathcal{Q} \in
S(Q_{0})}\left\{  \sum_{i\in I}\bigg(\frac{1}{|Q_{i}|}\int_{Q_{i}%
}|f|\bigg)^{2}|Q_{i}|\right\}  ^{1/2}.\label{v3.1new2}%
\end{equation}

To estimate the left-hand side of \eqref{IntIneqS} we use (\ref{v3.1}). Let
$f\in S_{\Psi}(Q_0)$ and  $\mathcal{Q} \in S(Q_{0}).$ Then,
for $M\in\mathbb{N}_{0}$ we have
\begin{equation}
\sum_{i\in I}\bigg(\frac{1}{|Q_{i}|}\int_{Q_{i}}|f-f_{Q_{i}}|\bigg)^{2}|Q_{i}|
  =J_{1}(M)+J_{2}(M),\label{v3.2new}%
\end{equation}
where%
\[
J_{1}(M):=\sum_{k=0}^{M}\sum_{Q_{i}\in \mathbb{D}_{k}(\mathcal{Q}%
)}\bigg(\frac{1}{|Q_{i}|}\int_{Q_{i}}|f-f_{Q_{i}}|\bigg)^{2}|Q_{i}|,
\]
\[
J_{2}(M):=\sum_{k=M+1}^{\infty}\sum_{Q_{i}\in \mathbb{D}%
_{k}(\mathcal{Q})}\bigg(\frac{1}{|Q_{i}|}\int_{Q_{i}}|f-f_{Q_{i}}|\bigg)^{2}|Q_{i}|.
\]

We estimate $J_{1}(M)$ and $J_{2}(M)$. Since $f\in S_{\Psi}(Q_0)$, we find
\begin{align}
J_{1} (M)&  \lesssim\sum_{k=0}^{M}\sum_{Q_{i}\in \mathbb{D}%
_{k}(\mathcal{Q})}\bigg(\frac{1}{|Q_{i}|}\int_{Q_{i}}|f|\bigg)^{2}|Q_{i}| \nonumber\\
&  \approx\sum_{k=0}^{M}2^{2k}\sum_{Q_{i}\in \mathbb{D}%
_{k}(\mathcal{Q})}\bigg(\int_{Q_{i}}|f|\bigg)^{2}\nonumber\\
&  \leq\sum_{k=0}^{M}2^{2k}s_{k+1}(f)^{2}\nonumber\\
&  \lesssim \Vert f\Vert_{S_{\Psi}(Q_{0})}^{2}\,\sum_{k=1}^{M}2^{2k}%
\Psi(k)^{2}\nonumber\\
&  \lesssim\Vert f\Vert_{S_{\Psi}(Q_{0})}^{2}\,2^{2M}\Psi
(M)^{2} \label{v3.4}%
\end{align}
where we have used Definition \ref{DefAdm}(i) in the last estimate.

To estimate $J_{2}(M)$ we will make use of the classical Poincar\'{e}
inequality:
\begin{equation}
\int_{Q}|f-f_{Q}|\lesssim\ell(Q)\int_{Q}|\nabla f|.\label{v3.3}%
\end{equation}
Then, by \eqref{v3.3} and \eqref{v3.1new2} applied to $|\nabla f|$,
\begin{align}
J_{2}(M) &  \lesssim\sum_{k=M+1}^{\infty}\sum_{Q_{i} \in 
\mathbb{D}_{k}(\mathcal{Q})}\bigg(\frac{\ell(Q_{i})}{|Q_{i}|}\int_{Q_{i}}|\nabla
f|\bigg)^{2}|Q_{i}|\nonumber\\
&  \approx\sum_{k=M+1}^{\infty}2^{-k2}\sum_{Q_{i}\in 
\mathbb{D}_{k}(\mathcal{Q})}\bigg(\frac{1}{|Q_{i}|}\int_{Q_{i}}|\nabla
f|\bigg)^{2}|Q_{i}|\nonumber\\
&  \lesssim\Vert\nabla f\Vert_{L^{2}(Q_{0})}^{2}\,\sum_{k=M+1}^{\infty}%
2^{-k2} \quad \text{(by H\"{o}lder's inequality)}\nonumber\\
&  \approx\Vert\nabla f\Vert_{L^{2}(Q_{0})}^{2}\,2^{-M2}.\label{v3.5}%
\end{align}

Inserting the estimates \eqref{v3.4} and \eqref{v3.5} into \eqref{v3.2new}, we
achieve
\[
\left\{  \sum_{i\in I}\bigg(\frac{1}{|Q_{i}|}\int_{Q_{i}}|f-f_{Q_{i}%
}|\bigg)^{2}|Q_{i}|\right\}  ^{1/2}\lesssim\Vert f\Vert_{S_{\Psi
}(Q_{0})}2^{M}\Psi(M)+\Vert\nabla f\Vert_{L^{2}(Q_{0})}\,2^{-M}.
\]
Since this bound is independent of the sparse family $\mathcal{Q}$, we arrive
at (cf. \eqref{v3.1})
\[
\Vert f-f_{Q_{0}}\Vert_{L^{2}(Q_{0})}\lesssim\Vert f\Vert_{S_{\Psi
}(Q_{0})}2^{M}\Psi(M)+\Vert\nabla f\Vert_{L^{2}(Q_{0})}\,2^{-M}.
\]
Since $\Psi$ is decreasing, the previous estimate can be
expressed as
\[
\Vert f-f_{Q_{0}}\Vert_{L^{2}(Q_{0})}\lesssim\frac{\Psi(-\log r)}{r}\,\Vert
f\Vert_{S_{\Psi}(Q_{0})}+r\Vert\nabla f\Vert_{L^{2}(Q_{0})}%
\]
for all $r\in(0,1).$
\end{proof}

We are now ready to present the proof of Theorem \ref{ThmCon}. The
strategy of proof is inspired by \cite{Lan}, we have replaced the role played
there by structure functions with our decays of sparse indices. We provide
full details for the sake of completeness.

\begin{proof}
[Proof of Theorem \ref{ThmCon}] Let $\{u^\varepsilon\}_{\varepsilon > 0}$ be a physical realization of $u$ and let $\{\omega^{\varepsilon}\}_{\varepsilon > 0}$ be the related vorticities. By assumption, there exists an admissible decay $\Psi$ such that  
\begin{equation}\label{109}
M:=\sup_{\varepsilon>0}\Vert\omega^{\varepsilon}\Vert
_{C(0,T;S_{\Psi}(\T^2))}<\infty.
\end{equation}
Furthermore,  $\omega^{\varepsilon}$ satisfies the transport
equation:
\[
\omega_{t}^{\varepsilon}+u^{\varepsilon}\cdot\nabla\omega^{\varepsilon}=\varepsilon\Delta\omega^{\varepsilon}%
\]
and $\text{div }u^{\varepsilon}=0$. Multiplying both sides of the previous equation by
$\omega^{\varepsilon}$ and integrating on $\mathbb{T}^{2}$ yields
\[
\frac{d}{dt}\,\Vert\omega^{\varepsilon}\Vert_{L^{2}(\T^2)}^{2}=-2\varepsilon\Vert\nabla\omega^{\varepsilon
}\Vert_{L^{2}(\T^2)}^{2}.
\]
Consequently, for any $\delta\in(0,T)$ and $t\in(\delta,T)$,
\begin{equation}
\Vert\omega^{\varepsilon}(t)\Vert_{L^{2}(\T^2)}^{2}=\Vert\omega^{\varepsilon}(\delta)\Vert_{L^{2}(\T^2)
}^{2}-2\varepsilon\int_{\delta}^{t}\Vert\nabla\omega^{\varepsilon}(s)\Vert_{L^{2}(\T^2)}%
^{2}\,ds.\label{0.9}%
\end{equation}

According to Lemma \ref{LemmaInt} (with $Q_{0}=\mathbb{T}^{2}$ and\footnote{Note
that $\omega^\varepsilon$ has mean zero, i.e., $\omega^\varepsilon_{\mathbb{T}^{2}}=0$.}
$f=\omega^{\varepsilon}=\omega^{\varepsilon}(\cdot,t), \, t\in(0,T)$) and \eqref{109}, there exists a universal constant $C > 0$ such that
\[
\Vert\omega^{\varepsilon}\Vert_{L^{2}(\T^2)}^{2}\leq C\frac{\Psi(-\log r)^{2}}{r^{2}}%
M^{2}+Cr^{2}\Vert\nabla\omega^{\varepsilon}\Vert_{L^{2}(\T^2)}^{2}, \qquad \forall r \in (0, 1). 
\]
Integrating
\begin{equation}
\int_{\delta}^{t}\Vert\omega^{\varepsilon}(s)\Vert_{L^{2}(\T^2)}^{2}\,ds 
  \leq CTM^{2}\frac{\Psi(-\log r)^{2}}{r^{2}}+Cr^{2}\int_{\delta}^{t}%
\Vert\nabla\omega^{\varepsilon}(s)\Vert_{L^{2}(\T^2)}^{2}\, ds\label{v3.1.2}%
\end{equation}
for $r \in (0, 1)$. 
 In fact, letting $\Psi(t)=\Psi(0)$ for $t<0$,
\eqref{v3.1.2} with $r\geq1$ follows immediately from Poincar\'{e}'s
inequality\footnote{By the Poincar\'e inequality $\|\omega^\varepsilon\|_{L^2(\T^2)} \lesssim \|\nabla \omega^\varepsilon\|_{L^2(\T^2)}$, we have, for $r \geq 1$, \begin{align*}\int_\delta^t \|\omega^\varepsilon(s)\|^2_{L^2(\T^2)} \, ds \lesssim \int_t^\delta \|\nabla \omega^\varepsilon(s)\|^2_{L^2(\T^2)} \, ds \leq r^2 \int_t^\delta \|\nabla \omega^\varepsilon(s)\|^2_{L^2(\T^2)} \, ds. \end{align*}}. Next we optimize the right-hand side of \eqref{v3.1.2}. Indeed,
setting
\begin{equation*}
r_{0}:=\log\frac{(\int_{\delta}^{t}\Vert\nabla\omega^{\varepsilon}(s)\Vert_{L^{2}(\T^2)}%
^{2}\,ds)^{1/4}}{\Psi(0)^{1/2}}
\end{equation*}
and
\[
r=\frac{\Psi(r_{0})^{1/2}}{(\int_{\delta}^{t}\Vert\nabla\omega^{\varepsilon}%
(s)\Vert_{L^{2}(\T^2)}^{2}\,ds)^{1/4}}.
\]
Note that $-\log r\geq r_{0}$ (since $\Psi$ is decreasing) and thus
$\Psi(-\log r)\leq\Psi(r_{0})$. Accordingly, it follows from \eqref{v3.1.2}
that
\[
\bigg(\int_{\delta}^{t}\Vert\omega^{\varepsilon}(s)\Vert_{L^{2}(\T^2)}^{2}\,ds\bigg)^{2}\leq
C^{2}(TM^{2}+1)^{2}\Psi\left(  \log\frac{(\int_{\delta}^{t}\Vert\nabla
\omega^{\varepsilon}(s)\Vert_{L^{2}(\T^2)}^{2}\,ds)^{1/4}}{\Psi(0)^{1/2}}\right)  ^{2}%
\int_{\delta}^{t}\Vert\nabla\omega^{\varepsilon}(s)\Vert_{L^2(\T^2)}^{2}\,ds.
\]
Setting $x_{\varepsilon}=x_{\varepsilon}(t)=\varepsilon\int_{\delta}^{t}\Vert\omega^{\varepsilon}%
(s)\Vert_{L^{2}(\T^2)}^{2}\,ds$ and $y_{\varepsilon}=y_{\varepsilon}(t)=\int_{\delta}^{t}\Vert
\nabla\omega^{\varepsilon}(s)\Vert^{2}_{L^2(\T^2)}\,ds$, the previous estimate can be rewritten
as
\begin{equation}
\Big(\frac{x_{\varepsilon}}{\varepsilon}\Big)^{2}\leq f(y_{\varepsilon})\label{v3.1.4}%
\end{equation}
where $f(y)=C^{2}(TM^{2}+1)^{2}y\Psi\big(\log\frac{y^{1/4}}{\Psi(0)^{1/2}%
}\big)^{2}$. The function $f$ satisfies
\[
\sup_{y>0}\frac{f(y)}{y}=C^{2}(TM^{2}+1)^{2}\sup_{y>0}\Psi\bigg(\log
\frac{y^{1/4}}{\Psi(0)^{1/2}}\bigg)^{2}=C^{2}(TM^{2}+1)^{2}\Psi(0)^{2}<\infty,
\]
and (recall that $\lim_{y\rightarrow\infty}\Psi(y)=0$)
\[
\limsup_{y\rightarrow\infty}\frac{f(y)}{y}=C^{2}(TM^{2}+1)^{2}\limsup
_{y\rightarrow\infty}\Psi\bigg(\log\frac{y^{1/4}}{\Psi(0)^{1/2}}\bigg)^{2}=0.
\]
In addition $f(0)=0$ (note that $\lim_{y\rightarrow-\infty}\Psi(y)=\Psi(0)$).
Hence  \cite[Lemma C.1]{Lan} guarantees the
existence of a strictly increasing function $F$ with $F(y)\geq f(y)$ such that
the corresponding inverse function of $F$ can be expressed as $F^{-1}%
(x)=\sigma(\sqrt{x})x$, where $\sigma$ is a continuous increasing function
with $\sigma(\sqrt{x})\geq\sigma_{0}>0$ and $\lim_{x\rightarrow\infty}%
\sigma(x)=\infty$. From \eqref{v3.1.4}, we have
\[
\Big(\frac{x_{\varepsilon}}{\varepsilon}\Big)^{2}\leq F(y_{\varepsilon}),
\]
and thus
\[
\sigma\Big(\frac{x_{\varepsilon}}{\varepsilon}\Big)\Big(\frac{x_{\varepsilon}}{\varepsilon}\Big)^{2}%
=F^{-1}\Big(\Big(\frac{x_{\varepsilon}}{\varepsilon}\Big)^{2}\Big)\leq y_{\varepsilon}%
\]
or equivalently
\begin{equation}
-\varepsilon^{2}\int_{\delta}^{t}\Vert\nabla\omega^{\varepsilon}(s)\Vert_{L^2(\T^2)}^{2}\,ds\leq
-\sigma\Big(\frac{x_{\varepsilon}}{\varepsilon}\Big)x_{\varepsilon}^{2}.\label{v3.1.5}%
\end{equation}

Note that \eqref{0.9} can be rewritten as
\[
\frac{d}{dt}\,x_{\varepsilon}=\varepsilon\Vert\omega^{\varepsilon}(\delta)\Vert_{L^{2}(\T^2)}^{2}-2\varepsilon
^{2}\int_{\delta}^{t}\Vert\nabla\omega^{\varepsilon}(s)\Vert_{L^{2}(\T^2)}^{2}\,ds
\]
and then, by \eqref{v3.1.5} and well-known a priori $L^{2}$%
-estimates\footnote{Recall that $u_{0}\in L^{2}(\T^2)$, cf. Definition \ref{DefVV}.} for Navier-Stokes solutions (cf. \cite[Lemma A.2]{Lan}),
\begin{equation}
\frac{d}{dt}\,x_{\varepsilon}\leq\varepsilon\Vert\omega^{\varepsilon}(\delta)\Vert_{L^{2}(\T^2)}^{2}%
-2\sigma\Big(\frac{x_{\varepsilon}}{\varepsilon}\Big)x_{\varepsilon}^{2}\leq\frac{\Vert u_{0}%
\Vert_{L^{2}(\T^2)}^{2}}{\delta}-2\sigma\Big(\frac{x_{\varepsilon}}{\varepsilon}\Big)x_{\varepsilon}^{2}.
\label{v3.1.6}%
\end{equation}

Next we show that
\begin{equation}
\lim_{\varepsilon\rightarrow0}\varepsilon\int_{\delta}^{t}\Vert\omega^{\varepsilon}(s)\Vert_{L^{2}(\T^2)%
}^{2}\,ds=0 \label{v3.1.7}%
\end{equation}
uniformly with respect to $t\in\lbrack\delta,T]$. For $\eta>0$ arbitrary,
consider the set
\[
A_{\eta,t}:=\{\varepsilon>0:x_{\varepsilon}(t)\geq\eta\}.
\]
Assume first that $0\in\bar{A}_{\eta,t}$. In particular, there exists
$\{\varepsilon_{l}\}_{l\in\mathbb{N}}\subset A_{\eta,t}$ with $\lim_{l\rightarrow
\infty}\varepsilon_{l}=0$. Since $\sigma$ is increasing, $\sigma(x_{\varepsilon_{l}}%
(t)/\varepsilon_{l})\geq\sigma(\eta/\varepsilon_{l})$ and (cf. \eqref{v3.1.6})
\[
\frac{d}{dt}\,x_{\varepsilon_{l}}\leq\frac{\Vert u_{0}\Vert_{L^{2}(\T^2)}^{2}}{\delta
}-2\sigma\Big(\frac{\eta}{\varepsilon_{l}}\Big)\eta^{2}.
\]
Observe that $\lim_{l\rightarrow\infty}\sigma(\eta/\varepsilon_{l})=\infty$,
which yields that $x_{\varepsilon_{l}}(t)$ is decreasing with respect to $t$ whenever
$l\geq l_{0}=l_{0}(\eta,\sigma,\Vert u_{0}\Vert_{L^{2}(\T^2)},\delta)$. Since
$x_{\varepsilon_{l}}(\delta)=0$ and $x_{\varepsilon_{l}}(t)\geq0$, we conclude that
$x_{\varepsilon_{l}}(t)=0$ for all $t>\delta$ and $l\geq l_{0}$. Therefore there
exists $\varepsilon_{0}=\varepsilon_{0}(\eta,\sigma,\Vert u_{0}\Vert_{L^{2}(\T^2)},\delta)>0$
such that
\begin{equation}
x_{\varepsilon}(t)\leq\eta \qquad\text{if}\qquad\varepsilon\leq\varepsilon_{0}. \label{v3.1.8}%
\end{equation}
On the other hand, if $0\not \in \bar{A}_{\varepsilon,t}$ then \eqref{v3.1.8}
holds trivially. Either way, we have shown that \eqref{v3.1.7} is fulfilled.

Recall the energy formula for $2$D Navier-Stokes solutions
\[
\frac{d}{dt}\,\Vert u^{\varepsilon}\Vert_{L^{2}(\T^2)}^{2}=-2\varepsilon\Vert\omega^{\varepsilon}%
\Vert_{L^{2}(\T^2)}^{2}.
\]
Integrating over $[\delta,t]$:
\[
\Vert u^{\varepsilon}(t)\Vert_{L^{2}(\T^2)}^{2}-\Vert u^{\varepsilon}(\delta)\Vert_{L^{2}(\T^2)}^{2}%
=-2\varepsilon\int_{\delta}^{t}\Vert\omega^{\varepsilon}(s)\Vert_{L^{2}(\T^2)}^{2}\,ds.
\]
In light of \eqref{v3.1.7}, we get (uniformly in $t$)
\begin{equation}
\lim_{\varepsilon\rightarrow0}\Vert u^{\varepsilon}(t)\Vert_{L^{2}(\T^2)}^{2}-\Vert u^{\varepsilon}%
(\delta)\Vert_{L^{2}(\T^2)}^{2}=0. \label{v3.1.9}%
\end{equation}

Since $\{u^\varepsilon\}_{\varepsilon > 0}$ is sparse stable, by virtue of Theorem \ref{teo:exhaust}, one can find a sequence of $\varepsilon$'s with
$\varepsilon\rightarrow0$ such that
\[
\lim_{\varepsilon\rightarrow0}\Vert u^{\varepsilon}(t)\Vert_{L^{2}(\T^2)}^{2}=\Vert u(t)\Vert
_{L^{2}(\T^2)}^{2}%
\]
for all $t\in(0,T)$. This combined with \eqref{v3.1.9} lead to
\begin{equation}
\Vert u(t)\Vert_{L^{2}(\T^2)}=\Vert u(\delta)\Vert_{L^{2}(\T^2)},\qquad\text{for any}\quad
t\in(\delta,T). \label{v3.1.10}%
\end{equation}

On the other hand, since $\Vert u(t)\Vert_{L^{2}(\T^2)}$ is right-continuous at
$t=0$, given any $\eta>0$ one can find $\delta>0$ (depending on $\eta$) such that
\[
0\leq\Vert u(t)\Vert_{L^{2}(\T^2)}-\Vert u_{0}\Vert_{L^{2}(\T^2)}\leq\eta
,\qquad\text{for all}\quad t\in(0,\delta].
\]
Since \eqref{v3.1.10} also holds, we have
\[
0\leq\Vert u(t)\Vert_{L^{2}(\T^2)}-\Vert u_{0}\Vert_{L^{2}(\T^2)}\leq\eta
,\qquad\text{for all}\quad t\in(0,T].
\]
Clearly this shows that $u$ is conservative, i.e., $\Vert u(t)\Vert_{L^{2}(\T^2)
}=\Vert u_{0}\Vert_{L^{2}(\T^2)}, \, t\in\lbrack0,T]$.
\end{proof}

%
%
%
%


\begin{thebibliography}{99}                                                                                               %
\bibitem {Adele}D. Albritton, E. Bru\'{e}, M. Colombo, C. De Lellis, V. Giri,
M. Janisch, H. Kwon, \emph{Instability and nonuniqueness for the 2d Euler
equations in vorticity form, after M. Vishik.} IAS Lecture Notes, Princeton
2022 (to appear in Annals of Math. Studies).

\bibitem {BR80}C. Bennett, K. Rudnick, \emph{On Lorentz-Zygmund spaces}.
Dissertationes Math. \textbf{175} (1980), 67 pp.

\bibitem {BS88}C. Bennett, R. Sharpley, \emph{Interpolation of Operators}.
Academic Press, Boston (1988).



\bibitem{Ches2} A. Cheskidov, P. Constantin, S. Friedlander, R. Shvydkoy, \emph{Energy conservation and Onsager's conjecture for the Euler equations}. Nonlinearity \textbf{21} (2008), 1233--1252.

\bibitem {Ches}A. Cheskidov, M.C. Lopes Filho, H.J. Nussenzveig Lopes, R.
Shvydkoy, \emph{Energy conservation in two-dimensional incompressible ideal fluids}. Commun. Math. Phys. \textbf{348} (2016), 129--143.

\bibitem {Ch88}M. Christ, \emph{Weak type $(1, 1)$ bounds for rough operators}.
Ann. of Math. \textbf{128 }(1988), 19--42.


\bibitem{Crippa} G. Ciampa, G. Crippa, S. Spirito, \emph{Strong convergence of the vorticity for the \emph{$2$D} Euler equations in the inviscid limit}. Arch. Ration. Mech. Anal. \textbf{240} (2021), 295--326.

\bibitem {Delort}J.-M. Delort, \emph{Existence de nappes de tourbillon en
dimension deux (Existence of vortex sheets in dimension two)}. J. Amer. Math.
Soc. \textbf{4 }(1991), 553--586.
%


\bibitem {DiPernaMajda}R.J. DiPerna, A.J. Majda, \emph{Concentrations in
regularizations for $2\emph{D}$ incompressible flow}. Comm. Pure Appl. Math.
\textbf{40 }(1987), 301--345.

\bibitem {DiPernaMajda1*}R.J. DiPerna, A.J. Majda, \emph{Oscillations and
concentrations in weak solutions of the incompressible fluid equations.}
Commun. Math. Phys. \textbf{108} (1987), 667--689.

\bibitem {DiPernaMajda2}R.J. DiPerna, A.J. Majda, \emph{Reduced Hausdorff
dimension and concentration-cancelation for $2\emph{D}$ incompressible flow.}
J. Amer. Math. Soc. \textbf{1} (1988), 59--95.

\bibitem {DMComptes}O. Dom\'{\i}nguez, M. Milman, \emph{Sparse Brudnyi and
John--Nirenberg spaces}. C. R. Math. Acad. Sci. Paris \textbf{359} (2021), 1059--1069.

\bibitem {DMEuler}O. Dom\'{\i}nguez, M. Milman, \emph{Uniqueness for $2$D Euler and transport equations via extrapolation.} \url{https://arxiv.org/pdf/2306.08082.pdf}


\bibitem {Giga}Y. Giga, T. Miyakawa, \emph{Navier--Stokes flow in
$\mathbb{R}^{3}$ with measures as initial vorticity and Morrey spaces.} Comm.
Part. Differ. Equat. \textbf{14} (1989), 577--618.

\bibitem{H21} T.P. Hyt\"onen, \emph{The $L^p$-to-$L^q$ boundedness of commutators with applications to the Jacobian operator.} J. Math. Pures Appl. \textbf{156} (2021), 351--391.

\bibitem {JM91}B. Jawerth, M. Milman, \emph{Extrapolation theory with
applications}. Mem. Amer. Math. Soc. \textbf{89 }(1991), 82 pp.




\bibitem {Lan}S. Lanthaler, S. Mishra, C. Par\'{e}s-Pulido, \emph{On the
conservation of energy in two-dimensional incompressible flows}. Nonlinearity
\textbf{34} (2021), 1084--1135.


\bibitem {LN19}A.K. Lerner, F. Nazarov, \emph{Intuitive dyadic calculus: the
basics.} Expo. Math. \textbf{37} (2019), 225--265.

\bibitem {Lio96}P.-L. Lions, \emph{Mathematical Topics in Fluid Mechanics.
Vol. 1. Incompressible Models}. Oxford Lecture Series in Mathematics and its
Applications, Oxford (1996).

\bibitem {LNT}M.C. Lopes Filho, H.J. Nussenzveig Lopes, E. Tadmor,
\emph{Approximate solutions of the incompressible Euler equations with no
concentrations.} Ann. Inst. H. Poincar\'{e} Anal. Non Lin\'{e}aire \textbf{17
}(2000), 371--412.

\bibitem {M93}A. Majda, \emph{Remarks on weak solutions for vortex sheets with
a distinguished sign}. Indiana Univ. Math. J. \textbf{42 }(1993), 921--939.


%

\bibitem {MW74}B. Muckenhoupt, R.L. Wheeden, \emph{Weighted norm inequalities
for fractional integrals}. Trans. Amer. Math. Soc. \textbf{192 }(1974), 261--274.




\bibitem{S70} E.M. Stein, \emph{Singular Integrals and Differentiability Properties of Functions}. Princeton Univ. Press, Princeton, N.J., (1970). 

\bibitem {Tadmor}E. Tadmor, \emph{On a new scale of regularity spaces with
applications to Euler's equations}. Nonlinearity \textbf{14 }(2001), 513--532.

\bibitem{Timan} A.F. Timan, \emph{Theory of Approximation of Functions of a Real Variable}. MacMillan Co., New York, (1967).

\bibitem{Triebel} H. Triebel, \emph{Function Spaces and Wavelets on Domains.} EMS Tracts in Mathematics, vol. \textbf{7}.  Eur. Math. Soc., Z\"urich, (2008).

\bibitem {Vecc}I. Vecchi, S. Wu, \emph{On }$L^{1}$\emph{ vorticity for
}$2$D\emph{ incompressible flow}. Manuscripta Math. \textbf{78} (1993), 403--412.

\bibitem {V99}M. Vishik, \emph{Incompressible flows of an ideal fluid with
vorticity in borderline spaces of Besov type.} Ann. Sci. \'{E}cole Norm. Sup.
\textbf{32} (1999), 769--812.

\bibitem {V2018}M. Vishik, \emph{Instability and non-uniqueness in the Cauchy
problem for the Euler equations of an ideal incompressible fluid. Part I and II}.
(2018). \url{https://arxiv.org/pdf/1805.09426.pdf} \url{https://arxiv.org/pdf/1805.09440.pdf}


\bibitem {Y63}V.I. Yudovich, \emph{Non-stationary flows of an ideal
incompressible fluid}. Z. Vycisl. Mat. i Mat. Fiz. \textbf{3} (1963), 1032--1066.

\bibitem {Y95}V.I. Yudovich, \emph{Uniqueness theorem for the basic
nonstationary problem in the dynamics of an ideal incompressible fluid}. Math.
Res. Lett. \textbf{2} (1995), 27--38.
\end{thebibliography}
\end{document}